\newtheorem{theorem}{Theorem}
\newtheorem{proposition}[theorem]{Proposition}
\newtheorem{lemma}[theorem]{Lemma}
\newtheorem{conjecture}[theorem]{Conjecture}
\theoremstyle{remark}
\newtheorem*{example}{Example}
\newtheorem*{remark}{Remark}
\newcommand{\E}{\mathrm{e}}
\newcommand{\I}{\mathrm{i}}
\newcommand{\N}{\mathbb{N}}
\newcommand{\Z}{\mathbb{Z}}
\newcommand{\Q}{\mathbb{Q}}
\newcommand{\R}{\mathbb{R}}
\newcommand{\C}{\mathbb{C}}
\newcommand{\hecke}{\mathscr{H}}
\newcommand{\sym}{\mathfrak{S}}
\newcommand{\comp}{\mathfrak{C}}
\newcommand{\setpart}{\mathfrak{P}}
\newcommand{\Sym}{\mathbf{Sym}}
\newcommand{\QSym}{\mathbf{QSym}}
\newcommand{\FQSym}{\mathbf{FQSym}}
\newcommand{\ym}{\mathfrak{Y}}
\newcommand{\card}{\mathrm{card}\,} 
\newcommand{\GL}{\mathrm{GL}}
\newcommand{\proba}{\mathbb{P}}
\newcommand{\qproba}{\mathbb{Q}}
\newcommand{\esper}{\mathbb{E}}
\newcommand{\eps}{\varepsilon}
\newcommand{\lle}{\left[\!\left[} 
\newcommand{\rre}{\right]\!\right]} 
\newcommand{\obs}{\mathscr{O}}
\newcommand{\alg}{\mathscr{A}}
\newcommand{\blg}{\mathscr{B}}
\newcommand{\id}{\mathrm{id}}
\newcommand{\tr}{\mathrm{tr}\,}
\newcommand{\arra}{\mathfrak{A}}
\newcommand{\scal}[2]{\left\langle #1\vphantom{#2}\,\right |\left.#2 \vphantom{#1}\right\rangle}
\newcommand{\figcap}[2]{\begin{figure}[ht] \begin{center} {\footnotesize{#1}} \caption{#2} \end{center} \end{figure}}
\newcommand{\comment}[1]{}
\begin{document}
\author{Pierre-Lo\"ic M\'eliot}
\institute{The Gaspard--Monge Institute of electronics and computer science,
University of Marne-La-Vall\'ee Paris-Est,
77454 Marne-la-Vall\'ee Cedex 2, France}
\email{meliot@phare.normalesup.org}

\title{A central limit theorem for the characters of the infinite symmetric group and of the infinite Hecke algebra}
\titlerunning{A central limit theorem for the characters of $\sym_{\infty}$ and $\hecke(\sym_{\infty})$}

\begin{abstract} In this paper, we review the representation theory of the infinite symmetric group, and we extend the works of Kerov and Vershik by proving that the irreducible characters of $\sym_{\infty}$ always satisfy a central limit theorem. Hence, for any point of the Thoma simplex, the corresponding measures on the levels $\ym_{n}$ of the Young graph have a property of gaussian concentration. By using the Robinson-Schensted-Knuth algorithm and the theory of Pitman operators, we relate these results to the properties of certain random permutations obtained by riffle shuffles, and to the behaviour of random walks conditioned to stay in a Weyl chamber.
\keywords{Asymptotics of Young diagrams, central limit theorems, combinatorial Hopf algebras, representation theory of symmetric groups, random walks in a Weyl chamber, Markov traces of Hecke algebras.}
\subclass{60B15,20C32,20C08,05E05,05E10}
\end{abstract}

\maketitle

For $n \geq 0$, we denote by $\sym_{n}$ the \textbf{symmetric group} of order $n$, that is to say the group of permutations of $\lle 1,n\rre=\{1,2,\ldots,n\}$. We also denote by $\sym_{\infty}$ the inductive limit $\bigcup_{n =0}^{\infty}\uparrow \sym_{n}$. The representation theory of $\sym_{\infty}$ is quite difficult, because it is a wild group, \emph{i.e.}, an infinite group with factor representations of type II or III. However, the classification of the finite factor representations of $\sym_{\infty}$ is entirely known since the works of E. Thoma (\emph{cf.} \cite{Tho64}). Hence, these representations are labelled by the pairs $\omega=(\alpha,\beta)$ of non-increasing sequences of non-negative real numbers
$$\alpha=(\alpha_{1}\geq \alpha_{2}\geq \cdots \geq \alpha_{n}\geq \cdots)\qquad;\qquad \beta=(\beta_{1}\geq \beta_{2}\geq \cdots \geq \beta_{n}\geq \cdots)$$
such that $\sum_{i=1}^{\infty}\alpha_{i}+\sum_{i=1}^{\infty}\beta_{i}\leq 1$. Moreover, if $\chi^{\omega}$ is the normalized character of $\sym_{\infty}$ corresponding to $\omega$, and if $\sigma \in \sym_{\infty}$ is a product of disjoint cycles of lengths $l_{1},\ldots,l_{r}\geq 2$, then
$$\chi^{\omega}(\sigma)=\prod_{j=1}^{r}\left(\sum_{i=1}^{\infty} (\alpha_{i})^{l_{j}}+ (-1)^{l_{j}-1} \sum_{i=1}^{\infty} (\beta_{i})^{l_{j}} \right).$$ \bigskip

Now, if one considers a restriction $\chi^{\omega}_{|\sym_{n}}$ of $\chi^{\omega}$ to a finite symmetric group, it is not in general an irreducible character of $\sym_{n}$, and one can expand it as a linear combination
$$\chi^{\omega}_{|\sym_{n}}=\sum_{\lambda\in \ym_{n}} \proba_{n,\omega}[\lambda]\,\chi^{\lambda}$$
of the normalized irreducible characters of $\sym_{n}$. The sum runs over the set $\ym_{n}$ of (isomorphism classes of) irreducible representations of $\sym_{n}$, which happen to correspond bijectively to the \textbf{integer partitions} of size $n$, that is to say the non-decreasing sequences of positive integers that sum to $n$:
$$\ym_{n}=\left\{\lambda=(\lambda_{1}\geq \lambda_{2}\geq \cdots \geq \lambda_{r})\,\,\bigg|\,\,|\lambda|=\sum_{i=1}^{r}\lambda_{i}=n\right\}.$$
Hence, for any pair $\omega=(\alpha,\beta)$ and any integer $n$, one obtains a probability measure $\proba_{n,\omega}$ on $\ym_{n}$. It has been shown by Kerov and Vershik (see \cite{KV81a,KV81b}) that these systems of probability measures, called coherent systems, satisfy a law of large numbers. Thus, the irreducible characters $\chi^{\lambda}$ (viewed as random functions on the symmetric groups) and the renormalized coordinates of the random partitions $\lambda$ under $\proba_{n,\omega}$ both converge in probability. In this paper, we will refine this result by showing that the convergence is always gaussian, see our main Theorems \ref{mainalg}, \ref{mainmeasure} and \ref{maingeom}.\bigskip\bigskip

The article is organized as follows. In the first part of the paper, we recall the results of E. Thoma, using extensively the formalism of symmetric functions. In Section \ref{randompermutation}, we explain how coherent systems of measures are related to models of random permutations obtained by generalized riffle shuffles; here we use well-known arguments from the theory of symmetric and quasi-symmetric functions. In \S\ref{obs}, we present a ``method of non-commutative moments'' that is due to Kerov and Olshanski (see \cite{KO94,IO02,Sni06b}), and that is extremely useful in the setting of asymptotic representation theory of the symmetric groups. \bigskip

We apply this method in Sections \ref{largenumbers} and \ref{clt} to reprove the law of large numbers of Kerov and Vershik and to establish our central limit theorems. The arguments are exactly those that the author used with V. F\'eray in \cite{FM10} for the asymptotic study of the $q$-Plancherel measures; in our general setting, they shall seem somewhat simpler. However, when looking at the fluctuations of the rows and columns of the random partitions taken with respect to the probability $\proba_{n,\omega}$, new difficulties arise when some coordinates of the parameter $\omega$ in the Thoma simplex are equal. We address these difficulties in Section \ref{oconnell} by using and generalizing arguments due to N. O'Connell (\emph{cf.} \cite{OC03}, see also \cite{Litt95,OCY02,BBO05,Bia09}); hence, we relate a random walk on $\ym=\bigsqcup_{n \in \N} \ym_{n}$ whose marginale laws are the $\proba_{n,\omega}$'s, to a multidimensional random walk  conditioned to stay in a Weyl chamber. This link ensures that the fluctuations of rows or columns corresponding to the same parameter are the same as those of the eigenvalues of a gaussian hermitian matrix. Incidentally, the link between random partitions and random walks provides a new proof of some results of Section \ref{clt}, in particular Theorem \ref{almostgeom}.\bigskip

Finally, in \S\ref{qtplancherel}, we explain how our results behave with respect to the quantization of the symmetric group, that is to say that we replace $\sym_{\infty}$ by its Hecke algebra $\hecke(\sym_{\infty})$. In particular, we perform all the computations in the case when the irreducible character of $\hecke_{q}(\sym_{\infty})$ is the so-called Jones-Ocneanu trace (\cite{Jon87}) of parameter $z=-(1-q)(1-t)$, thereby providing an asymptotic study of a $(q,t)$-deformation of the Plancherel measures of the symmetric groups.\bigskip

We have tried to make this paper accessible to a wide audience, and self-contained. In particular, we will recall many well-known results on symmetric groups, symmetric functions and (free) quasi-symmetric functions.

\bigskip
\begin{acknowledgements}
The author would like to express his gratitude to P. Biane, M. Bo\.{z}ejko, V. F\'eray, S. Giraudo, F. Hivert, A. Nikeghbali, J.-C. Novelli, P. \'Sniady, J.-Y. Thibon and A. Vershik for various discussions or suggestions.
\end{acknowledgements}
\bigskip

\section{Representation theory of the infinite symmetric group}
In this first paragraph, we recall the representation theory of the infinite symmetric group, and a basic tool that we will use throughout the paper, namely, the Hopf algebra of symmetric functions (later, we shall also need the algebras of quasi-symmetric and free quasi-symmetric functions).

\subsection{Representation theory of the symmetric groups $\sym_{n}$} To begin with, let us recall the representation theory of the finite symmetric groups $\sym_{n}=\sym(\lle 1,n\rre)$. If $\sigma \in \sym_{n}$ is a permutation, it can be written uniquely as a product of disjoint cycles; for example, $$41728635=(1,4,2)(3,7)(5,8)(6).$$ Then, two permutations are conjugated in $\sym_{n}$ if and only if the lengths or their cycles form the same integer partition (here, $(3,2,2,1)$). Thus, the conjugacy classes of $\sym_{n}$ are labelled by the elements of $\ym_{n}$. The irreducible representations of $\sym_{n}$ are also labelled by integer partitions of size $n$. Indeed, if $\lambda=(\lambda_{1},\lambda_{2},\ldots,\lambda_{r})$, let us denote 
\begin{align*}\Delta^{\lambda}(x_{1},\ldots,x_{r})&=\prod_{i=1}^{r}\Delta(x_{\lambda_{1}+\cdots+\lambda_{i-1}+1},\ldots,x_{\lambda_{1}+\cdots+\lambda_{i}})\\
&=\prod_{i=1}^{r} \left(\prod_{\lambda_{1}+\cdots+\lambda_{i-1}+1\leq j<k\leq \lambda_{1}+\cdots+\lambda_{i}}x_{j}-x_{k}\right).
\end{align*}
The symmetric group acts (on the right) on $\C[x_{1},\ldots,x_{n}]$ by permutation of the variables, and the complex vector space $V$ generated by the $(\Delta^{\lambda})\cdot \sigma$ with $\sigma \in \sym_{n}$ happens to be an irreducible module over $\sym_{n}$; moreover, all the isomorphism classes of irreducibles can be obtained this way (\emph{cf.} \cite{JK81}).\bigskip

If $\lambda$ is a partition of size $n$, we represent it by its \textbf{Young diagram}: this is the array of $n$ boxes with $\lambda_{1}$ boxes on the first row, $\lambda_{2}$ boxes on the second row, \emph{etc.} 
$$(5,4,2,2) \quad \leftrightarrow \quad \yng(2,2,4,5)\,.$$
The conjugate of a partition $\lambda$ is the partition $\lambda'$ obtained by symmetrizing its Young diagram with respect to the first diagonal; for instance, $(5,4,2,2)'=(4,4,2,2,1)$. In the following, we shall denote by $V^{\lambda}$ the irreducible module over $\sym_{n}$ associated to the polynomial $\Delta^{\lambda'}$; this is the usual convention for encoding the values of the irreducible characters in the algebra of symmetric functions, see \S\ref{symfunc}. The two one-dimensional representations of $\sym_{n}$ correspond to the partitions $(n)$ and $(1,1,\ldots,1)=1^{n}$: the first one gives the trivial representation and the second one is the sign representation.
\bigskip

The Young diagrams yield a simple description of the branching rules of the irreducible modules of $\sym_{n}$ and $\sym_{n+1}$. If $\lambda \in \ym_{n}$ and $\Lambda \in \ym_{n+1}$, we denote $\lambda \nearrow \Lambda$ if $\Lambda$ can be obtained from $\lambda$ by adding a single box to the Young diagram. 
\begin{example} If $\lambda=(5,4,2,2)$, the partitions $\Lambda$ of size $14=13+1$ such that $\lambda \nearrow \Lambda$ are $(6,4,2,2)$, $(5,5,2,2)$, $(5,4,3,2)$ and $(5,4,2,2,1)$. 
\end{example}
\noindent The \textbf{branching rules} of the symmetric groups are the following (see \cite{OV04}, \cite{Zel81}): 
\begin{align*}\forall \lambda \in \ym_{n},&\,\,\mathrm{Ind}_{\sym_{n}}^{\sym_{n+1}}(V^{\lambda})=\sum_{\Lambda \in \ym_{n+1}\,\,|\,\,\lambda \nearrow \Lambda}V^{\Lambda}\\
 \forall \Lambda \in \ym_{n+1},&\,\,\mathrm{Res}_{\sym_{n}}^{\sym_{n+1}}(V^{\Lambda})=\sum_{\lambda \in \ym_{n}\,\,|\,\,\lambda \nearrow \Lambda}V^{\lambda}.
 \end{align*}
Since the restriction of representations preserves the dimension, by applying $n$ times the second branching rule, one sees that
$$\dim \lambda=\dim V^{\lambda}=\card\{\text{standard tableaux of shape }\lambda\},$$
where a standard tableau of shape $\lambda$ is a numbering of the boxes of the Young diagram of $\lambda$ that is strictly increasing along the rows and along the columns.

\begin{example}
The five standard tableaux of shape $(3,2)$ are $$\young(45,123)\quad;\quad\young(35,124)\quad;\quad\young(25,134)\quad;\quad\young(34,125)\quad;\quad\young(24,135)\,.$$ Hence, $\dim (3,2)=5$.
\end{example}
\bigskip

There is also a ``closed'' formula for $\dim \lambda$ known as the \textbf{hook length formula}. If $\oblong=(i,j)$ is a box of the Young diagram of $\lambda$, its hook length is $h(\oblong)=\lambda_{i}+\lambda_{j}'-i-j+1$. For instance, in $(5,4,2,2)$, the hook length of the box $(3,1)$ appearing on the first row and the third column is $4$. Then, 
$$\dim \lambda =\frac{n!}{\prod_{\oblong \in \lambda} h(\oblong)},$$
see for instance \cite[\S1.7]{Mac95}. This can be proved by relating the character theory of the symmetric groups to the theory of symmetric functions that we are going to review in the next paragraph.

\subsection{The Hopf algebra of symmetric functions}\label{symfunc}
A formal power series\footnote{In the following, all the coefficients of the functions and power series are supposed complex, though one could define everything over $\Q$.} $f$ of bounded degree in an infinite sequence of variables $X=\{x_{1},\ldots,x_{n},\ldots\}$ is called a \textbf{symmetric function} if, for any finite permutation $\sigma \in \sym_{\infty}$, 
$$f(x_{\sigma(1)},x_{\sigma(2)},\ldots)=f(x_{1},x_{2},\ldots)=f(X).$$
In other words, for any $i_{1}\neq i_{2}\neq \cdots \neq i_{r}$ and $j_{1}\neq j_{2}\neq \cdots \neq j_{r}$, and any $(a_{1},\ldots,a_{r})\in \N^{r}$,
$$[x_{i_{1}}^{a_{1}}x_{i_{2}}^{a_{2}}\cdots x_{i_{r}}^{a_{r}}](f)=[x_{j_{1}}^{a_{1}}x_{j_{2}}^{a_{2}}\cdots x_{j_{r}}^{a_{r}}](f).$$
The set of symmetric functions will be denoted by $\Sym$; this is an infinite-dimensional complex commutative algebra. A transcendance basis of $\Sym$ consists in the \textbf{power sums}
$$p_{k}(X)=\sum_{i=1}^{\infty} \,(x_{i})^{k}.$$
If $\mu=(\mu_{1},\ldots,\mu_{r})$ is a partition, we shall denote $p_{\mu}=p_{\mu_{1}}\cdots p_{\mu_{r}}$; a linear basis of $\Sym$ is therefore $(p_{\mu})_{\mu}$, where $\mu$ runs over $\ym=\bigsqcup_{n \in \N}\ym_{n}$ (\cite[\S1.2]{Mac95}).\bigskip

Another important basis of $\Sym$ is given by the \textbf{Schur functions}, see \cite[\S1.3]{Mac95}. If $\lambda$ is a partition, $\ell(\lambda)$ will denote its number of parts. Then, given variables $x_{1},\ldots,x_{N}$ with $N \geq \ell(\lambda)$, we define the antisymmetric polynomial of type $\lambda$ by
$$a_{\lambda}(x_{1},x_{2},\ldots,x_{N})=\sum_{\sigma \in \sym_{N}} \eps(\sigma)\, x^{\sigma(\lambda)},$$
where $x^{\sigma(\lambda)}=x_{1}^{\lambda_{\sigma(1)}}x_{2}^{\lambda_{\sigma(2)}}\cdots x_{N}^{\lambda_{\sigma(N)}}$ --- here, we complete $\lambda$ with parts of size $0$ to obtain a sequence of length $N$. Finally, the Schur polynomial is defined as the quotient
$$s_{\lambda,N}(x_{1},x_{2},\ldots,x_{N})=\frac{a_{\lambda+\delta}(x_{1},x_{2},\ldots,x_{N})}{a_{\delta}(x_{1},x_{2},\ldots,x_{N})},$$
where $\delta=(N-1,N-2,\ldots,1,0)$, and 
$$\lambda+\delta=(\lambda_{1}+N-1,\lambda_{2}+N-2,\ldots,\lambda_{N-1}+1,\lambda_{N}).$$ If $M$ is a matrix in $\GL(N,\C)$ with eigenvalues $x_{1},\ldots,x_{N}$, then by Weyl's character formula, $s_{\lambda,N}(x_{1},\ldots,x_{N})$ is the character of the action of $M$ on the irreducible $\GL(N,\C)$-module of highest weight $\lambda$. On the other hand, for any $N$, $$s_{\lambda,N+1}(x_{1},\ldots,x_{N},x_{N+1}=0)=s_{\lambda,N}(x_{1},\ldots,x_{N}),$$ so there is a formal power series $s_{\lambda}(x_{1},x_{2},\ldots)$ that is the inductive limit of the $s_{\lambda,N}$'s. Since these polynomials are defined as quotients of antisymmetric polynomials, $s_{\lambda}$ is always a symmetric function in $\Sym$. Actually, $(s_{\lambda})_{\lambda \in \ym}$ is a linear basis of $\Sym$, and by using the Schur-Weyl duality between $\GL(N,\C)$ and $\sym_{n}$ on $(\C^{N})^{\otimes n}$, one can show that the transition matrix between the two bases $(s_{\lambda})_{\lambda \in \ym}$ and $(p_{\mu})_{\mu \in \ym}$ is given by the tables of characters of the symmetric groups (see \cite[\S1.7]{Mac95}).
\begin{proposition}\label{frobenius}
For $\lambda$ and $\mu$ in $\ym_{n}$, we denote by $\zeta^{\lambda}(\mu)$ the value of the (non-normalized) character of $V^{\lambda}$ on a permutation of cycle type $\mu$. Then, for any partition $\mu$,
$$p_{\mu}(X)=\sum_{\lambda \in \ym_{n}} \zeta^{\lambda}(\mu)\,s_{\lambda}(X).$$
\end{proposition}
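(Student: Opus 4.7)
The plan is to prove this classical Frobenius formula by evaluating a single trace on $(\C^{N})^{\otimes n}$ in two different ways, using the Schur--Weyl duality that links $\GL(N,\C)$ and $\sym_{n}$. For $N\geq n$, the commuting actions of $\GL(N,\C)$ (diagonally on the tensor factors) and of $\sym_{n}$ (by permutation of the positions) decompose the space as the bimodule
$$(\C^{N})^{\otimes n} \,\simeq\, \bigoplus_{\lambda \in \ym_{n},\,\ell(\lambda)\leq N} W^{\lambda}_{N} \otimes V^{\lambda},$$
where $W^{\lambda}_{N}$ is the irreducible polynomial $\GL(N,\C)$-module of highest weight $\lambda$, whose character is given by the Weyl formula recalled in the excerpt as $\tr_{W^{\lambda}_{N}}(M)=s_{\lambda,N}(x_{1},\ldots,x_{N})$ when $M$ has eigenvalues $x_{1},\ldots,x_{N}$, and $V^{\lambda}$ is the irreducible $\sym_{n}$-module built from $\Delta^{\lambda'}$ in the previous paragraph.

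Next I would fix a diagonal matrix $M=\mathrm{diag}(x_{1},\ldots,x_{N})$ and a permutation $\sigma \in \sym_{n}$ with cycle type $\mu=(\mu_{1},\ldots,\mu_{r})$, and compute the trace of the endomorphism $M^{\otimes n}\circ \sigma$ of $(\C^{N})^{\otimes n}$ in two ways. On the representation-theoretic side, multiplicativity of the trace across the tensor product together with the bimodule decomposition above gives
$$\tr\bigl(M^{\otimes n}\circ\sigma\bigr) \,=\, \sum_{\lambda \in \ym_{n},\,\ell(\lambda)\leq N} s_{\lambda,N}(x_{1},\ldots,x_{N})\,\zeta^{\lambda}(\mu).$$
Computing the same trace directly in the tensor basis $\{e_{i_{1}}\otimes \cdots \otimes e_{i_{n}}\}_{1\leq i_{k}\leq N}$, only the tuples with $(i_{1},\ldots,i_{n})$ constant on each cycle of $\sigma$ contribute to the diagonal, and the corresponding weight equals $x_{i_{1}}\cdots x_{i_{n}}$. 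Summing over the free choice of a value in $\{1,\ldots,N\}$ for each of the $r$ cycles of $\sigma$ yields
$$\tr\bigl(M^{\otimes n}\circ\sigma\bigr) \,=\, \prod_{j=1}^{r} \sum_{i=1}^{N} x_{i}^{\mu_{j}} \,=\, p_{\mu}(x_{1},\ldots,x_{N}).$$
Equating the two expressions and passing to the inductive limit $N \to \infty$ in $\Sym$ delivers the desired identity.

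The only real obstacle is keeping the conventions coherent: a mismatch between $\lambda$ and $\lambda'$ on either the $\GL$-side or the $\sym_{n}$-side would insert a spurious conjugation in the final formula. The choice made in \S\ref{symfunc} of defining $V^{\lambda}$ via $\Delta^{\lambda'}$ is exactly what makes the Schur--Weyl pairing above couple $s_{\lambda}$ with $V^{\lambda}$ without any transpose. Beyond this bookkeeping point, the entire argument rests on Schur--Weyl duality itself, which is the genuinely non-trivial input and can be cited as a classical structural theorem.
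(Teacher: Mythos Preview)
Your argument is correct and follows precisely the route the paper indicates: the paragraph preceding the proposition explicitly says the transition matrix between $(s_{\lambda})_{\lambda}$ and $(p_{\mu})_{\mu}$ is obtained ``by using the Schur-Weyl duality between $\GL(N,\C)$ and $\sym_{n}$ on $(\C^{N})^{\otimes n}$'', and then simply cites \cite[\S1.7]{Mac95} without spelling out the details. You have supplied exactly those details --- the double trace computation of $M^{\otimes n}\circ\sigma$ --- and your remark on the $\lambda$ versus $\lambda'$ convention is on point.
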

\noindent This result is the celebrated \textbf{Frobenius formula}.
\bigskip\bigskip

The aforementioned link between symmetric functions and the representation theory of the symmetric groups gives rise to an isomorphism between two self-adjoint \textbf{Hopf algebras}, see \cite{Zel81}. On the one hand, let us denote by $K(\sym_{n})$ the complex vector space built from the Grothendieck group of the category of representations of $\sym_{n}$; it means that $K(\sym_{n})$ has for linear basis the irreducible modules $V^{\lambda}$ with $\lambda \in \ym_{n}$. The graded vector space $K(\sym)=\bigoplus_{n \in \N}K(\sym_{n})$ is a graded connected Hopf algebra for the following operations:
\begin{align*}\text{product: }&\forall \lambda \in \ym_{p}, \,\,\,\forall \mu \in \ym_{q},\,\,\,V^{\lambda}\times V^{\mu}= \mathrm{Ind}_{\sym_{p}\times \sym_{q}}^{\sym_{p+q}}(V^{\lambda}\otimes V^{\mu}) \,;\\
\text{coproduct: }&\forall \lambda \in \ym_{n},\,\,\,\Delta(V^{\lambda})=\sum_{p+q=n} \mathrm{Res}_{\sym_{p}\times \sym_{q}}^{\sym_{n}}(V^{\lambda})\,;\\
\text{involution: }&\forall \lambda \in \ym_{n},\,\,\,\nu(V^{\lambda})=V^{1^{n}}\otimes_{\C} V^{\lambda}=V^{\lambda'}.
\end{align*}
The involution $\nu$ defined above is equal to the antipode of the Hopf algebra up to sign. Moreover, there is a scalar product on $K(\sym)$ defined by $\scal{V^{\lambda}}{V^{\mu}}=\delta_{\lambda\mu}$, and $K(\sym)$ is a self-adjoint Hopf algebra with respect to this pairing. On the other hand, $\Sym$ has also a structure of self-adjoint Hopf algebra if one sets:
$$\Delta(p_{k})=p_{k}\otimes 1 + 1 \otimes p_{k}\qquad;\qquad \nu(p_{k})=(-1)^{k-1}p_{k}\qquad;\qquad \scal{p_{\mu}}{p_{\rho}}=\delta_{\mu\rho}\,z_{\mu}$$
where $z_{\mu}=\prod_{i\geq 1}i^{m_{i}}\,m_{i}!$ if $\mu=1^{m_{1}}2^{m_{2}}\cdots s^{m_{s}}$. Then, it is shown in \cite{Zel81} that:
\begin{proposition}
The rule $V^{\lambda}\mapsto s_{\lambda}$ provides an isomorphism of self-adjoint graded Hopf algebras between $K(\sym)$ and $\Sym$. With some additional conditions, this isomorphism is unique up to the multiplication by the antipode.
\end{proposition}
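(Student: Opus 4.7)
The plan is to define explicitly the \emph{Frobenius characteristic map} $\phi \colon K(\sym) \to \Sym$ by $\phi(V^\lambda) = s_\lambda$, and then to verify that it respects each piece of structure (pairing, product, coproduct, involution). Since $(V^\lambda)_{\lambda \in \ym}$ and $(s_\lambda)_{\lambda \in \ym}$ are homogeneous bases of the two graded vector spaces, $\phi$ is automatically a graded linear isomorphism. The Frobenius formula (Proposition \ref{frobenius}) supplies the equivalent description
$$\phi(V) = \sum_{\mu} \frac{\chi^V(\mu)}{z_\mu}\,p_\mu,$$
valid for any virtual $\sym_n$-representation $V$ with non-normalized character $\chi^V$; this power-sum description is what makes every compatibility calculation elementary.

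For the pairing, expanding $s_\lambda,s_\mu$ in the $p_\nu$'s and using $\scal{p_\nu}{p_\rho} = z_\nu\,\delta_{\nu\rho}$ reduces $\scal{s_\lambda}{s_\mu} = \delta_{\lambda\mu}$ to the second orthogonality relation $\sum_\nu z_\nu^{-1}\,\zeta^\lambda(\nu)\,\zeta^\mu(\nu) = \delta_{\lambda\mu}$ for the characters of $\sym_n$. For the product, the induced character formula yields, for a $\sigma \in \sym_{p+q}$ of cycle type $\nu$, the identity $\chi^{V^\lambda \times V^\mu}(\sigma) = \sum_{\rho \sqcup \tau = \nu} \zeta^\lambda(\rho)\,\zeta^\mu(\tau)$, the sum being over the decompositions of the cycles of $\sigma$ into a $\sym_p$-part and a $\sym_q$-part. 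Comparing with the multiplicativity $p_\rho\,p_\tau = p_{\rho \sqcup \tau}$ in $\Sym$ and re-applying the Frobenius formula gives $\phi(V^\lambda \times V^\mu) = s_\lambda s_\mu$. The coproduct case is dual: restricting $V^\lambda$ to $\sym_p \times \sym_q$ and evaluating at $(\sigma_1,\sigma_2)$ of cycle types $\rho, \tau$ gives $\zeta^\lambda(\rho \sqcup \tau)$, which matches $\Delta(p_k) = p_k \otimes 1 + 1 \otimes p_k$ extended multiplicatively. Compatibility with the involutions follows from the classical identity $\omega(s_\lambda) = s_{\lambda'}$ for the involution $\omega$ on $\Sym$ with $\omega(p_k) = (-1)^{k-1} p_k$; on the representation side this just reflects the fact that tensoring with the sign representation $V^{1^n}$ sends $V^\lambda$ to $V^{\lambda'}$.

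Uniqueness up to multiplication by the antipode is then a formal consequence of Zelevinsky's rigidity theorem for positive self-adjoint Hopf algebras: both $K(\sym)$ and $\Sym$ are PSH-algebras whose graded piece in degree $1$ is one-dimensional, hence they are generated as PSH-algebras by their unique positive primitive element, and any such PSH is unique up to the reflection provided by the antipode. The main obstacle in this plan is the product computation, which requires careful combinatorial bookkeeping of how cycle types of $\sym_{p+q}$ split into pairs of cycle types for $\sym_p$ and $\sym_q$, together with the induced character formula; the other verifications are either dual to this one or reduce to well-known identities between Schur and power-sum functions.
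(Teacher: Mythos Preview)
The paper does not give its own proof of this proposition; it simply attributes the result to \cite{Zel81}. Your proposal is correct and is precisely the standard argument one finds there: the Frobenius characteristic map is checked to respect the pairing, the product (via the induced-character formula), the coproduct (via restriction), and the involution, and the uniqueness claim is exactly Zelevinsky's rigidity theorem for PSH-algebras with one-dimensional degree-one component. So you have supplied the proof that the paper merely cites, by essentially the same route as the cited reference.
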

\noindent In particular, the Schur functions form an orthonormal basis of $\Sym$, and the character values of the symmetric groups satisfy $\zeta^{\lambda}(\mu)=\scal{s_{\lambda}}{p_{\mu}}$.
\bigskip
\bigskip

In $\Sym$, the structure of Hopf algebra can be related to operations on alphabets. Indeed, given two infinite alphabets $X$ and $Y$, if $\overline{X}=\{-x_{1},-x_{2},\ldots\}$ and $X+Y=\{x_{1},x_{2},\ldots,y_{1},y_{2},\ldots\}$, then
$$(\Delta p_{k})(X,Y)=p_{k}(X)+p_{k}(Y)=p_{k}(X+Y)\qquad;\qquad (\nu( p_{k}))(X)=-p_{k}(\overline{X}).$$
Hence, the coproduct $\Delta$ in $\Sym$ corresponds simply to the addition of alphabets, and if the (formal) difference of two alphabets is defined by $p_{k}(X-Y)=p_{k}(X)-p_{k}(Y)$ for any $k\geq 1$, then for any symmetric function $f$,
$$f(X-Y)=[(1\otimes \nu) \circ \Delta]f(X,\overline{Y}).$$\bigskip

To conclude this paragraph, let us speak about \textbf{specializations}. By a formal specialization of $\Sym$, we just mean a morphism of $\C$-algebras between $\Sym$ and $\C$. Since $\Sym=\C[p_{1},p_{2},\ldots,p_{k},\ldots]$, it amounts to give the images of the $p_{k}$'s. Another way to define a specialization is to give values to the variables of the alphabet $X$; however, not all the specializations of $\Sym$ can be obtained in this way. 
\begin{example} We set
$$X_{q}=\frac{1}{1-[q]}=(1,q,q^{2},q^{3},\ldots).$$
Then, $p_{k}(X_{q})=\frac{1}{1-q^{k}}$ for any $k$, and there is a hook-length formula for the specializations $s_{\lambda}(X_{q})$ of the Schur functions corresponding to this alphabet:
$$s_{\lambda}(X_{q})=\frac{q^{n(\lambda)}}{\prod_{\oblong \in \lambda}1-q^{h(\oblong)}},$$
where $n(\lambda)=\sum_{i=1}^{\ell(\lambda)}(i-1)\lambda_{i}$. 
\end{example}\bigskip

Given two specializations denoted by formal alphabets $A$ and $B$, one can as before construct new specializations $A+B$ and $A-B$ by setting:
$$\forall k\geq 1,\,\,\,p_{k}(A+B)=p_{k}(A)+p_{k}(B)\qquad;\qquad p_{k}(A-B)=p_{k}(A)-p_{k}(B).$$
Even if $A$ and $B$ are true alphabets, the specialization $A-B$ may not come from a true alphabet; hence, differences of alphabets give rise to ``new'' specializations of $\Sym$, and we will see hereafter that this operation is important in the setting of the representation theory of $\sym_{\infty}$. Another important specialization of $\Sym$ that will be used in the next paragraph is the so-called exponential alphabet $E$. This formal alphabet satisfies:
$$p_{1}(E)=1\qquad;\qquad\forall k \geq 2,\,\,\,p_{k}(E)=0.$$
Using the reciprocal of the Frobenius formula, one sees that for any $\lambda \in \ym_{n}$, $s_{\lambda}(E)=\frac{\dim \lambda}{n!}$.

\subsection{Coherent systems of measures and Thoma's theorem for the characters of $\sym_{\infty}$}
 We denote by $\sym_{\infty}$ the set of finite permutations of $\N^{*}=\lle 1,+\infty\rre$; this is also the inductive limit of the $\sym_{n}$'s with respect to the natural embeddings $\sym_{n}\hookrightarrow \sym_{n+k}$. As explained in the introduction, there is no hope to have a complete knowledge of the representation theory of this group: indeed, the existence of factor representations of type II or III implies that a unitary representation of $\sym_{\infty}$ does not in general have a \emph{unique} expansion as a direct integral of factors. However, one can classify the \textbf{characters} of $\sym_{\infty}$, that is to say the functions $\chi : \sym_{\infty} \to \C$ that satisfy the three following conditions:\vspace{2mm}
\begin{enumerate}
\item $\chi$ is constant on conjugacy classes: $\forall \sigma,\tau,\,\,\chi(\tau\sigma\tau^{-1})=\chi(\sigma)$.\vspace{2mm}
\item $\chi$ is normalized so that $\chi(1)=1$. \vspace{2mm}
\item $\chi$ is non-negative definite: $\forall \sigma_{1},\ldots,\sigma_{n},\,\,(\chi(\sigma_{i}\sigma_{j}^{-1}))_{1\leq i,j \leq n}$ is hermitian and non-negative definite.\vspace{2mm}
\end{enumerate} 
Such a character is always the trace of a unitary representation of $\sym_{\infty}$ in a Von Neumann algebra of finite type. If one considers only finite type \emph{factor} representations of $\sym_{\infty}$, then one obtained the \textbf{extremal characters} that satisfy the additional condition:\vspace{2mm}
\begin{enumerate}
\setcounter{enumi}{3}
\item $\chi$ cannot be written as a combination $a\chi_{1}+(1-a)\chi_{2}$ with $\chi_{1}\neq \chi_{2}$ characters and $a \in\, ]0,1[$.\vspace{2mm}
\end{enumerate}
We will denote by $\mathcal{X}(\sym_{\infty})$ the set of characters of $\sym_{\infty}$, and by $\mathcal{X}^{*}(\sym_{\infty})$ the set of extremal characters. Then, $\mathcal{X}(\sym_{\infty})$ is a convex compact set (for the topology of simple convergence), and 
\begin{align*}\mathcal{X}^{*}(\sym_{\infty})&=\mathrm{Extr}(\mathcal{X}(\sym_{\infty}))=\{\text{extremal points of }\mathcal{X}(\sym_{\infty})\}\,;\\
\mathcal{X}(\sym_{\infty})&=\mathscr{P}(\mathcal{X}^{*}(\sym_{\infty}))=\{\text{borelian probability measures on }\mathcal{X}^{*}(\sym_{\infty})\}.
\end{align*}
Hence, in order to understand the character theory of $\sym_{\infty}$, it is sufficient to determine the set of extremal (or irreducible) characters of $\sym_{\infty}$. As explained beautifully in \cite{KOV04,Oko97}, this has been done by E. Thoma (\cite{Tho64}), who relates this problem to the classification of totally positive sequences. We define the \textbf{Thoma simplex} $\Omega$ as the set of pairs $\omega=(\alpha,\beta)$ of non-increasing sequences of non-negative real numbers
$$\alpha=(\alpha_{1}\geq \alpha_{2}\geq \cdots \geq 0)\qquad;\qquad\beta=(\beta_{1}\geq \beta_{2}\geq \cdots \geq 0)$$
such that $\sum_{i=1}^{\infty}\alpha_{i}+\sum_{i=1}^{\infty}\beta_{i}=1-\gamma$ with $\gamma \geq0$. To any point $\omega$ of this infinite dimensional simplex, we associate the formal alphabet $A-B+\gamma E$, where $A=\alpha=\{\alpha_{1},\alpha_{2},\ldots\}$, $B=\overline{\beta}=\{-\beta_{1},-\beta_{2},\ldots\}$, and $E$ is the exponential alphabet. Thus, 
\begin{align*}
p_{1}(A-B+\gamma E)&=p_{1}(\alpha)-p_{1}(\overline{\beta})+\gamma p_{1}(E)=p_{1}(\alpha)+p_{1}(\beta)+\gamma=1\,;\\
p_{k\geq 2}(A-B+\gamma E)&=p_{k}(\alpha)-p_{k}(\overline{\beta})=p_{k}(\alpha)+(-1)^{k-1}p_{k}(\beta)\\
&=\sum_{i=1}^{\infty}(\alpha_{i})^{k}+(-1)^{k-1}\sum_{i=1}^{\infty}(\beta_{i})^{k}.
\end{align*}
\begin{theorem}\label{thomatheorem}
The set of extremal characters $\mathcal{X}^{*}(\sym_{\infty})$ can be identified (homeomorphically) with the Thoma simplex $\Omega$, and the character $\chi^{\omega}$ corresponding to a point $\omega=(\alpha,\beta) \in \Omega$ writes as
$$\chi^{\omega}(\sigma)=p_{\mu}(A-B+\gamma E)$$
if $\sigma \in \sym_{n}$ is of cycle type $\mu \in \ym_{n}$ --- since $p_{1}(A-B+\gamma E)=1$, this definition does not depend on the choice of $n$.
\end{theorem}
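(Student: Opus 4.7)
The plan is to reduce the classification of extremal characters to the Aissen--Schoenberg--Whitney--Edrei theorem on totally positive sequences. First, I would establish the \emph{multiplicativity} of any extremal character on disjoint-support permutations: if $\sigma,\tau \in \sym_\infty$ have disjoint supports, then $\chi(\sigma\tau) = \chi(\sigma)\,\chi(\tau)$. This is the standard characterization of finite type factor traces. Concretely, if $\tau_n$ denotes the conjugate of $\tau$ obtained by shifting its support far from that of $\sigma$, then $\chi(\sigma\tau_n) = \chi(\sigma\tau)$ by conjugacy-invariance, while the factor condition forces $\chi(\sigma\tau_n) \to \chi(\sigma)\chi(\tau)$. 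Consequently, $\chi$ is determined by its values on single cycles, and the rule $\varphi_\chi(p_k) := \chi(c_k)$ extends (using $p_k$'s as algebraic generators) to a unital $\C$-algebra morphism $\varphi_\chi : \Sym \to \C$. The theorem then amounts to classifying such morphisms that come from a non-negative definite $\chi$.

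Next, I would translate non-negative definiteness into positivity of $\varphi_\chi$ on the Schur basis. For each $\lambda \in \ym_n$, the central idempotent $e_\lambda = \frac{\dim\lambda}{n!}\sum_{\sigma\in\sym_n}\zeta^\lambda(\sigma^{-1})\,\sigma$ is self-adjoint in $\C[\sym_n]$, so positive-definiteness of $\chi$ forces $\chi(e_\lambda) \geq 0$; combined with the Frobenius formula (Proposition \ref{frobenius}) and its inversion $s_\lambda = \sum_\mu \frac{\zeta^\lambda(\mu)}{z_\mu}\,p_\mu$, this yields $\varphi_\chi(s_\lambda) \geq 0$ for every partition $\lambda$. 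By the Jacobi--Trudi identity $s_\lambda = \det(h_{\lambda_i-i+j})$, these inequalities are equivalent to the requirement that the sequence $(\varphi_\chi(h_k))_{k\geq 0}$, with $\varphi_\chi(h_0)=1$, be \emph{totally positive} in the sense that every minor of the infinite Toeplitz matrix $(\varphi_\chi(h_{j-i}))_{i,j\geq 0}$ is non-negative.

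The analytic heart of the argument is the Aissen--Schoenberg--Whitney--Edrei theorem: a non-negative sequence $(h_k)_{k\geq 0}$ with $h_0=1$ is totally positive if and only if its generating function admits the factorization
\[
H(z) \,=\, \E^{\gamma z}\,\prod_{i=1}^{\infty}\frac{1+\beta_i z}{1-\alpha_i z},\qquad \alpha_i,\beta_i,\gamma\geq 0,\ \sum_i\alpha_i+\sum_i\beta_i<\infty.
\]
Taking the logarithmic derivative and using $\log H(z) = \sum_{k\geq 1}\varphi_\chi(p_k)\,z^k/k$, I would read off
\[
\varphi_\chi(p_k) \,=\, \sum_{i=1}^\infty \alpha_i^k + (-1)^{k-1}\sum_{i=1}^\infty \beta_i^k \quad (k\geq 2), \qquad \varphi_\chi(p_1) = \gamma + \sum_i \alpha_i + \sum_i \beta_i,
\]
which is exactly $p_k(A-B+\gamma E)$, and the normalization $\chi(\id)=1$ gives $\varphi_\chi(p_1)=1$, i.e. the Thoma simplex condition. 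Injectivity of $\omega\mapsto\chi^\omega$ is immediate from the explicit formula on $2$-cycles, $3$-cycles, etc. (the $\alpha_i,\beta_i$ are recovered as poles of the corresponding generating series), and continuity together with compactness of $\Omega$ promotes the bijection to a homeomorphism.

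The main obstacle is without question the Aissen--Schoenberg--Whitney--Edrei theorem, whose proof requires delicate complex-analytic work on entire functions in the Laguerre--Pólya class; everything else (the multiplicativity reduction, the Jacobi--Trudi translation, the extraction of $p_k$ from $H(z)$) is algebraic or combinatorial and would be routine by comparison. A secondary difficulty is verifying that \emph{every} $\omega\in\Omega$ really gives rise to a non-negative definite character, which requires checking positive-definiteness of $\sigma\mapsto p_\mu(A-B+\gamma E)$ directly; this can be done by exhibiting an explicit model (for instance, the tensor product construction of Vershik--Kerov on $(\C^A\oplus\C^B\oplus L^2([0,1]))^{\otimes\infty}$), or by approximating by characters coming from true alphabets and passing to the limit.
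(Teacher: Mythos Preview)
The paper does not supply its own proof of this theorem: immediately after the statement it writes ``we shall assume in the following that Theorem \ref{thomatheorem} is already known, and use it as the starting point of our new results,'' citing \cite{Tho64,KV84,Oko97,GK10}. So there is nothing to compare your argument against in the paper itself.

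That said, your sketch is correct and is essentially Thoma's original route \cite{Tho64} that the paper alludes to (``E. Thoma \ldots relates this problem to the classification of totally positive sequences''): multiplicativity on disjoint supports reduces the character to a unital specialization of $\Sym$; positivity on the central idempotents $e_\lambda$ gives Schur-positivity; Jacobi--Trudi turns this into total positivity of the sequence $(\varphi_\chi(h_k))_k$; and the Aissen--Schoenberg--Whitney--Edrei/Edrei factorization yields the Thoma parameters. The paper also points out a genuinely different alternative: once multiplicativity is granted, the Kerov--Vershik approximation method (reviewed in Section~\ref{largenumbers}) recovers the parameters $(\alpha,\beta)$ as limiting row and column frequencies of the random partitions under $\proba_{n,\omega}$, bypassing the analytic Edrei theorem entirely. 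Your approach buys a self-contained algebraic/analytic classification; the Kerov--Vershik approach buys a direct probabilistic interpretation of the parameters and avoids the hard Laguerre--P\'olya input, at the cost of a more delicate ergodic/boundary argument.
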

\noindent In particular, a remarkable property of extremal characters of $\sym_{\infty}$ is that they factorize on disjoint cycles. Actually, it has been proved that these two conditions are  equivalent, see \cite{Tho64,KV84,Oko97}, and also \cite{GK10} for a proof that relates this property to the notion of exchangeability of (non-commutative) random variables. Assuming only this multiplicativity, the methods of Kerov and Vershik (which are reviewed in Section \ref{largenumbers}) allow to reprove Thoma's theorem. That said, we shall assume in the following that Theorem \ref{thomatheorem} is already known, and use it as the starting point of our new results.\bigskip
\bigskip

Another way of describing $\mathcal{X}(\sym_{\infty})$ and $\mathcal{X}^{*}(\sym_{\infty})$ is by using the notion of \textbf{coherent systems of measures} on $\ym=\bigsqcup_{n \in \N}\ym_{n}$. A function $H$ on $\ym$ is said harmonic if, for any partition $\lambda$,
$$H(\lambda)=\sum_{\lambda \nearrow \Lambda}H(\Lambda).$$
Let us suppose that $H$ is positive and that $H(\emptyset)=1$; we then set $P_{n}(\lambda)=H(\lambda)\times \dim\lambda $. In this setting, it is an easy recurrence to show that on each level $\ym_{n}$ of the Young graph $\ym$,  $P_{n}$ is a probability measure. Indeed, for any $n$,
\begin{align*}
\sum_{\Lambda \in \ym_{n+1}}P_{n+1}(\Lambda)&=\sum_{\Lambda \in \ym_{n+1}} H(\Lambda)\,\dim \Lambda=\sum_{\Lambda \in \ym_{n+1}} \sum_{\lambda\nearrow \Lambda} H(\Lambda)\,\dim\lambda\\
&=\sum_{\lambda \in \ym_{n}} H(\lambda)\,\dim \lambda=\sum_{\lambda \in \ym_{n}} P_{n}(\lambda).
\end{align*}

We call coherent system of probability measures on $\ym$ a sequence of probability measures $(P_{n})_{n \in \N}$ coming from a positive normalized harmonic function on $\ym$.
\begin{proposition}
The coherent systems of probability measures provide a bijection between $\mathcal{X}(\sym_{\infty})$ and the set of positive normalized harmonic functions on $\ym$.
\end{proposition}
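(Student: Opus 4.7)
The plan is to set up the bijection directly, in both directions, using the branching rules recalled above together with standard finite-group character theory applied to each restriction $\chi_{|\sym_{n}}$.

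Starting from a character $\chi \in \mathcal{X}(\sym_{\infty})$, I would first look at its restriction to each $\sym_{n}$. Conditions (1) and (3) say that $\chi_{|\sym_{n}}$ is a central non-negative definite function on the finite group $\sym_{n}$; standard finite-group theory then says that such a function is a non-negative combination of the (non-normalized) irreducible characters $\zeta^{\lambda}$, which after converting to normalized characters $\chi^{\lambda}=\zeta^{\lambda}/\dim\lambda$ gives a unique expansion
\[
\chi_{|\sym_{n}}\;=\;\sum_{\lambda\in\ym_{n}}P_{n}(\lambda)\,\chi^{\lambda},\qquad P_{n}(\lambda)\geq0.
\]
Evaluating at the identity and using $\chi(1)=1$ yields $\sum_{\lambda}P_{n}(\lambda)=1$, so each $P_{n}$ is a probability measure on $\ym_{n}$. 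I would then set $H(\lambda)=P_{n}(\lambda)/\dim\lambda$ for $\lambda\in\ym_{n}$, which is positive and satisfies $H(\emptyset)=1$.

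The key step is to show $H$ is harmonic. This comes from the compatibility of the restrictions: $\chi_{|\sym_{n}}=(\chi_{|\sym_{n+1}})_{|\sym_{n}}$. Using the second branching rule recalled above, the restriction of the non-normalized character $\zeta^{\Lambda}$ decomposes as $\sum_{\lambda\nearrow\Lambda}\zeta^{\lambda}$, so that
\[
(\chi^{\Lambda})_{|\sym_{n}}\;=\;\frac{1}{\dim\Lambda}\sum_{\lambda\nearrow\Lambda}\dim\lambda\;\chi^{\lambda}.
\]
Plugging this into $(\chi_{|\sym_{n+1}})_{|\sym_{n}}$ and collecting the coefficient of each $\chi^{\lambda}$, one finds
\[
P_{n}(\lambda)\;=\;\dim\lambda\sum_{\Lambda:\,\lambda\nearrow\Lambda}\frac{P_{n+1}(\Lambda)}{\dim\Lambda},
\]
which after dividing by $\dim\lambda$ is precisely the harmonicity relation $H(\lambda)=\sum_{\lambda\nearrow\Lambda}H(\Lambda)$. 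Linear independence of the $\chi^{\lambda}$ in the space of central functions on $\sym_{n}$ ensures the coefficients are forced, so the correspondence $\chi\mapsto H$ is well defined and injective.

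For the converse, given a positive normalized harmonic function $H$ on $\ym$, I would set $P_{n}(\lambda)=H(\lambda)\dim\lambda$ and define the central function
\[
\chi_{n}\;=\;\sum_{\lambda\in\ym_{n}}P_{n}(\lambda)\,\chi^{\lambda}
\]
on each $\sym_{n}$. The same calculation as above, read backwards, shows that harmonicity of $H$ forces $(\chi_{n+1})_{|\sym_{n}}=\chi_{n}$, so the sequence $(\chi_{n})_{n\in\N}$ glues to a well-defined function $\chi$ on $\sym_{\infty}$. Centrality of $\chi$ is inherited from that of each $\chi_{n}$; normalization $\chi(1)=1$ is the identity $\sum_{\lambda}P_{n}(\lambda)=1$, already established by the induction at the start of this subsection; and non-negative definiteness follows because every irreducible character of a finite group is non-negative definite and a non-negative combination of such is again non-negative definite. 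The two constructions are manifestly inverse to each other, finishing the bijection.

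I do not anticipate any real obstacle: the proof is essentially a bookkeeping of the branching rules, and the most delicate point is simply keeping track of the normalization factors $\dim\lambda$ and $\dim\Lambda$ when converting between $\zeta^{\lambda}$ and $\chi^{\lambda}$, together with the standard fact that for a finite group positive-definiteness of a central function is equivalent to non-negativity of all its coefficients in the basis of (normalized) irreducible characters.
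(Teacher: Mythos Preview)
Your proof is correct and follows essentially the same approach as the paper: restrict $\chi$ to each $\sym_{n}$, expand in normalized irreducible characters, and use the compatibility $(\chi_{|\sym_{n+1}})_{|\sym_{n}}=\chi_{|\sym_{n}}$ together with the branching rules to obtain harmonicity, then reverse the construction. The paper's version is more terse and leaves the explicit branching-rule computation and the verification of positive-definiteness implicit, whereas you have spelled these out carefully.
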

\begin{proof}
If $\chi$ is character of $\sym_{\infty}$, its restrictions $\chi_{n}=\chi_{| \sym_{n}}$ are non-negative normalized characters of the symmetric groups, so they are positive linear combinations of the normalized irreducible characters $\chi^{\lambda}$:
$$\forall n,\,\,\,\chi_{|\sym_{n}}=\sum_{\lambda \in \ym_{n}} P_{n,\chi}(\lambda)\,\chi^{\lambda}.$$
Since $(\chi_{|\sym_{n+1}})_{|\sym_{n}}=\chi_{|\sym_{n}}$, the $P_{n,\chi}$'s form a coherent system. Conversely, given a coherent system of probability measures, one can define on each group $\sym_{n}$ a non-negative character by using the formula written above. The coherence of the system implies that these definitions are compatible, whence a well-defined character of $\sym_{\infty}$.
\end{proof}\bigskip

Let us denote by $(\proba_{n,\omega})_{n\in \N}$ the coherent system associated to a point $\omega$ in the Thoma simplex. For any permutation $\sigma \in \sym_{n}\subset \sym_{\infty}$ of cycle type $\mu \in \ym_{n}$,
\begin{align*}
\sum_{\lambda \in \ym_{n}}\proba_{n,\omega}[\lambda]\,\,\chi^{\lambda}(\mu)&=\chi^{\omega}_{|\sym_{n}}(\sigma)=\chi^{\omega}(\sigma)=p_{\mu}(A-B+\gamma E)\\
&=_{(\text{Frobenius formula})} \sum_{\lambda \in \ym_{n}} s_{\lambda}(A-B+\gamma E)\,\zeta^{\lambda}(\mu).
\end{align*}
Since this is true for any permutation, the coherent system of probability measures on $\ym$ associated to $\omega$ is therefore given by the specialization $A-B+\gamma E$ of the Schur functions:
\begin{proposition}\label{csspezialschur}
$\proba_{n,\omega}[\lambda]=(\dim \lambda)\,s_{\lambda}(A-B+\gamma E)$.
\end{proposition}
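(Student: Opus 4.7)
The statement is essentially a direct consequence of the computation just performed above it, so my plan is to push that computation one step further by inverting the character table.

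First, I would recall the relation between the normalized and non-normalized irreducible characters: $\chi^{\lambda}(\mu) = \zeta^{\lambda}(\mu)/\dim \lambda$. Substituting this into the identity already established in the paragraph preceding the proposition gives, for every $\mu \in \ym_{n}$,
$$\sum_{\lambda \in \ym_{n}} \frac{\proba_{n,\omega}[\lambda]}{\dim \lambda}\,\zeta^{\lambda}(\mu) = \sum_{\lambda \in \ym_{n}} s_{\lambda}(A - B + \gamma E)\,\zeta^{\lambda}(\mu),$$
where the right-hand side comes from the Frobenius formula (Proposition \ref{frobenius}) applied to $p_{\mu}(A-B+\gamma E) = \chi^{\omega}(\sigma)$ for $\sigma$ of cycle type $\mu$.

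Next, I would invoke linear independence to equate coefficients. The matrix $(\zeta^{\lambda}(\mu))_{\lambda,\mu \in \ym_{n}}$ is the character table of $\sym_{n}$, which is square and invertible (the irreducible characters $\zeta^{\lambda}$ form a basis of the space of class functions on $\sym_{n}$). Since the two linear combinations of the columns $\zeta^{\bullet}(\mu)$ agree for every $\mu$, their coefficients on each $\lambda$ must coincide, yielding
$$\frac{\proba_{n,\omega}[\lambda]}{\dim \lambda} = s_{\lambda}(A - B + \gamma E),$$
which is the claimed formula after multiplying by $\dim \lambda$.

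There is no real obstacle: the only non-trivial ingredients are Thoma's theorem (already assumed), the Frobenius formula (already stated), and the invertibility of the character table (standard). If I wanted to phrase the last step without matrix inversion, I could equivalently observe that both sides give the coefficients of the restricted character $\chi^{\omega}_{|\sym_{n}}$ in the basis $(\chi^{\lambda})_{\lambda \in \ym_{n}}$, which are unique by the definition of $\proba_{n,\omega}$. I would probably present the argument in exactly this form to emphasize that the proposition is really just a reformulation of Thoma's factorization formula via the change of basis from power sums to Schur functions.
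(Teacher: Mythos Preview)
Your proof is correct and follows essentially the same approach as the paper: the paper establishes the identity $\sum_{\lambda}\proba_{n,\omega}[\lambda]\,\chi^{\lambda}(\mu)=\sum_{\lambda} s_{\lambda}(A-B+\gamma E)\,\zeta^{\lambda}(\mu)$ for all $\mu$ via Thoma's theorem and the Frobenius formula, then says ``since this is true for any permutation'' to conclude. You are simply making that last step explicit by invoking the invertibility of the character table (equivalently, the linear independence of the $\chi^{\lambda}$), which is exactly the content of the paper's one-line justification.
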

\begin{example}
Take $\omega=\mathbf{0}=((0,0,\ldots),(0,0,\ldots))$. Then, $\chi^{\mathbf{0}}$ is the regular trace $\tau$ of the infinite symmetric group:
$$\tau(\omega)=\begin{cases}
1 &\text{if }\omega=\id_{\N},\\
0&\text{otherwise}.
\end{cases}$$
The corresponding measures are the so-called \textbf{Plancherel measures} of the symmetric groups:
$$\proba_{n}[\lambda]=\proba_{n,\mathbf{0}}[\lambda]=(\dim\lambda)\,s_{\lambda}(E)=\frac{(\dim \lambda)^{2}}{n!}.$$
The asymptotics of these measures have been studied in \cite{LS77,KV77,Ker93,IO02}. It turns out that this point of the Thoma simplex will be the only one for which our methods won't give a sufficiently precise result (more on this at the end of the paper).
\end{example}\bigskip

Proposition \ref{csspezialschur} is very important from a combinatorial point of view: it enables us to relate the coherent systems of probability measures coming from characters of $\sym_{\infty}$ to very general models of random permutations. The goal of the next section is to explain this link.
\bigskip

\section{Quasi-symmetric functions and a link with random permutations}\label{randompermutation}
In this section, we shall explain how the coherent systems of measures on $\ym$ are related to models of random permutations that can be defined by using generalized riffle shuffles in the sense of \cite{Stan01,Ful02}.

\subsection{The Hopf algebras $\QSym$ and $\FQSym$}\label{fqsym}
To begin with, we have to present two new combinatorial Hopf algebras, namely, $\QSym$ and $\FQSym$. In the following, $X=\{x_{1},x_{2},\ldots\}$ is an infinite ordered alphabet, but this time with non-commuting letters. The algebra $\C\langle X\rangle$ generated by these noncommutative variables has for linear basis the words $w=x_{i_{1}}x_{i_{2}}\ldots x_{i_{r}}$. The algebra of \textbf{free quasi-symmetric functions}, first introduced in \cite{MR95} and redefined in \cite{NCSF6}, is a subalgebra of $\C\langle X\rangle$ that happens to have a natural structure of Hopf algebra and to be related to numerous other combinatorial Hopf algebras. It is defined by using the \textbf{standardization} of words. Given a word $w=x_{i_{1}}x_{i_{2}}\ldots x_{i_{r}}$, let us denote by $\{j_{1} <j_{2}<\cdots <j_{s}\}$ the set of distinct indices $i$ appearing in $w$, and by $m_{1},\ldots,m_{s}$ their multiplicities. In $w$, we replace from left to right the occurrences of $x_{j_{1}}$ by $1,2,\ldots, m_{1}$; then, we replace from left to right the occurrences of $x_{j_{2}}$ by $m_{1}+1,m_{1}+2,\ldots,m_{1}+m_{2}$; \emph{etc.} We thus obtain a permutation of size $n$, called the standardization of the word $w$.
\begin{example}
If $w=x_{4}x_{3}x_{1}x_{5}x_{1}x_{1}x_{3}x_{4}$, then $\mathrm{Std}(w)=64182357$.
\end{example}\bigskip

Now, for any permutation $\sigma \in \sym_{n}$, we define elements $F_{\sigma}$ and $G_{\sigma}$ in $\C\langle X \rangle$ by
$$G_{\sigma}=\sum_{w\,\,|\,\,\mathrm{Std}(w)=\sigma}w\qquad;\qquad F_{\sigma}=G_{\sigma^{-1}}.$$
It turns out that the $F_{\sigma}$'s with $\sigma \in \sym=\bigsqcup_{n \in\N}\sym_{n}$ generate linearly a subalgebra of $\C\langle X \rangle$. For two permutations $\sigma=\sigma(1)\cdots \sigma(p)$ and $\tau=\tau(1)\cdots \tau(q)$, we define their shifted \textbf{shuffle product} $\sigma \cshuffle \tau$ as the set of permutations $\upsilon$ of size $p+q$ such that $\sigma(1),\sigma(2),\ldots, \sigma(p)$ appear in this order in the word of $\upsilon$, and $\tau(1)+p,\tau(2)+p,\ldots,\tau(q)+p$ also appear in this order in the word of $\upsilon$.
\begin{example}
If $\sigma=21$ and $\tau=231$, then 
\begin{align*}
\sigma \cshuffle \tau&= 21 \shuffle 453\\
&\!\!\!=\{21453,24153,24513,24531,42153,42513,42531,45213,45231,45321\}.
\end{align*}

\end{example}
\noindent For all permutations $\sigma,\tau \in \sym$, by sorting the words appearing in the product $F_{\sigma}F_{\tau}$ according to their standardization, one sees readily that
$F_{\sigma}F_{\tau}=\sum_{\upsilon \in \sigma\cshuffle\tau}F_{\upsilon}.$\bigskip

Let us denote by $\FQSym$ the algebra generated by the $F_{\sigma}$'s. The ordered sum of ordered alphabets $X+Y$ defines a coproduct on $\FQSym$: $\Delta f(X,Y)=f(X+Y)$. Endowed with this new operation, $\FQSym$ becomes an Hopf algebra (see \cite[\S3]{NCSF6} and \cite{AS05}) which is neither commutative nor cocommutative. Moreover, $\FQSym$ is self-dual with respect to the pairing
$$\scal{F_{\sigma}}{G_{\tau}}=\delta_{\sigma\tau},$$
and in the basis $(F_{\sigma})_{\sigma \in \sym}$, the coproduct of $\FQSym$ can be written in terms of deconcatenation and standardization. More precisely,
$$\Delta F_{\sigma}=\sum_{\sigma=\tau\cdot \nu}F_{\mathrm{Std}(\tau)}\otimes F_{\mathrm{Std}(\nu)}.$$
\begin{example}
If $\sigma$ is in $\sym_{n}$, one will always obtain $n+1$ terms in $\Delta(F_{\sigma})$. For instance,
$$\Delta(F_{4132})=1\otimes F_{4132}+F_{1}\otimes F_{132}+F_{21}\otimes F_{21}+F_{312}\otimes F_{1}+F_{4132}\otimes 1.$$
\end{example}
\bigskip
\bigskip

If $\sigma\in \sym_{n}$, its \textbf{descents} are the $i$'s in $\lle 1,n-1\rre$ such that $\sigma(i)>\sigma(i+1)$, and its \textbf{recoils} are the descents of $\sigma^{-1}$. For instance, if $\sigma=64182357$, then 
$$D(\sigma)=\{1,2,4\}\qquad;\qquad R(\sigma)=\{3,5,7\}.$$
To a subset $I \subset \lle 1,n-1\rre$, one can associate bijectively a \textbf{composition} of size $n$, that is to say a sequence of positive integers $c=(c_{1},c_{2},\ldots,c_{r})$ such that $c_{1}+\cdots+c_{r}=n$. Indeed, such a sequence is entirely determined by the integers
$$c_{1}<c_{1}+c_{2}<\cdots<c_{1}+\cdots+c_{r-1},$$
and these integers form an arbitrary subset of $\lle 1,n-1\rre$. We will denote by $c(\sigma)$ the composition associated to the descent set of a permutation $\sigma$. It is easy to compute $c(\sigma)$ by writing down the \textbf{ribbon} of $\sigma$, which is the unique ribbon tableau with reading word $\sigma$ and increasing rows and columns. 
\begin{example}
The ribbon of $64182357$ is 
$$\young(6,4,18,:2357)\,.$$
\end{example}
\noindent Then, the parts of $c(\sigma)$ are the lengths of the rows of the ribbon of $\sigma$. In the following, for any integer $n$, we will denote $\comp_{n}$ the set of compositions of size $n$, and if $c\in \comp_{n}$, we will denote $D(c)$ the associated subset of $\lle 1,n-1\rre$ --- it is called the descent set of $c$.
\bigskip
\bigskip

The algebra of \textbf{quasi-symmetric functions} $\QSym$ (see for instance \cite{Ges84} and \cite[\S7.19]{Stan91}) is obtained from $\FQSym$ by making the variables $x_{i}$ commutative. If $X$ is now a commutative alphabet, then it is not hard to convince oneself that $G_{\sigma}$ and $G_{\tau}$ become equal if and only if $\sigma$ and $\tau$ have the same recoils. More precisely, under the morphism $\C\langle X \rangle \to \C[X]$, a free quasi-symmetric function $F_{\sigma}$ becomes $L_{c(\sigma)}$, where for any composition $c \in \comp_{n}$, $L_{c}$ is defined as
$$L_{c}=\sum_{\substack{i_{1}\leq i_{2}\leq \cdots \leq i_{n}\\ i_{j}<i_{j+1}\text{ if }j \in D(c)}} x_{i_{1}}x_{i_{2}}\cdots x_{i_{n}}.$$
A formal power series $f$ in commutative variables $x_{1},x_{2},\ldots$ belongs to the vector space generated by the $L_{c}$'s, $c \in \comp=\bigsqcup_{n \in \N}\comp_{n}$ if and only if it is quasi-symmetric, meaning that for any $i_{1}< i_{2}< \cdots < i_{r}$ and $j_{1}< j_{2}< \cdots < j_{r}$, and any $(a_{1},\ldots,a_{r})\in \N^{r}$,
$$[x_{i_{1}}^{a_{1}}x_{i_{2}}^{a_{2}}\cdots x_{i_{r}}^{a_{r}}](f)=[x_{j_{1}}^{a_{1}}x_{j_{2}}^{a_{2}}\cdots x_{j_{r}}^{a_{r}}](f).$$
The reader should compare this definition of quasi-symmetric functions with the definition of symmetric functions given on page \pageref{symfunc}. The space $\QSym$ of quasi-symmetric functions is stable by product, and in fact, it is again an Hopf algebra; it contains $\Sym$ as a Hopf subalgebra, and it is the quotient of $\FQSym$ by the congruences 
$$F_{\sigma}\equiv F_{\tau} \quad\iff \quad c(\sigma)=c(\tau).$$
The product of two fundamental quasi-symmetric functions $L_{c}$ and $L_{d}$ is not in general multiplicity-free as in $\FQSym$\footnote{This is one the main advantage of doing computations in $\FQSym$.}; however, the formula for a coproduct $\Delta(L_{c})$ remains essentially the same as the one given in $\FQSym$ for $\Delta(F_{\sigma})$. More precisely, for $c\in \comp_{n}$ and $i,j \in \lle 0,n\rre$, let us denote $c_{\lle i+1,j\rre}$ the composition of size $j-i$ that satisfies:
$$D(c_{\lle i+1,j\rre})=\big(D(c)\cap \lle i+1,j-1\rre \big)-i.$$
In terms of ribbons, it amounts to cut the ribbon of $c$ before the box $i+1$ and after the box $j$.
\begin{example}
If $c=(1,1,2,4)$ and $\lle i+1,j\rre=\lle 3,7\rre$, then the ribbon of $c$ is $$\young(~,~,~~,:~~~~)$$
and it is then easy to see that $c_{\lle 3,7\rre}=(2,3)$.
\end{example}
\noindent The translation in $\QSym$ of the formula given previously for a coproduct $\Delta(F_{\sigma})$ is quite obviously:
$$\Delta(L_{c})=\sum_{i=0}^{n} L_{c_{\lle 1,i\rre}}\otimes L_{c_{\lle i+1,n\rre}}.$$
\begin{example}
$$\Delta(L_{1,2,1})=L_{\emptyset}\otimes L_{1,2,1}+L_{1}\otimes L_{2,1}+L_{1,1}\otimes L_{1,1}+L_{1,2}\otimes L_{1}+L_{1,2,1}\otimes L_{\emptyset}.$$
\end{example}
\bigskip

We shall also need the formula for the antipode of $\QSym$. If $c=(c_{1},\ldots,c_{r})$ is a composition, let us denote by $c^{*}$ the composition whose descent set is $(c_{r},c_{r}+c_{r-1},\ldots,c_{r}+\cdots+c_{2})$; this is the reverse composition of $c$. Then, we define the conjugate $\overline{c}$ of $c$ by $D(\overline{c})=\lle 1,n-1\rre \setminus D(c^{*})$. The operation $c \mapsto \overline{c}$ may seem complicate, but in terms of ribbons, it amounts simply to symmetrize $c$ with respect to the first diagonal. 
\begin{example}
The conjugate of $c=(1,1,2,4)$ has for ribbon
$$\young(~,~,~,~~,:~~~)$$
so $\overline{c}=(1,1,1,2,3)$.
\end{example}
\noindent Then, it can be shown that the antipode of $\QSym$ is up to sign equal to $\nu$, where $\nu(L_{c})=L_{\overline{c}}$ for any composition. In $\FQSym$, the antipode of is much more difficult to compute; see \cite[\S5]{AS05}. Fortunately, we shall only need the following property:
 $$\nu(F_{12\ldots n})=F_{n\ldots 21}$$
 which is an immediate consequence\footnote{This partial rule is compatible with the following identity in $\QSym$: for any permutation $\sigma \in \sym_{n}$, $\nu(L_{c(\sigma)})=L_{c(\sigma\omega_{0})}$, where $\omega_{0}$ is the permutation of maximal length in $\sym_{n}$.} of \cite[Theorem 5.4]{AS05}.
\bigskip
\bigskip

To conclude this paragraph, let us explain how the bases of $\Sym$, $\QSym$ and $\FQSym$ are related. If $\sigma \in \sym_{n}$ is a permutation, the Robinson-Schensted-Knuth (RSK) algorithm (\cite{Rob38,Sch61}) provides a pair of standard tableaux $(P(\sigma),Q(\sigma))$ of same shape $\Lambda(\sigma) \in \ym_{n}$, and it is actually a bijection between $\sym_{n}$ and $\bigsqcup_{\lambda \in \ym_{n}}\mathrm{ST}(\lambda)^{2}$. Given a permutation\footnote{Or more generally, a word $w$; then, $P(w)$ is a \emph{semi-standard} tableau whose entries are the letters of $w$.} $\sigma=w_{1}w_{2}\cdots w_{n}$, one constructs the first tableau $P(\sigma)$ as follows:\vspace{2mm}
\begin{enumerate}
\item One reads the word from left to right, and one inserts the letters successively in the standard tableau $P$, starting from $P=\emptyset$.\vspace{2mm}
\item When inserting the letter $a$, one compare $a$ to the entries of the first row. If $a$ is greater than all the entries of the first row, one adds $a$ at the end of the row. Otherwise, if $b$ is the first entry strictly greater than $a$, one replaces $b$ by $a$ in the first row, and one inserts $b$ in the next row (with the same rules).\vspace{2mm}
\end{enumerate}
The tableau $Q(\sigma)$ is constructed by recording the order of creation of the boxes of $P$. The correspondence $\sigma \mapsto (P(\sigma),Q(\sigma))$ has the following additional properties (\cite{Ful97}):\vspace{2mm}
\begin{enumerate}
\item For any permutation $\sigma$, $P(\sigma)=Q(\sigma^{-1})$.\vspace{2mm}
\item The parts of the common shape $\Lambda(\sigma)$ of $P(\sigma)$ and $Q(\sigma)$ are related to the so-called \textbf{Greene invariants} of the permutation (see \emph{e.g.} \cite{LLT02}). Hence, for any $i \leq \ell(\Lambda(\sigma))$,
$$\Lambda_{1}+\Lambda_{2}+\cdots+\Lambda_{i}=\max\{\ell(iw_{1})+\ell(iw_{2})+\cdots+\ell(iw_{i})\}$$
where the maximum is taken on the set of $i$-tuples of disjoint increasing subwords of $\sigma$. In particular, $\Lambda_{1}$ is the length $\ell(\sigma)$ of a longest increasing subword of $\sigma$. One has the same result if one replaces $\Lambda(\sigma)$ by its conjugate and the longest increasing subwords by the longest decreasing subwords.\vspace{2mm}
\item A descent of a standard tableau $T$ is an index $i$ such that $i+1$ appears in a row strictly above the one that contains $i$. This notion allows to associate to $T$ a subset of $\lle 1,n-1\rre$, and therefore a composition $c(T)$. Then, for any permutation $\sigma$, $c(\sigma)=c(Q(\sigma))$.\vspace{2mm}
\end{enumerate}
\begin{example}
If $\sigma=64182357$, the two corresponding tableaux are
$$\young(6,48,12357)\quad\text{and}\quad\young(3,25,14678)\,.$$
The length of the first part is $5$, and it is indeed the length of a longest increasing subword, here, $12357$. The descents of $Q(\sigma)$ are $\{1,2,4\}$, and they are indeed the descents of $\sigma$.
\end{example}\bigskip

Let us denote $\pi: \FQSym \to \QSym$  the quotient map coming from the morphism $\C\langle X\rangle \to \C[X]$. For any $n$, notice that in $\FQSym$,
$$(F_{1})^{n}=\sum_{\sigma \in \sym_{n}}F_{\sigma}.$$
We decompose this sum in blocks according to the shape $\Lambda(\sigma)$ of the two standard tableaux corresponding to $\sigma$:
$$(F_{1})^{n}=\sum_{\lambda \in \ym_{n}} \left(\sum_{\sigma \,\,|\,\,\Lambda(\sigma)=\lambda}F_{\sigma}\right).$$
If one applies $\pi$ to the left-hand side of this expansion, one obtains $(L_{(1)})^{n}$. On the other hand, if one applies $\pi$ to a block $B_{\lambda}$ in the right hand side, one gets 
$$\pi(B_{\lambda})=\sum_{P,Q \in \mathrm{ST}(\lambda)}L_{c(Q)}=(\dim \lambda)\,\sum_{Q \in \mathrm{ST}(\lambda)}L_{c(Q)}.$$
But in fact, after applying $\pi$, both members of the equation are symmetric functions (not just \emph{quasi-symmetric}), and the decomposition is simply the Frobenius formula for $(p_{1})^{n}$:
$$(p_{1})^{n}=\sum_{\lambda \in \ym_{n}}(\dim \lambda)\,s_{\lambda}.$$
In particular, $s_{\lambda}=\sum_{Q \in \mathrm{ST}(\lambda)}L_{c(Q)}$, see \cite[p. 361]{Stan91}. This simple fact allows to lift the coherent systems of measures to systems of probability measures on the symmetric groups $\sym_{n}$.

\subsection{Coherent systems as push-forwards of measures on the symmetric groups}\label{pushforward} 
To do this, we will lift the specializations $A-B+\gamma E$ to specializations of $\FQSym$. First, remark that for any symmetric function $f$, one can write:
$$f(A-B+\gamma E)= \Delta f(A-B,\gamma E)=[(\id \otimes \nu \otimes \id)\circ (\Delta \otimes \id)\circ \Delta]f(A,\overline{B},\gamma E).$$
Suppose that the specializations $A$, $\overline{B}$ and $\gamma E$ are lifted up to $\QSym$; this means that one has morphisms of algebras $A : \QSym \to \C$, $B : \QSym \to \C$ and $\gamma E : \QSym \to \C$, whose restrictions to $\Sym \subset \QSym$ are those defined in Theorem \ref{thomatheorem}. Then, since $\nu$ and $\Delta$ are morphisms of the commutative algebra $\QSym$, the formula above defines a specialization of $\QSym$ that is compatible with the specialization $A-B+\gamma E$ of $\Sym$. Moreover, by composition by the quotient map $\pi: \FQSym \to \QSym$, one gets a specialization of $\FQSym$. Hence, it is sufficient to define $f(A)$, $f(\overline{B})$ and $f(\gamma E)$ for any quasi-symmetric function $f$.\bigskip

For the two first, exactly as in the case of $\Sym$, it amounts to replace the variables $X=\{x_{1},x_{2},\ldots\}$ by either $A=\{\alpha_{1},\alpha_{2},\ldots,\}$ or $\overline{B}=\{\beta_{1},\beta_{2},\ldots\}$. Notice that since $\sum_{i=1}^{\infty}\alpha_{i}+\sum_{i=1}^{\infty}\beta_{i}\leq 1$, for any $f \in \QSym$, the series $f(A)$ and $f(\overline{B})$ converge absolutely. As for the exponential alphabet, in $\FQSym$ or $\QSym$, it is defined by the rule:
$$F_{\sigma}(\gamma E)=L_{c(\sigma)}(\gamma E)=\frac{\gamma^{n}}{n!}.$$
Since a product $F_{\sigma}F_{\tau}$ involves $\binom{|\sigma|+|\tau|}{|\sigma|,|\tau|}$ terms, this rule gives indeed a morphism of algebras. So, the specialization $A-B+\gamma E$ is well-defined on $\QSym$ and $\FQSym$. Moreover, notice that for any permutation $\sigma$, $F_{\sigma}(A)$, $F_{\sigma}(\overline{B})$ and $F_{\sigma}(\gamma E)$ are non-negative. Since $\Delta$ and $\nu$ are positive operators on $\QSym$ with respect to the basis $(L_{c})_{c \in \comp}$, 
$$\forall \omega=(\alpha,\beta)\in \Omega,\,\,\,\forall \sigma \in \sym,\,\,\,F_{\sigma}(A-B+\gamma E)>0.$$
On the other hand, since $F_{1}(A-B+\gamma E)=p_{1}(A-B+\gamma E)=1$,
$$\sum_{\sigma \in \sym_{n}}G_{\sigma}(A-B+\gamma E)=\sum_{\sigma \in \sym_{n}}F_{\sigma}(A-B+\gamma E)=(F_{1}(A-B+\gamma E))^{n}=1.$$
Thus, by using the Hopf algebras $\QSym$ and $\FQSym$, for any point $\omega$ of the Thoma simplex, one can define a (positive) probability measure $\qproba_{n,\omega}$ on the symmetric group $\sym_{n}$ corresponding \emph{via} RSK to the coherent measure $\proba_{n,\omega}$:
\begin{proposition}\label{frompermtopar}
For any $\omega \in \Omega$, and any permutation $\sigma \in \sym$, set $\qproba_{n,\omega}[\sigma]=G_{\sigma}(A-B+\gamma E)$.\vspace{2mm}
\begin{enumerate}
\item For any $n$, $\qproba_{n,\omega}$ is a probability measure on $\sym_{n}$, and for any partition $\lambda$,
$$\proba_{n,\omega}[\lambda]=\Lambda_{*}\qproba_{n,\omega}[\lambda]=\sum_{\sigma\,\,|\,\,\Lambda(\sigma)=\lambda}\qproba_{n,\omega}[\sigma].$$\vspace{2mm}
\item If $c$ is the composition of size $n$ encoding the recoils of $\sigma$, then
$$\qproba_{n,\omega}[\sigma]=\sum_{i+j+k=n}\frac{\gamma^{k}}{k!}\,L_{c_{\lle 1,i\rre}}(\alpha_{1},\alpha_{2},\ldots)\,L_{\overline{c_{\lle i+1,i+j\rre}}}(\beta_{1},\beta_{2},\ldots).$$\vspace{2mm}
\end{enumerate}
\end{proposition}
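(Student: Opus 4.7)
The plan is to reduce everything to a computation in $\QSym$ by means of the quotient map $\pi\colon \FQSym \to \QSym$. The paragraphs preceding the statement show that the specialization $A-B+\gamma E$ of $\FQSym$ factors through $\pi$; since $G_{\sigma} = F_{\sigma^{-1}}$, we have $\pi(G_{\sigma}) = L_{c(\sigma^{-1})} = L_{c}$, where $c$ is by definition the composition encoding the recoils of $\sigma$. Thus $\qproba_{n,\omega}[\sigma] = L_{c}(A-B+\gamma E)$, and only the combinatorics of the fundamental basis $(L_{c})$ of $\QSym$ will be needed.

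For assertion (1), positivity of the weights is the inequality $F_{\sigma}(A-B+\gamma E) > 0$ displayed just before the statement, and normalization is the identity
$$\sum_{\sigma \in \sym_{n}} G_{\sigma}(A-B+\gamma E) \;=\; (F_{1}(A-B+\gamma E))^{n} \;=\; (p_{1}(A-B+\gamma E))^{n} \;=\; 1$$
also already established. To identify the pushforward, I would use the involution $\sigma \leftrightarrow \sigma^{-1}$, which by RSK exchanges $P(\sigma)$ and $Q(\sigma)$ and so preserves the common shape $\Lambda(\sigma)$; hence
$$\sum_{\sigma\,:\,\Lambda(\sigma)=\lambda} G_{\sigma} \;=\; \sum_{\tau\,:\,\Lambda(\tau)=\lambda} F_{\tau}.$$
Applying $\pi$ and using $\pi(F_{\tau}) = L_{c(\tau)} = L_{c(Q(\tau))}$ transforms the right-hand side into
$$\sum_{P,Q \in \mathrm{ST}(\lambda)} L_{c(Q)} \;=\; (\dim\lambda)\,\sum_{Q \in \mathrm{ST}(\lambda)} L_{c(Q)} \;=\; (\dim\lambda)\,s_{\lambda}$$
via the identity $s_{\lambda} = \sum_{Q} L_{c(Q)}$ recalled at the end of \S\ref{fqsym}. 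Specializing at $A-B+\gamma E$ and invoking Proposition \ref{csspezialschur} then yields $(\dim\lambda)\,s_{\lambda}(A-B+\gamma E) = \proba_{n,\omega}[\lambda]$, as required.

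For assertion (2), I would unfold the recipe
$$f(A-B+\gamma E) \;=\; \bigl[(\id \otimes \nu \otimes \id) \circ (\Delta \otimes \id) \circ \Delta\bigr](f)\big|_{(A,\,\overline{B},\,\gamma E)}$$
on $f = L_{c}$. Iterating the coproduct formula $\Delta(L_{d}) = \sum_{\ell=0}^{|d|} L_{d_{\lle 1,\ell\rre}} \otimes L_{d_{\lle \ell+1,|d|\rre}}$ twice, and using the transitivity $(c_{\lle 1,i\rre})_{\lle j+1,i\rre} = c_{\lle j+1,i\rre}$ of the sub-ribbon operation (immediate from the definition of the descent sets $D(c_{\lle \cdot,\cdot\rre})$), one obtains
$$(\Delta \otimes \id) \circ \Delta(L_{c}) \;=\; \sum_{0 \le j \le i \le n} L_{c_{\lle 1,j\rre}} \otimes L_{c_{\lle j+1,i\rre}} \otimes L_{c_{\lle i+1,n\rre}}.$$
Since $\nu(L_{d}) = L_{\overline{d}}$ and $L_{d}(\gamma E) = \gamma^{|d|}/|d|!$, and since the paper's conventions set $\overline{B} = \{\beta_{1},\beta_{2},\ldots\}$ (because $B = \overline{\beta} = \{-\beta_{1},-\beta_{2},\ldots\}$), the triple tensor above specializes to $L_{c_{\lle 1,j\rre}}(\alpha)\,L_{\overline{c_{\lle j+1,i\rre}}}(\beta)\,\gamma^{n-i}/(n-i)!$. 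Relabeling $(j,\,i-j,\,n-i)$ as $(i,j,k)$ with $i+j+k = n$ reproduces the formula of the statement verbatim.

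Nothing in this argument is genuinely hard; it is essentially bookkeeping, and the only aspects requiring attention are (i) verifying that the specialization of $\FQSym$ really does factor through $\pi$, so that the substitution $G_{\sigma} \rightsquigarrow L_{c}$ is legitimate, and (ii) tracking the sign/conjugation conventions so that the interplay between $\nu$ on the middle tensor factor and the evaluation at $\overline{B}$ collapses into the single positive-variable evaluation $L_{\overline{d}}(\beta_{1},\beta_{2},\ldots)$ appearing in the statement.
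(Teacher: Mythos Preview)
Your proof is correct and follows essentially the same route as the paper's: reduce to $\QSym$ via $G_{\sigma}\mapsto L_{c(\sigma^{-1})}$, use RSK and the identity $s_{\lambda}=\sum_{Q}L_{c(Q)}$ for the pushforward, and unfold $(\id\otimes\nu\otimes\id)\circ(\Delta\otimes\id)\circ\Delta$ on $L_{c}$ for the explicit formula. The only cosmetic difference is that you pass through the change of variable $\tau=\sigma^{-1}$ explicitly before applying $\pi$, whereas the paper writes $L_{c(\sigma^{-1})}$ directly and indexes the resulting sum by $L_{c(P)}$ rather than $L_{c(Q)}$.
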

\begin{proof}
We have seen just before that $\qproba_{n,\omega}[\sigma]=G_{\sigma}(A-B+\gamma E)$ defines a probability measure on $\sym_{n}$. Now, for any partition $\lambda \in \ym_{n}$, we can write:
\begin{align*}
\sum_{\Lambda(\sigma)=\lambda}\qproba_{n,\omega}[\sigma]&=\sum_{\Lambda(\sigma)=\lambda} L_{c(\sigma^{-1})}(A-B+\gamma E)=\sum_{P,Q \in \mathrm{ST}(\lambda)}L_{c(P)}(A-B+\gamma E)\\
&=(\dim \lambda)\,\,s_{\lambda}(A-B+\gamma E)=\proba_{n,\omega}[\lambda].
\end{align*}
Finally, the specialization $A-B+\gamma E$ of $\FQSym$ factors through the quotient map $\pi : \FQSym \to \QSym$, and in this latter algebra, it is not difficult to compute it:
\begin{align*}
\qproba_{n,\omega}[\sigma]&=G_{\sigma}(A-B+\gamma E)=L_{c(\sigma^{-1})}(A-B+\gamma E)\\
&=[(\id \otimes \nu \otimes \id)\circ (\Delta \otimes \id)\circ \Delta]L_{c}(A,\overline{B},\gamma E)\\
&=\sum_{i+j+k=n} L_{c_{\lle 1,i\rre}}(A)\,(\nu L_{c_{\lle i+1,i+j\rre}})(\overline{B})\,L_{c_{\lle i+j+1,n\rre}}(\gamma E)\\
&=\sum_{i+j+k=n}L_{c_{\lle 1,i\rre}}(\alpha)\,L_{\overline{c_{\lle i+1,i+j\rre}}}(\beta)\,\frac{\gamma^{k}}{k!}.
\end{align*}
\end{proof}
\begin{example}
If $\omega=((0,0,\ldots),(0,0,\ldots))$, then $\gamma =1$ and $\qproba_{n,\omega}[\sigma]=\frac{1}{n!}$ for any permutation $\sigma \in \sym_{n}$ (in the triple sum, the only non-zero term corresponds to $i=j=0$ and $k=n$). Thus, one obtains the uniform measures.\bigskip

\noindent Similarly, suppose that $\omega=((1-\gamma,0,0,\ldots),(0,0,\ldots))$ with $\gamma >0$. Then, $L_{c_{\lle 1,i\rre}}(\alpha)$ equals $0$ unless $c_{\lle 1,i\rre}$ has no descents, which is equivalent to ask that the first recoil of $\sigma$ is after $i$. So, if $\mathrm{FR}(\sigma)$ denotes this first recoil, then
$$\mathbb{Q}_{n,\omega}[\sigma]=\sum_{i=0}^{\mathrm{FR}(\sigma)}\frac{(1-\gamma)^{i}\,\gamma^{n-i}}{n-i!}.$$\bigskip

\noindent Suppose now that $\omega=((1/d,1/d,\ldots,1/d,0,0,\ldots),(0,0,\ldots))$, with $d$ terms $1/d$ in $\alpha$. Then, 
$$\qproba_{n,\omega}[\sigma]=L_{c(\sigma)^{-1}}(\alpha)=\!\!\!\!\sum_{\substack{i_{1}\leq i_{2} \leq \ldots\leq i_{n} \leq d\\
i_{j}<i_{j+1}\text{ if }j \in D(c(\sigma^{-1}))}}\!\! \frac{1}{d^{n}}=\frac{1}{d^{n}}\,\binom{n-1+d-\card R(\sigma)}{n}.$$
For the two last examples, exchanging $\alpha$ and $\beta$ amounts to replace $\sigma$ by $\sigma\omega_{0}$, where $\omega_{0}=(n,n-1,\ldots,1)$.\bigskip

\noindent  Finally, suppose that $\omega=(((1-q),(1-q)q,(1-q)q^{2},\ldots),(0,0,\ldots))$. Then,
$$\qproba_{n,\omega}[\sigma]=L_{c(\sigma)^{-1}}(\alpha)=(1-q)^{n}\!\!\!\!\!\!\sum_{\substack{i_{1} \leq \ldots\leq i_{n} \leq d\\
i_{j}<i_{j+1}\text{ if }j \in D(c(\sigma^{-1}))}}\!\!\!\!\! q^{i_{1}+\cdots+i_{n}-n}=\frac{q^{\mathrm{comaj}(\sigma^{-1})}}{\{n!\}_{q}},$$
where $\mathrm{comaj}(\sigma)=\sum_{d \in D(\sigma)}n-d$ and $\{n!\}_{q}=\prod_{i=1}^{n }\frac{1-q^{i}}{1-q}.$
\end{example}

\subsection{Link with the generalized riffle shuffles}\label{riffleshuffle}
A remarkable fact is that the probability measures $\qproba_{n,\omega}$ defined in the previous paragraph can be described practically by random \textbf{generalized riffle shuffles}; see \cite{Ful02}. We fix a point $\omega=(\alpha,\beta)$ of the Thoma simplex and an integer $n$, and we consider a deck of cards ordered from $1$ to $n$. We then construct a random permutation $\sigma \in \sym_{n}$ by using the following algorithm:\vspace{2mm}
\begin{enumerate}
\item First, we split the deck in blocks of size $n_{1}+n_{2}+\cdots+m_{1}+m_{2}+\cdots+l=n$, the sizes of the blocks being chosen randomly according to a multinomial law of parameter $(\alpha,\beta,\gamma)$:
$$\proba[n_{1},\ldots,m_{1},\ldots,l]=\binom{n}{n_{1},\ldots,m_{1},\ldots,l}\,\left(\prod_{i\geq 1}\alpha_{i}^{n_{i}}\right)\,\left(\prod_{j\geq 1}\beta_{j}^{m_{j}}\right)\,\gamma^{l}.$$\vspace{2mm}
\item In the blocks of sizes $m_{1},m_{2},\ldots$, we reverse the order of the cards. In the block of size $l$, we randomize the order of the cards, so that every permutation of the $l$ last cards becomes equiprobable.\vspace{2mm}
\item Finally, we randomly shuffle the blocks with the following rule. If two blocks of size $k_{1}$ and $k_{2}$ are shuffled together, then the first card of the new deck comes from the top of the first block with probability $\frac{k_{1}}{k_{1}+k_{2}}$, and from the top of the second block with probability $\frac{k_{2}}{k_{1}+k_{2}}$; then we start again with the remaining $k_{1}+k_{2}-1$ cards. Notice that this recursive rule is equivalent to ask that all the $\binom{k_{1}+k_{2}}{k_{1},k_{2}}$ shuffles of the blocks are equiprobable.
\end{enumerate}
This algorithm is called in \cite{Ful02} an $(\alpha,\beta,\gamma)$-shuffle.
\begin{proposition}\label{apocalyptica}
The probability of a permutation $\sigma$ obtained by the $\omega$-shuffle presented above is exactly $\qproba_{n,\omega}[\sigma]$.
\end{proposition}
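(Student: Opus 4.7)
The plan is to compute directly the probability $\proba[\sigma]$ that the $\omega$-shuffle produces a given permutation $\sigma$, and to match the answer term by term with the explicit expression for $\qproba_{n,\omega}[\sigma]$ obtained in the second part of Proposition \ref{frompermtopar}:
$$\qproba_{n,\omega}[\sigma]=\sum_{i+j+k=n}\frac{\gamma^{k}}{k!}\,L_{c_{\lle 1,i\rre}}(\alpha)\,L_{\overline{c_{\lle i+1,i+j\rre}}}(\beta),$$
where $c=c(\sigma^{-1})$ satisfies $D(c)=R(\sigma)$.

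First I would fix a size tuple $(n_{1},n_{2},\ldots,m_{1},m_{2},\ldots,l)$ and analyse its contribution. Once the sizes are chosen, the deck is split into consecutive intervals, the $\beta$-intervals are reversed deterministically, the $\gamma$-interval is uniformly randomised (giving $l!$ equiprobable arrangements), and the blocks are uniformly interleaved (giving $\binom{n}{n_{1},\ldots,l}$ equiprobable interleavings). For a prescribed output $\sigma$, the $\gamma$-arrangement and the interleaving that realise $\sigma$ are uniquely determined when they exist; they exist if and only if the size tuple is \emph{consistent} with $\sigma$, meaning that two consecutive cards in a common $\alpha_{i}$-block are separated by a non-recoil of $\sigma$, while two consecutive cards in a common $\beta_{j}$-block are separated by a recoil. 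Multiplying the multinomial weight $\binom{n}{n_{1},\ldots,l}\prod\alpha_{i}^{n_{i}}\prod\beta_{j}^{m_{j}}\gamma^{l}$ by the conditional probability $1/(\binom{n}{n_{1},\ldots,l}\,l!)$ cancels the multinomial coefficient, so each consistent tuple contributes $\prod\alpha_{i}^{n_{i}}\prod\beta_{j}^{m_{j}}\cdot\gamma^{l}/l!$ to $\proba[\sigma]$.

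Next I would reindex by the total sizes $(i,j,k)$ of the $\alpha$-, $\beta$- and $\gamma$-parts of the deck; this factorises the sum as
$$\proba[\sigma]=\sum_{i+j+k=n}\frac{\gamma^{k}}{k!}\cdot S_{\alpha}(i)\cdot S_{\beta}(j).$$
Parameterising an $\alpha$-labelling by the weakly increasing sequence $\tilde{p}_{1}\leq\cdots\leq\tilde{p}_{i}$ of $\alpha$-block indices of the first $i$ cards, the consistency requirement becomes $\tilde{p}_{s}<\tilde{p}_{s+1}$ exactly when $s\in R(\sigma)\cap\{1,\ldots,i-1\}=D(c_{\lle 1,i\rre})$, which is the defining sum of $L_{c_{\lle 1,i\rre}}(\alpha)$. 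Applying the same analysis to the $\beta$-part, once one aligns the $\beta$-block indexing with the convention dictated by the shuffle (larger $\beta$-index corresponds to smaller card values inside the $\beta$-part, so that $\beta$-block indices decrease as card values increase), the $\beta$-sum becomes $L_{\overline{c_{\lle i+1,i+j\rre}}}(\beta)$: the reversal of the $\beta$-indexing sends the naive complement-set condition on strict jumps into a reverse-complement condition, which is precisely the descent set of the conjugate composition $\overline{c_{\lle i+1,i+j\rre}}$.

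Putting these identifications together reproduces the formula of Proposition \ref{frompermtopar} for $\qproba_{n,\omega}[\sigma]$, and hence $\proba[\sigma]=\qproba_{n,\omega}[\sigma]$. I expect the main obstacle to be the $\beta$-part of the match: carefully tracking the orientation of the $\beta$-blocks in the deck and identifying it with the reverse-complement operation $c\mapsto\overline{c}$. At the Hopf-algebraic level this reversal is the combinatorial avatar of the antipode $\nu$ that appears in the expansion $f(A-B+\gamma E)=[(\id\otimes\nu\otimes\id)\circ(\Delta\otimes\id)\circ\Delta]\,f(A,\beta,\gamma E)$ used in the proof of Proposition \ref{frompermtopar}. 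Once this bookkeeping is made precise, everything else reduces to routine manipulations of the fundamental quasi-symmetric functions.
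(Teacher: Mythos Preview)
Your approach is correct in outline and genuinely different from the paper's. The paper gives a purely Hopf-algebraic proof: it rewrites $\qproba_{n,\omega}[\sigma]=\phi_{A-B+\gamma E}(G_\sigma)$ as an iterated application of operators $t_X$, $u_X$, $\nu$ on $\FQSym$, uses the self-duality of $\FQSym$ to move these operators across the pairing $\scal{\cdot}{\cdot}$, and then identifies the resulting composite $S_3\circ S_2\circ S_1$ acting on $F_{12\cdots n}$ with the three steps of the shuffle. Your argument instead computes the shuffle probability by hand and matches it to the explicit formula of Proposition~\ref{frompermtopar}(2). Your route is more elementary and makes the combinatorics transparent; the paper's route is more structural and explains \emph{why} the shuffle is the natural random model attached to the specialisation $A-B+\gamma E$.

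One genuine caution on the $\beta$-part. Your direct computation yields
\[
S_\beta(j)=\sum_{\substack{q_1\le\cdots\le q_j\\ q_t<q_{t+1}\text{ if }t\notin D(c_{\lle i+1,i+j\rre})}}\beta_{q_1}\cdots\beta_{q_j}=L_{d}(\beta),
\]
where $D(d)=\lle 1,j-1\rre\setminus D(c_{\lle i+1,i+j\rre})$ is the \emph{complement}. The target $L_{\overline{c_{\lle i+1,i+j\rre}}}(\beta)$ has descent set $\lle 1,j-1\rre\setminus D\big((c_{\lle i+1,i+j\rre})^{*}\big)$, which is the complement of the \emph{reverse}; thus $d$ and $\overline{c_{\lle i+1,i+j\rre}}$ are reverses of one another, and $L_d(\beta)\neq L_{d^{*}}(\beta)$ in general. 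Your sentence about ``larger $\beta$-index corresponds to smaller card values'' is not what the shuffle description says (block $m_1$ receives the smallest cards of the $\beta$-part), so as written the reindexing does not justify the passage from complement to conjugate. You need to pin down exactly where the extra reversal comes from --- it is hidden in the convention for the ordered alphabet $\overline{B}$ in the specialisation formula $f(-B)=(\nu f)(\overline{B})$, and you should make that explicit rather than leave it as bookkeeping.
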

\begin{proof}
Let us give a purely algebraic proof. If $X$ is a formal alphabet, we denote by $\phi_{X} : \FQSym \to \C$ the corresponding specialization, and by $u_{X}$ and $t_{X}$ the maps 
\begin{align*}
u_{X}:\FQSym &\to \FQSym\qquad\qquad;\qquad\qquad t_{X}:\FQSym \to \FQSym\\
F_{\sigma} &\mapsto F_{\sigma}(X)\, F_{12\ldots n}\qquad\qquad\qquad\qquad\quad\,\qquad F_{\sigma}\mapsto F_{\sigma}(X)\,F_{\sigma}.
\end{align*}
Notice that if $X=(\zeta)$ is a single variable, then $u_{(\zeta)}=t_{(\zeta)}$. 
We start from the identity $\qproba_{n,\omega}[\sigma]=G_{\sigma}(A-B+\gamma E)$, and we expand it:
\begin{align*}
\qproba_{n,\omega}[\sigma]&=\phi_{A-B+\gamma E}(G_{\sigma})=m^{3}\circ (\phi_{A}\otimes \phi_{-B}\otimes \phi_{\gamma E})\circ \Delta^{3} (G_{\sigma})\\
&=m^{3}\circ (\phi_{A}\otimes \phi_{\overline{B}}\otimes \phi_{\gamma E})\circ(1\otimes \nu \otimes 1)\circ \Delta^{3} (G_{\sigma})
\end{align*}
The operator applied to $G_{\sigma}$ can be further expanded in 
$$m^{2\infty +1}\circ \left(\left(\otimes_{i}\,\phi_{(\alpha_{i})}\right) \otimes \left(\otimes_{j}\,\phi_{(\beta_{j})}\right) \otimes \phi_{\gamma E}\right)\circ\left(1^{\otimes \infty}\otimes \nu^{\otimes \infty} \otimes 1\right)\circ \Delta^{2\infty +1}$$
where the meaning of the symbols $2\infty+1$ is quite clear from what precedes. Let us denote $I^{2\infty+1} = 1^{\otimes \infty}\otimes \nu^{\otimes \infty}\otimes 1$. Then, $\mathbb{Q}_{n,\omega}[\sigma]$ can be rewritten as:
\begin{align*}
&\scal{\Delta^{2 \infty +1}(F_{12\ldots n})}{\left(\left(\otimes_{i}\,u_{(\alpha_{i})}\right) \otimes \left(\otimes_{j}\,u_{(\beta_{j})}\right) \otimes u_{\gamma E}\right)\circ I^{2\infty+1} \circ \Delta^{2\infty +1} (G_{\sigma})}\\
&=\scal{\Delta^{2 \infty +1}(F_{12\ldots n})}{\big(\big(\!\otimes_{i}\,t_{(\alpha_{i})}\big) \otimes \big(\!\otimes_{j}\,t_{(\beta_{j})}\big) \otimes u_{\gamma E}\big)\circ I^{2\infty+1} \!\circ \Delta^{2\infty +1} (G_{\sigma})}\\
&=\scal{m^{2\infty+1} \circ I^{2\infty+1} \circ \! \big(\big(\!\otimes_{i}\,t_{(\alpha_{i})}^{*}\big) \otimes \big(\!\otimes_{j}\,t_{(\beta_{j})}^{*}\big) \otimes u_{\gamma E}^{*}\big) \! \circ \Delta^{2 \infty +1}(F_{12\ldots n})}{G_{\sigma}}
\end{align*}
Here, we have used extensively the property of self-adjointness of $\FQSym$. Now, we have to check that the operator applied to $F_{12\ldots n}$ encodes the random steps of the $\omega$-shuffle. We describe a probability distribution $\proba$ on $\sym_{n}$ by an element $\sum_{\sigma \in \sym_{n}}\proba[\sigma]\,F_{\sigma} \in \FQSym$, and the effect of our random steps by matrices
$$M=\sum_{\sigma,\tau}c_{\sigma\tau}\,F_{\sigma}\otimes G_{\tau},$$
where the $G$'s are considered as elements of the dual space of $\FQSym$.  We also introduce the operator of uniformization
$$U=\sum_{n\in \N}\sum_{\sigma\in \sym_{n}}\frac{1}{n!}\,F_{\sigma}\otimes G_{12\ldots n}.$$
Notice that for any $n$,
$$u_{\gamma E}^{*}(F_{12\ldots n})=\frac{\gamma^{n}}{n!}\,F_{12\ldots n}=(U \circ t_{(\gamma)})(F_{12\ldots n})\qquad;\qquad F_{12\ldots n}=G_{12\ldots n}.$$
For this reason, we can rewrite the probability $\qproba_{n,\omega}$ as
$$\scal{m^{2\infty+1} \circ \widetilde{I}^{2\infty+1} \circ \big(\big(\otimes_{i}\,t_{(\alpha_{i})}\big) \otimes \big(\otimes_{j}\,t_{(\beta_{j})}\big) \otimes t_{(\gamma)}\big) \circ \Delta^{2 \infty +1}(F_{12\ldots n})}{G_{\sigma}},$$
where $\widetilde{I}^{2\infty+1}=1^{\otimes \infty}\otimes \nu^{\otimes \infty} \otimes U$.\vspace{2mm}
\begin{enumerate}
\item Starting from the state $F_{12\ldots n}$, the first step of our algorithm can be written
$$S_{1}=\left(\left(\otimes_{i}\,t_{(\alpha_{i})}\right) \otimes \left(\otimes_{j}\,t_{(\beta_{j})}\right) \otimes t_{(\gamma)}\right) \circ t_{E}^{\otimes 2\infty+1} \circ \Delta^{2\infty+1} \circ t_{E^{-1}}.$$
Indeed, $\Delta^{2\infty+1}$ enables us to cut the deck in every possible way 
$$n=n_{1}+n_{2}+\cdots+m_{1}+m_{2}+\cdots+l,$$ and the other operators take account of the probabilities (we use the $t_{(\zeta)}$'s for the powers and the $t_{E}$'s for the factorials).\vspace{2mm}
\item The second step is obviously $S_{2}=\widetilde{I}^{2\infty+1}=1^{\otimes \infty}\otimes \nu^{\otimes \infty} \otimes U$.
Here, we use the aforementioned fact that the involution $\nu$ reverses the term $F_{12\ldots n}$ in $F_{n(n-1)\ldots 1}$.\vspace{2mm}
\item Finally, since all the shuffles are equiprobable, by the very definition of the multiplication in $\FQSym$, the third step amounts to apply $$S_{3}=t_{E}\circ m^{2\infty +1} \circ t_{E^{-1}}^{\otimes 2\infty +1}.$$ \vspace{2mm}
\end{enumerate}
Finally, in $S_{3}\circ S_{2}\circ S_{1}$, an operator $t_{E}$ commutes with every degree-preserving operator, so one can simplify them. One gets therefore
$$S_{3}\circ S_{2}\circ S_{1}=m^{2\infty+1} \circ \widetilde{I}^{2\infty+1} \circ \big(\big(\otimes_{i}\,t_{(\alpha_{i})}\big) \otimes \big(\otimes_{j}\,t_{(\beta_{j})}\big) \otimes t_{(\gamma)}\big) \circ \Delta^{2 \infty +1}$$
and the proof is complete.
\end{proof}
\bigskip

\noindent Hence, any point $\omega$ of the Thoma simplex $\Omega$ corresponds to:\vspace{2mm}
\begin{enumerate}
\item an irreducible character $\chi^{\omega}$ of $\sym_{\infty}$ and a coherent system of probability measures $(\proba_{n,\omega})_{n \in \N}$ on the levels $\ym_{n}$ of the Young graph;\vspace{2mm}
\item a system of measures $(\qproba_{n,\omega})_{n\in \N}$ on the symmetric groups $\sym_{n}$ such that $\Lambda_{*}\qproba_{n,\omega}=\proba_{n,\omega}$;\vspace{2mm}
\item and a random algorithm of shuffling that produces permutations $\sigma \in \sym_{n}$ with distribution $\qproba_{n,\omega}$.\vspace{2mm}
\end{enumerate}
It is now time to introduce the tools that shall enable us to perform the asymptotic analysis of these objects.
\bigskip

\section{Observables of Young diagrams and their cumulants}\label{obs}
For the asymptotic analysis of Young diagrams, we are basically going to use a method of moments, but we have to define precisely what we mean by the moments of a partition. We shall essentially follow \cite{IO02}, and \cite{Sni06b,FM10} for the combinatorics of cumulants.

\subsection{Polynomial functions on Young diagrams}\label{polfunc}  If $\lambda$ is a partition, we associate to it two finite alphabets $A(\lambda)$ and $B(\lambda)$ called the \textbf{Frobenius coordinates}:
\begin{align*}
A(\lambda)&=\left\{\lambda_{1}-\frac{1}{2},\lambda_{2}-\frac{3}{2},\ldots,\lambda_{d}-\frac{2d-1}{2}\right\};\\
B(\lambda)&=A(\lambda')=\left\{\lambda_{1}'-\frac{1}{2},\lambda_{2}'-\frac{3}{2},\ldots,\lambda_{d}'-\frac{2d-1}{2}\right\}.
\end{align*}
Here, $d$ is the size of the diagonal of $\lambda$, that is to say the number of boxes of the Young diagram of $\lambda$ that belong to the first diagonal.
\begin{example}
If $\lambda=(6,3,2,2)$, its Young diagram is
$$\yng(2,2,3,6)$$
and the Frobenius coordinates are $A=\{\frac{11}{2},\frac{3}{2}\}$ and $B=\{\frac{7}{2},\frac{5}{2}\}$ --- they are easily computed by drawing the diagonal and measuring the rows and columns starting from the diagonal.
\end{example}
\noindent The \textbf{Frobenius moments} of $\lambda$ are the power sums
$$p_{k}(\lambda)=p_{k}\left(A(\lambda)-\overline{B}(\lambda)\right)=\left(\sum_{i=1}^{d}a_{i}^{k}\right)+(-1)^{k-1}\left(\sum_{i=1}^{d}b_{i}^{k}\right).$$
For instance, $p_{1}(\lambda)=|\lambda|$ for any partition. The functions $p_{k}$ generate freely over $\C$ an algebra called the algebra of \textbf{observables of diagrams}, and denoted $\obs$ (\emph{cf.} \cite{KO94,IO02}). This algebra is isomorphic to $\Sym$, and it is naturally graded by $\deg p_{k}=k$. It does not contain all the polynomial functions of the coordinates of the partitions, but it will prove sufficiently large\footnote{However, we should mention that there is a bigger algebra of observables of diagrams that contains $\obs$, is isomorphic to $\QSym$ and allows to decompose the observables of $\obs$ in combinatorial summands; see in particular \cite{DFS10}.} in our setting.
\bigskip
\bigskip

We now define new functions on Young diagrams by renormalizing the values of the irreducible characters of the symmetric groups. Recall that for any partitions $\lambda,\mu$ of the same size, $\chi^{\lambda}(\mu)$ denotes the value of the \emph{normalized} irreducible character of $\sym_{n}$ of label $\lambda$ on a permutation of cycle type $\mu$; thus, $\zeta^{\lambda}(\mu)=(\dim \lambda)\,\chi^{\lambda}(\mu)$. Then, for any partitions $\lambda,\mu$, we set:
$$\varSigma_{\mu}(\lambda)=\begin{cases}
n(n-1)\ldots (n-|\mu|+1)\,\chi^{\lambda}(\mu \sqcup 1^{n-|\mu|}) &\text{if }n=|\lambda|\geq |\mu|,\\
0&\text{otherwise}.
\end{cases}$$
For convenience, we will denote $n(n-1)\cdots (n-k+1)=n^{\downarrow k}$ for any $n$ and $k$ --- this is the Pochhammer symbol for falling factorials. Thus, $\varSigma_{\mu}(\lambda)=|\lambda|^{\downarrow |\mu|}\,\chi^{\lambda}(\mu \sqcup 1^{|\lambda|-|\mu|})$. An essential result exposed in details in \cite{IO02} is the following:
\begin{theorem}\label{ivanovolshanski}
Viewed as a function on Young diagrams, $\varSigma_{\mu}$ is an element of $\obs$, and is of degree $|\mu|$. Moreover,
$$\forall \mu,\,\,\,\varSigma_{\mu}=p_{\mu}+(\text{terms of degree at most } |\mu|-1),$$
see \cite[Proposition 3.4]{IO02}.
\end{theorem}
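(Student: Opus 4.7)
The statement comprises three claims: (a) $\varSigma_{\mu}$ belongs to $\obs$; (b) it is homogeneous of degree $|\mu|$ in the grading $\deg p_{k}=k$; and (c) its top-degree component equals $p_{\mu}$. I propose to handle the single-cycle case $\mu=(k)$ first, via the classical Frobenius formula, and then to extend to arbitrary $\mu$ via an approximate-multiplicativity identity in the centre of the group algebra.

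\emph{Single-cycle step.} For $\mu=(k)$, I would begin with the Frobenius formula $\zeta^{\lambda}(\mu)=\scal{s_{\lambda}}{p_{\mu}}$ combined with the bialternant expression $s_{\lambda}=a_{\lambda+\delta}/a_{\delta}$, and rewrite everything in terms of the shifted row lengths $\ell_{i}=\lambda_{i}+N-i$. Passing to the stable limit $N\to\infty$ and rearranging gives Frobenius's classical contour-integral formula for $\varSigma_{k}(\lambda)$ as the residue at infinity of a rational function whose zeros and poles involve the Frobenius coordinates $a_{i}, b_{i}$, schematically
\[
\varSigma_{k}(\lambda)=-\frac{1}{k}\oint\prod_{i=1}^{d}\frac{(z-a_{i}+k)(z+b_{i})}{(z-a_{i})(z+b_{i}-k)}\,\frac{dz}{2\pi\I}.
\]
Expanding the integrand as a Laurent series at infinity exhibits $\varSigma_{k}$ as a polynomial in the $a_{i},b_{i}$, symmetric separately in each family, of total degree $k$; the leading Laurent coefficient is precisely $\sum_{i}a_{i}^{k}+(-1)^{k-1}\sum_{i}b_{i}^{k}=p_{k}(\lambda)$. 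This settles (a), (b), (c) when $\mu$ is a single cycle.

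\emph{General step.} For $\mu=(\mu_{1},\ldots,\mu_{r})$, I would establish an approximate-multiplicativity identity of the form
\[
\varSigma_{\mu}=\varSigma_{\mu_{1}}\cdots\varSigma_{\mu_{r}}+\sum_{|\nu|<|\mu|}c_{\mu,\nu}\,\varSigma_{\nu},
\]
coming from the product structure on the projective limit of centres of $\C[\sym_{n}]$: multiplying the class sums $a_{\mu_{1}}\cdots a_{\mu_{r}}$ produces $a_{\mu_{1}\sqcup\cdots\sqcup\mu_{r}}$ (from disjoint realizations of the cycles) plus ``collision'' contributions, each of which is a class sum of strictly shorter non-trivial support. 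Combining this with the single-cycle case and inducting on $\ell(\mu)$ yields $\varSigma_{\mu}=\prod_{i}p_{\mu_{i}}+(\text{lower})=p_{\mu}+(\text{lower degree})$, which completes the proof.

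\emph{Main obstacle.} The substantive technical work is in controlling the collision corrections above, i.e., in showing that every contributing partition $\nu$ actually satisfies $|\nu|<|\mu|$ (rather than just $\le|\mu|$). The cleanest framework for this is the Ivanov--Kerov algebra of partial permutations, whose natural filtration by the number of non-fixed points makes the bound transparent. An attractive alternative is to bypass the inductive scheme altogether and invoke Stanley's (and later F\'eray's) combinatorial formula expressing $\varSigma_{\mu}(\lambda)$ as a sum over colourings of a bipartite graph attached to $\mu$, from which both membership in $\obs$ and the identification of the top degree follow directly by inspection of the formula.
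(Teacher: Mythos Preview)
The paper does not supply its own proof of this theorem: it is quoted from \cite{IO02}, with the Wassermann contour formula mentioned in the example that follows and the partial-permutation machinery developed afterwards in \S3.2. Your plan is precisely the route taken in \cite{IO02}, so in spirit you are aligned with what the paper cites rather than diverging from it.

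Two points deserve tightening. First, your ``schematic'' contour integral as written is not correct: the rational product $\prod_{i}\frac{(z-a_{i}+k)(z+b_{i})}{(z-a_{i})(z+b_{i}-k)}$ behaves like $1+O(z^{-1})$ at infinity, so its residue there is a bounded quantity, not something whose leading term is $p_{k}$. The genuine Wassermann formula carries an additional polynomial prefactor in $z$ of degree $k{+}1$ (a falling factorial), and it is this prefactor that produces the degree-$k$ top term. With the correct formula in hand, the cleanest way to see membership in $\obs$ is to take the logarithm of the product, which expands directly as a series in the $p_{j}(\lambda)$; merely observing that the result is ``symmetric separately'' in the $a_{i}$'s and the $b_{i}$'s is not enough, since $\obs$ is the ring generated by the supersymmetric power sums $p_{j}$, which is strictly smaller than the ring of doubly symmetric functions.

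Second, your ``general step'' is exactly the content of Lemma~\ref{matching} in the paper: the product $\varSigma_{\mu_{1}}\cdots\varSigma_{\mu_{r}}$ expands over partial matchings of the index sets, the empty matching gives $\varSigma_{\mu}$, and every non-empty matching contributes a $\varSigma_{\nu}$ with $|\nu|=|\mu|-|M|<|\mu|$. So the ``main obstacle'' you flag is in fact already handled cleanly by the partial-permutation framework the paper sets up, and no separate induction or appeal to the Stanley--F\'eray formula is needed.
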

\begin{example}
Actually, there is an explicit formula for the $\varSigma_{k}$'s that is due to Wassermann (\cite{Was81}, \cite[Proposition 3.3]{IO02}). Hence, one can compute $\varSigma_{1}=p_{1}$, $\varSigma_{2}=p_{2}$, $\varSigma_{3}=p_{3}-\frac{3}{2}p_{11}+\frac{5}{4}p_{1}$, $\varSigma_{4}=p_{4}-4p_{21}+\frac{11}{2}p_{2}$.
\end{example}
\noindent Theorem \ref{ivanovolshanski} implies in particular that the symbols $\varSigma_{\mu}$ factorize in higher degree, that is to say that for any partitions $\mu$ and $\nu$,
$$\varSigma_{\mu}\,\varSigma_{\nu}=\varSigma_{\mu \sqcup \nu}+(\text{terms of degree at most }|\mu|+|\nu|-1).$$
The next paragraph is devoted to a better description of the products $\varSigma_{\mu}\,\varSigma_{\nu}$; we shall rely on the formalism of partial permutations (\emph{cf.} \cite{IK99}). To conclude this paragraph, let us compute the expectation of a symbol $\varSigma_{\mu}(\lambda)$ when $\lambda$ is chosen randomly according to a measure $\proba_{n,\omega}$ (we suppose that $n \geq |\mu|$):
\begin{align*}\esper_{n,\omega}[\varSigma_{\mu}(\lambda)]&=n^{\downarrow |\mu|} \, \sum_{\lambda \in \ym_{n}} \proba_{n,\omega}[\lambda]\,\,\chi^{\lambda}(\mu \sqcup 1^{n-|\mu|})\\
&=n^{\downarrow |\mu|} \, \sum_{\lambda \in \ym_{n}} s_{\lambda}(A-B+\gamma E)\,\,\zeta^{\lambda}(\mu \sqcup 1^{n-|\mu|})\\
&=_{(\text{Frobenius formula})}n^{\downarrow |\mu|}\,p_{\mu \sqcup 1^{n-|\mu|}}(A-B+\gamma E)\\
&=n^{\downarrow |\mu|}\,p_{\mu}(A-B+\gamma E).\end{align*}
Theorem \ref{ivanovolshanski} implies that $(\varSigma_{\mu})_{\mu \in \ym}$ is a linear basis of $\obs$; hence, by using this simple formula, we will be able to compute the expectation of \emph{any} observable of diagrams under a coherent measure $\proba_{n,\omega}$. In particular, we shall compute in Sections \ref{largenumbers} and \ref{clt} the joint moments of the Frobenius observables $p_{k}$; they will provide information on the geometry of the random partitions under the probability measures $\proba_{n,\omega}$.

\subsection{Ivanov-Kerov algebra of partial permutations}
If $\sigma \in \sym_{\infty}$, we call admissible set for $\sigma$ a finite subset $S \subset \N^{*}$ such that if $x \notin S$, then $\sigma(x)=x$. A \textbf{partial permutation} is a pair $(\sigma,S)$ with $S$ admissible set for $\sigma$. Said in other words, a partial permutation is a subset $S \subset\N^{*}$ and a permutation $\sigma \in \sym(S)$. One can multiply these objects by the rule
$$(\sigma,S)\times(\tau,T)=(\sigma\tau,S\cup T).$$
We denote $\deg (\sigma,S)=\card S$, and we construct a graded algebra $\alg_{\infty}$ by looking at all the formal linear combinations of partial permutations with bounded degree. For any $n$, this algebra $\alg_{\infty}$ can be projected onto the group algebra $\C\sym_{n}$:
\begin{align*}
\pi_{n} : \alg_{\infty}&\to \C\sym_{n}\\
(\sigma,S)&\mapsto \begin{cases}
\sigma &\text{if }S \subset \lle 1,n\rre,\\
0 &\text{otherwise}.
\end{cases}
\end{align*}
There is a natural action of $\sym_{\infty}$ on $\alg_{\infty}$: on the basis elements $(\sigma,S)$, it is defined by
$$(\sigma,S)^{\tau}=(\tau\sigma\tau^{-1},\tau(S)),$$
and it is compatible with the product of partial permutations. Moreover, two partial permutations $(\sigma,S)$ and $(\tau,T)$ are conjugated if and only if $\card S=\card T$, and the cyclic type of $\sigma_{|S}$ is the same as the cyclic type of $\tau_{|T}$. Thus, the invariant algebra $\blg_{\infty}=(\alg_{\infty})^{\sym_{\infty}}$ (called the \textbf{Ivanov-Kerov algebra}) has a linear basis labelled by partitions of arbitrary size:
$$\blg_{\infty}=\mathrm{Vect}((A_{\lambda})_{\lambda \in \ym}),\quad\text{with }A_{\mu}=\sum_{|S|=|\mu|}\sum_{\substack{\sigma \in \sym(S)\\ t(\sigma)=\mu}}(\sigma,S).$$
Actually, we would rather use a renormalized version of this basis, defined as follows:
$$\varSigma_{\mu}=\sum\left[(a_{11},\ldots,a_{1\mu_{1}})(a_{21},\ldots,a_{2\mu_{2}})\cdots(a_{r1},\ldots,a_{r\mu_{r}}), \{a_{ij} \}_{\substack{1 \leq i \leq \ell(\mu) \\ 1\leq j \leq \mu_{i}}}\right]$$
where the sum is taken over injective $\N^{*}$-valued functions $i,j \mapsto a_{ij}$ with $1 \leq i \leq \ell(\mu)$ and $1\leq j \leq \mu_{i}$.
\begin{proposition}\label{fringe}
The family $(\varSigma_{\mu})_{\mu \in \ym}$ is a linear basis of $\blg_{\infty}$, and for any $n$ and any partition $\lambda \in \ym_{n}$,
$$\varSigma_{\mu}(\lambda)=\chi^{\lambda}(\pi_{n}(\varSigma_{\mu})).$$
\end{proposition}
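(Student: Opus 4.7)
\medskip

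The plan is to reduce the statement to a counting argument comparing the family $(\varSigma_{\mu})$ to the basis $(A_{\mu})$ already introduced, and then to compute $\pi_{n}(\varSigma_{\mu})$ explicitly in $\C\sym_{n}$.

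First I would establish that $\varSigma_{\mu} = z_{\mu}\,A_{\mu}$, where $z_{\mu} = \prod_{i\geq 1} i^{m_{i}}\,m_{i}!$ and $\mu = 1^{m_{1}}2^{m_{2}}\cdots$. To see this, fix a partial permutation $(\sigma,S)$ with $|S|=|\mu|$ and $\sigma \in \sym(S)$ of cycle type $\mu$, and count the injective functions $a: (i,j)\mapsto a_{ij}$ that produce it as a summand of $\varSigma_{\mu}$. One must (a) decide which actual cycle of $\sigma$ of length $k$ is placed at which of the $m_{k}$ positions indexed by the parts equal to $k$ in $\mu$ (giving $\prod_{k}m_{k}!$ choices), and (b) for each cycle of length $k$, choose one of its $k$ cyclic rewritings (giving $\prod_{k}k^{m_{k}}$ choices). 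Multiplying yields $z_{\mu}$. Since $(A_{\mu})_{\mu\in\ym}$ is a basis of $\blg_{\infty}$ by construction and $z_{\mu}\neq 0$, it follows that $(\varSigma_{\mu})_{\mu\in\ym}$ is a linear basis of $\blg_{\infty}$ as well.

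Next I would compute $\pi_{n}(\varSigma_{\mu})$ by retaining in the defining sum only those injective functions $a$ whose image lies in $\lle 1,n\rre$. If $|\mu|>n$ there are none, so $\pi_{n}(\varSigma_{\mu})=0$, in agreement with the vanishing of $\varSigma_{\mu}(\lambda)$. Assume $n\geq|\mu|$. The number of injective maps from the $|\mu|$ index pairs $(i,j)$ into $\lle 1,n\rre$ is $n^{\downarrow |\mu|}$. For each such $a$, the partial permutation $(\sigma(a),S(a))$ projects to the element of $\sym_{n}$ obtained by extending $\sigma(a)$ by the identity outside $S(a)$; observe that the cycle decomposition of this extension is given by the cycles prescribed by $\mu$ on the image of $a$ (including singletons coming from parts $\mu_{i}=1$, which are fixed points in $S(a)$) together with the $n-|\mu|$ fixed points of $\lle 1,n\rre\setminus S(a)$. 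Hence each term of $\pi_{n}(\varSigma_{\mu})$ has cycle type $\mu\sqcup 1^{n-|\mu|}$ in $\sym_{n}$.

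Finally, applying the class function $\chi^{\lambda}$ by linearity and using the fact that the normalized character depends only on the cycle type yields
$$\chi^{\lambda}(\pi_{n}(\varSigma_{\mu})) = n^{\downarrow |\mu|}\,\chi^{\lambda}(\mu\sqcup 1^{n-|\mu|}) = \varSigma_{\mu}(\lambda),$$
which is the desired identity. The only genuinely subtle point—and the spot where care is needed—is the bookkeeping of parts equal to $1$ in $\mu$: these contribute elements to the support $S$ but are fixed by $\sigma$, so they are indistinguishable in $\sym_{n}$ from the fixed points of $\lle 1,n\rre\setminus S$. The counting above is robust to this because we simply sum over injective $a$'s without demanding that the fixed points of the extension be tracked; both sides of the claimed formula only register the final cycle type $\mu\sqcup 1^{n-|\mu|}$, and the count $n^{\downarrow|\mu|}$ of injective embeddings of the index set is unchanged.
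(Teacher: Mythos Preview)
Your proof is correct and follows the same route as the paper: show that each $\varSigma_{\mu}$ is a nonzero scalar multiple of $A_{\mu}$ (you make the constant explicit as $z_{\mu}$, while the paper just says ``a multiple''), then observe that $\pi_{n}(\varSigma_{\mu})$ is a sum of $n^{\downarrow|\mu|}$ permutations of cycle type $\mu\sqcup 1^{n-|\mu|}$ and evaluate $\chi^{\lambda}$ termwise. Your added remark on parts of size $1$ is a nice clarification but does not change the argument.
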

\begin{proof}
Here, an irreducible character $\chi^{\lambda}$ is extended by linearity from the group $\sym_{n}$ to the algebra $\C\sym_{n}$. It is not hard to see that $\varSigma_{\mu}$ is a multiple of $A_{\mu}$ for any partition $\mu$; hence, $(\varSigma_{\mu})_{\mu}$ is indeed a linear basis of $\blg_{\infty}$. Now, for any $n$, $\pi_{n}(\varSigma_{\mu})$ consists in $n^{\downarrow |\mu|}$ elements of $\sym_{n}$ of cycle type $\mu \sqcup 1^{n-|\mu|}$, corresponding to all the possible $|\mu|$-arrangements of the $a_{ij}$'s. Hence,
$$\chi^{\lambda}(\pi_{n}(\varSigma_{\mu}))=n^{\downarrow |\mu|}\,\chi^{\lambda}(\mu \sqcup 1^{n-|\mu|})=\varSigma_{\mu}(\lambda).$$
\end{proof}\bigskip

Now, Proposition \ref{fringe} allows to compute a product of two symbols $\varSigma$ in $\blg_{\infty}$ instead of $\obs$. Indeed, for any partitions $\lambda,\mu,\nu$,
\begin{align*}
\varSigma_{\mu}(\lambda) \,\varSigma_{\nu}(\lambda)&=\chi^{\lambda}(\pi_{n}(\varSigma_{\mu}))\,\,\chi^{\lambda}(\pi_{n}(\varSigma_{\nu}))=\chi^{\lambda}(\pi_{n}(\varSigma_{\mu}) \, \pi_{n}(\varSigma_{\nu}))\\
&=\chi^{\lambda}(\pi_{n}(\varSigma_{\mu}\times_{\blg_{\infty}} \varSigma_{\nu}))=(\varSigma_{\mu}\times_{\blg_{\infty}} \varSigma_{\nu})(\lambda)
\end{align*}
where we used the fact that for any elements $a,b$ of the center $Z(\C\sym_{n})$ of the group algebra, and any normalized irreducible character $\chi^{\lambda}$ of $\sym_{n}$, $\chi^{\lambda}(ab)=\chi^{\lambda}(a)\,\chi^{\lambda}(b)$ --- this is true for any finite group. With this new point of view, the combinatorics of the symbols $\varSigma$ has been investigated by P. \'Sniady in \cite{Sni06a,Sni06b}, and later by V. F\'eray and the author in \cite{FM10}. Thus, one can give a semi-explicit description of a product $\varSigma_{\mu}\,\varSigma_{\nu}$ by using \textbf{partial matchings}. To this purpose, let us introduce some additional notations. We denote by $\arra_{k}$ the set of $k$-arrangements of integers in $\N^{*}$, and if $A=(a_{1},\ldots,a_{k})\in \arra_{k}$, we denote by $C(A)$ the corresponding $k$-cycle, viewed as a partial permutation with support $\{a_{1},\ldots,a_{k}\}$. Then, $\varSigma_{\mu}$ can be rewritten as
$$\varSigma_{\mu}=\sum_{\substack{\forall i,\,\,A_{i} \in \arra_{\mu_{i}} \\ \forall i \neq j,\,\,A_{i} \cap A_{j}=\emptyset}}C(A_{1})\,C(A_{2})\,\cdots \,C(A_{\ell(\mu)}),$$
and given two partitions $\mu$ and $\nu$,
$$\varSigma_{\mu}\,\varSigma_{\nu}=\!\!\sum_{\substack{\forall i,\,\,A_{i} \in \arra_{\mu_{i}} \\ \forall i \neq j,\,\,A_{i} \cap A_{j}=\emptyset}}\,\,\,\sum_{\substack{\forall k,\,\,B_{k} \in \arra_{\nu_{k}} \\ \forall k \neq l,\,\,B_{k} \cap B_{l}=\emptyset}} \!\!C(A_{1})\,\cdots \,C(A_{\ell(\mu)})\,C(B_{1})\,\cdots \,C(B_{\ell(\nu)}).$$
To any sequences $(a_{ij})_{i,j}$ and $(b_{kl})_{k,l}$ appearing in this expression, we associate a partial matching of the set $I_{A}$ of the $(i,j)$'s with the set $I_{B}$ of the $(k,l)$'s: 
$$(i,j) \sim (k,l)\text{ if }a_{ij}=b_{kl}.$$
Then, given a partial matching $M$ of $I_{A}$ with $I_{B}$, if we sum the products 
$$C(A_{1})\,\cdots \,C(A_{\ell(\mu)})\,C(B_{1})\,\cdots \,C(B_{\ell(\nu)})$$
corresponding to this partial matching, we obtain a term $\varSigma_{\rho(M,\mu,\nu)}$: indeed, if one knows which of the $a_{ij}$'s are equal to which of the $b_{kl}$'s, then one can rewrite the product as a product of \emph{disjoint} cycles in a way that does not depend on the values of the $a$' and the $b$'s (\emph{cf.} \cite[\S3.4]{FM10}). Thus: 
\begin{lemma}\label{matching}
For any partition $\mu$ and $\nu$,
$$\varSigma_{\mu}\,\varSigma_{\nu}=\sum_{M} \varSigma_{\rho(M,\mu,\nu)}$$
where the sum runs over all the partial matchings $M$ of $I_{A}$ with $I_{B}$.
\end{lemma}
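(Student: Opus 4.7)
The plan is to reorganize the double sum defining $\varSigma_\mu \varSigma_\nu$ according to the pattern of coincidences between the arrangements coming from the two factors. Expanding as
$$\varSigma_\mu\,\varSigma_\nu \;=\; \sum C(A_1)\cdots C(A_{\ell(\mu)})\,C(B_1)\cdots C(B_{\ell(\nu)}),$$
where the $A_i$'s are pairwise disjoint and so are the $B_k$'s, the only possible equalities among labels occur between an $a_{ij}$ and a $b_{kl}$. Each summand therefore determines a partial matching $M$ of $I_A$ with $I_B$, and partitioning the sum according to $M$ reduces the problem to identifying the contribution of each fixed matching.

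The cycle-type statement underlying this identification is the following: for fixed $M$, the cycle type of the product $C(A_1)\cdots C(B_{\ell(\nu)})$ depends only on the abstract data $(M,\mu,\nu)$. Indeed, any two injective integer labelings of the quotient set $(I_A \sqcup I_B)/M$ differ by a bijection of $\N^*$, which acts on $\alg_\infty$ by conjugation and therefore preserves cycle structure. Denote the common cycle type by $\rho=\rho(M,\mu,\nu)$, a partition of $|\mu|+|\nu|-|M|$ possibly containing parts equal to $1$ when factors of the two products cancel on some points. The $M$-part of the sum is then a sum of partial permutations of cycle type $\rho$, indexed by injective labelings of $(I_A \sqcup I_B)/M$ into $\N^*$.

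It remains to identify this $M$-part with the defining sum of $\varSigma_\rho$ over ordered tuples of disjoint arrangements of sizes $\rho_1,\ldots,\rho_{\ell(\rho)}$. Restricting both sums to a fixed support $S \subset \N^*$ of size $|\rho|$, the labeling-to-permutation map is $\sym(S)$-equivariant, for the natural action by composition on labelings and by conjugation on partial permutations, and its image is the conjugacy class of partial permutations of cycle type $\rho$ with support $S$. Since $\sym(S)$ acts simply transitively on bijections into $S$, each element of this conjugacy class receives multiplicity equal to the order of its centralizer, namely $\prod_k k^{m_k}\,m_k!$ where $m_k$ is the number of parts of size $k$ in $\rho$; and the same count governs the multiplicity in $\varSigma_\rho$ restricted to $S$, by an elementary enumeration of arrangements representing a given product of disjoint cycles. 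Summing over $S$ and then over $M$ yields the lemma.

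The main obstacle, in my view, will be the clean handling of this last multiplicity computation and providing a transparent description of the map $(M,\mu,\nu)\mapsto \rho$, which a priori requires tracking how cycles of the two sides merge and possibly cancel along the matched indices. The conjugation-invariance argument in the second paragraph is the cleanest structural input: it reduces a combinatorial identity about ordered labels to a statement about conjugacy classes in $\sym(S)$, after which the identification becomes a bookkeeping exercise.
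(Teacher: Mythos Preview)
Your argument is correct and follows exactly the paper's approach: group the double sum by the partial matching $M$ recording which $a_{ij}$ equal which $b_{kl}$, observe that the cycle type of the resulting product depends only on $(M,\mu,\nu)$, and identify the $M$-block with $\varSigma_{\rho(M,\mu,\nu)}$. The paper states this with essentially no further justification, referring to \cite[\S3.4]{FM10} for details; your conjugation argument for the independence of cycle type and your $\sym(S)$-equivariance multiplicity count (both sides hitting each partial permutation of type $\rho$ on support $S$ exactly $z_\rho$ times) supply precisely those details.
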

\noindent We refer again to \cite{FM10} for more details on this combinatorial lemma. The reader should also notice that the degree of a term $\varSigma_{\rho(M,\mu,\nu)}$ corresponding to a partial matching $M$ is $|\mu|+|\nu|-|M|$, where $|M|$ is the size of the partial matching, that is to say the number of pairs $(a \in I_{A},b \in I_{B})$ in $M$.
\begin{example}
Let us compute the term of higher degree and the subdominant term of a product $\varSigma_{\mu}\,\varSigma_{\nu}$.\vspace{2mm}
\begin{enumerate}
\setcounter{enumi}{-1}
\item If $\deg \varSigma_{\rho(M,\mu,\nu)}=|\mu|+|\nu|$, then $|M|=0$, and $M=\emptyset$. The cycles corresponding to the empty matching are disjoint ones; thus, the only term of degree $|\mu|+|\nu|$ in $\varSigma_{\mu}\,\varSigma_{\nu}$ is indeed $\varSigma_{\mu \sqcup \nu}$. \vspace{2mm}
\item If $\deg \varSigma_{\rho(M,\mu,\nu)}=|\mu|+|\nu|-1$, then $|M|=1$ and $M$ contains only one pair $(a,b)$. Suppose that $a$ is in a cycle $C(A_{i})$ of length $\mu_{i}$, and that $b$ is in a cycle $C(B_{j})$ of length $\nu_{j}$. Then, $C(A_{i})$ and $C(B_{j})$ intersect in exactly one point, so their product is a $(\mu_{i}+\nu_{j}-1)$-cycle; and all the other cycles $C(A_{i'})$ or $C(B_{j'})$ stay disjoint. Thus, such a matching yields a term
$$\varSigma_{(\mu\sqcup \nu \sqcup \{\mu_{i}+\nu_{j}-1\})\setminus \{\mu_{i},\nu_{j}\} },$$
and one has to sum over all possible matchings of size $1$ to get the subdominant term of $\varSigma_{\mu}\,\varSigma_{\nu}$.
\end{enumerate}
So, for any partitions $\mu=(\mu_{1},\ldots,\mu_{r})$ and $\nu=(\nu_{1},\ldots,\nu_{s})$,
$$\varSigma_{\mu}\,\varSigma_{\nu}=\varSigma_{\mu \sqcup \nu}+\sum_{i=1}^{r}\sum_{j=1}^{s} \mu_{i}\nu_{j}\,\varSigma_{(\mu\sqcup \nu \sqcup \{\mu_{i}+\nu_{j}-1\})\setminus \{\mu_{i},\nu_{j}\} }$$
plus an observable of degree at most $|\mu|+|\nu|-2$. In particular, for two symbols $\varSigma_{l}$ and $\varSigma_{m}$ corresponding to cycles,
$$\varSigma_{l}\,\varSigma_{m}=\varSigma_{l,m}+lm\,\varSigma_{l+m-1}+(\text{observable of degree at most }l+m-2).$$
\end{example}

\subsection{\'Sniady's theory of cumulants of observables}
In order to prove results of asymptotic gaussian behaviour in the setting of the representation theory of $\sym_{\infty}$, we shall use in Section \ref{clt} \'Sniady's theory of \textbf{cumulants of observables} (\emph{cf.} \cite{Sni06a,Sni06b}), though in a setting that is slightly different from the setting of his papers, and with another gradation on the algebra $\obs$. If $X_{1},\ldots,X_{r}$ are real random variables defined on a common probability space, we recall that their \textbf{joint cumulant} is defined by:
$$k(X_{1},X_{2},\ldots,X_{r})=\left.\frac{\partial^{r}}{\partial t_{1}\partial t_{2}\cdots \partial t_{r}} \right|_{t=0}\log \esper\left[\exp(t_{1}X_{1}+t_{2}X_{2}+\cdots+t_{r}X_{r})\right].$$
In particular, $k(X)=\esper [X]$ is the expectation and $k(X,Y)=\esper[XY]-\esper[X]\,\esper[Y]$ is the covariance. With the definition given above, it is obvious that a gaussian vector $X=(X_{1},\ldots,X_{n})$ is characterized by the vanishing of all the joint cumulants of the coordinates of order $r\geq 3$. On the other hand, one can give an equivalent combinatorial definition of the joint cumulants. For any integer $n$, we denote by $\setpart_{n}$ the set of \textbf{set partitions} of $\lle 1,n\rre$, and if $\pi=\pi_{1}\sqcup \pi_{2}\sqcup\cdots \sqcup \pi_{s}$ is such a set partition, we denote $\ell(\pi)=s$ and $\mu(\pi)=(-1)^{\ell(\pi)-1}\,(\ell(\pi)-1)!$. Then,
$$k(X_{1},X_{2},\ldots,X_{r})=\sum_{\pi \in \setpart_{r}} \mu(\pi) \,\left(\prod_{i=1}^{\ell(\pi)}\,\esper\!\left[\prod_{j \in \pi_{i}} X_{j}\right]\right).$$
\begin{example}
For instance, when $r=3$, one has to sum over the $5$ set partitions of $\setpart_{3}$, and $k(X,Y,Z)$ is equal to
$$\esper[XYZ]-\esper[XY]\,\esper[Z]-\esper[XZ]\,\esper[Y]-\esper[YZ]\,\esper[X]+2\,\esper[X]\,\esper[Y]\,\esper[Z].$$
\end{example}
\bigskip
\bigskip

Now, in the algebra of observables of diagrams $\obs$, we can in fact define three kind of cumulants. First, if $\proba$ is a probability law on partitions, then any $X \in \obs$ can be considered as a random variable $X(\lambda)$, and one can define the joint cumulant of observables $X_{1},\ldots,X_{r}$. That said, one can define on $\obs \simeq \blg_{\infty}$ a new product $\bullet$, called \textbf{disjoint product}, and given by the following rule:
$$\forall \mu,\nu,\,\,\,\varSigma_{\mu}\bullet \varSigma_{\nu}=\varSigma_{\mu\sqcup \nu}.$$
This new product is the restriction to $\blg_{\infty}$ of the disjoint product of partial permutations:
$$(\sigma,S)\bullet (\tau,T)=\begin{cases}(\sigma\tau,S\sqcup T)&\text{if }S\cap T=\emptyset,\\
0&\text{otherwise}.
\end{cases}$$
We will denote by $\obs^{\bullet}$ the algebra of observables endowed with this new product. The \textbf{disjoint cumulant} of some observables $X_{1},\ldots,X_{r}$ is then defined exactly in the same way as for the standard cumulant:
$$k^{\bullet}(X_{1},X_{2},\ldots,X_{r})=\sum_{\pi \in \setpart_{r}} \mu(\pi) \,\left(\prod_{i\leq \ell(\pi)}\,\esper\!\left[\prod_{j \in \pi_{i}}^{\bullet} X_{j}\right]\right).$$
Finally, we define the \textbf{identity cumulant} of observables $X_{1},\ldots,X_{r}$ as the new observable 
$$k^{\id} (X_{1},X_{2},\ldots,X_{r})=\sum_{\pi \in \setpart_{r}} \mu(\pi) \,\left(\prod_{i \leq \ell(\pi)}^{\bullet}\,\left[\prod_{j \in \pi_{i}} X_{j}\right]\right).$$
The three kind of cumulants are related by the following identity, that can be understood as a computation of expectation \emph{via} conditioning, see \cite{Bri69} and \cite[Lemma 4.8]{Sni06b}:
$$k(X_{1},\ldots,X_{r})=\sum_{\pi \in \setpart_{r}} k^{\bullet}\left(k^{\id}(X_{j \in \pi_{1}}),k^{\id}(X_{j \in \pi_{2}}),\ldots,k^{\id}(X_{j \in \pi_{\ell(\pi)}})\right).$$\bigskip
\bigskip

The expansion of an identity cumulant $k^{\id}(\varSigma_{l_{1}},\ldots,\varSigma_{l_{r}})$ in partial permutations has been computed in \cite[Lemma 19]{FM10}. Given arrangements $A_{1},\ldots,A_{r}$, we define a relation $\sim$ by $i \sim j \iff A_{i}\cap A_{j}\neq \emptyset$. The transitive cloture of $\sim$ is an equivalence relation, and we denote by $\pi(A_{1},\ldots,A_{r})$ the corresponding set partition in $\setpart_{r}$. 
\begin{lemma}\label{identitycumulant}
For any $l_{1},\ldots,l_{r}$,
$$k^{\id}(\varSigma_{l_{1}},\ldots,\varSigma_{l_{r}})=\sum_{\substack{\forall i,\,\,A_{i} \in \arra_{l_{i} } \\ \pi(A_{1},\ldots,A_{r})=\lle 1,r\rre }} C(A_{1})\,C(A_{2})\,\cdots\,C(A_{r}).$$
\end{lemma}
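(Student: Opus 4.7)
My plan is to expand everything into partial permutations, swap summation orders, and reduce the resulting coefficient to a classical Möbius identity on the partition lattice.

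Since each $\varSigma_{l_i}$ expands as $\varSigma_{l_i} = \sum_{A_i \in \arra_{l_i}} C(A_i)$ with no internal disjointness constraint, the ordinary product over a block $\pi_i$ of a set partition $\pi \in \setpart_r$ becomes
\begin{equation*}
\prod_{j \in \pi_i} \varSigma_{l_j} = \sum_{(A_j)_{j \in \pi_i}} \prod_{j \in \pi_i} C(A_j),
\end{equation*}
and this element has support $\bigcup_{j \in \pi_i} A_j$. The $\bullet$-product across distinct blocks of $\pi$ annihilates terms whose supports overlap, so
\begin{equation*}
\prod_{i \leq \ell(\pi)}^{\bullet} \prod_{j \in \pi_i} \varSigma_{l_j} = \sum_{\substack{(A_j)_{1 \leq j \leq r},\, A_j \in \arra_{l_j} \\ A_j \cap A_k = \emptyset\text{ if } j \not\sim_\pi k}} C(A_1) \cdots C(A_r),
\end{equation*}
where $j \sim_\pi k$ means $j$ and $k$ lie in the same block of $\pi$.

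The key observation is that the condition $A_j \cap A_k = \emptyset$ whenever $j \not\sim_\pi k$ is equivalent to the set partition $\pi(A_1, \ldots, A_r)$ from the statement being a refinement of $\pi$: any pair $j, k$ with $A_j \cap A_k \neq \emptyset$ must then satisfy $j \sim_\pi k$, and the transitive closure preserves this. Substituting into the definition of $k^{\id}$ and exchanging sums yields
\begin{equation*}
k^{\id}(\varSigma_{l_1}, \ldots, \varSigma_{l_r}) = \sum_{(A_j)} \bigl(C(A_1) \cdots C(A_r)\bigr) \cdot \Bigg(\sum_{\pi \geq \pi(A_1, \ldots, A_r)} \mu(\pi)\Bigg).
\end{equation*}

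It remains to establish the Möbius-type identity $\sum_{\pi \geq \tau} \mu(\pi) = \delta_{\tau,\lle 1, r\rre}$ for any fixed $\tau \in \setpart_r$. The coarsenings $\pi \geq \tau$ are in bijection with set partitions of the blocks of $\tau$, preserving the number of blocks, so the sum reduces to $\sum_{\pi' \in \setpart_k} \mu(\pi')$ where $k = \ell(\tau)$. Using the standard generating function $\sum_{k \geq j} S(k,j)\, x^k/k! = (e^x-1)^j/j!$ and weighting by $(-1)^{j-1}(j-1)!$ gives
\begin{equation*}
\sum_{k \geq 1} \Bigg(\sum_{\pi' \in \setpart_k} \mu(\pi')\Bigg)\frac{x^k}{k!} = \sum_{j \geq 1} \frac{(-1)^{j-1}}{j} (e^x - 1)^j = \log(e^x) = x,
\end{equation*}
so the inner sum vanishes unless $k = 1$, i.e.\ unless $\tau = \lle 1, r\rre$. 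Plugging this back into the expression for $k^{\id}$ retains exactly the arrangements with $\pi(A_1, \ldots, A_r) = \lle 1, r \rre$, which proves the lemma. The only delicate point is the translation between the support-disjointness condition on the $A_j$'s and the refinement condition on $\pi(A_1, \ldots, A_r)$; the rest is a routine expansion followed by a classical Möbius computation.
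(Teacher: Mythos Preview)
Your argument is correct. The expansion of the mixed $\bullet$/ordinary product into sums over tuples of arrangements is valid, and the key reformulation --- that the disjointness condition imposed by the $\bullet$-product is exactly the refinement condition $\pi(A_1,\ldots,A_r)\leq\pi$ --- is the right observation. One small point worth making explicit: when the $\bullet$-product is nonzero, the cycles coming from different blocks have disjoint supports and therefore commute, so the product can indeed be rewritten as $C(A_1)\cdots C(A_r)$ in the canonical order regardless of how the blocks are listed. With that, the swap of summations and the M\"obius identity $\sum_{\pi\geq\tau}\mu(\pi)=\delta_{\tau,\hat 1}$ finish the job cleanly; your generating-function derivation of this identity is correct, though one could also note that $\mu(\pi)$ coincides with the M\"obius function $\mu_{\setpart_r}(\pi,\hat 1)$ of the partition lattice, so the identity is just the defining relation of that M\"obius function.

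The paper takes a different route: rather than using the explicit alternating-sum definition of $k^{\id}$, it invokes the equivalent recursive characterisation
\[
X_1\cdots X_r=\sum_{\pi\in\setpart_r} k^{\id}(X_{j\in\pi_1})\bullet\cdots\bullet k^{\id}(X_{j\in\pi_{\ell(\pi)}})
\]
and proves the lemma by induction on $r$. Your approach is more direct and entirely self-contained; the paper's inductive argument is shorter to state but pushes the actual work into verifying the inductive step (which it delegates to a citation). Both are standard techniques for identities of this moment--cumulant type.
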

\begin{proof}
The three kind of cumulants can be defined inductively by the relations:
\begin{align*}
X_{1}X_{2}\cdots X_{r}&=\sum_{\pi \in \setpart_{r}} k^{\id}(X_{j \in \pi_{1}})\bullet k^{\id}(X_{j \in \pi_{2}})\bullet \cdots \bullet k^{\id}(X_{j \in \pi_{\ell(\pi)}})\,;\\
\esper[X_{1}X_{2}\cdots X_{r}]&=\sum_{\pi \in \setpart_{r}} k(X_{j \in \pi_{1}})\, k(X_{j \in \pi_{2}})\,\cdots \, k(X_{j \in \pi_{\ell(\pi)}})\,;\\
\esper[X_{1}\bullet X_{2}\bullet \cdots \bullet X_{r}]&=\sum_{\pi \in \setpart_{r}} k^{\bullet}(X_{j \in \pi_{1}})\, k^{\bullet}(X_{j \in \pi_{2}})\,\cdots \, k^{\bullet}(X_{j \in \pi_{\ell(\pi)}}).
\end{align*}
In particular, the first relation enables us to prove Proposition \ref{identitycumulant} by induction on $r$, see \cite[Lemma 19]{FM10}.\end{proof}
\bigskip

Now, the degree in $\obs \simeq \blg_{\infty}$ is the restriction of the degree $\deg(\sigma,S)=|S|$ of the algebra of partial permutations $\alg_{\infty}$. However, in any product of cycles appearing in the expansion of $k^{\id} (\varSigma_{l_{1}},\ldots,\varSigma_{l_{r}})$, the joint support of the cycles $C(A_{1}),\ldots,C(A_{r})$ has for cardinality at most $l_{1}+l_{2}+\cdots+l_{r}-(r-1)$ --- this is imposed by the condition $\pi(A_{1},\ldots,A_{r})=\lle 1,r\rre$. Hence, Proposition \ref{identitycumulant} implies that
$$\forall l_{1},\ldots,l_{r},\,\,\,\deg k^{\id}(\varSigma_{l_{1}},\ldots,\varSigma_{l_{r}}) \leq l_{1}+\cdots+l_{r}-(r-1).$$
This leads to the following estimate, that will be crucial in the proof of our central limit theorem for the irreducible characters of $\sym_{\infty}$.
\begin{theorem}\label{estimate}
For $\omega \in \Omega$, we denote by $k_{n,\omega}$ and $k_{n,\omega}^{\bullet}$ the cumulant and the disjoint cumulant of observables with respect to the expectation $\esper_{n,\omega}$. For any observables $X_{1},\ldots,X_{r}$, 
\begin{align*}k_{n,\omega}^{\bullet}(X_{1},\ldots,X_{r})&=O\left(n^{\deg X_{1} +\cdots+\deg X_{r}-r+1}\right);\\
k_{n,\omega}(X_{1},\ldots,X_{r})&=O\left(n^{\deg X_{1} +\cdots+\deg X_{r}-r+1}\right). 
\end{align*}
\end{theorem}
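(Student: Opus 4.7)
The strategy is to first establish the bound for the disjoint cumulant $k^\bullet$, then bootstrap to the ordinary cumulant $k$ using the decomposition relating the two through the identity cumulant, for which Lemma~\ref{identitycumulant} provides the key degree bound.

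By multilinearity and the basis $(\varSigma_\mu)_{\mu\in\ym}$ of $\obs$, it suffices to prove the $k^\bullet$ estimate when $X_i = \varSigma_{\mu_i}$, with $d_i = |\mu_i|$. For any block $B \subseteq \lle 1,r\rre$ one has $\prod_{j\in B}^{\bullet}\varSigma_{\mu_j} = \varSigma_{\sqcup_{j\in B}\mu_j}$, so the expectation formula established in \S\ref{polfunc}, together with the multiplicativity $p_{\mu\sqcup\nu} = p_\mu p_\nu$ of the power sums, gives
$$\esper_{n,\omega}\!\left[\prod_{j\in B}^{\bullet}\varSigma_{\mu_j}\right] = n^{\downarrow D_B}\,\prod_{j\in B} p_{\mu_j}(A-B+\gamma E),\qquad D_B := \sum_{j\in B} d_j.$$
The common factor $\prod_{j=1}^{r} p_{\mu_j}(A-B+\gamma E)$ then pulls out of the Möbius sum defining $k^\bullet$, and the estimate reduces to the purely polynomial claim
$$N_r(n) \;:=\; \sum_{\pi \in \setpart_r} \mu(\pi) \prod_{B\in\pi} n^{\downarrow D_B} \;=\; O\!\left(n^{d-r+1}\right),\qquad d = d_1+\cdots+d_r.$$

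The polynomial identity above is the technical core, and where I expect the real work. My plan is to expand $n^{\downarrow D_B} = n^{D_B}\prod_{l=0}^{D_B-1}(1-l/n)$ and read off the coefficient of $n^{d-m}$ in $N_r(n)$ for each $m\geq 0$: this coefficient is a polynomial of total degree $2m$ in the block sizes $(D_B)$, coming from the elementary symmetric polynomials $e_k(0,1,\ldots,D_B-1)$. Substituting $D_B = \sum_{j\in B} d_j$ and expanding, each monomial selects a multiset of indices from $\lle 1,r\rre$ grouped into collections (one per factor $D_B^{a_B}$ in the expansion), each collection constrained to lie inside a common block of $\pi$. Summing $\mu(\pi)$ over partitions that respect a prescribed family of such constraints amounts to a Möbius sum over a quotient of $\setpart_r$ isomorphic to $\setpart_{r'}$, with $r'$ the number of connected components of the constraint graph on $\lle 1,r\rre$; by the classical identity $\sum_{\pi\in\setpart_s}\mu(\pi) = \delta_{s,1}$ this vanishes unless $r' = 1$, i.e., unless all $r$ indices are forced together. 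Connecting $r$ vertices requires at least $r-1$ constraint edges, and each constraint uses up one power of $1/n$, so only coefficients of $n^{d-m}$ with $m \geq r-1$ can survive.

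For the ordinary cumulant, the relation
$$k(X_1,\ldots,X_r) \;=\; \sum_{\pi\in\setpart_r} k^\bullet\!\left(k^{\id}(X_{j\in\pi_1}),\ldots,k^{\id}(X_{j\in\pi_{\ell(\pi)}})\right)$$
recalled at the end of Section~\ref{obs} finishes the job. Lemma~\ref{identitycumulant}, extended by multilinearity to arbitrary observables, yields $\deg k^{\id}(X_{j\in\pi_i}) \leq \sum_{j\in\pi_i} d_j - |\pi_i| + 1$; plugging this into the $k^\bullet$ bound just proved, each summand is
$$O\!\left(n^{\sum_{i=1}^{\ell(\pi)}(\sum_{j\in\pi_i} d_j - |\pi_i| + 1) - \ell(\pi) + 1}\right) \;=\; O\!\left(n^{d - r + 1}\right),$$
and summation over $\pi$ gives the stated estimate for $k_{n,\omega}$.
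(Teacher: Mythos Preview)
Your proof follows the paper's route exactly: factor out the power-sum specialization $\prod_j p_{\mu_j}(A-B+\gamma E)$ to reduce the $k^\bullet_{n,\omega}$ estimate to the purely polynomial bound $N_r(n)=O(n^{d-r+1})$ on M\"obius-weighted products of falling factorials, then bootstrap to $k_{n,\omega}$ via the decomposition through $k^{\id}$ together with the degree bound following Lemma~\ref{identitycumulant}. The only difference is that the paper simply cites \cite[Lemma~4.8]{Sni06b} for the falling-factorial estimate, whereas you sketch a direct M\"obius/connectivity argument; your sketch is correct, though the interchange of the sum over $\pi$ with the monomial expansion (legitimate because the coefficient of each monomial in $D_B^{a}$ depends only on the support of the monomial, not on the particular block $B$) is worth making explicit.
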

\begin{proof}
Because of the multilinearity of cumulants and of some compatibility relation with respect to the products (see \cite[Theorem 4.4]{Sni06b}), it is sufficient to prove these estimates for an algebraic basis of $\obs^{\bullet}$ or $\obs$, so we will suppose in the following that $X_{i}=\varSigma_{l_{i}}$ with positive integers $l_{1},\ldots,l_{r}\geq 1$. Now, notice that for any partition $\mu$,
$$\esper_{n,\omega}[\varSigma_{\mu}]=p_{\mu}(A-B+\gamma E)\,\,n^{\downarrow |\mu|}=p_{\mu}(A-B+\gamma E)\,\,\esper_{n,\omega}[\varSigma_{1^{|\mu|}}].$$
Thus, one can rewrite a disjoint cumulant $k_{n,\omega}^{\bullet}(\varSigma_{l_{1}},\ldots,\varSigma_{l_{r}})$ as follows:
\begin{align*}
k_{n,\omega}^{\bullet}(\varSigma_{l_{1}},\ldots,\varSigma_{l_{r}})&=\sum_{\pi \in \setpart_{r}}\mu(\pi)\,\left(\prod_{i\leq \ell(\pi)}\,\esper_{n,\omega}\!\left[\prod_{j \in \pi_{i}}^{\bullet} \varSigma_{l_{j}}\right]\right)\\
&\!\!\!\!\!\!\!\!\!\!\!\!=p_{l_{1},\ldots,l_{r}}(A-B+\gamma E) \,\,\sum_{\pi \in \setpart_{r}}\mu(\pi)\,\left(\prod_{i\leq \ell(\pi)}\,\esper_{n,\omega}\!\left[\prod_{j \in \pi_{i}}^{\bullet} \varSigma_{1^{l_{j}}}\right]\right)\\
&\!\!\!\!\!\!\!\!\!\!\!\!=p_{l_{1},\ldots,l_{r}}(A-B+\gamma E)\,\,k_{n,\omega}^{\bullet}(\varSigma_{1^{l_{1}}},\ldots,\varSigma_{1^{l_{r}}}).
\end{align*}
So, in order to prove the estimate for disjoint cumulants, it is sufficient to treat the case of observables $\varSigma_{1^{l}}$. However, for these observables, the expectation $\esper_{n,\omega}[\varSigma_{1^{l}}]=n^{\downarrow l}$ does not depend on $\omega$, and the estimate is simply an algebraic result on the degree of some polynomial combinations of falling factorials of $n$. It has been proved in \cite[Lemma 4.8]{Sni06b} that:
$$k^{\bullet}_{n}(\varSigma_{1^{l_{1}}},\ldots, \varSigma_{1^{l_{r}}})=O\left(n^{l_{1}+\cdots+l_{r}-r+1}\right).$$
Thus, the proof is complete for the disjoint cumulants. As for the standard cumulants, one uses the aforementioned formula:
$$k_{n,\omega}(X_{1},\ldots,X_{r})=\sum_{\pi \in \setpart_{r}} k^{\bullet}_{n,\omega}\left(k^{\id}(X_{j \in \pi_{1}}),k^{\id}(X_{j \in \pi_{2}}),\ldots,k^{\id}(X_{j \in \pi_{\ell(\pi)}})\right).$$
Since the estimate for disjoint cumulants has been proved, it suffices now to show that for any set partition $\pi \in \setpart_{r}$,
$$\left(\sum_{i=1}^{\ell(\pi)} \deg k^{\id}(X_{j} \in \pi_{i})\right)- \ell(\pi)+1 \leq \deg X_{1} +\cdots+\deg X_{r} -r+1.$$
This is trivial because of Proposition \ref{identitycumulant}:
\begin{align*}
\sum_{i=1}^{\ell(\pi)} \deg k^{\id}(X_{j} \in \pi_{i})&\leq \sum_{i=1}^{\ell(\pi)} \left(\sum_{j \in \pi_{i}} \deg X_{i}-|\pi_{i}|+1\right)\\
&\leq\deg X_{1}+\cdots+\deg X_{r}-r+\ell(\pi).
\end{align*}
Notice that the estimates of our Theorem can be made independent of $\omega$: indeed, for any integers $l_{1},\ldots,l_{r}$, $|p_{l_{1},\ldots,l_{r}}(A-B+\gamma E)|\leq 1$, and these quantities are the only ones that depend explicitly on $\omega$ in our proof.
\end{proof}
\noindent A particular case of Theorem \ref{estimate} is when $r=1$; thus, $\esper_{n,\omega}[X]=O(n^{\deg X})$ for any observable $X$. This simple fact and the multiplicativity of the irreducible characters of $\sym_{\infty}$ will now be used in Section \ref{largenumbers}, in which we shall recall the results of Kerov and Vershik (see \cite{KV81a}) and their proofs.
\bigskip

\section{Kerov-Vershik law of large numbers}\label{largenumbers}
In this section, we recall the results of Kerov and Vershik; although nothing is new here, we shall introduce the important concept of a (random) probability measure associated to a (random) diagram, that will later prove useful for the analysis of the fluctuations of the rows and columns, see \S\ref{gaussrow}.

\subsection{Law of large numbers for the irreducible characters}
Given an irreducible character $\chi^{\omega}$, we replace it on $\sym_{n}$ by the random function $\chi^{\lambda}$, where $\lambda$ is taken according to the probability measure $\proba_{n,\omega}$. The law of large numbers of Kerov and Vershik ensures that $\chi^{\lambda}$ converges to $\chi^{\omega}$:
\begin{theorem}\label{llnalg}
For any permutation $\sigma \in \sym_{\infty}$,
$$\chi^{\lambda}(\sigma) \longrightarrow_{\proba_{n,\omega}} \chi^{\omega}(\sigma),$$
where the long right arrow means that one has convergence in probability.
\end{theorem}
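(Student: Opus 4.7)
The plan is to reduce the statement to the convergence in probability of a single renormalized observable, and then close the argument with Chebyshev's inequality applied to the cumulant estimates of Theorem~\ref{estimate}.

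First, I would fix $\sigma \in \sym_\infty$ and let $\mu=(\mu_1,\ldots,\mu_r)$ be the partition formed by the lengths of its non-trivial cycles, so that $\sigma$ lies in $\sym_n$ for every $n\geq |\mu|$ with cycle type $\mu\sqcup 1^{n-|\mu|}$. By the very definition of the Ivanov--Kerov observables in \S\ref{polfunc},
$$\chi^\lambda(\sigma)=\frac{\varSigma_\mu(\lambda)}{n^{\downarrow|\mu|}}\qquad\forall\,\lambda\in\ym_n,$$
while Theorem~\ref{thomatheorem} gives $\chi^\omega(\sigma)=p_\mu(A-B+\gamma E)$. Hence the statement is equivalent to the single convergence
$$\frac{\varSigma_\mu(\lambda)}{n^{\downarrow|\mu|}}\longrightarrow_{\proba_{n,\omega}} p_\mu(A-B+\gamma E).$$

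Second, the expectation computation carried out at the end of \S\ref{polfunc} already establishes this convergence \emph{on average}:
$$\esper_{n,\omega}\!\left[\frac{\varSigma_\mu}{n^{\downarrow|\mu|}}\right]=p_\mu(A-B+\gamma E).$$
So everything reduces to controlling the fluctuations. This is exactly where Theorem~\ref{estimate} enters: applied with $r=2$ and $X_1=X_2=\varSigma_\mu$ (which has degree $|\mu|$ in $\obs$), it yields
$$\mathrm{Var}_{n,\omega}(\varSigma_\mu)=k_{n,\omega}(\varSigma_\mu,\varSigma_\mu)=O\!\left(n^{2|\mu|-1}\right).$$
Since $n^{\downarrow|\mu|}\sim n^{|\mu|}$, the variance of the renormalized observable is $O(1/n)$, and Chebyshev's inequality furnishes convergence in probability to the mean, which coincides with $\chi^\omega(\sigma)$.

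Conceptually, the entire weight of the proof sits inside Theorem~\ref{estimate}, which itself rests on \'Sniady's cumulant machinery together with the multiplicativity of $\chi^\omega$ on disjoint cycles encoded in the formula $\esper_{n,\omega}[\varSigma_\mu]=p_\mu(A-B+\gamma E)\,n^{\downarrow|\mu|}$. Once that estimate is available, the law of large numbers is essentially a one-line Chebyshev bound, and I do not expect any genuine obstacle beyond this. The only point that warrants care is the book-keeping of fixed points: the factor $n^{\downarrow|\mu|}$ is the correct normalization because the fixed points of $\sigma$ outside $\lle 1,N\rre$ are all absorbed in the $1^{n-|\mu|}$ block of the cycle type, so $\varSigma_\mu$ evaluated at $\lambda$ genuinely reconstructs $n^{\downarrow|\mu|}\,\chi^\lambda(\sigma)$. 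The same cumulant formalism, pushed one step further to $r\geq 3$, will afterwards provide the gaussian fluctuations announced in Section~\ref{clt}.
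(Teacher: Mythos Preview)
Your proposal is correct and follows the same Chebyshev strategy as the paper: reduce to the convergence of $\varSigma_\mu/n^{|\mu|}$, compute the mean exactly, bound the variance by $O(n^{2|\mu|-1})$, and conclude. The only difference is cosmetic: you invoke Theorem~\ref{estimate} with $r=2$ to control the variance, whereas the paper keeps things slightly more elementary at this stage, using only the $r=1$ case $\esper_{n,\omega}[X]=O(n^{\deg X})$ together with the factorization $\varSigma_\mu\,\varSigma_\mu=\varSigma_{\mu\sqcup\mu}+(\text{degree}\leq 2|\mu|-1)$ to compute the second moment directly. Both routes are equivalent and equally short.
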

\begin{proof}
For any partition $\mu$, $\esper_{n,\omega}\!\left[\frac{\varSigma_{\mu}}{n^{|\mu|}}\right]\simeq p_{\mu}(A-B+\gamma E)$, and 
\begin{align*}
\esper_{n,\omega}\!\left[\left(\frac{\varSigma_{\mu}}{n^{|\mu|}}\right)^{2}\right]&=\esper_{n,\omega}\!\left[\frac{\varSigma_{\mu \sqcup \mu}}{n^{2|\mu|}}\right]+\frac{1}{n^{2|\mu|}}\,\esper_{n,\omega}[\text{obs. of degree at most }2|\mu|-1]\\
&= \left(p_{\mu}(A-B+\gamma E)\right)^{2}+O(n^{-1})\simeq \left(p_{\mu}(A-B+\gamma E)\right)^{2}.
\end{align*}
Here, we have used the estimate $\esper_{n,\omega}[X]=O(n^{\deg X})$ mentioned at the end of the previous paragraph, the factorization property of the symbols $\varSigma_{\mu}$ in higher degree, and the computation of the expectations $\esper_{n,\omega}[\varSigma_{\mu}]$ performed at the end of \S\ref{polfunc}. Because of Bienaym\'e-Chebyshev inequality, these two estimates ensure that $\frac{\varSigma_{\mu}}{n^{|\mu|}}\simeq \chi^{\lambda}(\mu \sqcup 1^{n-|\mu|})$ converges in probability towards $p_{\mu}(A-B+\gamma E)=\chi^{\omega}(\mu)$. 
\end{proof}
\bigskip

Again, Theorem \ref{llnalg} can be given an uniform flavour: namely, for any $\sigma \in \sym_{\infty}$, and any $\eps >0$,
$$\sup_{\omega \in \Omega} \proba_{n,\omega}[|\chi^{\lambda}(\sigma)-\chi^{\omega}(\sigma)|\geq \eps] \to_{n \to \infty} 0.$$
On the other hand, we shall see in Section \ref{clt} that in fact, a much stronger result holds: for any permutation $\sigma$, the renormalized difference $\sqrt{n}\,(\chi^{\lambda}(\sigma)-\chi^{\omega}(\sigma))$ converges in law towards a centered gaussian variable. Now, let us remark that for any $k$,
$$\esper_{n,\omega}\!\left[\left(\frac{p_{k}-\varSigma_{k}}{n^{k}}\right)^{2}\right]=\frac{\esper_{n,\omega}[\text{obs. of degree at most }2k-2]}{n^{2k}}=O(n^{-2}) \to 0.$$
So, Theorem \ref{llnalg} implies that for any $k\geq 1$, and any $\omega \in \Omega$,
$$\frac{p_{k}(\lambda)}{n^{k}} \longrightarrow_{\proba_{n,\omega}} p_{k}(A-B+\gamma E).$$
We are now going to interpret this result in geometric terms, thereby proving a law of large numbers for the first rows $\lambda_{1},\lambda_{2},\ldots$ and the first columns $\lambda_{1}',\lambda_{2}',\ldots$ of a random partition $\lambda$ chosen according to a law $\proba_{n,\omega}$.

\subsection{Random measures associated to random partitions}
We denote by $\mathscr{P}([-1,1])$ the set of borelian probability measures on the interval $[-1,1]$; this is a metrizable compact set for Skorohod's topology of weak convergence, see \cite[Chapter 1]{Bil69}. A distance compatible with this topology is provided by any dense sequence of functions in the space of continuous (real-valued) functions $\mathscr{C}([-1,1])$ --- by Stone-Weierstrass theorem, such a sequence $(f_{n})_{n \in \N}$ exists, and we can for instance take $\{f_{n}\}_{n \in \N}=\Q[x]$. Thus, 
$$d(m_{1},m_{2})=\sum_{n=0}^{\infty}\frac{1}{2^{n}} \min(1,|m_{1}(f_{n})-m_{2}(f_{n})|)$$
is a distance on $\mathscr{P}([-1,1])$ that defines the topology of weak convergence\footnote{Here, for any $f \in \mathscr{C}([-1,1])$ and any $m \in \mathscr{P}([-1,1])$, $m(f)=\int_{-1}^{1} f(x) \,m(dx)$.}. Now, to any partition $\lambda$ of size $n$, we associate a probability measure $X_{\lambda} \in \mathscr{P}([-1,1])$:
$$X_{\lambda}=\sum_{i=1}^{d} \frac{a_{i}}{n}\,\delta_{\left(\frac{a_{i}}{n}\right)}+\sum_{i=1}^{d} \frac{b_{i}}{n}\,\delta_{\left(-\frac{b_{i}}{n}\right)},$$
where the $a$'s and the $b$'s are the Frobenius coordinates of $\lambda$. Notice that the moments of $X_{\lambda}$ are up to a shift of index the renormalized Frobenius moments of $\lambda$:
$$\forall k \geq 0,\,\,\,X_{\lambda}(x^{k})=\frac{p_{k+1}(\lambda)}{n^{k+1}}.$$
If $\lambda$ is random, we will then consider $X_{\lambda}$ as a \textbf{random measure}, that is to say a random point of the space $\mathscr{P}([-1,1])$.\bigskip
\bigskip

If $\omega =((\alpha_{1},\alpha_{2},\ldots),(\beta_{1},\beta_{2},\ldots))$ is a point of the Thoma simplex $\Omega$, we can also associate to it a probability measure $X_{\omega} \in \mathscr{P}([-1,1])$:
$$X_{\omega}=\sum_{i=1}^{\infty} \alpha_{i}\,\delta_{(\alpha_{i})}+\sum_{i=1}^{\infty} \beta_{i}\,\delta_{(-\beta_{i})} + \gamma \,\delta_{0}.$$
The moments of $X_{\omega}$ are up to a shift of index the specializations of the power sums on the formal alphabet $A-B+\gamma E$:
$$\forall k \geq 0,\,\,\, X_{\omega}(x^{k})=p_{k+1}(A-B+\gamma E).$$
We have seen in the previous paragraph that under the coherent system of measures $\proba_{n,\omega}$, $\frac{p_{k}(\lambda)}{n^{k}}$ converges in probability towards $p_{k}(A-B+\gamma E)$ for any $k$. Consequently,
$$\forall k \geq 0,\,\,\, X_{\lambda}(x^{k}) \longrightarrow_{\proba_{n,\omega}} X_{\omega}(x^{k}).$$
By linearity, we have the same result for any evaluation of the measures on a polynomial in $\Q[x]$, and since $\Q[x]$ is dense in $\mathscr{C}([-1,1])$, we conclude that:
\begin{proposition}
For the topology of weak convergence,
$$X_{\lambda}\longrightarrow_{\proba_{n,\omega}} X_{\omega}.$$
\end{proposition}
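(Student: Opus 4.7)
The plan is to upgrade the moment-by-moment convergence in probability, already established just above the statement, to convergence in probability in the metric $d$ on $\mathscr{P}([-1,1])$. The key point is that $[-1,1]$ is compact, so polynomials are dense in $\mathscr{C}([-1,1])$ by Stone--Weierstrass, and the metric $d$ is built precisely from a countable sequence $(f_n)_{n\in\N}$ enumerating $\Q[x]$.

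First, I would record the starting point: for every integer $k\geq 0$,
$$X_{\lambda}(x^{k})=\frac{p_{k+1}(\lambda)}{n^{k+1}}\longrightarrow_{\proba_{n,\omega}} p_{k+1}(A-B+\gamma E)=X_{\omega}(x^{k}),$$
as shown in the paragraph preceding the statement via the estimate $\esper_{n,\omega}\!\left[(p_k/n^k - \varSigma_k/n^k)^2\right] = O(n^{-2})$ together with Theorem \ref{llnalg}. By linearity in the test function, this extends to every $P\in\Q[x]$: $X_{\lambda}(P)\to X_{\omega}(P)$ in probability under $\proba_{n,\omega}$.

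Second, to pass from this to convergence in the metric $d$, I would fix $\eps>0$ and choose $N$ so large that $\sum_{n>N}2^{-n}<\eps/2$. Then
$$d(X_{\lambda},X_{\omega})\;\leq\;\frac{\eps}{2}+\sum_{n=0}^{N} 2^{-n}\,\min\!\bigl(1,\,|X_{\lambda}(f_{n})-X_{\omega}(f_{n})|\bigr).$$
Each of the finitely many random variables $|X_{\lambda}(f_{n})-X_{\omega}(f_{n})|$, for $0\leq n\leq N$, tends to $0$ in probability by the previous step, hence so does their (finite) weighted sum. Consequently $\proba_{n,\omega}[d(X_{\lambda},X_{\omega})>\eps]\to 0$, which is the desired conclusion.

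There is no real obstacle here: the proposition is essentially a combinatorial-probabilistic reformulation of the convergence of moments already obtained. The only mild point is to verify that convergence in probability is indeed preserved under the countable weighted sum defining $d$, which is handled by the standard truncation argument above; compactness of $[-1,1]$ together with the fact that each $X_{\lambda}$ and $X_{\omega}$ is a genuine probability measure supported in $[-1,1]$ (so that $|X_{\lambda}(f_{n})-X_{\omega}(f_{n})|\leq 2\|f_{n}\|_{\infty}$ a.s.) makes the truncation legitimate.
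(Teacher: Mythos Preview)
Your argument is correct and follows exactly the route the paper sketches: pass from convergence of moments to convergence of $X_\lambda(P)$ for every $P\in\Q[x]$ by linearity, then use density of $\Q[x]$ in $\mathscr{C}([-1,1])$. The paper leaves the last step as a one-line remark, whereas you spell out the standard truncation argument in the metric $d$; this is a legitimate (and welcome) filling-in of details, not a different approach.
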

\noindent More concretely, for \emph{any} function $f \in \mathscr{C}([-1,1])$ and any $\eps>0$, $\proba_{n,\omega}[|X_{\lambda}(f)-X_{\omega}(f)|\geq \eps] \to 0$. In fact, these convergences are again uniform on the Thoma simplex:
$$\sup_{\omega \in \Omega}\proba_{n,\omega}[|X_{\lambda}(f)-X_{\omega}(f)|\geq \eps]\to_{n \to \infty} 0.$$
\bigskip

Finally, let us translate the Kerov-Vershik law of large numbers in terms of the asymptotic geometry of the random partitions. From the result on random measures, it is easy to guess that the following should be true:
\begin{theorem}\label{llngeom}
Take $\lambda$ randomly according to a measure $\proba_{n,\omega}$, with $\omega=(\alpha,\beta)$.
$$\forall i \geq1,\,\,\,\frac{\lambda_{i}}{n}\simeq \frac{a_{i}}{n} \longrightarrow_{\proba_{n,\omega}} \alpha_{i}\qquad;\qquad \forall j \geq1,\,\,\,\frac{\lambda_{j}'}{n}\simeq \frac{b_{j}}{n} \longrightarrow_{\proba_{n,\omega}} \beta_{j}.$$
\end{theorem}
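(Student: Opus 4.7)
The plan is to upgrade the weak convergence $X_{\lambda}\to X_{\omega}$ established just above into convergence of the individual atoms. Applying it to the continuous test function $f_{k}(x)=(\max(x,0))^{k-1}$, which vanishes at $x=0$ for every $k\geq 2$ and thus ignores the atom $\gamma\,\delta_{0}$ of $X_{\omega}$, one extracts the convergence of one-sided Frobenius moments
\[
\sum_{j=1}^{d} (a_{j}/n)^{k} \longrightarrow_{\proba_{n,\omega}} \sum_{j=1}^{\infty} \alpha_{j}^{k} \qquad\text{for every } k\geq 2,
\]
and the analogous statement for the $b_{j}$'s and $\beta_{j}$'s using $f_{k}(x)=(\max(-x,0))^{k-1}$.

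The core of the argument is a sandwich by $\ell^{k}$-norms. Since $a_{1}\geq a_{2}\geq \cdots$ and $\sum_{j} a_{j}\leq n$, one has
\[
\left(\sum_{j}(a_{j}/n)^{k}\right)^{\!1/(k-1)} \leq \frac{a_{1}}{n} \leq \left(\sum_{j}(a_{j}/n)^{k}\right)^{\!1/k},
\]
and exactly the same inequalities hold with $a_{j}/n$ replaced by $\alpha_{j}$ (using $\sum_{j}\alpha_{j}\leq 1$). Both $\alpha$-side bounds converge to $\max_{j}\alpha_{j}=\alpha_{1}$ as $k\to\infty$. Given $\varepsilon>0$, I first pick $k$ large enough that the two deterministic bounds lie within $\varepsilon/2$ of $\alpha_{1}$, then let $n\to\infty$ at this fixed $k$; the probabilistic convergence of the Frobenius moments then pins $a_{1}/n$ within $\varepsilon$ of $\alpha_{1}$ with probability tending to $1$. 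The degenerate case $\alpha_{1}=0$ is even simpler: $a_{1}/n \leq \left(\sum_{j}(a_{j}/n)^{2}\right)^{1/2}\to 0$. For general $i\geq 2$ I proceed by induction: once $a_{j}/n \to \alpha_{j}$ is known in probability for $j<i$, subtraction yields $\sum_{j\geq i}(a_{j}/n)^{k}\to \sum_{j\geq i}\alpha_{j}^{k}$, and the identical sandwich restricted to indices $\geq i$ gives $a_{i}/n\to \alpha_{i}$. The $\beta_{j}$'s are handled by running the same argument on the negative half-line.

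Finally, $\lambda_{i}/n \to \alpha_{i}$ is deduced from $\lambda_{i}/n - a_{i}/n = (2i-1)/(2n) \to 0$ when $i$ lies on the diagonal of $\lambda$; when eventually $\lambda_{i}<i$, one must be in the case $\alpha_{i}=0$ (by the already established convergence of $a_{1}/n,\ldots,a_{i-1}/n$) and $\lambda_{i}=O(1)$ makes the claim immediate. The only genuine technical subtlety --- more bookkeeping than obstruction --- is the order of limits: the parameter $k$ must be fixed \emph{after} $\varepsilon$ and \emph{before} sending $n\to\infty$. Possible ties $\alpha_{i-1}=\alpha_{i}$ cause no trouble, as the $\ell^{k}$-sandwich handles them transparently.
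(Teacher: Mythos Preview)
Your argument is correct and genuinely different from the paper's. Both proofs start from the weak convergence $X_{\lambda}\to X_{\omega}$, but then diverge. The paper isolates each atom by evaluating $X_{\lambda}$ and $X_{\omega}$ on localised bump functions $f_{\eps}^{+},f_{\eps}^{-}$ supported near $\alpha_{i}$: the first shows that no $a_{j}/n$ overshoots $\alpha_{1}+2\eps$, the second counts how many $a_{j}/n$ lie above $\alpha_{1}-2\eps$; multiplicities are handled explicitly, and an outer induction peels off each block of equal coordinates. Your route is more analytic: you evaluate on the global test functions $x\mapsto (\max(x,0))^{k-1}$, obtain the one-sided power sums $\sum_{j}(a_{j}/n)^{k}$, and use the elementary sandwich
\[
\Bigl(\textstyle\sum_{j\geq i}(a_{j}/n)^{k}\Bigr)^{1/(k-1)}\;\leq\;\frac{a_{i}}{n}\;\leq\;\Bigl(\textstyle\sum_{j\geq i}(a_{j}/n)^{k}\Bigr)^{1/k},
\]
valid because $\sum_{j}a_{j}\leq n$. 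Your approach buys simplicity: multiplicities are absorbed automatically by the $\ell^{k}$-norm asymptotics, and no bump functions need be constructed. The paper's approach buys a more direct geometric picture of where the mass of $X_{\lambda}$ sits, which is reused later when proving the fluctuation Theorem~\ref{almostgeom}. One small wording issue: your justification ``by the already established convergence of $a_{1}/n,\ldots,a_{i-1}/n$'' for the case $\lambda_{i}<i$ is not quite the right reference; what you actually need (and have) is $a_{i}/n\to\alpha_{i}$ with the convention $a_{i}=0$ when $d<i$, which forces $\alpha_{i}=0$ whenever $\proba[\lambda_{i}<i]$ does not vanish.
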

\begin{proof}
One of the main difficulty when recovering the shapes of the partitions is that we have to take account of the multiplicities of the coordinates $\alpha_{i},\beta_{j}$ of the point $\omega \in \Omega$. Let us prove rigorously that if $\alpha_{1}=\alpha_{2}=\cdots=\alpha_{m}>\alpha_{m+1}$ are the biggest coordinates of $\alpha$, then the renormalized $m$ first parts of $\lambda$ converge in probability to the value $\alpha_{1}$. We fix $\eps>0$, and we split the proof in two parts.\vspace{2mm}
\begin{enumerate}
\item First, let us prove that $\frac{a_{1}}{n}$ is smaller than $\alpha_{1}+2\eps$ with probability going to $1$ as $n$ goes to infinity (of course, we suppose $\alpha_{1}<1$, this case being obvious). We consider a test function $f_{\eps}^{+}$ that takes its values in $[0,1]$, is continuous, and is equal to $1$ after $\alpha_{1}+2\eps$ and to $0$ before $\alpha_{1}+\eps$ (see Figure \ref{functionplus}). On the one hand, $X_{\omega}(f_{\eps}^{+})=0$, and on the other hand,
$$X_{\lambda}(f_{\eps}^{+})\geq (\alpha_{1}+2\eps)\,\,\card \left\{i \,\,\big|\,\, \frac{a_{i}}{n} \geq \alpha_{1}+2\eps\right\}.$$
Since $X_{\lambda}(f_{\eps})$ converges in probability to $X_{\omega}(f_{\eps}^{+})=0$, this implies that the cardinality is equal to $0$ for $n$ big enough and with big probability.
\figcap{
\includegraphics{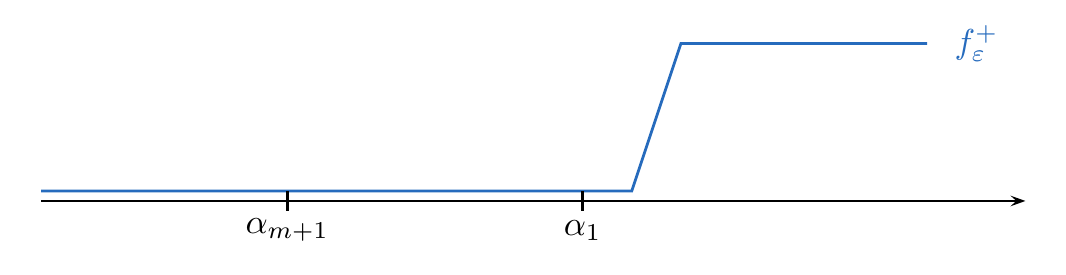}}{Test function $f_{\eps}^{+}$.\label{functionplus}}
\vspace{2mm}
\item Similarly, let us prove that $\frac{a_{m}}{n}$ is bigger than $\alpha_{1}-2\eps$ with probability going to $1$ as $n$ goes to infinity (again, we suppose that $\alpha_{1}>0$, this case being obvious). We consider a test function $f_{\eps}^{-}$ that takes its values in $[0,1]$, is continuous, and is equal to $1$ after $\alpha_{1}-\eps$ and to $0$ before $\alpha_{1}-2\eps$ (see Figure \ref{functionminus}).
\figcap{
\includegraphics{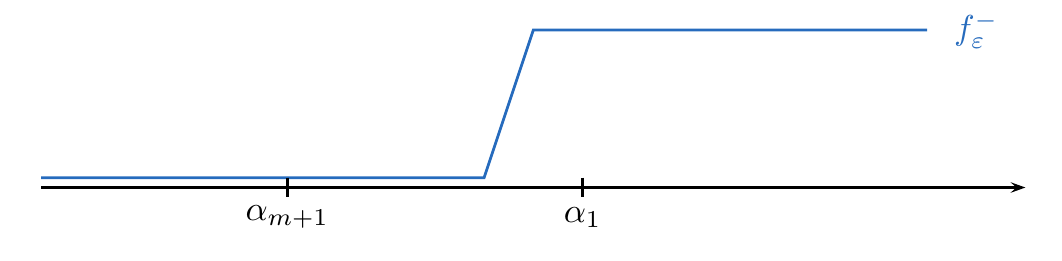}}{Test function $f_{\eps}^{-}$.\label{functionminus}}
For $\eps $ small enough, we may also suppose that $\alpha_{1}-2\eps>\alpha_{m+1}$. Then, $X_{\omega}(f_{\eps}^{-})=m\alpha_{1}$, whereas
$$X_{\lambda}(f_{\eps}^{-}) \leq (\alpha_{1}+2\eps) \,\,\card\left\{i \,\,\big|\,\, \frac{a_{i}}{n} \geq \alpha_{1}-2\eps\right\}$$
with big probability --- indeed, we already know that the $a_{i}$'s are smaller than $\alpha_{1}+2\eps$ with probability almost $1$. For $\eps$ small enough and as $n$ goes to infinity, this leads to the inequality $ \card\{i \,\,|\,\, \frac{a_{i}}{n} \geq \alpha_{1}-2\eps\} \geq m$, which is equivalent to ask that $\frac{a_{m}}{n} \geq \alpha_{1}-2\eps$.\vspace{2mm}
\end{enumerate}
Thus, for any $\eps>0$, the probability of the event
$$\alpha_{1}+2\eps\geq \frac{a_{1}}{n}\geq \cdots \geq \frac{a_{m}}{n} \geq \alpha_{1}-2\eps$$
goes to $1$ as $n$ goes to infinity, which is exactly Theorem \ref{llngeom} for the $m$ first rows of $\lambda$. Now, an induction enables us to treat the other rows, and the columns are treated exactly the same way.
\end{proof}
\begin{example}
When $\omega=\mathbf{0}$, the law of large numbers ensures that all the rows of $\lambda$ are some $o(n)$, and similarly for its columns. Actually, it is a well-known fact that the rows and the columns have order of magnitude $\sqrt{n}$, and that there is a limit shape for the random partitions under Plancherel measures after scaling the Young diagrams in both directions by the factor $\sqrt{n}$ (\emph{cf.} \cite{LS77,KV77}):
\figcap{
\includegraphics{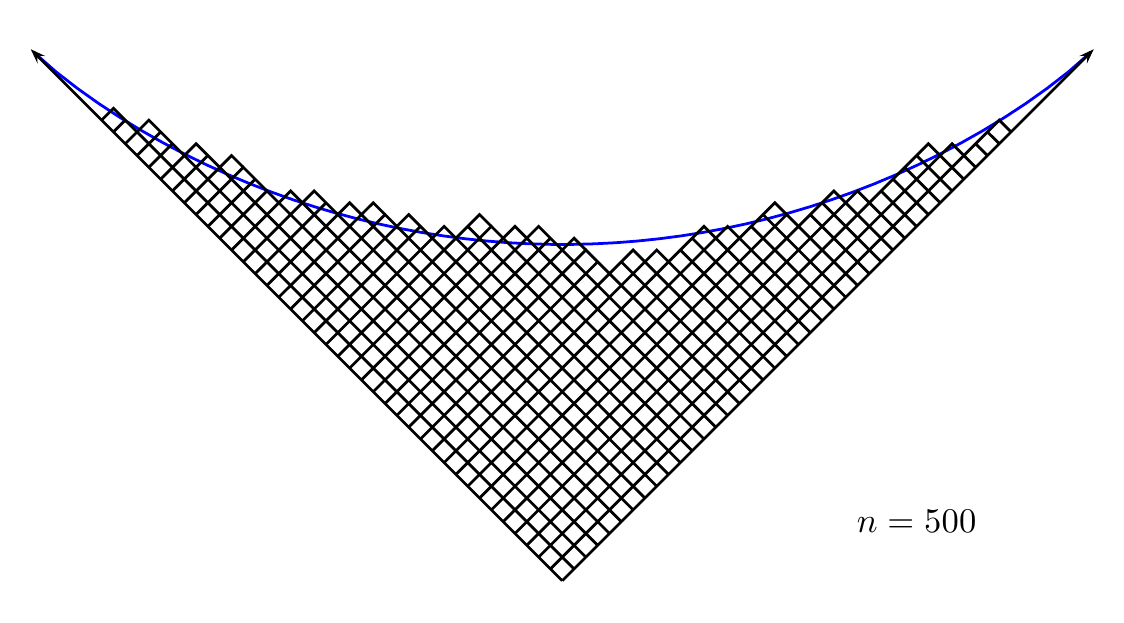}}{Random partition under the Plancherel measure $\proba_{n,\mathbf{0}} = \proba_{n}$.}

\noindent Hence, for this particular point of $\Omega$, the law of large numbers is not precise enough; however, an accurate law of large numbers and a central limit theorem can be proved in this case (and for other classes of measures on partitions) by a slight modification of the tools that we use here (see \cite{IO02,Mel11}). Another important example is when $\omega=(\alpha,\beta)$ with $\alpha$ and $\beta$ geometric progressions of same ratio $q$. However, these examples are better understood in the setting of the Hecke algebras, see Section \ref{qtplancherel}.
\end{example}
\bigskip\bigskip

To conclude this section, let us interpret the geometric law of large numbers in terms of properties of $\omega$-shuffles. We fix an integer $r$. Given a shuffle $\sigma$ of $n_{1}+n_{2}+\cdots+m_{1}+m_{2}+\cdots+l$ cards as in \S\ref{riffleshuffle}, it is obvious that the maximal sum 
$$\ell(iw_{1})+\ell(iw_{2})+\cdots+\ell(iw_{r})$$
of the lengths of $r$ disjoint increasing subwords in $\sigma$ is at least $n_{1}+n_{2}+\cdots+n_{r}$. Indeed, it suffices to consider the words 
$$iw_{j}=\lle n_{1}+\cdots+n_{j-1}+1,n_{1}+\cdots+n_{j}\rre,$$
because they stay in this order in $\sigma$. 
\begin{example}
Take $r=1$, $n=9=5+4$. Any shuffle of $12345$ and $6789$ contains an increasing subword of length at least $5$, namely, $12345$. However, for many shuffles, one can do better: for instance, $162378495$ is a shuffle of $12345$ with $6789$, and it contains an increasing subword of length $6$, namely, $123789$.
\end{example}
\noindent Thus, the $r$-th Greene invariant of $\sigma$, which we shall denote by $GI_{r}(\sigma)$, is bigger than $n_{1}+\cdots+n_{r}$. Now, if the sizes of the blocks are taken according to a multinomial law of parameter $(\alpha,\beta,\gamma)$, then the law of large numbers ensures that $\frac{n_{i}}{n}$ converges in probability to $\alpha_{i}$. Thus, with probability going to $1$, if $\sigma$ is a permutation obtained by an $\omega$-shuffle, then
$$\frac{GI_{r}(\sigma)}{n} \geq \alpha_{1}+\cdots + \alpha_{r}-\eps$$
for any $\eps>0$. However, one could imagine that ``many'' $\omega$-shuffles yield permutations with greater Greene invariants (this impression may be strengthened by the randomized block of size $l\simeq n\gamma$). The Kerov-Vershik law of large numbers and Propositions \ref{frompermtopar} and \ref{apocalyptica} ensure that it is not the case:
\begin{proposition}
Let $\sigma$ be a permutation of size $n$ chosen randomly according to $\qproba_{n,\omega}$. As $n$ goes to infinity, 
$$\frac{GI_{r}(\sigma)}{n} \longrightarrow_{\qproba_{n,\omega}} \alpha_{1}+\cdots+\alpha_{r}.$$
Of course, the same holds if one replaces the increasing subwords by the decreasing subwords and the $\alpha$'s by the $\beta$'s.
\end{proposition}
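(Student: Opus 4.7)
The plan is to reduce the statement to the geometric law of large numbers for partitions (Theorem \ref{llngeom}) by invoking the RSK correspondence and its compatibility with the push-forward described in Proposition \ref{frompermtopar}.

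First I would recall from \S\ref{fqsym} that for any permutation $\sigma$, the common shape $\Lambda(\sigma)$ of the RSK pair $(P(\sigma),Q(\sigma))$ encodes the Greene invariants: for every $r \leq \ell(\Lambda(\sigma))$, one has
$$GI_{r}(\sigma) = \Lambda_{1}(\sigma) + \Lambda_{2}(\sigma) + \cdots + \Lambda_{r}(\sigma).$$
When $r$ exceeds $\ell(\Lambda(\sigma))$ the identity still holds trivially if we set $\Lambda_{i}(\sigma) = 0$ for $i > \ell(\Lambda(\sigma))$. Analogously, the $r$-th Greene invariant associated to decreasing subwords is $\Lambda'_{1}(\sigma) + \cdots + \Lambda'_{r}(\sigma)$.

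Next I would use Proposition \ref{frompermtopar}: if $\sigma$ is sampled from $\qproba_{n,\omega}$, then $\lambda = \Lambda(\sigma)$ has distribution $\proba_{n,\omega}$. Therefore for any continuous bounded functional of the first $r$ parts of a partition, the push-forward identity $\Lambda_{*}\qproba_{n,\omega} = \proba_{n,\omega}$ allows us to transfer statements on $\proba_{n,\omega}$ to statements on $\qproba_{n,\omega}$. Applying this to the event
$$\left|\frac{\Lambda_{1}(\sigma) + \cdots + \Lambda_{r}(\sigma)}{n} - (\alpha_{1} + \cdots + \alpha_{r})\right| \geq \eps,$$
we see that it has $\qproba_{n,\omega}$-probability equal to the $\proba_{n,\omega}$-probability of the analogous event for $\frac{\lambda_{1} + \cdots + \lambda_{r}}{n}$.

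Finally, Theorem \ref{llngeom} yields $\frac{\lambda_{i}}{n} \to \alpha_{i}$ in probability under $\proba_{n,\omega}$ for every $i$, and by a union bound over $i \in \{1,\ldots,r\}$ the sum also converges in probability to $\alpha_{1} + \cdots + \alpha_{r}$. Combined with the RSK identification above, this gives the claim for increasing subwords. For the decreasing version one applies the same argument to the conjugate coordinates $\frac{\lambda'_{j}}{n} \to \beta_{j}$, also provided by Theorem \ref{llngeom}. There is no real obstacle here: the work has already been done at the level of partitions, and the only subtlety worth mentioning is the possibility that $r > \ell(\Lambda(\sigma))$ for some $\sigma$, but this is absorbed by the convention $\Lambda_{i}(\sigma) = 0$ for $i > \ell(\Lambda(\sigma))$, which is consistent with the Thoma parameters satisfying $\alpha_{i} = 0$ beyond the support of $\alpha$.
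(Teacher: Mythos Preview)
Your proof is correct and follows exactly the same approach as the paper: identify $GI_{r}(\sigma)$ with $\lambda_{1}+\cdots+\lambda_{r}$ via RSK and Proposition \ref{frompermtopar}, then apply Theorem \ref{llngeom}. You have simply spelled out a few more details (the union bound, the convention for $r>\ell(\Lambda(\sigma))$) than the paper's two-line proof.
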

\begin{proof}
Because of Proposition \ref{frompermtopar}, the distribution of $GI_{r}(\sigma)$ under $\qproba_{n,\omega}$ is the same as the distribution of $\lambda_{1}+\cdots+\lambda_{r}$ under $\proba_{n,\omega}$. Then, it suffices to apply Theorem \ref{llngeom}.
\end{proof}\bigskip

Hence, $\omega$-shuffles do not create bigger increasing subwords than those given by the sizes of the blocks. In Section \ref{clt}, we are going to prove a central limit theorem that will improve this result. Hence, assuming that the $\alpha_{i}$'s are all distinct, up to order $\sqrt{n}$ (and not $n$), the asymptotic distribution of $GI_{r}(\sigma)$ is the same as the asymptotic distribution of the size $n_{1}+\cdots+n_{r}$ of the $r$ first blocks of the $\omega$-shuffle, that is to say
$$n\,(\alpha_{1}+\cdots+\alpha_{r})+\sqrt{n}\,\sqrt{(\alpha_{1}+\cdots+\alpha_{r})(1-\alpha_{1}-\cdots-\alpha_{r})}\,\,\mathcal{N}(0,1)+o(\sqrt{n})$$
with $\mathcal{N}(0,1)$ a standard gaussian variable.
\bigskip

\section{Central limit theorems for characters and for partitions}\label{clt}
In the previous paragraph, it was shown that all the coherent systems of measures $(\proba_{n,\omega})_{n \in \N} $ have a property of concentration, that can be seen at the level of characters or directly for the geometry of the random partitions. In this section, we shall prove that the concentration is always gaussian, which is quite a striking fact. However, the geometric version of our central limit theorem will be a little less precise than the algebraic one: indeed, one cannot study with our method of moments the fluctuations of rows or columns corresponding to equal coordinates of the point $\omega$ of the Thoma simplex. This will be done in \S\ref{oconnell} by entirely different methods, that relie essentially on the interpretation of the measures $\proba_{n,\omega}$ as push-forwards of the measures $\qproba_{n,\omega}$.\bigskip

\subsection{Central limit theorem for the irreducible characters} In this paragraph, we fix permutations $\sigma^{(1)},\ldots,\sigma^{(r)} \in \sym_{\infty}$ of cycle types $\mu^{(1)},\ldots,\mu^{(r)}$; the partitions $\mu^{(i)}$ are all defined up to parts of size $1$, but this will not change our results. For convenience, we shall denote $p_{\mu}(A-B+\gamma E)=p_{\mu}(\omega)$.
\begin{theorem}\label{mainalg}
We consider the random vector $(\Delta_{n,\omega}(\sigma^{(1)}),\ldots,\Delta_{n,\omega}(\sigma^{(r)}))$, with
$$\Delta_{n,\omega}(\sigma)=\sqrt{n}\,\left(\chi^{\lambda}(\sigma)-\chi^{\omega}(\sigma)\right)$$
and $\lambda$ chosen randomly according to $\proba_{n,\omega}$. As $n$ goes to infinity, this vector converges in joint law to a centered gaussian vector $(\Delta_{\infty,\omega}(\sigma^{(1)}),\ldots,\Delta_{\infty,\omega}(\sigma^{(r)}))$, and the covariance matrix of this vector is:
\begin{align*}
&k(\Delta_{\infty,\omega}(\sigma^{(i)}),\Delta_{\infty,\omega}(\sigma^{(j)}))\\
&=\sum_{a \in \mu^{(i)},\,\,
b\in \mu^{(j)} } ab\,\,p_{\mu_{i}\sqcup \mu_{j}\setminus \{a,b\}}(\omega)\,\,(p_{a+b-1}(\omega)-p_{a}(\omega)\,p_{b}(\omega)).
\end{align*}
In particular, $k(\Delta_{\infty,\omega}(\sigma),\Delta_{\infty,\omega}(\tau))=lm\,\,(p_{l+m-1}(\omega)-p_{l}(\omega)\,p_{m}(\omega))$ if $\sigma$ and $\tau$ are cycles of lengths $l$ and $m$.
\end{theorem}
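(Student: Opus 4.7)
The plan is to use the method of cumulants on the observables $\varSigma_{\mu^{(i)}}$, converting everything back to characters via the identity $\chi^\lambda(\sigma^{(i)}) = \varSigma_{\mu^{(i)}}(\lambda)/n^{\downarrow |\mu^{(i)}|}$ valid for $n \geq |\mu^{(i)}|$. Thus I would study
\[
Y_i^{(n)} = \sqrt{n}\,\left(\frac{\varSigma_{\mu^{(i)}}(\lambda)}{n^{\downarrow |\mu^{(i)}|}} - p_{\mu^{(i)}}(\omega)\right),
\]
and prove (a) all joint cumulants of order $r\geq 3$ under $\proba_{n,\omega}$ tend to zero, and (b) the covariances converge to the prescribed limit. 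Since each $\chi^\lambda(\sigma^{(i)})$ is bounded by $1$ and $p_{\mu^{(i)}}(\omega) \leq 1$, the random variables $Y_i^{(n)}/\sqrt{n}$ are bounded; consequently, joint moments determine joint laws and the method of cumulants implies convergence to a centered gaussian vector as soon as (a) and (b) are established.

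For (a), the computation is immediate from Theorem~\ref{estimate} and the multilinearity of cumulants: each $Y_i^{(n)}$ is, up to the deterministic constant $p_{\mu^{(i)}}(\omega)$, a multiple $\sqrt{n}/n^{\downarrow|\mu^{(i)}|}$ of $\varSigma_{\mu^{(i)}}$, so
\[
k_{n,\omega}(Y_{i_1}^{(n)},\ldots,Y_{i_r}^{(n)}) = \frac{n^{r/2}}{\prod_{j} n^{\downarrow |\mu^{(i_j)}|}}\,k_{n,\omega}(\varSigma_{\mu^{(i_1)}},\ldots,\varSigma_{\mu^{(i_r)}}) = O\!\left(n^{1-r/2}\right),
\]
which tends to $0$ whenever $r\geq 3$.

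For (b), I would compute the covariance by combining the explicit product formula
\[
\varSigma_{\mu}\,\varSigma_{\nu} = \varSigma_{\mu\sqcup \nu} + \sum_{a\in\mu,\,b\in\nu} ab\, \varSigma_{(\mu\sqcup\nu\sqcup\{a+b-1\})\setminus\{a,b\}} + (\text{degree} \leq |\mu|+|\nu|-2)
\]
from Section~\ref{obs} with the basic expectation formula $\esper_{n,\omega}[\varSigma_\rho] = n^{\downarrow |\rho|}\,p_\rho(\omega)$. The factorization $p_{\rho\sqcup\tau}(\omega)=p_\rho(\omega)p_\tau(\omega)$ of power sums together with the elementary identity
\[
n^{\downarrow |\mu|+|\nu|} - n^{\downarrow|\mu|}\,n^{\downarrow|\nu|} = -|\mu|\,|\nu|\,n^{|\mu|+|\nu|-1} + O(n^{|\mu|+|\nu|-2})
\]
is what makes the leading $n^{|\mu|+|\nu|}$ terms cancel exactly (this is why a factor $\sqrt{n}$ rather than a larger renormalization is needed). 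Writing $|\mu|\,|\nu| = \sum_{a\in\mu,b\in\nu} ab$ and $p_\mu(\omega)p_\nu(\omega) = p_a(\omega)p_b(\omega)\,p_{\mu\sqcup\nu\setminus\{a,b\}}(\omega)$, the negative cross-term combines with the subdominant term of the product formula to give the announced covariance
\[
\sum_{a\in\mu^{(i)},\,b\in\mu^{(j)}} ab\,p_{\mu^{(i)}\sqcup\mu^{(j)}\setminus\{a,b\}}(\omega)\bigl(p_{a+b-1}(\omega)-p_a(\omega)p_b(\omega)\bigr),
\]
while the $O(n^{|\mu|+|\nu|-2})$ remainder disappears after multiplying by $n/(n^{\downarrow|\mu|}n^{\downarrow|\nu|}) \sim n^{1-|\mu|-|\nu|}$.

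The main obstacle is step (b): one has to be careful that the leading $n^{|\mu|+|\nu|}$ contribution, which naively blows up, cancels exactly, and that what survives at order $n^{|\mu|+|\nu|-1}$ comes simultaneously from the next-to-leading term of the product $\varSigma_\mu\varSigma_\nu$ in $\obs$ \emph{and} from the next-to-leading term of $n^{\downarrow|\mu|}n^{\downarrow|\nu|}$. Once this bookkeeping is done, the rest is mechanical; the cumulant bound of Theorem~\ref{estimate} does all the work to kill higher-order cumulants and to upgrade the pointwise convergence of cumulants into joint convergence in law to the centered gaussian vector characterized by the covariance above.
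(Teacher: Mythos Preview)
Your proof is correct and follows essentially the same route as the paper: reduce to the observables $\varSigma_{\mu^{(i)}}$, kill cumulants of order $\geq 3$ via Theorem~\ref{estimate}, and compute the limiting covariance by combining the subdominant term of $\varSigma_\mu\varSigma_\nu$ with the identity $n^{\downarrow|\mu|+|\nu|}-n^{\downarrow|\mu|}n^{\downarrow|\nu|}\simeq -|\mu||\nu|\,n^{|\mu|+|\nu|-1}$. The only cosmetic difference is that you normalize by $n^{\downarrow|\mu|}$ (so that $Y_i^{(n)}=\Delta_{n,\omega}(\sigma^{(i)})$ exactly), whereas the paper normalizes by $n^{|\mu|}$ and absorbs the discrepancy into an $O(n^{-1/2})$ error; this changes nothing.
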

\begin{proof}
For any partition $\mu$,
$$\sqrt{n}\,\left(\chi^{\lambda}(\mu\sqcup 1^{n-|\mu|})-p_{\mu}(\omega)\right)=\sqrt{n} \left(\frac{\varSigma_{\mu}(\lambda)-\esper_{n,\omega}[\varSigma_{\mu}]}{n^{|\mu|}}\right)+O\left(\frac{1}{\sqrt{n}}\right),$$
so it is sufficient to show that the random vector 
$$V=\sqrt{n}\left(\frac{\varSigma_{\mu^{(1)}}}{n^{|\mu^{(1)}|}},\ldots,\frac{\varSigma_{\mu^{(r)}}}{n^{|\mu^{(r)}|}}\right)$$
converges to a gaussian vector with the announced covariance matrix. However, for $s\geq 3$, by using Theorem \ref{estimate}, one sees that
\begin{align*}
k_{n,\omega}\left(\varSigma_{\mu^{(i_{1})}},\ldots,\varSigma_{\mu^{(i_{s})}}\right)&=O\left(n^{|\mu^{(i_{1})}|+\cdots+|\mu^{(i_{s})}|-s+1}\right)\\
k_{n,\omega}\left(\sqrt{n}\,\frac{\varSigma_{\mu^{(i_{1})}} }{n^{|\mu^{(i_{1})}|}},\ldots,\sqrt{n}\,\frac{\varSigma_{\mu^{(i_{s})}} }{n^{|\mu^{(i_{s})}|}} \right)&=O\left(n^{1-\frac{s}{2}}\right)\to 0.
\end{align*}
Thus, $V$ converges indeed towards a gaussian vector. To compute the covariances, we use the following trick:
\begin{align*}k_{n,\omega}(\varSigma_{\mu},\varSigma_{\nu})&=\esper_{n,\omega}[\varSigma_{\mu}\,\varSigma_{\nu}]-\esper_{n,\omega}[\varSigma_{\mu}]\,\esper_{n,\omega}[\varSigma_{\nu}]\\
&=\esper_{n,\omega}[\varSigma_{\mu}\,\varSigma_{\nu}-\varSigma_{\mu\sqcup \nu}]+\big(\esper_{n,\omega}[\varSigma_{\mu\sqcup\nu}]-\esper_{n,\omega}[\varSigma_{\mu}]\,\esper_{n,\omega}[\varSigma_{\nu}]\big).
\end{align*}
The term of higher degree in the first part of the right-hand side is exactly the expectation of the subdominant term of a product $\varSigma_{\mu}\,\varSigma_{\nu}$. We have calculated this subdominant term after Proposition \ref{matching}; hence,
\begin{align*}
\esper_{n,\omega}[\varSigma_{\mu}\,\varSigma_{\nu}-\varSigma_{\mu\sqcup \nu}]&=\sum_{a \in \mu,b \in \nu}ab\,\,\esper_{n,\omega}[\varSigma_{(\mu\sqcup\nu \sqcup \{a+b-1\}) \setminus \{a,b\}}]\\
&\simeq \left(\sum_{a \in \mu,\,b \in \nu}ab\,\, p_{\mu\sqcup\nu \setminus \{a,b\}}(\omega)\,\,p_{a+b-1}(\omega)\right)n^{|\mu|+|\nu|-1}.
\end{align*}
On the other hand, $\esper_{n,\omega}[\varSigma_{\mu\sqcup\nu}]-\esper_{n,\omega}[\varSigma_{\mu}]\,\esper_{n,\omega}[\varSigma_{\nu}]$ is easily computed to be equal to 
$$p_{\mu \sqcup \nu}(\omega) \,\,\left[n^{\downarrow |\mu|+|\nu|}-n^{\downarrow |\mu|}\,n^{\downarrow |\nu|}\right]\simeq - \,|\mu|\,|\nu|\,\,p_{\mu \sqcup \nu}(\omega) \,\,n^{|\mu|+|\nu|-1}$$
Dividing everything by $n^{|\mu|+|\nu|-1}$ and writing
$$|\mu|\,|\nu|\,\,p_{\mu \sqcup \nu}(\omega)=\sum_{a\in \mu,\,b\in \nu}ab\,\,p_{\mu\sqcup\nu \setminus \{a,b\}}(\omega)\,\,p_{a}(\omega)\,p_{b}(\omega),$$
we obtain exactly the announced formula.
\end{proof}
\bigskip
\bigskip

So, for any integer $k\geq 2$, if $\sigma$ is a $k$-cycle, then $$\sqrt{n}\,\left(\frac{\chi^{\lambda}(\sigma)-\chi^{\omega}(\sigma)}{k}\right)$$ converges under the laws $\proba_{n,\omega}$ to a centered gaussian variable of variance $p_{2k-1}(\omega)-p_{k}(\omega)^{2}$. Again, this result is not precise enough when $\omega=\mathbf{0}$ --- fortunately, this is the only problematic point. Actually, it has been shown in \cite[Theorem 6.1]{IO02} that under Plancherel measures, if $\sigma$ is a $k$-cycle, then $$n^{\frac{k}{2}}\,\left(\frac{\chi^{\lambda}(\sigma)}{\sqrt{k}}\right)$$ converges to a centered gaussian variable of variance $1$. We shall say a little bit more on these kind of results in the conclusion of our paper.

\subsection{Gaussian behaviour of the rows and of the columns}\label{gaussrow}
Now, let us look at the consequences of Theorem \ref{mainalg} for the geometry of the random partitions. First, let us give a central limit theorem for the random measures $X_{\lambda}$ under coherent systems of probability measures $(\proba_{n,\omega})_{n \in \N}$. In the following, $\mathscr{C}^{1}([-1,1])$ is endowed with the topology of uniform convergence of the functions and of their derivatives, and measures are now considered as elements of the dual space of $\mathscr{C}^{1}([-1,1])$ --- this dual space contains $\mathscr{P}([-1,1]) \subset \mathscr{M}([-1,1])=\mathscr{C}^{0}([-1,1])^{*}$.
\begin{theorem}\label{mainmeasure}
We consider the random measure $\nabla_{n,\omega}=\sqrt{n}\,(X_{\lambda}-X_{\omega})$, with $\lambda$ chosen randomly according to $\proba_{n,\omega}$. As $n$ goes to infinity, $\nabla_{n,\omega}$ converges to a centered gaussian element $\nabla_{\infty,\omega}$ in $\mathscr{C}^{1}([-1,1])^{*}$, and for any functions $f,g \in \mathscr{C}^{1}([-1,1])$, 
$$k(\nabla_{\infty,\omega}(f),\nabla_{\infty,\omega}(g))=X_{\omega}((xf(x))'(xg(x))')-X_{\omega}((xf(x))')\,X_{\omega}((xg(x))').$$
\end{theorem}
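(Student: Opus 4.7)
The plan is to reduce Theorem \ref{mainmeasure} to the algebraic central limit theorem (Theorem \ref{mainalg}), first for polynomial test functions and then extending by density to $\mathscr{C}^{1}([-1,1])$. For a monomial $f(x) = x^{k}$, using the identity $X_{\lambda}(x^{k}) = p_{k+1}(\lambda)/n^{k+1}$, the expansion $\varSigma_{k+1} = p_{k+1} + (\text{obs.\ of degree at most }k)$, and $\chi^{\lambda}((k+1)\sqcup 1^{n-k-1}) = \varSigma_{k+1}(\lambda)/n^{\downarrow(k+1)}$, I would write
$$\nabla_{n,\omega}(x^{k}) = \sqrt{n}\,\bigl(\chi^{\lambda}((k+1)\sqcup 1^{n-k-1}) - \chi^{\omega}((k+1))\bigr) + R_{n,k},$$
where Theorem \ref{estimate} applied to the subdominant terms and to the discrepancy between $n^{k+1}$ and $n^{\downarrow(k+1)}$ shows that the remainder $R_{n,k}$ has $L^{2}$-norm $O(n^{-1/2})$ and hence vanishes in probability. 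Theorem \ref{mainalg} then yields the joint Gaussian convergence of any finite family $\bigl(\nabla_{n,\omega}(x^{k_{1}}),\ldots,\nabla_{n,\omega}(x^{k_{r}})\bigr)$, extending by linearity to all polynomial test functions.

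Next, I would check the covariance formula directly on monomials: with $f = x^{k}$ and $g = x^{l}$, one has $(xf(x))' = (k+1)x^{k}$ and $(xg(x))' = (l+1)x^{l}$, so
$$X_{\omega}((xf)'(xg)') - X_{\omega}((xf)')\,X_{\omega}((xg)') = (k+1)(l+1)\bigl(p_{k+l+1}(\omega) - p_{k+1}(\omega)\,p_{l+1}(\omega)\bigr),$$
which coincides with the covariance of the fluctuations of a $(k+1)$-cycle and an $(l+1)$-cycle given by Theorem \ref{mainalg}. This identifies the limiting process $\nabla_{\infty,\omega}$ on the dense subspace of polynomials.

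To pass from polynomials to $\mathscr{C}^{1}([-1,1])$, I would fix $f \in \mathscr{C}^{1}$ and approximate it by polynomials $P_{N}$ with $\|f - P_{N}\|_{\mathscr{C}^{1}} \to 0$ (integrating polynomial approximations of $f'$ given by Weierstrass). Writing $\nabla_{n,\omega}(f) = \nabla_{n,\omega}(P_{N}) + \nabla_{n,\omega}(f - P_{N})$, the first summand converges to $\nabla_{\infty,\omega}(P_{N})$ by the polynomial case, and the second must be controlled by a uniform variance bound of the form $k_{n,\omega}(\nabla_{n,\omega}(h),\nabla_{n,\omega}(h)) \leq C\,\|h\|_{\mathscr{C}^{1}}^{2}$ valid for every $n$. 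Such an estimate should follow from a Chebyshev-type decomposition of $h$ combined with the bounds $k_{n,\omega}(p_{k+1}(\lambda)/n^{k+1},p_{k+1}(\lambda)/n^{k+1}) = O(1/n)$ provided by Theorem \ref{estimate}. A standard three-$\varepsilon$ argument then identifies the limit as the centered Gaussian process obtained by extending $\nabla_{\infty,\omega}$ by continuity, with the announced covariance.

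The main obstacle is this uniform $\mathscr{C}^{1}$-variance estimate: the crude bound coming from the $\ell^{1}$-norm of the Taylor coefficients is typically insufficient for a general $\mathscr{C}^{1}$ function, so one has to exploit orthogonality in an appropriate polynomial basis (Chebyshev or the eigenbasis of the limiting covariance operator) in order to obtain a bound that scales correctly with $\|h\|_{\mathscr{C}^{1}}$. Once this is in hand, the Gaussian character of the limit and the exact form of the covariance are preserved by the approximation, and joint convergence for any finite family of $\mathscr{C}^{1}$ test functions is immediate.
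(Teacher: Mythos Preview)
Your proposal is correct and follows essentially the same route as the paper: reduce to Theorem \ref{mainalg} on monomials via $X_{\lambda}(x^{k})=p_{k+1}(\lambda)/n^{k+1}$ and the replacement of $\varSigma_{k}$ by $p_{k}$, verify the covariance formula on monomials, extend by linearity to polynomials, and then pass to $\mathscr{C}^{1}([-1,1])$ by density. The only point of divergence is the density step: where you aim for an explicit uniform variance bound $k_{n,\omega}(\nabla_{n,\omega}(h),\nabla_{n,\omega}(h))\leq C\|h\|_{\mathscr{C}^{1}}^{2}$ via a Chebyshev-type expansion, the paper instead invokes a stochastic version of the Banach--Steinhaus theorem (citing \cite{Tha87,VV95}) and candidly notes that this tightness issue was left unexplained in \cite{FM10}. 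Neither argument is written out in full, so you have correctly identified the one genuinely delicate step.
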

\begin{proof}
We have shown in our ``algebraic'' central limit theorem that the random vector 
$$\left(\sqrt{n}\,\left(\frac{\varSigma_{k}}{n^{k}} - p_{k}(\omega)\right)\right)_{k\geq 1}$$
converges to a centered gaussian vector with covariance matrix 
$$lm\,(p_{l+m-1}(\omega)-p_{l}(\omega)\,p_{m}(\omega)).$$ By using the same arguments as before, one can replace $\varSigma_{k}$ by $p_{k}$, and therefore, the same results holds for
\begin{align*}
\left(\sqrt{n}\,\left(\frac{p_{k}(\lambda)}{n^{k}}-p_{k}(\omega) \right)\right)_{k \geq 1}&=\left(\sqrt{n}\,\left(X_{\lambda}(x^{k})-X_{\omega}(x^{k}) \right)\right)_{k\geq 0}\\
&=(\nabla_{n,\omega}(x^{k}))_{k\geq 0},
\end{align*}
although with a shift of index: $$k(\nabla_{\infty,\omega}(x^{l}),\nabla_{\infty,\omega}(x^{m}))=(l+1)(m+1)\,(p_{l+m+1}(\omega)-p_{l+1}(\omega)\,p_{m+1}(\omega)).$$ 
Now, this computation of covariances can be rewritten as:
\begin{align*}k(\nabla_{\infty,\omega}(x^{l}),\nabla_{\infty,\omega}(x^{m}))&=(l+1)(m+1)\,(X_{\omega}(x^{l}\,x^{m})-X_{\omega}(x^{l})\,X_{\omega}(x^{m}))\\
&=X_{\omega}((x^{l+1})'(x^{m+1})')-X_{\omega}((x^{l+1})')\,X_{\omega}((x^{m+1})').
\end{align*}
By multilinearity, for any polynomials $f$ and $g$ in $\R[x]$, $\nabla_{n,\omega}(f)$ and $\nabla_{n,\omega}(g)$ converge towards centered gaussian variables, and the covariance of the limiting distibutions writes as
$$k(\nabla_{\infty,\omega}(f),\nabla_{\infty,\omega}(g))=X_{\omega}((xf(x))'(xg(x))')-X_{\omega}((xf(x))')\,X_{\omega}((xg(x))').$$ 
Finally, one uses the density of $\R[x]$ in $\mathscr{C}^{1}([-1,1])$ to get the same result for  any functions $f,g \in \mathscr{C}^{1}([-1,1])$.  However, notice that one has first to apply a random version of the Banach-Steinhaus theorem for the (random) linear forms $\nabla_{n,\omega} : \mathscr{C}^{1}([-1,1])\to \R$ (see \cite{Tha87} or \cite{VV95}). Indeed, because of the factor $\sqrt{n}$, it is not clear \emph{a priori} that one can get a uniform bound on expressions like
$$\sqrt{n}\,\left(X_{\lambda}(f-f_{k})-X_{\omega}(f-f_{k})\right)$$
with $(f_{k})_{k \in \N}$ sequence of polynomials approximating $f$ in $\mathscr{C}^{1}([-1,1])$ --- this point was left unexplained in \cite{FM10}.
\end{proof}
\bigskip
\bigskip

Unfortunately, Theorem \ref{mainmeasure} is sufficient to describe the fluctuations of the row $\lambda_{i}$ or the column $\lambda_{j}'$ only when $\alpha_{i-1}>\alpha_{i}>\alpha_{i+1}$ or when $\beta_{j-1}>\beta_{j}>\beta_{j+1}$. Indeed, our method of moments will not allow to distinguish at order $\sqrt{n}$ the parts corresponding to equal coordinates. We fix distinct coordinates $\alpha_{i_{1}}\neq \alpha_{i_{2}}\neq \cdots \neq \alpha_{i_{r}}\neq 0$ and $\beta_{j_{1}}\neq \beta_{j_{2}} \neq \cdots \neq \beta_{j_{s}}\neq 0$ of the point $\omega \in \Omega$, and we denote by $m_{k}$ (respectively, $w_{k}$) the multiplicity of $\alpha_{i_{k}}$ (resp., of $\beta_{j_{k}}$), so
$$\alpha_{i_{k}-1}>\alpha_{i_{k}}=\alpha_{i_{k}+1}=\cdots=\alpha_{i_{k}+m_{k}-1}>\alpha_{i_{k}+m_{k}}$$
and similarly for $\beta_{j_{k}}$. We then denote
\begin{align*}\Lambda_{i_{k},m_{k},n}&=\sqrt{n}\left( \frac{\lambda_{i_{k}}}{n}+ \frac{\lambda_{i_{k}+1}}{n}+\cdots+ \frac{\lambda_{i_{k}+m_{k}-1}}{n}-m_{k}\,\alpha_{i_{k}}\right);\\
\Lambda_{j_{k},w_{k},n}'&=\sqrt{n}\left( \frac{\lambda_{j_{k}}'}{n}+ \frac{\lambda_{j_{k}+1}'}{n}+\cdots+ \frac{\lambda_{j_{k}+w_{k}-1}'}{n}-w_{k}\,\beta_{j_{k}}\right).
\end{align*}

\begin{theorem}\label{almostgeom}
As $n$ goes to infinity, the random vector $$(\Lambda_{i_{1},m_{1},n},\ldots,\Lambda_{i_{r},m_{r},n},\Lambda_{j_{1},w_{1},n}',\ldots,\Lambda_{j_{s},w_{s},n}')$$
 converges to a centered gaussian vector $(\Lambda_{i_{1},m_{1},\infty},\ldots,\Lambda_{j_{s},l_{s},\infty}')$, with the following covariances:
\begin{align*}
k(\Lambda_{i_{k},m_{k},\infty},\Lambda_{i_{l},m_{l},\infty})&=\delta_{kl}\,m_{k}\,\alpha_{i_{k}}-m_{k}m_{l}\,\alpha_{i_{k}}\alpha_{i_{l}}\,;\\
k(\Lambda_{j_{k},w_{k},\infty}',\Lambda_{j_{l},w_{l},\infty}')&=\delta_{kl}\,w_{k}\,\beta_{j_{k}}-w_{k}w_{l}\,\beta_{j_{k}}\beta_{j_{l}}\,;\\
k(\Lambda_{i_{k},m_{k},\infty},\Lambda_{j_{l},w_{l},\infty}')&=-m_{k}w_{l}\,\alpha_{i_{k}}\beta_{j_{l}}.
\end{align*}
In particular, if $\alpha_{i}$ and $\alpha_{j}$ are two ``isolated'' coordinates of the sequence $\alpha$, then the random variables $X_{i,n}=\sqrt{n}\left(\frac{\lambda_{i}}{n}-\alpha_{i}\right)$ and $X_{j,n}=\sqrt{n}\left(\frac{\lambda_{j}}{n}-\alpha_{j}\right)$ converge jointly in law to centered gaussian variables $X_{i,\infty}$ and $X_{j,\infty}$, with
$$k(X_{i,\infty},X_{j,\infty})=\delta_{ij}\,\alpha_{i}-\alpha_{i}\alpha_{j}.$$
\end{theorem}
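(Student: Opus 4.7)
The plan is to deduce this geometric CLT from the measure-valued CLT of Theorem \ref{mainmeasure} by evaluating the random signed measure $\nabla_{n,\omega}$ on carefully chosen $\mathscr{C}^1$ test functions, each localized around a single isolated coordinate. Since the sequences $\alpha$ and $\beta$ are non-increasing and tend to $0$, I can fix $\eta>0$ small enough that the $2\eta$-neighborhoods $U_k=[\alpha_{i_k}-2\eta,\alpha_{i_k}+2\eta]$ and $V_l=[-\beta_{j_l}-2\eta,-\beta_{j_l}+2\eta]$ are pairwise disjoint, avoid $0$, and each meets only the $m_k$ copies of $\alpha_{i_k}$ (resp. the $w_l$ copies of $\beta_{j_l}$) among all coordinates of $\omega$. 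I then pick $f_k,g_l\in\mathscr{C}^1([-1,1])$ supported in $U_k$ (resp. $V_l$) and identically equal to $1$ on the inner $\eta$-neighborhoods.

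The second step is to identify $\nabla_{n,\omega}(f_k)$ with the row statistic $\Lambda_{i_k,m_k,n}$. By Theorem \ref{llngeom} applied to the finitely many indices $i\leq i_r+m_r$ and $j\leq j_s+w_s$, there is a good event $G_n$ with $\proba_{n,\omega}[G_n]\to 1$ on which every $\lambda_i/n$ and $\lambda_j'/n$ among those indices lies within $\eta/2$ of its limit. Using the monotonicity $\lambda_i\leq\lambda_{i_r+m_r}$ for $i>i_r+m_r$, one checks that on $G_n$, the only $a_i/n$ falling in $U_k$ are those with $i\in[i_k,i_k+m_k-1]$, no $-b_j/n$ lies in any $U_k$, and symmetrically for the $V_l$'s. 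Consequently
\begin{equation*}
X_\lambda(f_k)=\sum_{i=i_k}^{i_k+m_k-1}\frac{a_i}{n},\qquad X_\omega(f_k)=m_k\alpha_{i_k},
\end{equation*}
and together with $\lambda_i-a_i=(2i-1)/2$ this yields
\begin{equation*}
\Lambda_{i_k,m_k,n}-\nabla_{n,\omega}(f_k)=\frac{1}{\sqrt{n}}\sum_{i=i_k}^{i_k+m_k-1}\frac{2i-1}{2}=O(n^{-1/2})
\end{equation*}
deterministically on $G_n$, with the analogous statement for columns. Slutsky's theorem then shows that the joint law of $(\Lambda_{i_k,m_k,n},\Lambda_{j_l,w_l,n}')_{k,l}$ has the same limit as that of $(\nabla_{n,\omega}(f_k),\nabla_{n,\omega}(g_l))_{k,l}$.

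The last step is a direct computation using Theorem \ref{mainmeasure}. Because $f_k\equiv 1$ in a neighborhood of each $\alpha_{i_k+t}$ (and $g_l\equiv 1$ near each $-\beta_{j_l+t}$), the function $(xf_k(x))'=f_k(x)+xf_k'(x)$ equals $1$ at those $m_k$ atoms of $X_\omega$ and $0$ at every other atom, with the analogous statement for $g_l$. Therefore
\begin{align*}
X_\omega((xf_k)'(xf_{k'})')&=\delta_{kk'}\,m_k\alpha_{i_k},\\
X_\omega((xg_l)'(xg_{l'})')&=\delta_{ll'}\,w_l\beta_{j_l},\\
X_\omega((xf_k)'(xg_l)')&=0,
\end{align*}
while $X_\omega((xf_k)')=m_k\alpha_{i_k}$ and $X_\omega((xg_l)')=w_l\beta_{j_l}$. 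Plugging these into the covariance formula
\begin{equation*}
k(\nabla_{\infty,\omega}(f),\nabla_{\infty,\omega}(g))=X_\omega((xf)'(xg)')-X_\omega((xf)')\,X_\omega((xg)')
\end{equation*}
reproduces the matrix stated in the theorem.

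The main subtlety is the localization step: one genuinely needs the chosen $\alpha_{i_k}$ and $\beta_{j_l}$ to be strictly isolated from the neighbouring coordinates in order to construct the separating $\mathscr{C}^1$ bumps, and this is precisely why the method cannot describe the internal fluctuations within a cluster of rows (or columns) sharing the same limit — no single $\mathscr{C}^1$ test function can weight them differently in the limit. This obstruction is exactly what motivates the alternative O'Connell-type analysis carried out in Section \ref{oconnell}.
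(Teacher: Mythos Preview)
Your proof is correct and follows essentially the same route as the paper: choose $\mathscr{C}^{1}$ bump functions $f_{k},g_{l}$ localized around the isolated atoms, use the law of large numbers (Theorem \ref{llngeom}) to identify $\nabla_{n,\omega}(f_{k})$ with the row statistic on a high-probability event, and then read off the covariances from Theorem \ref{mainmeasure} using that $f_{k}'$ vanishes at every atom of $X_{\omega}$. You are in fact slightly more careful than the paper in distinguishing $a_{i}$ from $\lambda_{i}$ (the paper silently identifies $X_{\lambda}(f_{k})$ with $(\lambda_{i_{k}}+\cdots+\lambda_{i_{k}+m_{k}-1})/n$) and in invoking Slutsky explicitly.
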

\begin{proof}
We consider test functions $f_{k}$ and $g_{l}$ such that:\vspace{2mm}
\begin{enumerate}
\item All these functions are in $\mathscr{C}^{1}([-1,1])$, non-negative and with values in $[0,1]$.\vspace{2mm}
\item For any $k$, there is a neighbourhood $W_{k}=[\alpha_{i_{k}}-\eps_{k},\alpha_{i_{k}}+\eps_{k}]$ of $\alpha_{i_{k}}$   such that $W_{k}$ does not contain any element of the set $\{\alpha_{i\geq 1},-\beta_{j\geq 1},0\} \setminus \{\alpha_{i_{k}}\}$, and the support of $f_{k}$ is contained in $W_{k}$. Moreover, there is a neighboorhood $V_{k}\subset W_{k}$ of $\alpha_{i_{k}}$ such that $f_{|V_{k}}=1$.\vspace{2mm}
\item Similarly, for any $l$, there is a neighbourhood $W_{l}'=[-\beta_{j_{l}}-\eps_{l}',-\beta_{j_{l}}+\eps_{l}']$ of $-\beta_{j_{l}}$   such that $W_{l}'$ does not contain any element of  $\{\alpha_{i\geq 1},-\beta_{j\geq 1},0\} \setminus \{\beta_{j_{l}}\}$, and the support of $g_{l}$ is contained in $W_{l}'$. Moreover, there is a neighboorhood $V_{l}'\subset W_{l}'$ of $-\beta_{j_{l}}$ such that $g_{|V_{l}'}=1$.
\end{enumerate}
\figcap{
\includegraphics{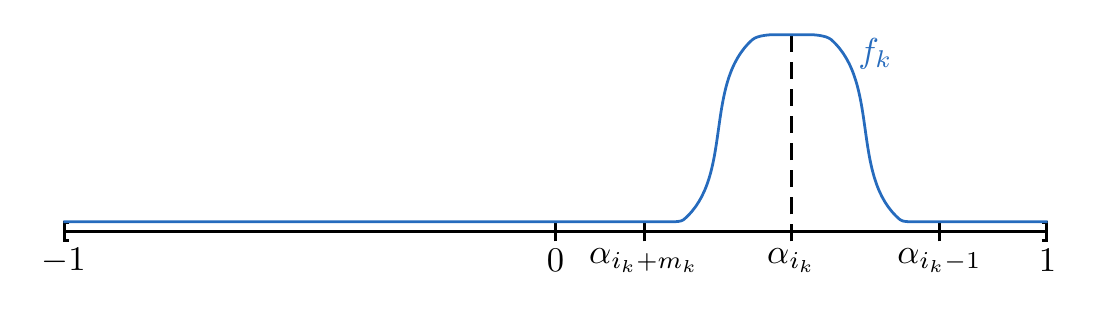}}{Test functions for the proof of the geometric central limit theorem.}
We fix an index $k$. Since $\frac{\lambda_{i}}{n} \to \alpha_{i}$ for all $i$, with probability going to $1$ as $n$ goes to infinity, the intersection of the support of $X_{\lambda}$ with $W_{k}$ consists exactly in 
$$\left\{\frac{\lambda_{i_{k}}}{n},\frac{\lambda_{i_{k}+1}}{n},\ldots,\frac{\lambda_{i_{k}+m_{k}-1}}{n}\right\},$$ that are even in $V_{k}$ with big probability. So,
$$X_{\lambda}(f_{k})=\frac{\lambda_{i_{k}}+ \lambda_{i_{k}+1}+\cdots+\lambda_{i_{k}+m_{k}-1}}{n}$$
with probability going to $1$; on the other hand, $X_{\omega}(f_{k})=m_{k}\,\alpha_{i_{k}}$. Hence,
$$\Lambda_{i_{k},m_{k},n}=\nabla_{n,\omega}(f_{k})$$
for any $k$ with probability going to $1$; similarly, $\Lambda_{j_{l},w_{l},n}'=\nabla_{n,\omega}(g_{l})$ with big probability. Then, by Theorem \ref{mainmeasure},  the random vector of the statement of our theorem converges in law to a centered gaussian vector $$(\Lambda_{i_{1},m_{1},\infty},\ldots,\Lambda_{i_{r},m_{r},\infty},\Lambda_{j_{1},l_{1},\infty}',\ldots,\Lambda_{j_{s},l_{s},\infty}'),$$ with $k(\Lambda_{i_{k},m_{k},\infty},\Lambda_{i_{l},m_{l},\infty})$ equal to
$$X_{\omega}((xf_{k}(x))' (xf_{l}(x))')-X_{\omega}((xf_{k}(x))')\,X_{\omega}((xf_{l}(x))'),$$
and the same kind of formula with variables $\Lambda_{j_{k},w_{k},\infty}'$. Now, notice that $f_{k}'(x)=0$ for any $x$ in the support of $X_{\omega}$, and any $k$. Hence, the previous covariance may be written as
$$X_{\omega}(f_{k}f_{l})-X_{\omega}(f_{k})\,X_{\omega}(f_{l})=\delta_{kl}\,m_{k}\,\alpha_{i_{k}}-m_{k}m_{l}\,\alpha_{i_{k}}\alpha_{i_{l}},$$
and the proof for the other covariance formulas is the same with functions $g_{l}$ instead of functions $f_{k}$.
\end{proof}
\bigskip

\section{Pitman operators and a link with conditioned random walks and brownian matrices}\label{oconnell}
In the previous paragraphs, we have shown that the rows and columns of a partition $\lambda$ of law $\proba_{n,\omega}$ have limit frequencies $\alpha_{i}$ and $\beta_{j}$, and gaussian fluctuations when gathered  for equal coordinates of the point $\omega$ of the Thoma simplex. To treat the case of equal parameters, we shall use O'Connell's theory (\cite{OC03}) that relates the Robinson-Schensted algorithm to an operation on multidimensional paths, and therefore random tableaux coming from $\omega$-shuffles to random walks conditioned to stay in a Weyl chamber. These random walks are also related to the processes of eigenvalues of brownian traceless Hermitian matrices. Thus, the fluctuations of a block of $d$ rows or columns corresponding to equal coordinates of $\omega$ are the same as those of the eigenvalues of a gaussian matrix in $\mathfrak{su}(d,\C)$, up to the additional gaussian term described by Theorem \ref{almostgeom}; see Theorem \ref{maingeom} for a precise statement. We shall prove this progressively, starting from the toy model of $2$-shuffles, then introducing \textbf{Pitman operators} in order to treat the case of $d$-shuffles, and finally studying the general case.

\subsection{The case of $2$-shuffles} To begin with, let us treat minutely the case when $$\omega=\left(\left(\frac{1}{2},\frac{1}{2},0,0,\ldots\right),(0,0,\ldots)\right).$$ The corresponding $\omega$-shuffles (which we shall call \textbf{$\mathbf{2}$-shuffles}) are easy to describe. Indeed, for any $n$, a random permutation of law $\qproba_{n,\omega}$ can be obtained by shuffling together two blocks of size $n_{1}$ and $n_{2}=n-n_{1}$ with $n_{1} \sim \mathcal{B}(n,\frac{1}{2})$, that is to say that
$$\proba[n_{1}=k]=\frac{1}{2^{n}}\,\binom{n}{k}.$$
We recall that all the $\binom{n}{k}$ shuffles of two blocks of sizes $k$ and $n-k$ are equiprobable. Thus, if $b_{1}, \ldots,b_{n}$ are independent identically distributed variables with $\proba[b_{i}=1]=\proba[b_{i}=2]=1/2$, then one can produce a permutation of law $\qproba_{n,\omega}$ by:
\begin{itemize}
\item setting $n_{1}=\card \{i \,\,|\,\, b_{i}=1\}$, $n_{2}=\card \{i \,\,|\,\, b_{i}=2\}$.
\item sending the integers of $\lle 1,n_{1}\rre$ in this order to the positions $i$ such that $b_{i}=1$, and then sending the integers of $\lle n_{1}+1,n_{1}+n_{2}\rre$ in this order to the positions $i$ such that $b_{i}=2$.
\end{itemize}
For instance, if the sequence of Bernoulli variables is $121121221$, then the corresponding permutation\footnote{With the vocabulary of \S\ref{randompermutation}, $\sigma$ is the standardization of the word whose letters are the Bernoulli variables.} will be $162374895$. This explicit construction of $2$-shuffles will enable us to relate them to random walks --- here, the random walk that we shall consider is:
\figcap{
\includegraphics{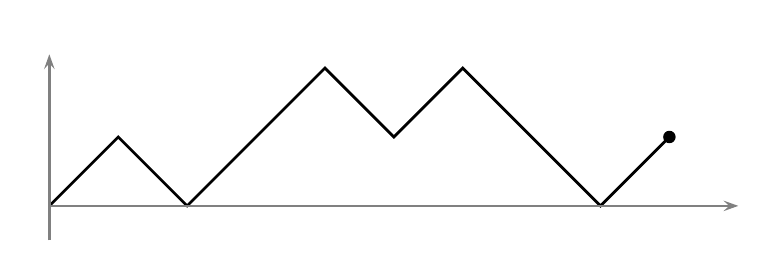}}{Random walk associated to the $2$-shuffle $162374895=\mathrm{Std}(121121221)$.}
\bigskip
\bigskip

Given a permutation $\sigma$ obtained by standardization from a word $w$ in $\{1,2\}^{*}$, the shape of the tableau $P(\sigma)$ is the same as the shape of the tableau $P(w)$. Actually, for any $k \leq n$, if $P_{k}(\sigma)$ (respectively, $P_{k}(w)$) is the tableau obtained after Schensted insertion of the $k$ first letters of $\sigma$ (resp., of the $k$ first letters of $w$), then $P_{k}(\sigma)$ has the same shape as $P_{k}(w)$. Hence, we will study the random process $(P(w_{k}))_{k \geq 0}$, where $w_{k}=b_{1}b_{2}\ldots b_{k}$ is a random word in $\{1,2\}^{k}$ with independent Bernoulli letters. This process takes its values in 
$$\mathrm{SST}\{1,2\}=\{\text{semi-standard tableaux with entries in }\{1,2\}\}.$$
A semi-standard tableau with entries in $\{1,2\}$ has at most two rows, and the entries in the second row are all equal to $2$. Thus, if one knows the size $n$ of the tableau, then it is entirely determined by the number $m_{11}$ of entries $1$ on the first row, and by the number $m_{12}$ of entries $2$ on the first row:
\begin{center}
\includegraphics{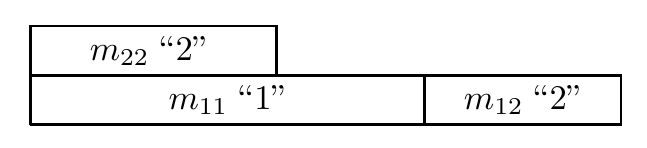}
\end{center}
with $m_{22}=n-m_{11}-m_{12}$. So, let us study in detail the random process $(m_{11}(k),m_{12}(k))$. For this purpose, we introduce the random walk $X_{k}=3k-2(b_{1}+b_{2}+\cdots+b_{k})$; this is a standard random walk on $\Z$, starting from $0$.

\begin{lemma}\label{2shuffle}
For any $k$, $m_{11}(k)=\frac{k+X_{k}}{2}$ and $m_{12}(k)=S_{k}-X_{k}$, where $S_{k}=\sup_{j \leq k}X_{j}$.
\end{lemma}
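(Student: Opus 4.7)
My plan is to prove the lemma by induction on $k$, carefully analysing the effect of one step of Schensted insertion on the pair $(m_{11}, m_{12})$ and comparing it to the increments of the walk $X$ and its running maximum $S$. The base case $k=0$ gives the empty tableau, so $m_{11}(0)=m_{12}(0)=0$, which matches $(0+X_{0})/2=0$ and $S_{0}-X_{0}=0$. The main observation needed to run the induction is the identity $m_{12}(k-1) > 0 \iff S_{k-1} > X_{k-1}$, which is tautologically preserved by the induction hypothesis $m_{12}(k-1) = S_{k-1}-X_{k-1}$.

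For the inductive step, I would distinguish the two cases according to the value of $b_{k}$. If $b_{k}=1$, then $X_{k}=X_{k-1}+1$, and the insertion of a $1$ in $P(w_{k-1})$ either appends a $1$ at the end of the first row (when there is no $2$ on it, that is when $m_{12}(k-1)=0$) or replaces the leftmost $2$ of the first row by a $1$, the bumped $2$ being appended to the end of the second row (when $m_{12}(k-1)\geq 1$). In both subcases $m_{11}$ increases by $1$, which gives $m_{11}(k) = (k-1+X_{k-1})/2 + 1 = (k+X_{k})/2$. For $m_{12}$, when $m_{12}(k-1)=0$ one has $S_{k-1}=X_{k-1}$, hence $S_{k}=X_{k}$ and $m_{12}(k)=0=S_{k}-X_{k}$; when $m_{12}(k-1)\geq 1$, one has $S_{k-1}\geq X_{k-1}+1=X_{k}$, hence $S_{k}=S_{k-1}$ and $m_{12}(k)=m_{12}(k-1)-1=S_{k-1}-X_{k-1}-1=S_{k}-X_{k}$. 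If on the other hand $b_{k}=2$, then $X_{k}=X_{k-1}-1$, the inserted $2$ is simply appended to the end of the first row (no entry of the first row is strictly greater than $2$), and $S_{k}=S_{k-1}$ since $X_{k}<X_{k-1}\leq S_{k-1}$. Thus $m_{11}(k)=m_{11}(k-1)=(k+X_{k})/2$ and $m_{12}(k)=m_{12}(k-1)+1=S_{k-1}-X_{k-1}+1=S_{k}-X_{k}$.

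The only mildly delicate step is the bookkeeping in the subcase $b_{k}=1$, where one has to check that the sign of $S_{k-1}-X_{k-1}$ correctly dictates whether the $1$ bumps a $2$ from the first row; however, this is exactly built into the induction hypothesis, so no real obstacle arises. As a sanity check (and an alternative proof that I would only mention briefly if space permits), the equality $m_{11}(k)+m_{12}(k)=(k-X_{k})/2+S_{k}$ is precisely Greene's theorem for the first row: the longest weakly increasing subsequence of a word in $\{1,2\}^{*}$ is obtained by choosing a cut $j$, keeping all $1$'s before position $j$ and all $2$'s afterwards, which gives length $q+\max_{j}(p_{j}-q_{j})=(k-X_{k})/2+S_{k}$, and subtracting $m_{11}(k)=(k+X_{k})/2$ (every $1$ lands on the first row since $1$ is never bumped) yields $m_{12}(k)=S_{k}-X_{k}$ directly.
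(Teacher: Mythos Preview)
Your proof is correct and follows essentially the same approach as the paper: an induction on $k$ with the same case split on $b_{k}\in\{1,2\}$ and, in the case $b_{k}=1$, the same two subcases according to whether $m_{12}(k-1)=0$ or $m_{12}(k-1)\geq 1$. The only cosmetic differences are that the paper derives $m_{11}(k)=(k+X_{k})/2$ directly from the count of $1$'s rather than by induction, and that you add a pleasant alternative argument via Greene's theorem for the first-row length, which the paper does not give.
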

\begin{proof}
The quantity $m_{11}(k)$ is just the number of ``$1$'' in the sequence $b_{1},\ldots,b_{k}$. Hence, 
$$m_{11}(k)=\sum_{i=1}^{k} 2-b_{i}=2k-(b_{1}+b_{2}+\cdots+b_{k})=\frac{k+X_{k}}{2}.$$
As for $m_{12}(k)$, let us prove the result by induction on $k$. Supposing $m_{12}(k)=S_{k}-X_{k}$, let us compute $m_{12}(k+1)$:\vspace{2mm}
\begin{enumerate}
\item If $b_{k+1}=1$, one inserts a letter $1$ in the first row. Suppose that $m_{12}(k)=0$. Then, $m_{12}(k+1)$ is also equal to $0$, so $S_{k}=X_{k}$, and $X_{k+1}=X_{k}+1$, because $b_{k+1}=1$ means that we go up. This implies that $S_{k+1}=X_{k+1}$, so $m_{12}(k+1)=0=S_{k+1}-X_{k+1}.$ On the contrary, suppose that $m_{12}(k)\geq 1$. Then, the letter $1$ will replace a letter $2$, so 
$$m_{12}(k+1)=m_{12}(k)-1=S_{k}-X_{k}-1.$$ On the other hand, since $X_{k}=S_{k}-m_{12}(k) \leq S_{k}-1$, $X_{k+1}=X_{k}+1 \leq S_{k}$, so $S_{k}=S_{k+1}$. But then, $m_{12}(k+1)=S_{k}-(X_{k}+1)=S_{k+1}-X_{k+1}$, so the identity holds again at rank $k+1$.\vspace{2mm}
\item If $b_{k+1}=2$, one inserts a letter $2$ at the end of the first row, so $$m_{12}(k+1)=m_{12}(k)+1=S_{k}-X_{k}+1.$$ On the other hand, $b_{k+1}=2$ means that the random walk goes down, so $X_{k+1}=X_{k}-1$ and $S_{k}=S_{k+1}$. Then, $m_{12}(k+1)=S_{k}-(X_{k}-1)=S_{k+1}-X_{k+1}$.
 \end{enumerate}
Hence, if $\ell_{k}(\sigma)$ is the length of the longest increasing subword of $\sigma(1)\cdots \sigma(k)$, then 
$$\ell_{k}(\sigma)=m_{11}(k)+m_{12}(k)=\frac{k}{2}+ \frac{2S_{k}-X_{k}}{2} ,$$
and this for any $k\leq n$.
\end{proof}
\bigskip\bigskip

It is known since Pitman's works (see \cite{Pit75}) that if $(X_{k})_{k \in \N}$ is a standard random walk on $\Z$, then $(2S_{k}-X_{k})_{k \in \N}$ is a markovian random process on the non-negative integers, with
$$p(k,k+1)=\frac{k+2}{2(k+1)}\qquad;\qquad p(k,k-1)=\frac{k}{2(k+1)}.$$
After scaling by $n$ on the $x$-axis and $\sqrt{n}$ on the $y$-axis, the random process $(2S_{k}-X_{k})_{k \in \lle 0,n\rre}$ converges in law in $\mathscr{C}([0,1])$ to a \textbf{3-dimensional Bessel process} $(\mathrm{BES}^{3}_{t})_{t \in [0,1]}$, whose trajectorial laws are the same as $(2S_{t}-B_{t})_{t \in [0,1]}$, with $(B_{t})_{t \in [0,1]}$ standard brownian motion and $S_{t}=\sup_{s\leq t}B_{s}$. Alternatively, $(\mathrm{BES}^{3}_{t})_{t \in [0,1]}$ can be defined as the solution of the stochastic differential equation
$$dY_{t}=dB_{t}+\frac{dt}{Y_{t}},$$
see \cite[Chapter 6, \S3]{RY91}. For this reason, and using It\^{o}'s calculus, one can show that $(\mathrm{BES}^{3}_{t})_{t \in [0,1]}$ has the same trajectorial laws as the euclidian norm $(\sqrt{X_{t}^{2}+Y_{t}^{2}+Z_{t}^{2}})_{t \in [0,1]}$ of a $3$-dimensional brownian motion. In particular, this allows to compute the density of the law of $\mathrm{BES}^{3}_{t}$:
$$\mathrm{BES}^{3}_{t}\sim \mathbf{1}_{y \geq 0} \,\sqrt{\frac{2}{\pi\,t^{3}}}\,y^{2}\,\E^{-\frac{y^{2}}{2t}}\,dy.$$

\noindent With these classical results from probability, Lemma \ref{2shuffle} can be reinterpreted as follows:
\begin{proposition}\label{bessel3}
Let $\sigma$ be a random permutation obtained by a $2$-shuffle. We denote
$$L_{t \in [0,1[}^{(n)}=\frac{2}{\sqrt{n}}\left(\left[(\lfloor nt +1\rfloor-nt)\,\ell_{\lfloor nt \rfloor}(\sigma)+ (nt-\lfloor nt\rfloor)\,\ell_{\lfloor nt \rfloor +1}(\sigma)\right]-\frac{nt}{2}\right)$$
the random process obtained by renormalization and interpolation of the process $(\ell_{k}(\sigma) - \frac{k}{2})_{k \in \lle 0,n\rre}$. As $n$ goes to infinity, $(L_{t}^{(n)})_{t \in [ 0,1]}$ converges in law\footnote{As usual, the interpolation is only needed in order to work in $\mathscr{C}([0,1])$; if one uses instead step \emph{c\`adl\`ag} functions, one has the same result of convergence, but this time in Skorohod's space $\mathscr{D}([0,1])$, see \cite[Chapter 3, \S14]{Bil69}.} in $\mathscr{C}([0,1])$ to a 3-dimensional Bessel process $(\mathrm{BES}^{3}_{t})_{t \in [0,1]}$ (see Figure \ref{figbessel}). 
\figcap{
\includegraphics[scale=0.8]{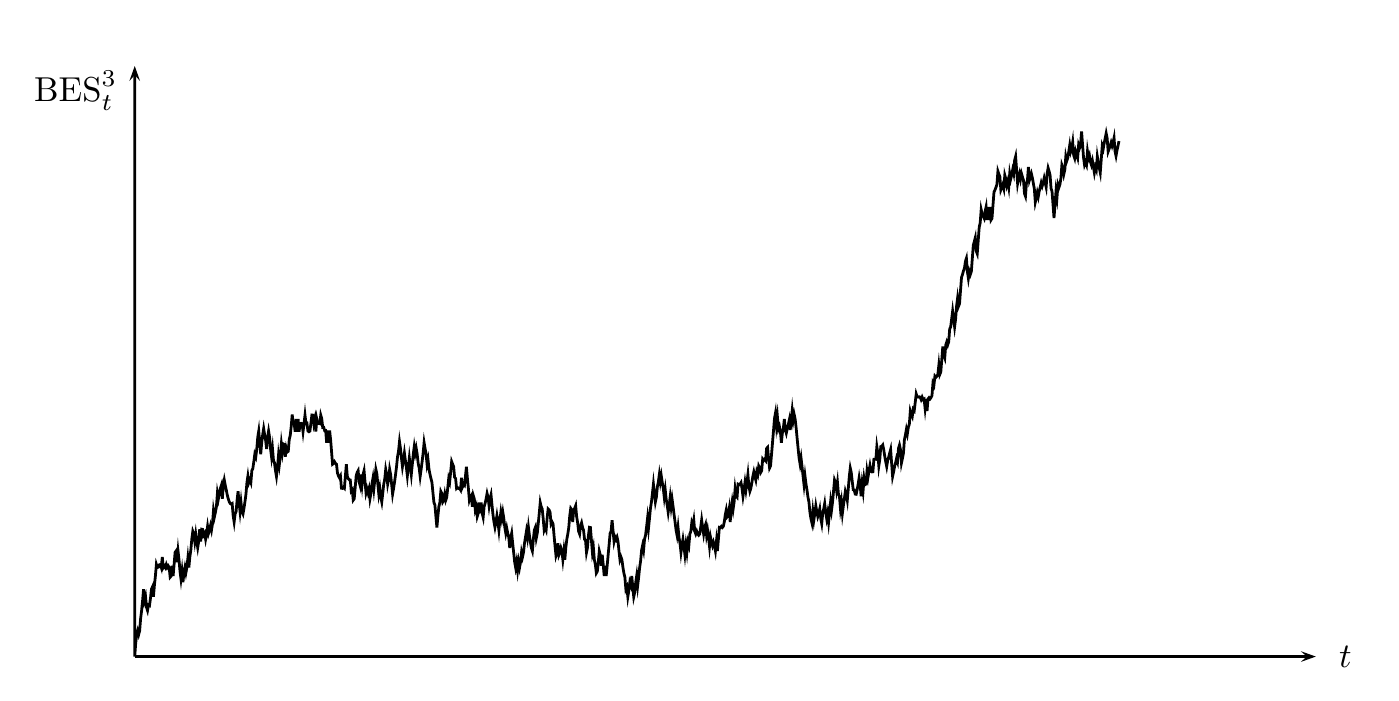}}{A trajectory of the $3$-dimensional Bessel process.\label{figbessel}}

\noindent In particular, 
$$\ell(\sigma)=\lambda_{1}=\frac{n}{2}+\frac{\sqrt{n}}{2}\,Y+o\left(\sqrt{n}\right), \quad\text{with }Y\sim \mathrm{BES}^{3}_{1}\sim \mathbf{1}_{y \geq 0} \,\sqrt{\frac{2}{\pi}}\,y^{2}\,\E^{-\frac{y^{2}}{2}}\,dy.$$
\end{proposition}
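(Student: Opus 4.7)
The plan is to reduce everything to the Pitman transform $\psi(x)_t = 2\sup_{s \leq t} x_s - x_t$ and apply Pitman's theorem together with Donsker's invariance principle. By Lemma \ref{2shuffle}, we have the exact identity
$$\ell_k(\sigma) - \frac{k}{2} = \frac{2S_k - X_k}{2},$$
so that after rescaling by $2/\sqrt{n}$, the process $L_t^{(n)}$ is nothing but the linear interpolation of $k \mapsto (2S_k - X_k)/\sqrt{n}$ at time $k = \lfloor nt\rfloor$. The problem therefore reduces to studying the asymptotic behaviour of the Pitman-transformed walk $\psi(X)_k = 2S_k - X_k$.

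First, I would verify that the simple random walk $X_k = 3k - 2(b_1+\cdots+b_k)$ has centered i.i.d.\ increments of variance $1$ (since $\esper[b_i]=3/2$ and $\mathrm{Var}(b_i)=1/4$). Donsker's invariance principle then implies that the rescaled, linearly interpolated process $t \mapsto X_{\lfloor nt\rfloor}/\sqrt{n}$ converges in law in $\mathscr{C}([0,1])$ to a standard Brownian motion $(B_t)_{t \in [0,1]}$. Next, I would observe that the Pitman transform $\psi : \mathscr{C}([0,1]) \to \mathscr{C}([0,1])$ defined by $\psi(x)_t = 2\sup_{s \leq t}x_s - x_t$ is a $3$-Lipschitz continuous map for the uniform norm. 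Therefore the continuous mapping theorem yields the convergence in law of $t \mapsto (2S_{\lfloor nt\rfloor} - X_{\lfloor nt\rfloor})/\sqrt{n}$ to the process $t \mapsto 2\sup_{s \leq t}B_s - B_t$, and by Pitman's theorem (\cite{Pit75}) this limit process is a $3$-dimensional Bessel process $(\mathrm{BES}^3_t)_{t \in [0,1]}$. Combining with the identity above gives the announced convergence of $(L_t^{(n)})_{t\in[0,1]}$ to $(\mathrm{BES}^3_t)_{t\in[0,1]}$.

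For the ``in particular'' part, it suffices to specialize at $t = 1$: since evaluation at $t=1$ is continuous on $\mathscr{C}([0,1])$, one obtains
$$\frac{2}{\sqrt{n}}\!\left(\ell(\sigma) - \frac{n}{2}\right) \xrightarrow[n\to\infty]{\mathrm{law}} \mathrm{BES}^3_1,$$
which is exactly the asymptotic expansion $\lambda_1 = n/2 + (\sqrt{n}/2)\,Y + o(\sqrt{n})$ with $Y \sim \mathrm{BES}^3_1$. The explicit density is then classical: via It\^o's formula one checks that $\mathrm{BES}^3$ has the same law as the Euclidean norm of a standard $3$-dimensional Brownian motion (see \cite[Chap.~6]{RY91}), and the density $\mathbf{1}_{y\geq 0}\sqrt{2/\pi}\,y^2\,\E^{-y^2/2}\,dy$ is obtained by integrating the Gaussian density in $\R^3$ in spherical coordinates.

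The main conceptual obstacle is not the probabilistic input (which is Donsker plus Pitman, both classical) but rather the combinatorial identity already established in Lemma \ref{2shuffle}; once that identity is in place, the rest is essentially a continuous-mapping argument, with only routine care needed for the polygonal interpolation (alternatively, one could formulate the result in Skorohod's space $\mathscr{D}([0,1])$ directly, as indicated in the footnote of the statement).
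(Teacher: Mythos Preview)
Your argument is correct and follows the same route as the paper: the paper does not give a separate proof of this proposition, but sets up all the ingredients in the paragraph preceding it (Lemma \ref{2shuffle}, Pitman's theorem, the scaling limit of $2S_k-X_k$ to $\mathrm{BES}^3$, and the identification of $\mathrm{BES}^3$ with the norm of a $3$-dimensional Brownian motion), so that the statement is a direct reinterpretation. You make the functional convergence step explicit via Donsker's theorem plus the continuity of the Pitman transform $\psi$, which the paper simply states as a known fact; otherwise the two arguments coincide.
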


\bigskip
\bigskip

To conclude this paragraph, let us give yet another representation of the $3$-dimensional Bessel process. We denote by $\mathfrak{su}(2,\C)$ the space of traceless hermitian matrices of size $2\times 2$. An element of $\mathfrak{su}(2,\C)$ can be written as
$$M=\begin{pmatrix}m_{1} & m_{2}+\I m_{3}\\
m_{2}-\I m_{3} & -m_{1}\end{pmatrix},$$
and $\mathfrak{su}(2,\C)$ is an euclidian space for the scalar product $\scal{A}{B}=\tr AB^{*}=2(a_{1}b_{1}+a_{2}b_{2}+a_{3}b_{3})$. A brownian motion in $\mathfrak{su}(2,\C)$ is a continuous gaussian process $(M_{t})_{t \in \R_{+}}$ starting from the null matrix, and with covariances 
$$\esper[\mathrm{tr}((AM_{s})(BM_{t})^{*})]=K\,(s\wedge t)\,\mathrm{tr}(AB^{*}).$$
For instance, if $X,Y,Z$ are three independent brownian motions in $\R$, then 
$$\begin{pmatrix}X_{t} & Y_{t}+\I Z_{t}\\
Y_{t} - \I Z_{t}& -X_{t}\end{pmatrix}$$
is a brownian motion in $\mathfrak{su}(2,\C)$, with $K=3$. Now, the eigenvalues of such a matrix brownian motion are $\pm \sqrt{X_{t}^{2}+Y_{t}^{2}+Z_{t}^{2}}$, so they are described by a $3$-dimensional Bessel process. Hence, Proposition \ref{bessel3} can be restated as follows: if $\lambda=(\lambda_{1},\lambda_{2})$ is a random partition of law $\proba_{n,\omega}$ with $\omega = ((1/2,1/2),(0))$, then
$$\frac{2}{\sqrt{n}}\left(\lambda_{1}-\frac{n}{2},\lambda_{2}-\frac{n}{2}\right) \simeq_{n \to \infty} (x_{1},x_{2}),$$
where $x_{1}\geq x_{2}$ are the two eigenvalues of a brownian matrix in $\mathfrak{su}(2)$ at time $t=1$. This is the good formulation of Proposition \ref{bessel3} for a generalization to the case of $d$-shuffles.

\subsection{The case of $d$-shuffles}
With a similar framework as before, we want to understand the asymptotics of \textbf{d-shuffles}, meaning that we consider the case when
$$\omega=\left(\left(\frac{1}{d},\frac{1}{d},\ldots,\frac{1}{d},0,0,\ldots \right),(0)\right),$$
with $d$ coordinates equal to $\frac{1}{d}$ in $\alpha$. A generalization of the arguments of the previous paragraph to the case of $d$-shuffles has been proposed by N. O'Connell in \cite{OC03}, in connection with previous results from \cite{OCY02}. Let us denote by $\Pi_{d}$ the set of paths $W : \N \to \N^{d}$, such that $W(0)=(0,0,\ldots,0)$ and 
$$W(k+1)-W(k) \in \{0,e_{1},e_{2},\ldots,e_{d}\}$$
for any $k$, where $e_{i}=(0_{1},0_{2},\ldots,1_{i},\ldots,0_{d})$. 
If $w$ is a word in $\{1,2,\ldots,d\}^{*}$ of length $n$, we associate to it a path $(W^{(k)})_{k \leq n}$ in $\Pi_{d}$, with
$$W(k+1)-W(k)=e_{i}\quad\text{if }w_{k}=i.$$
Now, for any pair of sequences $(x(k))_{k\in \N}$ and $(y(k))_{k \in \N}$ in $\Pi_{1}$, we define two new sequences 
\begin{align*}
(x \Yup y)(k) &= \min \{x(j)-y(j) +y(k) \}_{j \leq k};\\
(x \Ydown y)(k) &= \max \{ x(j)-y(j)+y(k)\}_{j \leq k}.
\end{align*}
Notice that the rules $\Yup$ and $\Ydown$ are not associative. In the following, we will use the following convention: $x \Yup y \Yup z = (x \Yup y) \Yup z$ and $x \Ydown y \Ydown z = (x \Ydown y) \Ydown z$. We then define operators $D_{d} : \Pi_{d} \to \Pi_{d}$ and $T_{d} :  \Pi_{d} \to \Pi_{d-1}$:
\begin{align*}
D_{d}(x_{1},\ldots,x_{d})&=(x_{1},x_{1}\Yup x_{2},x_{1}\Yup x_{2}\Yup x_{3},\ldots,x_{1}\Yup x_{2}\Yup \cdots \Yup x_{d});\\
T_{d}(x_{1},\ldots,x_{d})&=(x_{2}\Ydown x_{1},x_{3}\Ydown (x_{1}\Yup x_{2}),\ldots,x_{d}\Ydown(x_{1}\Yup x_{2}\Yup \cdots \Yup x_{d-1})).
\end{align*}
Finally, from a path $W$ in $\Pi_{d}$, we define new paths $D^{i}W \in \Pi_{d-i+1}$ and $T^{i}W \in \Pi_{d-i}$ by the recursive rule 
\begin{align*} D^{1}W=D_{d}(W_{1},W_{2},\ldots,W_{d})\qquad&;\qquad T^{1}W=T_{d}(W_{1},\ldots,W_{d})\\
D^{i+1}W=D_{d-i}(T^{i}W)\qquad&;\qquad T^{i+1}W=T_{d-i}(T^{i}W).
\end{align*}
For any path $W$, the family $(D^{1}W,D^{2}W,\ldots,D^{d}W)$ is a sequence of triangular arrays. The link between this construction and the RSK algorithm is given by the following result of O'Connell:
\begin{theorem} \label{bradpitt}
For any word $w \in \{1,2,\ldots,d\}^{n}$, the semi-standard tableau of size $n$ obtained from $w$ by Schensted insertion is 
\begin{center}
\includegraphics{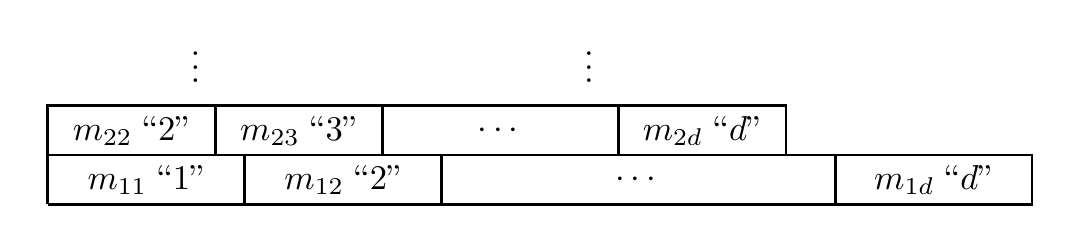}
\end{center}
where\footnote{By convention, $D^{0}W=(0,0,\ldots,0)$ for any path $W$, so $m_{ii}=(D^{1}W)_{i}(n)$ for any $i$.} $m_{ij}=(D^{j-i+1}W)_{i}(n)-(D^{j-i}W)_{i}(n)$ for any $1 \leq i \leq j \leq d$, $W$ being the path associated to the word $w$. In particular, for any $k\leq n$, the shape $(\lambda_{1},\lambda_{2},\ldots,\lambda_{d})$ of $P_{k}(w)$ is
$$\left((D^{d}W)_{1}(k),(D^{d-1}W)_{2}(k),\ldots,(D^{1}W)_{d}(k)\right).$$
\end{theorem}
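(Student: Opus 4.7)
The plan is to proceed by induction on the alphabet size $d$, since each application of the reduction operator $T$ shrinks the dimension by one, mirroring the idea of peeling off the first row of the Schensted tableau. The base case $d=1$ is immediate: the path $W$ is just $W_{1}(k)=k$, the operator $D^{1}$ is the identity, and $P(w)$ is a single row of length $n$. The case $d=2$ is essentially Lemma \ref{2shuffle}: the identifications $X_{k}=W_{1}(k)-W_{2}(k)$, $S_{k}=\max_{j\leq k}X_{j}$, and $W_{1}(k)+W_{2}(k)=k$ allow one to read off $m_{11}(k)=W_{1}(k)=(D^{1}W)_{1}(k)$, $m_{12}(k)=(W_{1}\Yup W_{2})(k)=(D^{1}W)_{2}(k)$, while $\lambda_{1}(k)=(D^{2}W)_{1}(k)$ is precisely Pitman's classical $2S-X$ representation of the length of the longest weakly increasing subword.

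For the inductive step, the key lemma to establish is the compatibility between Schensted insertion and the operator $T_{d}$. Let $\tilde{w}$ be the word formed by the letters bumped from the first row to the second during the insertion of $w$, read in the order the bumps occur. Then $\tilde{w}$ is a word in $\{2,\ldots,d\}$, and after the relabelling $i\mapsto i-1$ its associated path in $\Pi_{d-1}$ coincides with $T^{1}W$. Granting this, the sub-tableau obtained by deleting the first row of $P(w)$ equals $P(\tilde{w})$ (a standard property of row insertion), and the induction hypothesis applied to $\tilde{w}$ and $T^{1}W$ yields the statement for all $m_{ij}$ with $i\geq 2$, using the identity $D^{k+1}W=D^{k}(T^{1}W)$ built into the definitions. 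The remaining assertion concerns $\lambda_{1}=(D^{d}W)_{1}(n)$, which I would prove directly: Greene's invariant formula identifies $\lambda_{1}$ with the maximal length of a weakly increasing subsequence, which for a word over $\{1,\ldots,d\}$ is a maximum over break-points $0\leq j_{1}\leq\cdots\leq j_{d-1}\leq n$ of $W_{1}(j_{1})+(W_{2}(j_{2})-W_{2}(j_{1}))+\cdots+(W_{d}(n)-W_{d}(j_{d-1}))$, and a short telescoping argument rewrites this nested maximum as the iterated $\Ydown$ defining $(D^{d}W)_{1}(n)$.

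The main obstacle is the bumping lemma relating $\tilde{w}$ to $T^{1}W$. Because the definition of $T_{d}$ interleaves the non-associative operators $\Yup$ and $\Ydown$, I would need a secondary induction on $n$, checking at each insertion step that the correct coordinate of $T^{1}W$ advances exactly in step with the bumping event. The heart of the matter is already the two-letter case of Pitman's theorem: when a letter $i$ is inserted and bumps some $i'>i$ downward, the effect on the running counts is precisely a min/max update encoded by $\Yup$ and $\Ydown$. The additional bookkeeping needed here is to track how a single insertion may cascade through several levels, for which I would exploit the local nature of bumping on adjacent levels and reduce the general update to iterated two-letter Pitman operations, the specific left-to-right convention $x\Yup y\Yup z=(x\Yup y)\Yup z$ being essential to making this reduction consistent.
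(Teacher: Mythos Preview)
The paper does not prove this theorem at all: immediately after the statement it writes ``We refer to \cite[Theorem 3.1]{OC03} for a proof of this important result.'' So there is no in-paper argument to compare against; the relevant benchmark is O'Connell's original proof.

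Your strategy---induction on $d$, with the key lemma that the word $\tilde{w}$ of letters bumped out of the first row has associated path $T^{1}W$, together with a direct verification of $\lambda_{1}=(D^{d}W)_{1}$ via Greene's invariant and the telescoping nested maximum---is exactly O'Connell's approach. In his language the first row acts as a queue, $\tilde{w}$ is its departure process, and the identification of that process with $T_{d}(W)$ is the heart of the proof; the rest follows from the recursive definitions $D^{i+1}W=D_{d-i}(T^{i}W)$ and the standard fact that deleting the first row of $P(w)$ gives $P(\tilde{w})$.

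Two points worth tightening. First, the path of $\tilde{w}$ is naturally indexed by the length of $\tilde{w}$, whereas $T^{1}W$ is indexed by the position $k$ in $w$; since the paper's $\Pi_{d-1}$ allows zero increments, you should state explicitly that you parameterize the bumping process by $k$, recording a zero step whenever the inserted letter settles in the first row without bumping. Second, your ``secondary induction on $n$'' is really a case analysis on the value of the inserted letter: inserting letter $i$ either appends to the first row (no bump) or bumps the leftmost letter $>i$ currently present, and one must check that the resulting increment of the bumping path matches the increment of $T_{d}(W)$. This check is where the iterated $\Yup$/$\Ydown$ structure and the left-to-right convention genuinely matter, and it is the one place where a careful argument (rather than a pointer to the two-letter case) is required.
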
\bigskip
\bigskip

We refer to \cite[Theorem 3.1]{OC03} for a proof of this important result. Now, we have to understand the transformation
\begin{align*}G_{d} : \Pi_{d} &\to \Pi_{d}\\
W &\mapsto \left((D^{d}W)_{1},(D^{d-1}W)_{2},\ldots,(D^{1}W)_{d}\right),
\end{align*}
and what is the law of $G_{d}(W)$ when $W$ is a random path in $\N^{d}$ associated to a random word in $\{1,2,\ldots,d\}^{*}$. To begin with, we shall notice that the operators $\Yup$ and $\Ydown$ also make sense for any continuous or c\`adl\`ag functions $x(t)$ and $y(t)$. Thus, we can define operators
\begin{align*}D_{d} : \mathscr{C}_{0}(\R_{+},\R^{d}) &\to \mathscr{C}_{0}(\R_{+},\R^{d})\\
T_{d} : \mathscr{C}_{0}(\R_{+},\R^{d}) &\to \mathscr{C}_{0}(\R_{+},\R^{d-1}) \\
 G_{d} : \mathscr{C}_{0}(\R_{+},\R^{d}) &\to \mathscr{C}_{0}(\R_{+},\R^{d})
 \end{align*}
where $\mathscr{C}_{0}(\R_{+},\R^{d})$ denotes the set of continuous functions $w(t)$ from $\R_{+}$ to $\R^{d}$, with $w(0)=(0,0,\ldots,0)$. Moreover, $G_{d}$ has the following property, see \cite[Lemma 2.2]{OC03}: for any path $w \in \mathscr{C}_{0}(\R_{+},\R^{d})$ and any time $t \in \R_{+}$,
$$w_{1}(t)+w_{2}(t)+\cdots+w_{d}(t)=(G_{d}w)_{1}(t)+(G_{d}w)_{2}(t)+\cdots+(G_{d}w)_{d}(t).$$
In particular, $G_{d}$ stabilizes the subspace $\mathscr{C}_{0}(\R_{+},H)\subset \mathscr{C}_{0}(\R_{+},\R^{d})$ of functions taking their values in the hyperplane
$$H=\{(x_{1},x_{2},\ldots,x_{d})\,\,|\,\,x_{1}+x_{2}+\cdots+x_{d}=0\}.$$
On the other hand, for any $x,y,f$ functions in $\mathscr{C}_{0}(\R_{+},\R)$, one has 
$$
(x+f) \Yup (y+f)=(x\Yup y)+f\qquad;\qquad(x+f)\Ydown (y+f)=(x\Ydown y)+f.
$$
This implies immediately that:
$$G_{d}(x_{1}+f,x_{2}+f,\ldots,x_{d}+f)=G_{d}(x_{1},x_{2},\ldots,x_{d})+(f,f,\ldots,f).$$
In particular, set $f=\frac{x_{1}+x_{2}+\cdots+x_{d}}{d}$. Then,
$$G_{d}(x_{1},x_{2},\ldots,x_{d})=(f,f,\ldots,f)+G_{d}(x_{1}-f,x_{2}-f,\ldots,x_{d}-f),$$
and the last term is a function in $\mathscr{C}_{0}(\R_{+},H)$; hence, it suffices to study the restriction of $G_{d}$ to the subspace $\mathscr{C}_{0}(\R_{+},H)$.\bigskip
\bigskip

The operators $G_{d}$ are better understood in terms of the \textbf{Pitman transforms} attached to a root system (\emph{cf.} \cite{BBO05}). The hyperplane $H\subset \R^{d}$ contains a root system of type $A_{d-1}$, whose simple roots are $e_{1}-e_{2},e_{2}-e_{3},\ldots,e_{d-1}-e_{d}$. To each simple root $\delta_{i}=e_{i}-e_{i+1}$, we attach an operator $P_{i} : \mathscr{C}_{0}(\R_{+},\R^{d}) \to \mathscr{C}_{0}(\R_{+},\R^{d})$, defined by the following formula:\label{pitmanoperator}
$$(P_{i}w)(t)=w(t)-\left(\min\{ \scal{\delta_{i}}{w(s)}\}_{s \leq t}\right)\,\delta_{i}.$$
Notice that each $P_{i}$ stabilizes the subspace $\mathscr{C}_{0}(\R_{+},H)$. In \cite{BBO05}, it is shown that the operators $P_{i}$ satisfy the braid relations of the generators $s_{i}=(i,i-1)$ of the Coxeter group $\sym_{d}$:
\begin{align*}
&\forall i \leq d-2,\,\,\,P_{i}P_{i+1}P_{i}=P_{i+1}P_{i}P_{i+1};\\
&\forall i,j \leq d-1,\,\,\,|i-j|\geq 2 \Rightarrow P_{i}P_{j}=P_{j}P_{i}.
\end{align*}
It enables us to define $P_{\sigma}$ for any $\sigma \in \sym_{d}$, and it is not hard to see that $P_{\sigma}|_{\mathscr{C}_{0}(\R_{+},H)}$ takes its values in
$$\mathscr{C}_{0}\left(\R_{+},\bigcap_{s_{i} \in L_{\sigma}} H_{i}^{+}\right),$$
where $L_{\sigma}= \{s_{i} \,\,|\,\, \ell(s_{i}\sigma)<\ell(\sigma)\}$, and $H_{i}^{+} =\{x \in H\,\,|\,\,\scal{\delta_{i}}{x}\geq 0\}$ --- \emph{cf.} \cite[Proposition 2.8]{BBO05}. In particular, if $\omega_{0}$ is the permutation of maximal length in $\sym_{d}$, that is to say $(d,d-1,\ldots,1)$, then $P_{\omega_{0}}|_{\mathscr{C}_{0}(\R_{+},H)}$ is an idempotent operator taking its values in $\mathscr{C}_{0}(\R_{+},C),$ where $C=\bigcap_{i=1}^{d-1} H_{i}^{+}$ is the closed Weyl chamber in $H$ corresponding to the simple roots. In terms of coordinates, notice that $x =(x_{1},\ldots,x_{d})$ is in $C$ if and only if
$$x_{1}+\cdots+x_{d}=0 \quad\text{and}\quad x_{1}\geq x_{2}\geq \cdots \geq x_{d}.$$
Now, if $\phi_{k}^{d}$ is the projection $(x_{1},\ldots,x_{d}) \in \R^{d}\mapsto (x_{1},\ldots,x_{k}) \in \R^{k}$, then it is easily seen that
\begin{align*}
T_{d}(W)&=\phi_{d-1}^{d}\,P_{d-1}P_{d-2}\cdots P_{2}P_{1}(W);\\
T^{i}W&=\phi_{d-i}^{d}\,(P_{d-i}\cdots P_{1})\,(P_{d-i+1}\cdots P_{1})\,\cdots\, (P_{d-1}\cdots P_{1})(W).
\end{align*}
From this, one sees that $G_{d}=P_{1}(P_{2}P_{1})\cdots(P_{d-1}\cdots P_{1})= P_{\omega_{0}}$. In particular, $G_{d}|_{\mathscr{C}_{0}(\R_{+},H)}$ takes its values in $\mathscr{C}_{0}(\R_{+},C)$, which was not clear from the first definition. \bigskip
\bigskip

Then, let us consider the path $W$ associated to a random word $w \in \{1,2,\ldots,d\}^{*}$, where each letter of $w$ is independent of the other letters, and uniformly distributed on $\lle 1,d\rre$. If $$\widetilde{W}(k)=W(k)-\left(\frac{k}{d},\frac{k}{d},\ldots,\frac{k}{d}\right),$$ then $\widetilde{W}$ lives in the hyperplane $H$, and by (multi-dimensional) Donsker's theorem (\emph{cf.} \cite[Chapter 2]{Bil69}), after time-space scaling, $\widetilde{W}$ converges to a brownian motion $(w_{t})_{t \in \R}$ in $H$, that is to say a continuous gaussian process in $H$ with covariances
$$\esper[\scal{a}{w_{s}}\,\scal{b}{w_{t}}]=\frac{1}{d}\,(s\wedge t)\,\scal{a}{b}.$$
Indeed, if $X$ is a random step $\widetilde{W}(n+1)-\widetilde{W}(n)$, then $\scal{\delta_{i}}{X}$ equals $1$ if $X=e_{i}$, $-1$ if $X=e_{i+1}$, and $0$ otherwise. From this, one sees that $\esper[\scal{\delta_{i}}{X}\,\scal{\delta_{j}}{X}]$ equals $\frac{2}{d}$ if $i=j$, $-\frac{1}{d}$ if $i=j\pm 1$, and $0$ otherwise. But this is exactly $\frac{\scal{\delta_{i}}{\delta_{j}}}{d}$, whence the result after applying Donsker's theorem. \bigskip
\bigskip

Let us forget for the moment the factor $\frac{1}{d}$, and consider a standard brownian motion $w$ in $H$. Then, it can be shown that $P_{\omega_{0}}(w)$ is a brownian motion conditioned in Doob's sense to stay in the Weyl chamber $C$, \emph{cf.} \cite[Theorem 5.6]{BBO05} or \cite{Bia09}.  We have drawn on Figure \ref{pitmanfig} a random walk in $H \subset \R^{3}$ corresponding to a random word (in blue), and its image by Pitman's transform $P_{\omega_{0}}=P_{1}P_{2}P_{1}$ (in violet); it approximates a brownian motion conditioned to stay in a Weyl chamber\footnote{On the drawing, it appears that an important part of the trajectory of the initial brownian motion is simply translated by Pitman's transform $P_{\omega_{0}}$. This observation can be made precise, and one obtains a generalization of Williams' decomposition of the trajectories of the $\mathrm{BES}^{3}$ process, \emph{cf.} \cite[Chapter 6, Theorem 3.11]{RY91}. }. 
\figcap{
\includegraphics[scale=1.3]{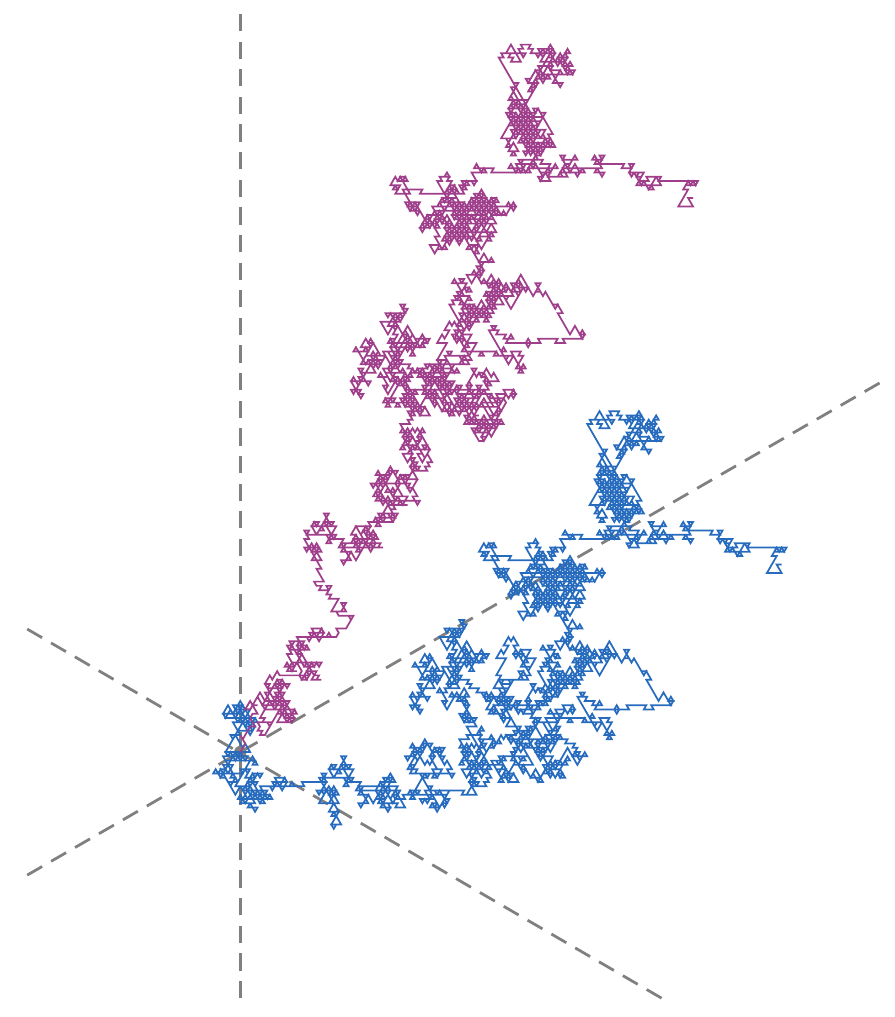}}{Action of Pitman's transform on a brownian motion when $d=3$.\label{pitmanfig}}

This conditioned brownian motion is the Markov process on $C$ with transition probabilities
$$q_{t}(x,y)=\frac{\Delta(y)}{\Delta(x)}\,\sum_{\sigma \in \sym_{d}} \eps(\sigma)\,p_{t}(x,\sigma(y))$$
where $p_{t}(x,y)=(2\pi t)^{-\frac{d-1}{2}}\,\exp\left(-\frac{\|x-y\|^{2}}{2t}\right)$ is the transition kernel of the standard brownian motion in $H$, and 
$$\Delta(x)=\prod_{\alpha \text{ positive root}} \scal{\alpha}{x}=\prod_{1\leq i<j\leq d}x_{i}-x_{j}$$
is (up to a multiplicative constant) the unique positive function on $C$ that is harmonic with respect to the symmetrized kernel and that vanishes at the boundary of $C$. The singularity of the transition kernel at $x=(0,0,\ldots,0)$ is solvable\footnote{This is easily seen by expanding terms $\exp(-\|\eta_{i}-x_{\sigma(i)}\|^{2}/2t)$ with $\eta \to 0$, and by using the theory of antisymmetric polynomials, see \cite[\S1.3]{Mac95}.}, with
$$q_{t}(0,y)=\frac{\Delta(y)^{2} }{1!\,2!\,\cdots\,d-1!\,t^{\frac{d(d-1)}{2}}}\,p_{t}(0,y).$$
This Markov process corresponds to the process of the eigenvalues $$(x_{t,1}\geq x_{t,2}\geq \cdots \geq x_{t,d})$$ of a brownian matrix in $\mathfrak{su}(d,\C)$ with covariances
$$\esper[\mathrm{tr}((AM_{s})(BM_{t})^{*})]=\frac{d^{2}-1}{d}\,(s\wedge t)\,\mathrm{tr}(AB^{*}),$$
see \cite{Dys62,Bia09}, and \cite[Theorem 2.5.2]{AGZ09} for checking that the multiplicative constant is indeed $\frac{d^{2}-1}{d}$. Taking account of the factor $\frac{1}{d}$ put aside before, we get:
\begin{theorem}
In the following, $(M_{t})_{t \in \R_{+}}$ is a brownian matrix in $\mathfrak{su}(d,\C)$ starting from the null matrix and with normalized covariances
$$\esper[\mathrm{tr}((AM_{s})(BM_{t})^{*})]= \frac{d^{2}-1}{d^{2}}\,(s\wedge t)\,\mathrm{tr}(AB^{*});$$
and $(x_{1,t}\geq x_{2,t}\geq \cdots \geq x_{d,t})$ is the set of eigenvalues of $M_{t}$. On the other hand, we consider a random permutation $\sigma$ obtained by a $d$-shuffle, and we denote
$$L_{i,t}^{(n)}= \frac{1}{\sqrt{n}}\left(\left[(\lfloor nt +1\rfloor-nt)\,\Lambda_{i,\lfloor nt \rfloor}(\sigma)+ (nt-\lfloor nt\rfloor)\,\Lambda_{i,\lfloor nt \rfloor +1}(\sigma)\right]-\frac{nt}{d}\right),$$
where $(\Lambda_{1,k}(\sigma),\ldots,\Lambda_{d,k}(\sigma))$ is the shape of the tableau $P_{k}(\sigma)$ obtained by Schensted insertion from the $k$ first letters of $\sigma$. As $n$ goes to infinity,
$$\left(L_{1,t}^{(n)},L_{2,t}^{(n)},\ldots,L_{d,t}^{(n)}\right)_{t \in [0,1]} \to  (x_{1,t},x_{2,t},\ldots,x_{d,t})_{t \in [0,1]},$$
where the convergence is in law and takes place in $\mathscr{C}([0,1],\R^{d})$. In particular, if $\lambda \sim \proba_{n,\omega}$ with $\omega = ((1/d,1/d,\ldots,1/d),(0))$, then\footnote{Here, notice that $dy$ denotes the $(d-1)$-dimensional Lebesgue measure on $H$.}
$$\lambda_{i}=\frac{n}{d}+\sqrt{\frac{n}{d}}\,Y_{i}+o\left(\sqrt{n}\right)$$
with
$$(Y_{1},\ldots,Y_{d})\sim \mathbf{1}_{\substack{y_{1}\geq y_{2}\geq  \cdots \geq y_{d}\quad \\ y_{1}+y_{2}+\cdots+y_{d}=0}}\,\,\,\frac{\Delta(y_{1},y_{2},\ldots,y_{d})^{2}}{1!\,2!\,\cdots\,d-1!\,(2\pi)^{\frac{d-1}{2}}}\,\E^{-\frac{\|y\|^{2}}{2}}\,dy.$$
\end{theorem}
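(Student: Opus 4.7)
The plan is to combine O'Connell's Theorem~\ref{bradpitt} with a multidimensional Donsker invariance principle, transfer the convergence through the continuous Pitman operator $G_d = P_{\omega_0}$, and then identify the limit with Dyson's Brownian motion on $\mathfrak{su}(d,\C)$. First, I would realize a random permutation $\sigma$ of law $\qproba_{n,\omega}$ as the standardization of a word $w \in \{1,\ldots,d\}^n$ whose letters are i.i.d.\ uniform, and encode $w$ by its lattice path $W(k) \in \Pi_d$ with i.i.d.\ increments in $\{e_1,\ldots,e_d\}$. Theorem~\ref{bradpitt} then identifies the shape of $P_k(\sigma)$ with $\bigl((G_d W)_1(k),\ldots,(G_d W)_d(k)\bigr)$. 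Centering in the hyperplane $H$ by $\widetilde W(k) = W(k) - (k/d,\ldots,k/d)$ and using the translation-covariance identity for $G_d$ recorded on page~\pageref{pitmanoperator}, one obtains the clean formula
$$L^{(n)}_{i,t} = \frac{1}{\sqrt n}\,(G_d \widetilde W)_i^{\mathrm{int}}(nt),$$
where the superscript denotes piecewise linear interpolation. The problem thus reduces to the study of $n^{-1/2}\,\widetilde W^{\mathrm{int}}(n\,\cdot)$ followed by the application of~$G_d$.

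The variance computation performed just before the theorem shows that the increments of $\widetilde W$ are centered random vectors in $H$ with $\esper[\scal{\delta_i}{X}\scal{\delta_j}{X}] = \scal{\delta_i}{\delta_j}/d$. The multidimensional Donsker theorem (\cite[Chapter 2]{Bil69}) ensures that $n^{-1/2}\,\widetilde W^{\mathrm{int}}(n\,\cdot)$ converges in law in $\mathscr{C}([0,1],H)$ to a Gaussian process $(w_t)_{t\in [0,1]}$ with covariance $\esper[\scal{a}{w_s}\scal{b}{w_t}] = (s\wedge t)\,\scal{a}{b}/d$. Each Pitman operator $P_i$ of page~\pageref{pitmanoperator} is continuous on path space, being defined by subtraction of a running minimum of a continuous linear functional; hence $G_d = P_{\omega_0}$ is continuous, and the continuous mapping theorem yields
$$\bigl(L^{(n)}_{1,t},\ldots,L^{(n)}_{d,t}\bigr)_{t\in [0,1]} \longrightarrow \bigl((G_d w)_1(t),\ldots,(G_d w)_d(t)\bigr)_{t\in [0,1]}$$
in law in $\mathscr{C}([0,1],\R^d)$.

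The key identification step comes next. By \cite[Theorem 5.6]{BBO05}, the image $P_{\omega_0}(w)$ of a Brownian motion on $H$ is a diffusion on the closed Weyl chamber $C$, equal to Brownian motion on $H$ conditioned in Doob's sense to stay in $C$, with transition kernel $q_t(x,y) = (\Delta(y)/\Delta(x))\sum_{\sigma\in\sym_d}\eps(\sigma)\,p_t(x,\sigma(y))$. By Dyson's theorem (\cite{Dys62}, see also \cite[Theorem 2.5.2]{AGZ09}), this diffusion coincides with the ordered eigenvalue process of a Brownian motion in $\mathfrak{su}(d,\C)$ started from the origin; carrying the factor $1/d$ present in the covariance of $w$ through Dyson's SDE fixes the normalization to $\esper[\tr((AM_s)(BM_t)^*)] = \frac{d^2-1}{d^2}(s\wedge t)\tr(AB^*)$, as announced. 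The marginal statement for $\lambda$ at $t = 1$ finally follows by specializing $q_1$ at the singular starting point $x = 0$ via the standard antisymmetric-polynomial expansion already cited in the text, producing the density proportional to $\Delta(y)^2\,\E^{-\|y\|^2/2}$ on $C$.

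The main technical obstacle is the identification step embodied in \cite[Theorem 5.6]{BBO05}: showing that the Pitman-transformed Brownian motion $P_{\omega_0}(w)$ is really a Markov diffusion with the announced kernel (rather than merely a deterministic functional of $w$) requires Rogers--Pitman-type intertwining arguments and is far from elementary. A secondary, more bookkeeping-type difficulty is to convert the $1/d$-scaled Brownian motion on $H$ into a Brownian matrix in $\mathfrak{su}(d,\C)$ with the correct covariance: a direct eigenvalue-gap variance computation is needed to secure the constant $(d^2 - 1)/d^2$ in the statement.
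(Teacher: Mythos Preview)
Your proposal is correct and follows essentially the same route as the paper: realize the $d$-shuffle as the standardization of an i.i.d.\ uniform word, apply O'Connell's Theorem~\ref{bradpitt} to write the shape process as $G_d(W)$, center into the hyperplane $H$, invoke the multidimensional Donsker theorem, push the convergence through the continuous Pitman transform $G_d=P_{\omega_0}$, and identify the limit via \cite[Theorem~5.6]{BBO05} and Dyson's description of the eigenvalue process. You make explicit the continuous mapping step (continuity of each $P_i$ on $\mathscr{C}_0(\R_+,H)$), which the paper leaves implicit, but otherwise the argument and the two technical difficulties you flag are exactly those the paper relies on.
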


\subsection{The general case of equal coordinates}
We pursue the study of the fluctuations of rows of a tableau obtained by Schensted insertion from a generalized shuffle, and we consider now the case when
$\omega=((p_{1}^{d_{1}},p_{2}^{d_{2}}, \ldots,p_{r}^{d_{r}}),(0))$ with 
$$p_{1}>p_{2}>\cdots>p_{r}>0 \,\,\,\text{ and }\,\,\,d_{1}p_{1}+d_{2}p_{2}+\cdots+d_{r}p_{r}=1.$$
Here, by $p_{i}^{d_{i}}$, we mean that $p_{i}$ appears $d_{i}$ times in $\alpha$. It will be clear from our reasoning that one could even consider an infinite sequence of $p_{i}$'s with the same property. So, if one understands the behaviour of rows with a finite number of parameters, the study of the general case when $\omega=(\alpha,0)$ and $\gamma=0$ will be completed. In the following, we set $d=d_{1}+d_{2}+\cdots+d_{r}$.\bigskip\bigskip

As before, an $\omega$-shuffle of size $n$ corresponding to parameters $p_{1}^{d_{1}},\ldots,p_{r}^{d_{r}}$ can be obtained by standardizing a random word $w=b_{1}b_{2}\ldots b_{n}$ in $\lle 1,d\rre^{n}$, where the letters $b_{k}$ are independent and of law
$$\proba[b_{k}=i]=p_{j}\quad\text{if }d_{1}+\cdots+d_{j-1}+1\leq i \leq d_{1}+\cdots+d_{j}.$$
To such a (random) word, we attach as before a random walk $W\in \Z^{d}$, with $W(k+1)-W(k)=e_{i}$ if $b_{k}=i$. O'Connell's Theorem \ref{bradpitt} holds again, so $G_{d}(W)(k)$ is the form $\Lambda_{k}(\sigma)$ of the tableau $P_{k}(\sigma)$ (or $P_{k}(w)$) obtained by Schensted insertion from the $k$ first letters of $\sigma$. Now, we shall use the following technical lemma:
\begin{lemma}\label{technical}
If $W=(x_{1},\ldots,x_{d})$ is a random walk obtained from a shuffle of parameter $p_{1}^{d_{1}},\ldots,p_{r}^{d_{r}}$, we introduce random functions $f_{1},\ldots,f_{d}$ such that $(f_{1},\ldots,f_{d})$ is the vector
$$
G_{d}(x_{1},\ldots,x_{d})-(G_{d_{1}}(x_{1},\ldots,x_{d_{1}}),\ldots,G_{d_{r}}(x_{d_{1}+\cdots+d_{r-1}+1},\ldots,x_{d_{1}+\cdots+d_{r}})).
$$
These functions are almost surely bounded (as functions of $\N$).
\end{lemma}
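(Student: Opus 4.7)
The plan is to exploit the parabolic decomposition $\omega_0 = w^P \cdot w_P$ of the longest element in $\sym_d$, where $W_P = \sym_{d_1} \times \cdots \times \sym_{d_r}$ is the Young subgroup associated with the block structure, $w_P$ is its longest element, and $w^P$ is the unique minimal right-coset representative satisfying $\ell(\omega_0)=\ell(w_P)+\ell(w^P)$. By the braid relations obeyed by the Pitman operators, this yields $G_d = P_{\omega_0} = P_{w^P} \circ P_{w_P}$, and $P_{w_P}$ acts block-wise:
\[ P_{w_P}(x_1,\ldots,x_d) = \bigl(G_{d_1}(x_1,\ldots,x_{d_1}),\,\ldots,\,G_{d_r}(x_{d_1+\cdots+d_{r-1}+1},\ldots,x_d)\bigr). \]
The lemma is thus reduced to showing that $P_{w^P}$, applied to this block-sorted output, modifies each coordinate by an almost surely bounded function of $t$.

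I would proceed by induction on the number of blocks $r$, the case $r=1$ being trivial. For $r\geq 2$, grouping the last $r-1$ blocks into a single block of total size $d_B = d_2+\cdots+d_r$ and invoking the inductive hypothesis inside this big block (using that the Pitman operators are $1$-Lipschitz in the sup norm, so a.s.\ bounded errors in the input remain a.s.\ bounded in the output) reduces the problem to the two-block case with drifts $p_1>p_2$. Here $w^P$ is the shuffle bringing block $1$ past block $B$; it admits an explicit staircase reduced expression of length $d_1 d_B$. Applying the corresponding sequence of Pitman operators one at a time to the block-sorted input, I would show inductively that at step $l$ the pair of positions $(j_l, j_l+1)$ being transformed contains, up to an a.s.\ bounded correction accumulated from earlier steps, one coordinate originating from block $1$ and one from block $B$. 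The elementary analytic fact then applies: if $u,v$ are c\`adl\`ag paths on $\R_+$ with $u-v$ a.s.\ bounded below (for instance because it has a strictly positive drift), then the Pitman transform $(u,v)\mapsto (u-m, v+m)$ with $m(t)=\min_{s\leq t}(u(s)-v(s))$ modifies both coordinates by an a.s.\ bounded amount. This closes the inductive step.

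The main obstacle is purely combinatorial: one must check, via the wiring diagram of the chosen reduced expression of $w^P$, that throughout the sequence of $d_1 d_B$ successive Pitman operators, the two positions being acted on always carry coordinates originating from different blocks of the input. Once this tracking of the ``block origin'' is carried out, summing finitely many a.s.\ bounded corrections yields that $P_{w^P}$ modifies its input by an a.s.\ bounded function on $\R_+$, and hence that the random functions $f_1,\ldots,f_d$ appearing in the statement are a.s.\ bounded as functions of $\N$.
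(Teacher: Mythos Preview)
Your parabolic decomposition $G_d=P_{w^P}\circ P_{w_P}$ is a clean and correct starting point, and it is a genuinely different organization from the paper's. The paper works instead with the specific reduced expression $P_{\omega_0}=P_1(P_2P_1)\cdots(P_{d-1}\cdots P_1)$, argues that each factor $P_i$ with $i\in\{d_1,d_1+d_2,\ldots\}$ (a ``boundary'' index) modifies the path by an a.s.\ bounded amount, deletes these factors, and then shows that the remaining internal factors on block $j$ collapse to $G_{d_j}$ via idempotence of the $P_i$'s. Your route avoids this deletion-and-collapse step entirely.

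There is, however, a genuine gap in your justification. You propose to track the ``block origin'' of each position through the wiring diagram of a reduced word for $w^P$, concluding that at each step the two active positions carry one coordinate from each block. The wiring diagram does have this property for a minimal coset representative, but the Pitman operators do \emph{not} follow the wiring diagram here. When $Y_i-Y_{i+1}$ is a.s.\ bounded below, $P_i$ alters positions $i,i+1$ by an a.s.\ bounded quantity --- it does \emph{not} swap them. Consequently, after the first few steps the content of each position is still approximately the original $Y_{j}$, not the strand predicted by the wiring diagram. Concretely, take $d_1=d_2=2$ and $w^P=s_2s_1s_3s_2$: at the second step ($P_3$) the two active positions carry $\approx Y_3,Y_4$, both from block~$2$, contradicting your inductive claim.

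The fix is short and stays within your framework. Observe that $Y=P_{w_P}(W)$ has \emph{every} consecutive difference $Y_i-Y_{i+1}$ a.s.\ bounded below: within a block this is the chamber inequality $Y_i\ge Y_{i+1}$, and across a block boundary it follows from the positive drift $p_j-p_{j+1}>0$. Now check directly from the formula $(P_iY)_k=Y_k-\mathbf{1}_{k=i}\,m+\mathbf{1}_{k=i+1}\,m$, with $m(t)=\min_{s\le t}(Y_i-Y_{i+1})(s)$, that (i) $|m|$ is a.s.\ bounded, so $P_iY-Y$ is a.s.\ bounded, and (ii) each consecutive difference of $P_iY$ is again a.s.\ bounded below. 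Iterating over the $\ell(w^P)$ factors of any reduced word for $w^P$ gives $P_{w^P}(Y)-Y$ a.s.\ bounded, which is exactly the lemma. This argument makes no reference to which blocks the active positions come from, and in particular does not rely on the wiring-diagram picture.
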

\begin{example}
Before proving Lemma \ref{technical}, let us see study the simplest example where it applies, namely, when $\omega=((a,1-a),(0))$ with $a>1/2$. The random walk to consider is two dimensional and will be denoted $w_{n}=(x_{n},y_{n})_{n \in \N}$; moreover,
$$(G_{2}w)_{n}= (x_{n}-\min\{x_{k}-y_{k}\}_{k \leq n},y_{n}+\min\{x_{k}-y_{k}\}_{k \leq n}).$$ However, $(z_{n}=x_{n}-y_{n})_{n \in \N}$ is a random walk on $\Z$ with $\proba[z_{n+1}=z_{n}+1]=a$, $\proba[z_{n+1}=z_{n}-1]=1-a$, and $z_{0}=0$ almost surely. It is well-known that such a random walk is transient, with
\begin{align*}\proba\left[\inf_{n \in \N}z_{n}\leq -m\right]&=\left(\frac{1-a}{a}\right)^{m}\\
\proba\left[\min_{n \in \N}z_{n}=-m\right]&=\left(\frac{1-a}{a}\right)^{m}-\left(\frac{1-a}{a}\right)^{m+1}=\frac{2a-1}{a}\left(\frac{1-a}{a}\right)^{m}.
\end{align*}
Thus, $(G_{2}(x,y))-(x,y)=(-z,z)$, with $|z|$ almost surely bounded by a (random) constant of expectation $\frac{1-a}{2a-1}$.
\end{example}
\begin{proof}[Proof of Lemma \ref{technical}]
The example above can be generalized by using the language of Pitman operators: if $P_{i}$ acts on the random walk with $i=d_{1}+\cdots+d_{j}$ for some $j \in \lle 1,r-1\rre$, then $P_{i}w$ differs from $w$ by almost surely bounded functions on the coordinates of index $d_{1}+\cdots+d_{j}$ and $d_{1}+\cdots+d_{j}+1$. This is exactly for the same reason: the difference of the two random walks $x_{d_{1}+\cdots+d_{j}}$ and $x_{d_{1}+\cdots+d_{j}+1}$ is almost surely bounded from below. Quite obviously, this is also true if one replaces $w$ by $w+\delta$ with $\delta$ almost surely bounded vector, so one can use this argument several times. More generally, this works also if $w$ is replaced by $P_{\sigma}(w)$, where $\sigma$ is a permutation in the Young subgroup $\sym_{d_{1}}\times \sym_{d_{2}}\times \cdots \times \sym_{d_{r}} \subset \sym_{d}$; indeed, such an operator won't change the fact that the coordinates $x_{d_{1}+\cdots+d_{j}}$ and $x_{d_{1}+\cdots+d_{j}+1}$ have different rates of growth to infinity. \bigskip

In the operator $G_{d}=P_{\omega_{0}}=P_{1}(P_{2}P_{1})(P_{3}P_{2}P_{1})\cdots(P_{d-1}\cdots P_{1})$, it allows to neglict all the factors $P_{i}$ with $i=d_{1}+\cdots+d_{j}$: they will simply modify the random path by almost surely bounded functions. So, $G_{d}w\simeq J_{d}w$, where $J_{d}$ is obtained from $G_{d}$ by deleting in the reduced decomposition of $\omega_{0}$ all the $s_{i}$'s with $i=d_{1}+\cdots+d_{j}$, $j \in \lle 1,r-1\rre$. Then, $J_{d}$ acts separately on each block of coordinates $x_{d_{1}+\cdots+d_{j-1}+1},\ldots,x_{d_{1}+\cdots+d_{j}}$, and the action corresponding to the block of size $d_{j}$ is given by the operator
$$G_{d_{j}}\,(P_{d_{j}-1}\cdots P_{2}P_{1})^{k},$$
where $k=d-d_{j}$. Now, each $P_{i}$ is an idempotent operator (this is almost obvious from the definition given on page \pageref{pitmanoperator}), and on the other hand, for any $i$, there is a reduced decomposition of $\omega_{0}$ in $\sym_{d_{j}}$ that ends by $s_{i}$. Hence, 
$$G_{d_{j}}\,(P_{d_{j}-1}\cdots P_{2}P_{1})^{k}=G_{d_{j}},$$
whence the result.
\end{proof}
\bigskip
\bigskip

Now, for each block $x_{d_{1}+\cdots+d_{j-1}+1},\ldots,x_{d_{1}+\cdots+d_{j}}$, one can follow the same reasoning as in the case of $d$-shuffles, with some precaution. First, notice that if $g_{j}=\frac{1}{d_{j}}\sum_{i=1}^{d_{j}} x_{d_{1}+\cdots+d_{j-1}+i}$, then:\vspace{2mm}
\begin{enumerate}
\item In $\N^{r}$, $(d_{1}g_{1},\ldots,d_{r}g_{r})$ is a random walk of parameter $d_{1}p_{1},\ldots,d_{r}p_{r}$.\vspace{2mm}
\item The vectors
$$\widetilde{W}_{j}=(x_{d_{1}+\cdots+d_{j-1}+1}-g_{j},\ldots,x_{d_{1}+\cdots+d_{j-1}+d_{j}}-g_{j})$$
with $j \in \lle 1,r\rre$ are random walks in $H \subset \R^{d_{j}}$, and they are all independent, and independent from the vector $(d_{1}g_{1},\ldots,d_{r}g_{r})$.\vspace{2mm}
\end{enumerate}
One takes account of the first vector by the standard central limit theorem: 
$$d_{i}g_{i}=n(d_{i}p_{i})+\sqrt{n}\,X_{i}+o\left(\sqrt{n}\right)$$
where $(X_{1},\ldots,X_{r})$ is a gaussian vector of covariance matrix $k(X_{i},X_{j})=\delta_{ij}\,d_{i}p_{i}-d_{i}d_{j}\,p_{i}p_{j}$. More precisely, one can describe asymptotically 
$$(d_{1}g_{1}(k)-k\,d_{1}p_{1},\ldots,d_{r}g_{r}(k)-k\,d_{r}p_{r})_{k\leq n}$$ 
by using the multidimensional Donsker theorem. As for the vectors $\widetilde{W}_{j}$, they can be studied with the framework of the previous paragraph on $d$-shufflles. The only difference is the multiplicative constant in the covariances of the brownian motion that is the limit of $\widetilde{W}_{j}$ after time-space scaling: it is $p_{j}$ instead of $\frac{1}{d_{j}}$. In the end, one obtains the following:
\begin{theorem}\label{maingeom}
With the same notations as before, we fix independent brownian motions $(M_{t,j})_{t \in \R_{+}, 1\leq j \leq r}$ taking values in the spaces $\mathfrak{su}(d_{j},\C)$, with covariances 
$$\esper[\mathrm{tr}((AM_{s,j})(BM_{t,j})^{*})]=\frac{d_{j}^{2}-1}{d_{j}}\,p_{j}\,(s\wedge t)\,\mathrm{tr}(AB^{*});$$
$(x_{1,j,t}\geq x_{2,j,t}\geq \cdots \geq x_{d_{j},j,t})$ will denote the set of eigenvalues of $M_{t,j}$. We also consider an independent brownian motion $(B_{1,t},\ldots,B_{r,t})$ in $\R^{d}$, with covariances 
$$\esper[\scal{X}{B_{s}}\,\scal{Y}{B_{t}}]=(s \wedge t)\,X^{t}QY,\quad \text{where }Q=\left(\delta_{ij}\,\frac{p_{i}}{d_{i}}-p_{i}p_{j}\right)_{1\leq i,j\leq r}. $$
On the other hand, we consider a random permutation obtained by a shuffle of parameters $p_{1}^{d_{1}},\ldots,p_{r}^{m_{r}}$, and we denote as before $\Lambda_{k}(\sigma)$ the shape of the standard tableau $P_{k}(\sigma)$, and 
$$L_{i,t}^{(n)}= \frac{1}{\sqrt{n}}\left(\left[(\lfloor nt +1\rfloor-nt)\,\Lambda_{i,\lfloor nt \rfloor}(\sigma)+ (nt-\lfloor nt\rfloor)\,\Lambda_{i,\lfloor nt \rfloor +1}(\sigma)\right]-p_{j}nt\right),$$
assuming that $d_{1}+\cdots+d_{j-1}+1\leq i\leq d_{1}+\cdots+d_{j}$. As $n$ goes to infinity, $\left(L_{1,t}^{(n)},\ldots,L_{i,t}^{(n)},\ldots,L_{d,t}^{(n)}\right)_{t \in [0,1]}$ converges in law to
$$\left(B_{1,t}+x_{1,1,t},\ldots,B_{j,t}+x_{i,j,t},\ldots,B_{r,t}+x_{d_{r},r,t}\right)_{t \in [0,1]},$$
in $\mathscr{C}([0,1],\R^{d})$. In particular, if $\lambda=\Lambda_{n}(\sigma)=\Lambda(\sigma)$ with $\sigma$ chosen randomly as before, then $$\lambda_{i}=np_{j}+\sqrt{np_{j}}\,(X_{j}+Y_{i,j})+o\left(\sqrt{n}\right),$$ with
$$(Y_{1,j},\ldots,Y_{d_{j},j})\sim \mathbf{1}_{\substack{y_{1,j}\geq y_{2,j}\geq  \cdots \geq y_{d_{j},j}\quad \\ y_{1,j}+y_{2,j}+\cdots+y_{d_{j},j}=0}}\,\,\,\frac{\Delta(y_{1,j},y_{2,j},\ldots,y_{d_{j},j})^{2}}{1!\,2!\,\cdots\,d_{j}-1!\,(2\pi)^{\frac{d_{j}-1}{2}}}\,\E^{-\frac{\|y_{j}\|^{2}}{2}}\,dy_{j}$$
and $(X_{1},\ldots,X_{r})$ gaussian vector of covariance matrix  $$Q'=\left(\frac{\delta_{ij}}{d_{i}}-\sqrt{p_{i}p_{j}}\right)_{1\leq i,j \leq r}.$$
\end{theorem}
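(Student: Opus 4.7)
The plan is to combine Theorem \ref{bradpitt} (O'Connell's identification of Schensted insertion with iterated Pitman transforms) with Lemma \ref{technical} to reduce the problem to the previous paragraph's analysis of pure $d$-shuffles, performed blockwise, and then to apply the multidimensional Donsker theorem to identify the limiting joint law. Write $W(k) \in \mathbb{Z}^d$ for the random walk associated to the word $w = b_1 \ldots b_n$. By Theorem \ref{bradpitt}, $\Lambda_k(\sigma) = G_d(W)(k)$; by Lemma \ref{technical}, this equals the block-diagonal transform
\[
\bigl(G_{d_1}(x_1,\ldots,x_{d_1}),\ G_{d_2}(x_{d_1+1},\ldots,x_{d_1+d_2}),\ \ldots,\ G_{d_r}(x_{d-d_r+1},\ldots,x_d)\bigr)(k)
\]
up to an almost surely bounded error, which is negligible at the $\sqrt{n}$ scale. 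So it suffices to compute the joint asymptotics of the $r$ blockwise Pitman transforms.

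For each block $j$, use the translation invariance of $G_{d_j}$ (noted on page \pageref{pitmanoperator}) to decompose
\[
G_{d_j}(x_{d_1+\cdots+d_{j-1}+1},\ldots,x_{d_1+\cdots+d_j}) \;=\; g_j \cdot (1,\ldots,1) + G_{d_j}(\widetilde W_j),
\]
where $g_j = \frac{1}{d_j}\sum_i x_{d_1+\cdots+d_{j-1}+i}$ and $\widetilde W_j$ lies in the zero-sum hyperplane $H \subset \mathbb{R}^{d_j}$. A direct per-step covariance computation for the underlying multinomial distribution yields three facts: (i) $(d_1g_1,\ldots,d_rg_r)$ is a multinomial random walk with parameters $(d_1p_1,\ldots,d_rp_r)$, so by (multidimensional) Donsker its renormalization converges to a brownian motion whose covariance matrix, when transferred to the $(g_j)$, is exactly the $Q$ of the statement; (ii) each $\widetilde W_j$ has per-step covariance $p_j(\delta_{ii'} - \tfrac{1}{d_j})$, i.e.\ $p_j$ times the orthogonal projector onto $H$; (iii) all cross-covariances between the $\widetilde W_j$'s and between each $\widetilde W_j$ and the center-of-mass vector vanish exactly. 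By (iii) and gaussianity, the limits are mutually independent. Thus, after scaling, $(g_1,\ldots,g_r)$ converges to the brownian motion $B$, while each $\widetilde W_j/\sqrt n$ converges in $\mathscr{C}([0,1],H)$ to an independent brownian motion $\widetilde B_j$ in $H$ with covariance $p_j(s\wedge t)\langle a,b\rangle$.

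To conclude, apply the continuous mapping theorem using that $G_{d_j}$, being built from $\sup$ and $\inf$ operations, is continuous for uniform convergence on compacts. Then $G_{d_j}(\widetilde W_j)/\sqrt n \to G_{d_j}(\widetilde B_j)$. By the Biane--Bougerol--O'Connell theorem \cite[Theorem 5.6]{BBO05} already invoked for $d$-shuffles, $G_{d_j}(\widetilde B_j)$ is a brownian motion in $H$ Doob-conditioned to stay in the Weyl chamber $C$; by Dyson's theorem \cite{Dys62}, this coincides with the eigenvalue process of a brownian motion in $\mathfrak{su}(d_j,\mathbb{C})$, and the $p_j$-scaling of $\widetilde B_j$'s covariance translates precisely to the normalized covariance $\frac{d_j^2-1}{d_j}p_j$ announced in the statement (the case $p_j=1/d_j$, $d_j=d$ recovering the factor $\frac{d^2-1}{d^2}$ of the pure $d$-shuffle case). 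Evaluating at $t=1$ and writing the explicit Gibbs density for Dyson eigenvalues, together with the rescaling $B_{j,1}=\sqrt{p_j}\,X_j$ and $x_{i,j,1}=\sqrt{p_j}\,Y_{i,j}$, gives the stated final formulas for $\lambda_i$, $(X_1,\ldots,X_r)$ and $(Y_{1,j},\ldots,Y_{d_j,j})$. The main technical obstacle is item (iii) above: the blocks $\widetilde W_j$ and the center-of-mass vector are genuinely correlated at finite $n$, and it is a special feature of the multinomial distribution that their covariances cancel exactly, making the limits independent --- this is what ultimately produces the additive decomposition $B_{j,t}+x_{i,j,t}$ for the fluctuations.
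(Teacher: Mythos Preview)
Your proof is correct and follows essentially the same route as the paper: O'Connell's Theorem \ref{bradpitt} plus Lemma \ref{technical} reduce $G_d$ to a block-diagonal $(G_{d_1},\ldots,G_{d_r})$ up to $O(1)$, then each block is split into its center of mass $g_j$ and its hyperplane part $\widetilde W_j$, and Donsker plus the continuous mapping theorem handle the limit. One point where you are actually more careful than the paper: the paper asserts that the $\widetilde W_j$'s and the center-of-mass vector $(d_1g_1,\ldots,d_rg_r)$ are \emph{independent} at finite $n$, whereas in fact (as you note) only the step covariances vanish --- the increments are not independent, since knowing which block was hit determines which $\widetilde W_j$ moved --- and independence holds only for the Gaussian limits. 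Your formulation via vanishing covariances and joint Donsker is the right way to phrase this.
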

\begin{remark}
At the end of Section \ref{largenumbers}, we have seen that the law of large numbers can be informally restated by saying that the longest increasing subsequences in a $\alpha$-shuffle are not bigger at order $n$ than the lengths of the blocks shuffled together. Theorem \ref{maingeom} ensures that it is also true at order $\sqrt{n}$, unless some blocks have the same frequency $p_{j}= \alpha_{i}=\alpha_{i'}$. Then, some of the increasing subwords become (stochastically) bigger than the sizes of the blocks to be shuffled, because of the terms $\sqrt{np_{j}} \,Y_{i,j}$.
\end{remark}
\bigskip
\bigskip

Theorem \ref{maingeom} seems to remain true when $\beta\neq 0, \gamma \neq 0$, and in this case, the fluctuations of the first renormalized columns seem to be given by analogous formulas as those given above, with fluctuations ``of type $X$'' described by Theorem \ref{almostgeom}, and fluctuations ``of type $Y$'' (for equal coordinates) independent of the fluctuations of the rows. We don't know how to prove this, but we can give arguments in favor of this conjecture. These arguments can be made rigorous in some cases, for instance when $\gamma=0$ and $\beta$ has only a finite number of non-zero coordinates. When performing the Schensted insertion from a permutation $\sigma$ obtained by an $\omega$-shuffle, let us mark by a point $\bullet$ the boxes containing a number coming from a block of size $m_{j}$, and by a star $\star$ the boxes containing a number coming from the block of size $l$.
\begin{example}
Suppose $n=16$, $n_{1}=n_{2}=3$, $m_{1}=m_{2}=3$, $l=4$. We denote the ``cards'' to be shuffled by $\texttt{0,1,\ldots,9,A,B,\ldots,F}$. After cutting the deck, the five blocks to be shuffled are
$$\texttt{012,\,\,345,\,\,678,\,\,9AB,\,\,CDEF}.$$
The blocks $\texttt{678}$ and $\texttt{9AB}$ are reversed, and $\texttt{CDEF}$ is randomized, becoming from instance $\texttt{DFCE}$. Then, all the blocks are shuffled together, and one can for instance obtain
$$\texttt{8B0D3A97FC41E562}.$$
Now, the partitions appearing in the RSK algorithm are:
$$\young(\bullet)\quad\young(\bullet\bullet)\quad\young(\bullet,~\bullet)\quad\young(\bullet,~\bullet\star)\quad\young(\bullet\bullet,~~\star)\quad\young(\bullet\bullet\star,~~\bullet)\quad\young(\bullet,\bullet\bullet\star,~~\bullet)\quad\young(\bullet,\bullet,\bullet\bullet\star,~~\bullet)$$

$$\young(\bullet,\bullet,\bullet\bullet\star,~~\bullet\star)\quad\young(\bullet,\bullet,\bullet\bullet\star\star,~~\bullet\star)\quad \young(\bullet,\bullet,\bullet,\bullet\bullet\star\star,~~~\star)\quad \young(\bullet,\bullet,\bullet,\bullet,~\bullet\star\star,~~~\star)\quad \young(\bullet,\bullet,\bullet,\bullet,~\bullet\star\star,~~~\star\star)$$

$$ \young(\bullet,\bullet,\bullet,\bullet\star,~\bullet\star\star,~~~~\star)\quad \young(\bullet,\bullet,\bullet,\bullet\star\star,~\bullet\star\star,~~~~\bullet)
\quad\young(\bullet,\bullet,\bullet\star,\bullet\bullet\star,~~\star\star,~~~~\bullet) $$\vspace{1mm}

\noindent Notice that the partition of unmarked boxes 
$$\yng(2,4)$$
 is the one corresponding to $034152$; more generally, during the Schensted insertion, the partition of unmarked boxes evolves as the partition obtained \emph{via} RSK from the subword containing only the letters coming from the blocks of sizes $n_{i}$. Thus, to prove that Theorem \ref{maingeom} holds again when $\beta\neq 0,\gamma \neq 0$ amounts to control the number of marked boxes appearing at the right of the first rows, and to show that it is a $o(\sqrt{n})$ --- then, by symmetry of the problem, we would  obtain the fluctuations of the columns, and see that they are analogous to those of the rows. 
\end{example}
\bigskip

One should notice that each time a unmarked box appears in a row, it pushes a marked box $$\young(\bullet)\quad\text{or}\quad\young(\star)$$ to the next row. Moreover, marked boxes push themselves to upper rows: for instance, if $\gamma=0$ and $\beta=(\beta_{1},\ldots,\beta_{d})$, then one can not  find more than $d$ marked boxes on the same row. So the idea would be to show that marked boxes are ``sufficiently pushed up''. Unfortunately, to do this rigorously, it seems that we would need precise estimates on the probabilities of creating a ``long'' increasing subword in a shuffle of parameter $\alpha=0$; and we don't have for the moment such estimates.\bigskip
\bigskip

To conclude this section, let us look again at the partition-valued random process $(\Lambda_{k}(\sigma))_{k \in \lle 0,n\rre}$, where $\sigma$ is obtained from an $\omega$-shuffle, and $\Lambda_{k}(\sigma)$ is the shape of the tableau obtained by Robinson-Schensted insertion from the $k$ first letters of $\sigma$. We have shown previously that after time-space scaling, and at least when $\beta=\gamma=0$, the coordinates of this process converge jointly to a markovian process that can be written in terms of (conditioned) brownian motions. It raises the following question: is $(\Lambda_{k}(\sigma))_{ k \in \lle 0,n\rre}$ itself a markovian process in $\ym=\bigsqcup_{n \in \N}\ym_{n}$? The answer is yes, and more precisely:
\begin{theorem}\label{markovpartition}
The random process $(\Lambda_{k}(\sigma))_{ k \in \lle 0,n\rre}$ is markovian, with transition probabilities equal to
$$p(\lambda,\Lambda)=\mathbf{1}_{\lambda \nearrow \Lambda}\,\frac{s_{\Lambda}(\omega)}{s_{\lambda}(\omega)}.$$
\end{theorem}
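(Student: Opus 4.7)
The plan is to determine the full joint distribution of the growth sequence $(\emptyset = \Lambda_0, \Lambda_1(\sigma), \ldots, \Lambda_n(\sigma))$ directly, and then read off both the Markov property and the transition kernel from it. First I would observe that placing the label $k$ into the unique new box $\Lambda_k \setminus \Lambda_{k-1}$ produces a standard Young tableau of shape $\Lambda_n(\sigma) = \Lambda(\sigma)$, and by the Schensted growth description this tableau is precisely the recording tableau $Q(\sigma)$. Thus the growth sequence and $Q(\sigma)$ carry the same data, and the whole theorem reduces to computing the law of $Q(\sigma)$ under $\qproba_{n,\omega}$.

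Second, I would compute this law using the $\FQSym$/$\QSym$ machinery of \S\ref{fqsym}. Starting from $\qproba_{n,\omega}[\sigma] = G_\sigma(A-B+\gamma E) = F_{\sigma^{-1}}(A-B+\gamma E)$, and using that the specialization $A-B+\gamma E$ factors through the quotient $\pi : \FQSym \to \QSym$ (as established in the proof of Proposition \ref{frompermtopar}), this equals $L_{c(\sigma^{-1})}(A-B+\gamma E)$. The RSK identities $P(\sigma) = Q(\sigma^{-1})$ and $c(\tau) = c(Q(\tau))$ give $c(\sigma^{-1}) = c(P(\sigma))$. Fixing a standard tableau $Q_0$ of shape $\lambda_0$ and summing over the permutations $\sigma$ with $Q(\sigma) = Q_0$ (equivalently, letting $P(\sigma)$ range over $\mathrm{ST}(\lambda_0)$), the identity $s_\lambda = \sum_{P \in \mathrm{ST}(\lambda)} L_{c(P)}$ recalled in \S\ref{fqsym} yields
\begin{equation*}
\proba[Q(\sigma) = Q_0] = \sum_{P \in \mathrm{ST}(\lambda_0)} L_{c(P)}(A-B+\gamma E) = s_{\lambda_0}(\omega).
\end{equation*}
Translating back to growth sequences, the joint law is
\begin{equation*}
\proba[\Lambda_1 = \lambda_1, \ldots, \Lambda_n = \lambda_n] = s_{\lambda_n}(\omega) \cdot \mathbf{1}_{\emptyset \nearrow \lambda_1 \nearrow \cdots \nearrow \lambda_n}.
\end{equation*}

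The final step is marginalization, driven by the Pieri rule $p_1 \cdot s_\lambda = \sum_{\lambda \nearrow \mu} s_\mu$ combined with the defining normalization $p_1(\omega) = p_1(A-B+\gamma E) = 1$, which produces the telescoping identity $\sum_{\lambda \nearrow \mu} s_\mu(\omega) = s_\lambda(\omega)$. Iterating it over the last $n-k$ growth steps gives
\begin{equation*}
\proba[\Lambda_1 = \lambda_1, \ldots, \Lambda_k = \lambda] = s_\lambda(\omega) \cdot \mathbf{1}_{\emptyset \nearrow \lambda_1 \nearrow \cdots \nearrow \lambda},
\end{equation*}
and dividing two consecutive marginals yields
\begin{equation*}
\proba[\Lambda_{k+1} = \Lambda \mid \Lambda_1, \ldots, \Lambda_k = \lambda] = \mathbf{1}_{\lambda \nearrow \Lambda}\,\frac{s_\Lambda(\omega)}{s_\lambda(\omega)},
\end{equation*}
which depends only on the current state $\lambda$. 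The main conceptual step is the reduction of the law of $Q(\sigma)$ to the quasi-symmetric expansion of $s_{\lambda_0}$; once this is in hand, the Markov structure and the explicit transitions fall out of the Pieri telescoping, which is available precisely because $p_1(\omega) = 1$.
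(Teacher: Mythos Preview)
Your proof is correct and reaches the same key identity as the paper: for any growth chain $\emptyset\nearrow\lambda^{(1)}\nearrow\cdots\nearrow\lambda^{(k)}$, the joint probability equals $s_{\lambda^{(k)}}(\omega)$, from which the Markov property and the transition kernel follow by the Pieri telescoping $\sum_{\lambda\nearrow\Lambda}s_{\Lambda}(\omega)=p_{1}(\omega)\,s_{\lambda}(\omega)=s_{\lambda}(\omega)$.

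The route to that identity differs slightly. The paper expands $s_{\lambda^{(k)}}(\omega)$ through the Hopf-algebra decomposition of the alphabet $A-B+\gamma E$ into a sum of products $s_{\nu}(\alpha)\,s_{\mu'\setminus\nu'}(\beta)\,s_{\lambda^{(k)}\setminus\mu}(\gamma E)$ and then defers to the combinatorial matching in \cite{OC03}. Your argument instead stays inside the $\FQSym\to\QSym$ framework already developed in \S\ref{fqsym}--\S\ref{pushforward}: identifying the growth chain with $Q(\sigma)$, using $\qproba_{n,\omega}[\sigma]=L_{c(P(\sigma))}(\omega)$, and summing over $P\in\mathrm{ST}(\lambda_{0})$ via $s_{\lambda}=\sum_{P}L_{c(P)}$. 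This is more self-contained (no appeal to \cite{OC03} is needed) and makes transparent why only the shape of $Q_{0}$ matters; the paper's expansion, on the other hand, would be the natural starting point if one wanted to track separately the contributions of the $\alpha$-, $\beta$-, and $\gamma$-blocks of the shuffle.
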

\begin{proof}
This is a generalization of \cite[Theorem 6.1]{OC03}, and the proof is exactly the same. The statement of Theorem \ref{markovpartition} amounts to ask that for any sequence of partitions $\lambda^{(1)}\nearrow \cdots \nearrow \lambda^{(k)}$,
$$\proba[\Lambda_{1}(\sigma)=\lambda^{(1)},\cdots,\Lambda_{k}(\sigma)=\lambda^{(k)}]=s_{\lambda^{(k)}}(\omega).$$
However, one can expand $s_{\lambda^{(k)}}(\omega)$ by using the structure of Hopf algebra of $\Sym$:
\begin{align*}
s_{\lambda^{(k)}}(\omega)&=\left((1 \otimes \upsilon \otimes 1)\circ \Delta^{3}\right)s_{\lambda^{(k)}}(\alpha,\beta,\gamma E)\\
&=\sum_{ \nu \subset \mu \subset \lambda^{(k)}} s_{\nu}(\alpha)\,s_{\mu'\setminus \nu'}(\beta)\,s_{\lambda^{(k)}\setminus \mu}(\gamma E).
\end{align*}
Then, the arguments are exactly the same as in \cite{OC03}.
\end{proof}
\bigskip

\section{Jones-Ocneanu traces of Hecke algebras and (q,t)-Plancherel measures}\label{qtplancherel} 
In this last section, we return to the ``algebraic'' study of the probability measures $\proba_{n,\omega}$, and we introduce new observables in $\obs$ related to the representation theory of the Hecke algebras $\hecke(\sym_{n})$, that are deformations of the symmetric group algebras $\C\sym_{n}$. We then determine their asymptotic behaviour under the probability measures $\proba_{n,\omega}$, thereby proving a $q$-analog of Theorem \ref{mainalg}. Finally, we study the systems of coherent measures coming from the Jones-Ocneanu traces of the Hecke algebras (\emph{cf.} \S\ref{jonesocneanu}); they form a $(q,t)$-deformation of the system of Plancherel measures described on page \pageref{csspezialschur}, see \S\ref{qtasymptotic}.

\subsection{Hecke algebras of the symmetric groups} Let us fix a non-negative integer $n$. We recall that the (generic) \textbf{Iwahori-Hecke algebra} of the symmetric group $\sym_{n}$ is the $\C(q)$-algebra $\hecke(\sym_{n})$ generated by elements $T_{1},T_{2},\ldots,T_{n-1}$, with presentation:
$$
\begin{cases}
&\forall i \leq n-1,\,\,\,(T_{i}-q)(T_{i}+1)=0\,;\\
&\forall i \leq n-2,\,\,\,T_{i}T_{i+1}T_{i}=T_{i+1}T_{i}T_{i+1}\,;\\
&\forall i,j,\,\,\,|j-i| \geq 2 \Rightarrow T_{i}T_{j}=T_{j}T_{i}.
\end{cases}
$$
A linear basis of $\hecke(\sym_{n})$ consists in the $T_{\sigma}=T_{i_{1}}T_{i_{2}}\cdots T_{i_{\ell(\sigma)}}$, where $\sigma$ runs over $\sym_{n}$ and $\sigma=s_{i_{1}}s_{i_{2}}\cdots s_{i_{\ell(\sigma)}}$ is any reduced decomposition of $\sigma$ in elementary transpositions. Thus, $\hecke(\sym_{n})$ is a quantization of the symmetric group algebra $\C\sym_{n}$. In fact, it has the same representation theory: it is a semisimple $\C(q)$-algebra, and its irreducible modules $V^{\lambda}(q)$ are labelled by the integer partitions of size $n$ and have the same dimensions as the irreducible representations $V^{\lambda}$ of $\sym_{n}$, and the same branching rules (see \cite{Mat99}, and \cite{GP00} for more details on the Iwahori-Hecke algebras in the general setting of Coxeter groups).\bigskip\bigskip

As in the case of the symmetric group $\sym_{n}$, the character theory of $\hecke(\sym_{n})$ can be encoded in the algebra $\Sym$ of symmetric functions, \emph{cf.} \cite{Ram91}. A normalized trace on $\hecke(\sym_{n})$ is a function $\tau:\hecke(\sym_{n}) \to \C(q)$ that satisfies the two hypotheses:\vspace{2mm}
\begin{enumerate}
\item $\tau$ is tracial: $\forall a,b,\,\,\tau(ab)=\tau(ba)$.\vspace{2mm}
\item $\tau$ is normalized so that $\tau(1)=1$. \vspace{2mm}
\end{enumerate} 
The condition of positivity is a little more subtle than in the case of the symmetric group. First, for any value of $q \in \C$, we denote $\hecke_{q}(\sym_{n}) = \hecke(\sym_{n})\otimes_{\C(q)}\C$ the specialized Hecke algebra obtained by sending the variable $q$ to its value. This specialized Hecke algebra is a semisimple algebra isomorphic to $\C\sym_{n}$ (whence with the same representation theory) as soon as $q\neq0$ and $q$ is not a non-trivial root of unity; in particular, it is the case when $q \in \R_{+}^{*}$. Then, assuming $q \in \R_{+^{*}}$, a positive normalized trace, or character on $\hecke_{q}(\sym_{n})$ is a normalized trace $\hecke_{q}(\sym_{n})\to \C$ such that (see \cite{KV89}):\vspace{2mm}
\begin{enumerate}
\setcounter{enumi}{2}
\item For all $\sigma_{1},\ldots,\sigma_{r}$ in $\sym_{n}$, $(\tau(T_{\sigma_{i}}T_{\sigma_{j}^{-1}}))_{1\leq i,j\leq r}$ is hermitian and non-negative definite.
\end{enumerate}\bigskip
\bigskip

As in the group case, any trace on $\hecke(\sym_{n})$ is a $\C(q)$-linear combination of the irreducible characters $\zeta^{\lambda}(q)$ of the modules $V^{\lambda}(q)$ of $\hecke(\sym_{n})$; we shall also denote by $\chi^{\lambda}(q)$ the normalized irreducible character of $V^{\lambda}(q)$. A trace on $\hecke_{q}(\sym_{n})$ with $q \in \R_{+}^{*}$ is positive if and only if it is a positive linear combination of the $\zeta^{\lambda}(q)$.  If $\mu=(\mu_{1},\ldots,\mu_{r})$ is a partition in $\ym_{n}$, let us denote
$$
T_{\mu}=(T_{1}\cdots T_{\mu_{1}-1})(T_{\mu_{1}+1}\cdots T_{\mu_{1}+\mu_{2}-1})\cdots(T_{\mu_{1}+\cdots+\mu_{r-1}+1}\cdots T_{\mu_{1}+\cdots+\mu_{r}-1}).
$$
Then, any element $h \in \hecke(\sym_{n})$ is conjugated modulo $[\hecke(\sym_{n}),\hecke(\sym_{n})]$ to a unique linear combination of elements $T_{\mu}$, $\mu \in \ym_{n}$ (\emph{cf.} \cite[\S8.2]{GP00} or \cite[\S5]{Ram91}). So, a trace $\tau : \hecke(\sym_{n}) \to \C(q)$ is entirely determined by the values $\tau(T_{\mu})$, and the character theory of $\hecke(\sym_{n})$ is given by the character table
$$\left(\zeta^{\lambda}(q,\mu)=\zeta^{\lambda}(q,T_{\mu})\right)_{\lambda,\mu \in \ym_{n}}.$$
 Now, using the notations introduced in \S\ref{symfunc}, for any partition $\mu$, we set
$$q_{\mu}(q,X)=\frac{h_{\mu}(qX-X)}{(q-1)^{\ell(\mu)}},$$
where $h_{\mu}=\prod_{i=1}^{\ell(\mu)}s_{\mu_{i}}$ is the homogeneous symmetric function of type $\mu$; $q_{\mu}(q,X)$ is an element of $\Sym \otimes_{\C}\C(q)$.
 A generalization of the Frobenius formula \ref{frobenius} reads as follows (see \cite[Theorem 4.14]{Ram91}):
\begin{proposition}\label{qfrobenius}
For any partition $\mu \in \ym_{n}$,
$$q_{\mu}(q,X)=\sum_{\lambda \in \ym_{n}} \zeta^{\lambda}(q,\mu)\,s_{\lambda}(X) .$$
\end{proposition}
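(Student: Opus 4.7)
My plan is to deduce the $q$-Frobenius identity from the quantum Schur--Weyl duality between the Hecke algebra $\hecke(\sym_{n})$ and the Drinfeld--Jimbo quantum group $U_{q}(\mathfrak{gl}_{N})$ acting on $V^{\otimes n}$ with $V = \C(q)^{N}$. This is the natural $q$-analog of the strategy used for the classical Frobenius formula (Proposition~\ref{frobenius}), where one computes a joint trace of a group element of $\GL(N,\C)$ and a permutation in $\sym_{n}$ in two different ways. Throughout, one should take $N \geq n$ so that every partition $\lambda \in \ym_{n}$ gives rise to a non-trivial summand.

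First, I would recall (or take as input from the literature on quantum groups) the bimodule decomposition
$$V^{\otimes n} = \bigoplus_{\lambda \in \ym_{n}} W^{\lambda}_{N}(q) \otimes V^{\lambda}(q),$$
where $W^{\lambda}_{N}(q)$ is the irreducible $U_{q}(\mathfrak{gl}_{N})$-module of highest weight $\lambda$ and $V^{\lambda}(q)$ is the irreducible $\hecke(\sym_{n})$-module of label $\lambda$. If $g \in U_{q}(\mathfrak{gl}_{N})$ is a diagonal ``group-like'' element with formal eigenvalues $x_{1},\ldots,x_{N}$, then the Weyl character formula (in its quantum version) gives $\tr(g \,|\, W^{\lambda}_{N}(q)) = s_{\lambda}(x_{1},\ldots,x_{N})$, exactly as in the classical case. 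Consequently, the trace of $g \otimes T_{\mu}$ on $V^{\otimes n}$ can be written as
$$\tr(g \otimes T_{\mu} \,|\, V^{\otimes n}) = \sum_{\lambda \in \ym_{n}} s_{\lambda}(x_{1},\ldots,x_{N}) \,\zeta^{\lambda}(q,\mu),$$
which is the right-hand side of the proposition, evaluated on the alphabet $X = (x_{1},\ldots,x_{N})$.

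Next, I would compute the same trace directly on the tensor product. The diagonal $g$ acts on a basis vector $e_{i_{1}} \otimes \cdots \otimes e_{i_{n}}$ by the scalar $x_{i_{1}} \cdots x_{i_{n}}$, and each generator $T_{i}$ acts by an explicit $R$-matrix-type formula which permutes adjacent factors up to a correction in $q-1$. The key combinatorial step is to track, for the specific element $T_{\mu} = \prod_{j} T_{\mu_{1}+\cdots+\mu_{j-1}+1} \cdots T_{\mu_{1}+\cdots+\mu_{j}-1}$ associated to a cycle type, which basis vectors contribute fixed points to the trace. Since $T_{\mu}$ is a product of elements supported on disjoint intervals, the trace factors as $\tr(g \otimes T_{\mu}) = \prod_{j} \tr(g \otimes C_{\mu_{j}})$ where $C_{k} = T_{1} \cdots T_{k-1}$ is the $k$-cycle element. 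The heart of the computation is therefore the single-cycle identity
$$\tr(g \otimes C_{k} \,|\, V^{\otimes k}) = \frac{h_{k}(qX - X)}{q-1},$$
which one proves by expanding the action of $C_{k}$ and summing over the relevant tuples $(i_{1},\ldots,i_{k})$; at the level of generating series, this amounts to identifying the trace with the coefficient of $z^{k}$ in $(q-1)^{-1} \prod_{i} (1 - x_{i}z)/(1 - qx_{i}z)$, which is the $\lambda$-ring expression of $H(qX-X,z)$.

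The main obstacle is this last direct computation of the single-cycle trace: one needs to carefully sort through the quadratic relation $(T_{i}-q)(T_{i}+1) = 0$ and the braid relations when expanding $C_{k}$ on tensors of basis vectors. Once this is established, the two computations yield an equality of symmetric polynomials in $x_{1},\ldots,x_{N}$ that holds for arbitrary $N \geq n$, hence an equality in $\Sym \otimes_{\C} \C(q)$, which is exactly the statement of Proposition~\ref{qfrobenius}. The fact that $q_{\mu}$ reduces to $p_{\mu}$ at $q = 1$, and that $\zeta^{\lambda}(q,\mu)$ specializes to $\zeta^{\lambda}(\mu)$, recovers Proposition~\ref{frobenius} as a sanity check.
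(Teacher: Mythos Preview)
The paper does not actually prove Proposition~\ref{qfrobenius}; it merely states the result and cites \cite[Theorem~4.14]{Ram91} as its source. Your proposal via quantum Schur--Weyl duality between $\hecke(\sym_{n})$ and $U_{q}(\mathfrak{gl}_{N})$ on $V^{\otimes n}$, followed by the direct single-cycle trace computation, is precisely Ram's original argument in that reference, so your approach is correct and coincides with the proof the paper defers to.
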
\bigskip\bigskip

Then, using exactly the same arguments as in the case of $\sym_{\infty}$, we get the $q$-analog of Thoma's theorem \ref{thomatheorem}:
\begin{theorem}
Suppose $q\in \R_{+}^{*}$. The set of extremal characters $\mathcal{X}^{*}(\hecke_{q}(\sym_{\infty}))$ can be identified (homeomorphically) with the Thoma simplex $\Omega$, and the character $\chi^{\omega}(q)$ corresponding to a point $\omega=(\alpha,\beta) \in \Omega$ writes as
$$\chi^{\omega}(q,T_\mu)=q_{\mu}(q,A-B+\gamma E).$$
The decomposition of ${\chi^{\omega}(q)}_{|\hecke_{q}(\sym_{n})}$ in irreducible characters is
$$\chi^{\omega}(q)=\sum_{\lambda\in \ym_{n}}\proba_{n,\omega}[\lambda]\,\chi^{\lambda}(q),$$
where $\proba_{n,\omega}[\lambda]=(\dim\lambda)\,s_{\lambda}(A-B+\gamma E)$ exactly as before.
\end{theorem}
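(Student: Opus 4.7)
The plan is to mimic the argument used for $\sym_{\infty}$, exploiting the fact that $\hecke_{q}(\sym_{n})$ and $\C\sym_{n}$ share the same representation-theoretic combinatorics (irreducibles labelled by $\ym_{n}$, same dimensions, same branching rules), so the bijection with the Thoma simplex depends only on this combinatorics, not on the group versus algebra distinction. Only the explicit formula for the character on $T_{\mu}$ will change, via the $q$-Frobenius formula of Proposition \ref{qfrobenius}.

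First, I would set up the analog of coherent systems for $\hecke_{q}(\sym_{\infty})=\bigcup_{n}\hecke_{q}(\sym_{n})$. For $q\in\R_{+}^{*}$, each $\hecke_{q}(\sym_{n})$ is semisimple with the same branching graph as $\sym_{n}$, so a normalized positive trace $\chi$ restricts to $\chi_{n}=\sum_{\lambda\in\ym_{n}}P_{n,\chi}(\lambda)\,\chi^{\lambda}(q)$ with $P_{n,\chi}(\lambda)\geq 0$ and $\sum_{\lambda}P_{n,\chi}(\lambda)=1$. The compatibility $(\chi_{n+1})_{|\hecke_{q}(\sym_{n})}=\chi_{n}$ combined with the Hecke branching rule $\mathrm{Res}V^{\Lambda}(q)=\sum_{\lambda\nearrow\Lambda}V^{\lambda}(q)$ gives
$$P_{n,\chi}(\lambda)\,\dim\lambda=\sum_{\Lambda\searrow\lambda}P_{n+1,\chi}(\Lambda)\,\dim\Lambda,$$
which is exactly the condition that $H(\lambda)=P_{n,\chi}(\lambda)/\dim\lambda$ be a positive normalized harmonic function on $\ym$. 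Conversely, any positive normalized harmonic function on $\ym$ defines, via this same formula, a coherent family of non-negative normalized traces on the Hecke algebras, hence a normalized positive trace on $\hecke_{q}(\sym_{\infty})$. This yields a bijection $\chi\leftrightarrow H$ between $\mathcal{X}(\hecke_{q}(\sym_{\infty}))$ and the convex set of positive normalized harmonic functions on $\ym$, sending extremals to extremals homeomorphically (both sides carry the topology of pointwise convergence).

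Next I would invoke Thoma's theorem \ref{thomatheorem} in its Young-graph formulation: the extremal harmonic functions on $\ym$ are exactly the specializations $H_{\omega}(\lambda)=s_{\lambda}(A-B+\gamma E)$ indexed by $\omega\in\Omega$. This is a purely combinatorial statement about $\ym$, already established for the group case via Kerov--Vershik's method (see also Section \ref{largenumbers}), and it transfers verbatim to the Hecke setting. Consequently $\mathcal{X}^{*}(\hecke_{q}(\sym_{\infty}))\cong\Omega$, with the probability measure $\proba_{n,\omega}[\lambda]=(\dim\lambda)\,s_{\lambda}(A-B+\gamma E)$ being literally the same as in the group case.

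Finally I would compute the explicit formula on $T_{\mu}$. Any trace on $\hecke_{q}(\sym_{n})$ is determined by its values $\tau(T_{\mu})$ since the $T_{\mu}$'s span $\hecke_{q}(\sym_{n})$ modulo commutators. Writing
$$\chi^{\omega}(q,T_{\mu})=\sum_{\lambda\in\ym_{n}}\proba_{n,\omega}[\lambda]\,\chi^{\lambda}(q,T_{\mu})=\sum_{\lambda\in\ym_{n}}s_{\lambda}(A-B+\gamma E)\,\zeta^{\lambda}(q,\mu),$$
and applying the $q$-Frobenius formula of Proposition \ref{qfrobenius} to $X=A-B+\gamma E$, this last sum collapses to $q_{\mu}(q,A-B+\gamma E)$, as claimed. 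Since $p_{1}(A-B+\gamma E)=1$ the value does not depend on the choice of the ambient $n$, so $\chi^{\omega}(q)$ is a well-defined trace on $\hecke_{q}(\sym_{\infty})$. The main obstacle is checking positivity in the sense of condition (3) for $\chi^{\omega}(q)$, that is, non-negative definiteness of $(\chi^{\omega}(q,T_{\sigma_{i}}T_{\sigma_{j}^{-1}}))$; this would be handled by observing that each $\chi^{\lambda}(q)$ is itself a positive trace on $\hecke_{q}(\sym_{n})$ (using that for $q>0$ the algebra is a $*$-algebra isomorphic to $\C\sym_{n}$ with $\chi^{\lambda}(q)$ being the character of the $*$-representation on $V^{\lambda}(q)$), and that a non-negative linear combination of positive traces is positive.
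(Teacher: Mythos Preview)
Your proof is correct and follows exactly the approach the paper indicates (it gives no explicit proof, merely saying ``using exactly the same arguments as in the case of $\sym_{\infty}$''): reduce characters of $\hecke_{q}(\sym_{\infty})$ to positive normalized harmonic functions on the Young graph via the shared branching rules, invoke Thoma's classification of those, and then apply the $q$-Frobenius formula of Proposition~\ref{qfrobenius} in place of the classical one to get the value on $T_{\mu}$. One minor slip: your displayed compatibility equation should read $P_{n,\chi}(\lambda)/\dim\lambda=\sum_{\Lambda\,:\,\lambda\nearrow\Lambda}P_{n+1,\chi}(\Lambda)/\dim\Lambda$ for it to be equivalent to harmonicity of $H(\lambda)=P_{n,\chi}(\lambda)/\dim\lambda$.
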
\bigskip

This result provides ``new'' coherent systems of probability measures on $\ym=\bigsqcup_{n \in \N}\ym_{n}$ coming from characters of $\hecke_{q}(\sym_{\infty})$ instead of characters of $\sym_{\infty}$. Moreover, letting the parameter $q$ vary in $\R_{+}^{*}$, we will obtain one-parameter families of coherent systems. In the following, we will investigate the (asymptotic) behaviour of certain families coming from the so-called Jones-Ocneanu traces of $\hecke(\sym_{\infty})$.  The theory developed in the previous paragraphs holds again; however, in the setting of the Hecke algebras, it seems more appriopriate to enounce the algebraic central limit theorem \ref{mainalg} for $q$-characters instead of characters. The end of this paragraph is devoted to this problem.\bigskip\bigskip

For $\lambda$ and $\mu$ partitions, we introduce as in \S\ref{polfunc} the symbol
$$\varSigma_{\mu}(q,\lambda)=\begin{cases}
n(n-1)\ldots (n-|\mu|+1)\,\chi^{\lambda}(q,\mu \sqcup 1^{n-|\mu|}) &\text{if }n=|\lambda|\geq |\mu|,\\
0&\text{otherwise}.
\end{cases}$$
At first sight, it is not clear that the functions $\varSigma_{\mu}(q)$ are observables of diagrams in the algebra $\obs$. However, one can expand the symmetric functions $q_{\mu}(q,X)$ as combinations of power sums $p_{\mu}(X)$, and therefore write the $q$-characters as linear combinations of characters, see \cite[Theorem 5.4]{Ram91} and \cite[Proposition 10]{FM10}:
$$\forall \lambda,\mu,\,\,\,(q-1)^{\ell(\mu)}\,\zeta^{\lambda}(q,T_{\mu})=\sum_{|\nu|=|\mu|} (q^{\nu}-1)\,\frac{\scal{h_{\mu}}{p_{\nu}}}{\scal{p_{\nu}}{p_{\nu}}}\,\zeta^{\lambda}(\nu)$$
Here, for a partition $\nu$, $q^{\nu}-1$ is an abbreviation for $\prod_{i=1}^{\ell(\nu)}q^{\nu_{i}}-1$. From this, one deduces that the $\varSigma_{\mu}(q)$ are indeed observables of diagrams, and that they are related to the $\varSigma_{\mu}$ by the following relations (notice that the change of basis is triangular):
\begin{align*}
(q-1)^{\ell(\mu)}\,\varSigma_{\mu}(q) &= \sum_{|\nu|=|\mu|} (q^{\nu}-1)\,\frac{\scal{h_{\mu}}{p_{\nu}}}{\scal{p_{\nu}}{p_{\nu}}}\,\varSigma_{\nu};\\
(q^{\mu}-1)\,\varSigma_{\mu}&=\sum_{|\nu|=|\mu|} (q-1)^{\ell(\nu)}\,\scal{p_{\mu}}{m_{\nu}}\,\varSigma_{\nu}(q).
\end{align*}
Let us fix a parameter $\omega \in \Omega$. Theorem \ref{mainalg} ensures that the variables 
$$\Delta_{n,\omega}(\mu)=\sqrt{n}\left(\frac{\varSigma_{\mu}(\lambda)}{n^{|\mu|}}-p_{\mu}(\omega)\right)$$
with $\lambda \sim \proba_{n,\omega}$ converge jointly to a gaussian vector; since the $\varSigma_{\mu}(q)$'s are linear combinations of the $\varSigma_{\mu}$'s, the same result holds for the variables 
 $$\Delta_{n,\omega}(q,\mu)=\sqrt{n}\left(\frac{\varSigma_{\mu}(q,\lambda)}{n^{|\mu|}}-q_{\mu}(q,\omega)\right).$$ 
We denote by $\Delta_{\infty,\omega}(q,\mu)$ the limiting gaussian variables; their covariances $k(\Delta_{\infty,\omega}(q,\mu^{(1)}),\Delta_{\infty,\omega}(q,\mu^{(2)}))$ are equal by bilinearity to
 \begin{align*}
&\sum_{|\nu^{(i)}|=|\mu^{(i)}|} C_{\mu^{(1)},\mu^{(2)},\nu^{(1)},\nu^{(2)}}(q)\, \,k(\Delta_{\infty,\omega}(\nu^{(1)}),\Delta_{\infty,\omega}(\nu^{(2)}))\\
=& \sum_{\substack{|\nu^{(i)}|=|\mu^{(i)}| \\ a \in \nu^{(1)}\,,\, b \in \nu^{(2)}}} ab\,\, C_{\mu^{(1)},\mu^{(2)},\nu^{(1)},\nu^{(2)}}(q) \,\,\left(\frac{p_{a+b-1}(\omega)}{p_{a}(\omega)\,p_{b}(\omega)}-1\right),
 \end{align*}
where  $$C_{\mu^{(1)},\mu^{(2)},\nu^{(1)},\nu^{(2)}}(q) =\frac{(q^{\nu^{(1)}\sqcup\nu^{(2)}}-1)}{(q-1)^{\ell(\mu^{(1)})+\ell(\mu^{(2)})}}\,\frac{\scal{h_{\mu^{(1)}}}{p_{\nu^{(1)}}}\,\scal{h_{\mu^{(2)}}}{p_{\nu^{(2)}}} }{z_{\nu^{(1)}}\,z_{\nu^{(2)}} }.$$
\begin{theorem}
As $n$ goes to infinity, if $\lambda \sim \proba_{n,\omega}$, then the variables $$\sqrt{n}\left(\chi^{\lambda}(q,\mu)-\chi^{\omega}(q,\omega)\right)$$ converge jointly to a gaussian vector, whose covariance matrix is given by the formula above.
\end{theorem}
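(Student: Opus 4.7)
The plan is to reduce the statement to the ``classical'' central limit theorem for characters of $\sym_{\infty}$ already established as Theorem \ref{mainalg}. The key observation, already noted in the paragraph preceding the statement, is that the transition between the basis $(\varSigma_{\mu})$ and the basis $(\varSigma_{\mu}(q))$ of $\obs$ is \emph{triangular in degree}: both the expansion
$$(q-1)^{\ell(\mu)}\,\varSigma_{\mu}(q) = \sum_{|\nu|=|\mu|} (q^{\nu}-1)\,\frac{\scal{h_{\mu}}{p_{\nu}}}{z_{\nu}}\,\varSigma_{\nu}$$
and its inverse preserve the degree. Since linear combinations of a jointly gaussian family remain jointly gaussian, this structural fact will essentially allow us to transport the asymptotic result ``for free''.

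More precisely, first I would compare $\chi^{\lambda}(q,\mu)$ with the rescaled observable $\varSigma_{\mu}(q,\lambda)/n^{|\mu|}$. Using $n^{\downarrow |\mu|} = n^{|\mu|} - \binom{|\mu|}{2}n^{|\mu|-1} + \cdots$ and the boundedness of $|\chi^{\lambda}(q,\cdot)|$ uniformly in $\lambda$ (for $q \in \R_+^*$ fixed), one checks that
$$\sqrt{n}\,(\chi^{\lambda}(q,\mu \sqcup 1^{n-|\mu|}) - q_{\mu}(q,\omega)) = \sqrt{n}\,\left(\frac{\varSigma_{\mu}(q,\lambda)}{n^{|\mu|}} - q_{\mu}(q,\omega)\right) + O(n^{-1/2}),$$
so the two differences have the same limiting behavior. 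Applying the triangular expansion above to $\varSigma_{\mu}(q,\lambda)$, and using that $q_{\mu}(q,\omega)$ is the analogous linear combination of the $p_{\nu}(\omega)$'s (this is exactly Proposition \ref{qfrobenius} applied to the specialization $A-B+\gamma E$), one obtains the clean identity
$$\Delta_{n,\omega}(q,\mu) = \sum_{|\nu|=|\mu|}\frac{(q^{\nu}-1)}{(q-1)^{\ell(\mu)}}\,\frac{\scal{h_{\mu}}{p_{\nu}}}{z_{\nu}}\,\Delta_{n,\omega}(\nu) + o(1).$$

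At this point Theorem \ref{mainalg} directly yields the result: the vector $(\Delta_{n,\omega}(\nu))_{|\nu|\leq N}$ converges in joint law to a centered gaussian vector, so any finite collection of linear combinations does too, and the limit vector $(\Delta_{\infty,\omega}(q,\mu^{(i)}))_i$ is centered gaussian. The covariance matrix is obtained by bilinearity from the one given in Theorem \ref{mainalg}: plugging the explicit formula for $k(\Delta_{\infty,\omega}(\nu^{(1)}),\Delta_{\infty,\omega}(\nu^{(2)}))$ into the double sum produces precisely the expression announced in the statement, namely the one involving the coefficients $C_{\mu^{(1)},\mu^{(2)},\nu^{(1)},\nu^{(2)}}(q)$ and the factors $ab\,(p_{a+b-1}(\omega) - p_a(\omega)\,p_b(\omega))$.

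There is essentially no hard step here; the only minor point deserving care is the uniform control of the remainder terms when replacing $\chi^{\lambda}(q,\mu\sqcup 1^{n-|\mu|})$ by $\varSigma_{\mu}(q,\lambda)/n^{|\mu|}$, together with verifying that the triangular identity holds for all $\lambda$ large enough (so that the formulas make sense at the level of functions on $\ym_n$). Both are routine given the algebraic machinery of \S\ref{polfunc} and the observation that the change-of-basis coefficients depend only on $q$ and $\mu$, not on $n$ or $\lambda$.
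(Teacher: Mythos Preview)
Your proposal is correct and follows essentially the same approach as the paper: the paper's argument, given in the paragraph immediately preceding the theorem, is precisely that the $\varSigma_{\mu}(q)$'s are fixed linear combinations of the $\varSigma_{\nu}$'s with $|\nu|=|\mu|$, so Theorem~\ref{mainalg} applies directly and the covariances follow by bilinearity. You have simply spelled out a bit more carefully the passage from $\chi^{\lambda}(q,\mu\sqcup 1^{n-|\mu|})$ to $\varSigma_{\mu}(q,\lambda)/n^{|\mu|}$ and the matching of the centering constants via Proposition~\ref{qfrobenius}, which the paper leaves implicit.
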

\noindent In particular, when $\mu^{(1)}=l$ and $\mu^{(2)}=m$ are partitions of length $1$, the scalar products $\scal{h_{\mu^{(i)}}}{p_{\nu^{(i)}}}$ are all equal to $1$ by Frobenius formula \ref{frobenius}, and thus, $k(\Delta_{\infty,\omega}(q,l),\Delta_{\infty,\omega}(q,m))$ is equal to
$$\frac{1}{(q-1)^{2}}\sum_{\substack{\nu^{(1)} \in \ym_{l} \\ \nu^{(2)} \in \ym_{m} }}  \frac{p_{\nu^{(1)}\sqcup \nu^{(2)}}(q\omega - \omega)}{z_{\nu^{(1)}} \,z_{\nu^{(2)}}}\left(\sum_{\substack{ a \in \nu^{(1)}\\ b \in \nu^{(2)}}} ab\,\left(\frac{p_{a+b-1}(\omega)}{p_{a}(\omega)\,p_{b}(\omega)}-1\right)\right).$$
In the general case, it does not seem possible to simplify this formula; however, we obtained a simplification in the case of the $q$-Plancherel measures, see \cite{FM10} and \cite{Mel10}.

\subsection{Jones-Ocneanu traces and their weights}\label{jonesocneanu}
A \textbf{Markov trace} (also-called \textbf{Jones-Ocneanu trace}, \cite{Jon87}) on $\hecke(\sym_{n})$ is a normalized trace that satisfies the additional condition:
$$
\forall k \in \lle 1,n-1\rre,\,\,\,\forall a \in \hecke(\sym_{k}), \tau(T_{k}\,a)=\tau(T_{k})\,\tau(a)=z\,\tau(a).
$$
If $z$ is a fixed parameter, there is a unique corresponding Markov trace on $\hecke(\sym_{n})$, characterized by the values $\tau_{z}(T_{\mu})=z^{|\mu|-\ell(\mu)}$.
\begin{proposition}
Let $X_{q,z}=\frac{[1-q+z]-[z]}{1-[q]}$ denote the formal alphabet that corresponds to the following specialization of $\Sym$: 
$$
\forall k \geq 1,\,\,\,p_{k}(X_{q,z})=\frac{(1-q+z)^{k}-z^{k}}{1-q^{k}}.
$$
The decomposition of $\tau_{z}$ in irreducible $q$-characters is
$$
\tau_{z}=\sum_{\lambda \in \ym_{n}}s_{\lambda}(X_{q,z})\,\zeta^{\lambda}(q).
$$
\end{proposition}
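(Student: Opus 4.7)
The plan is to invoke the $q$-Frobenius formula \ref{qfrobenius} to turn the statement into a computation of the values $\tau_z(T_\mu)$. Since $(\zeta^\lambda(q))_{\lambda \in \ym_n}$ is a basis of the trace space of $\hecke(\sym_n)$ and $\zeta^\lambda(q,\mu)$ is the $q$-character value on $T_\mu$, the claimed decomposition is equivalent to the family of identities
$$
\forall \mu \in \ym_n,\qquad \tau_z(T_\mu) \;=\; \sum_{\lambda \in \ym_n} s_\lambda(X_{q,z})\,\zeta^\lambda(q,\mu) \;=\; q_\mu(q,X_{q,z}),
$$
where the second equality is Proposition \ref{qfrobenius} specialised to $X=X_{q,z}$. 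Since $h_\mu = h_{\mu_1}\cdots h_{\mu_r}$, one has the factorisation $q_\mu(q,X) = \prod_i q_{\mu_i}(q,X)$; and by the defining Markov property applied iteratively, $\tau_z(T_\mu) = \prod_i \tau_z(T_1 T_2 \cdots T_{\mu_i - 1}) = \prod_i z^{\mu_i - 1} = z^{|\mu| - \ell(\mu)}$. So it suffices to prove the one-part identity
$$
q_k(q,X_{q,z}) \;=\; \frac{h_k((q-1)X_{q,z})}{q-1} \;=\; z^{k-1} \qquad \text{for every } k \geq 1.
$$

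To establish this, I would compute the generating function $\sum_{k \geq 0} h_k((q-1)X_{q,z})\,t^k$ through the standard exponential formula
$$
\sum_{k \geq 0} h_k(Y)\,t^k \;=\; \exp\!\left(\sum_{k \geq 1} \frac{p_k(Y)}{k}\,t^k\right).
$$
For $Y = (q-1)X_{q,z}$ the power sums are easy: by definition of $X_{q,z}$ one has $p_k((q-1)X_{q,z}) = (q^k-1)\,p_k(X_{q,z}) = z^k - (1-q+z)^k$. Summing the corresponding logarithmic series gives
$$
\sum_{k \geq 0} h_k((q-1)X_{q,z})\,t^k \;=\; \exp\bigl(\log(1-(1-q+z)t) - \log(1-zt)\bigr) \;=\; \frac{1-(1-q+z)t}{1-zt}.
$$
Expanding the right-hand side as a power series in $t$, the coefficient of $t^k$ for $k \geq 1$ reads $z^{k} - (1-q+z)z^{k-1} = (q-1)z^{k-1}$, which is precisely what is needed.

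There is no real obstacle; the proof is essentially a generating function identity, and the only conceptual point is recognising that the Markov property reduces the whole statement to single-cycle partitions $\mu = (k)$. One could alternatively avoid computing $h_k$ and argue via the Newton identities expressing $h_k$ from $p_1,\ldots,p_k$, but the generating function route is cleaner and makes the role of the ratio $\frac{1-(1-q+z)t}{1-zt}$ transparent, with its two linear factors corresponding respectively to the Markov parameter $z$ and the deformation parameter $1-q+z$.
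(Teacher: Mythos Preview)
Your proof is correct and follows essentially the same approach as the paper: reduce via the $q$-Frobenius formula to the identity $q_\mu(q,X_{q,z})=z^{|\mu|-\ell(\mu)}$, then compute $h_k(qX_{q,z}-X_{q,z})=(q-1)z^{k-1}$ via the generating series $H(t)=\exp\sum_k p_k t^k/k = \frac{1-(1-q+z)t}{1-zt}$. The only cosmetic difference is that the paper names the auxiliary alphabet $Y_{q,z}=qX_{q,z}-X_{q,z}$ rather than writing $(q-1)X_{q,z}$.
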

\begin{proof}
The result is due to Ocneanu (unpublished notes) and Wenzl (see \cite{Wen88}); we give here a very short proof. If $Y_{q,z}=qX_{q,z}-X_{q,z}$, then $p_{k}(Y_{q,z})=(q^{k}-1)\,p_{k}(X_{q,z})=z^{k}-(1-q+z)^{k}$. Consequently,
\begin{align*}
P(u)&=\sum_{k=1}^{\infty} \frac{p_{k}(Y_{q,z})}{k}\,u^{k}=\log\left( \frac{1-u(1-q+z)}{1-uz}\right)\\
H(u)&=\sum_{k=0}^{\infty}h_{k}(Y_{q,z})\,u^{k}=\exp P(u)=\frac{1-u(1-q+z)}{1-uz}.
\end{align*}
Hence, $h_{k}(Y_{q,z})=(q-1)z^{k-1}$, and $$q_{\mu}(q,X_{q,z})=\frac{h_{\mu}(Y_{q,z})}{(q-1)^{\ell(\mu)}}=\prod_{i=1}^{\ell(\mu)} z^{\mu_{i}-1} =z^{|\mu|-\ell(\mu)}=\tau_{z}(T_{\mu}).$$
Now, because of the $q$-Frobenius formula \ref{qfrobenius}, for any partition $\mu$,
$$
\tau_{z}(T_{\mu})=q_{\mu}(q,X_{q,z})=\sum_{\lambda \in \ym_{n}} s_{\lambda}(X_{q,z})\,\zeta^{\lambda}(q,T_{\mu}).
$$
Since a trace is characterized by its values on the $T_{\mu}$'s, the result follows immediately.
\end{proof}
\bigskip
\bigskip

There is a hook length formula for the specializations $s_{\lambda}(X_{q,z})$ of the Schur functions, see for instance \cite[\S1.3]{Mac95}:
$$s_{\lambda}(X_{q,z})=q^{n(\lambda)}\,\prod_{\oblong \in \lambda} \frac{(1-q)+z(1-q^{c(\oblong)})}{1-q^{h(\oblong)}}
$$
where $n(\lambda)=\sum_{i=1}^{\ell(\lambda)}(i-1)\lambda_{i}$, and if $\oblong =(i,j)$ is a box of the Young diagram of $\lambda$, $c(\oblong)=i-j$ is the \textbf{content} of $\oblong$. Now, let us make the change of variables:
$$
z=-(1-q)(1-t).
$$
Then, assuming that $q \in \R_{+}^{*}$ and $t \in [0,1]$, one sees that $s_{\lambda}(X_{q,z})$ is positive, because
$$
s_{\lambda}(X_{q,z})=\frac{q^{b(\lambda)}}{\prod_{\oblong \in \lambda} \{h(\oblong)\}_{q}}\,\left(\prod_{\oblong \in \lambda} t+(1-t)q^{c(\oblong)}\right)
$$
with $\{i\}_{q}=\frac{1-q^{i}}{1-q}$ for any positive integer $i$. Another way to see that $s_{\lambda}(X_{q,z})$ is positive in this situation is to notice that $$X_{q,z}=A-\overline{B},$$ where $A$ and $B$ are the alphabets associated to the point of the Thoma simplex $\omega_{q,t}=(\alpha_{q,t},\beta_{q,t}) $, with
\begin{align*}
\alpha_{q,t} &= \frac{[t(1-q)]}{1-[q]} = (t(1-q),t(1-q)q,t(1-q)q^{2},\ldots);\\
\beta_{q,t} &= \frac{[(1-t)(1-q)]}{1-[q]} = ((1-t)(1-q),(1-t)(1-q)q,(1-t)(1-q)q^{2},\ldots).
\end{align*}
In particular:
\begin{proposition}
If $z=-(1-q)(1-t)$ with $t \in [0,1]$ and $q \in \R_{+}^{*}$, then the Markov trace $\tau_{z}$ is an irreducible (positive) character of $\hecke_{q}(\sym_{\infty})$, and it is equal to $\chi^{\omega_{q,t}}$ with $\omega_{q,t}$ defined by the formulas above.
\end{proposition}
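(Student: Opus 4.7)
The plan is to reduce the statement to an identity between two specializations of $\Sym$ and then invoke the $q$-analog of Thoma's theorem already stated in the excerpt. Observe that the formula $\tau_{z} = \sum_{\lambda \in \ym_n} s_{\lambda}(X_{q,z})\,\zeta^{\lambda}(q)$ combined with the $q$-Thoma classification shows: in order to prove that $\tau_z$ is an extremal character, it suffices to exhibit a point $\omega \in \Omega$ such that the specialization $A - B + \gamma E$ coincides with $X_{q,z}$ on every $p_k$, because the Schur-decomposition of a $q$-character uniquely determines it and two specializations of $\Sym$ are equal iff they agree on the algebraic basis $(p_k)_{k \geq 1}$.

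First, I would check that the candidate $\omega_{q,t} = (\alpha_{q,t}, \beta_{q,t})$ indeed lies in $\Omega$: the coordinates of $\alpha_{q,t}$ are the geometric sequence $t(1-q)q^{i-1}$, which is non-negative and non-increasing when $q \in (0,1)$ and $t \in [0,1]$ (the cases $q = 1$ and $q > 1$ require a separate inspection but the alphabets still give a point of $\Omega$ after a trivial modification), and similarly for $\beta_{q,t}$. Their total mass is
\[
\sum_{i \geq 1} \alpha_{q,t,i} + \sum_{i \geq 1} \beta_{q,t,i} = t(1-q)\sum_{i \geq 0} q^i + (1-t)(1-q)\sum_{i \geq 0} q^i = t + (1-t) = 1,
\]
so $\gamma = 0$.

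The core of the argument is the verification of the power-sum identity $p_k(X_{q,z}) = p_k(A - B)$ for every $k \geq 1$, where $A, B$ are the alphabets attached to $\omega_{q,t}$. On the right-hand side,
\[
p_k(A - B) = p_k(\alpha_{q,t}) + (-1)^{k-1}\,p_k(\beta_{q,t}) = \frac{(t(1-q))^k + (-1)^{k-1}((1-t)(1-q))^k}{1-q^k}.
\]
On the left-hand side, using $z = -(1-q)(1-t)$ one computes $1 - q + z = (1-q)t$, so
\[
p_k(X_{q,z}) = \frac{(1-q+z)^k - z^k}{1-q^k} = \frac{((1-q)t)^k - (-(1-q)(1-t))^k}{1-q^k},
\]
which is the same expression (the sign $(-1)^k$ extracted from $z^k$ reproducing the $(-1)^{k-1}$). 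Hence $X_{q,z}$ and $A - B + 0\cdot E$ define the same specialization of $\Sym$, and therefore $s_\lambda(X_{q,z}) = s_\lambda(A - B)$ for every partition $\lambda$.

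From this identity one reads off that $s_\lambda(X_{q,z}) \geq 0$, so $\tau_z$ is a non-negative combination of the normalized $q$-characters $\zeta^\lambda(q)/\dim\lambda$ with weights $\proba_{n,\omega_{q,t}}[\lambda] = (\dim \lambda)\,s_\lambda(A-B)$, which is the coherent system of $\omega_{q,t}$. Since the $q$-analog of Thoma's theorem identifies the extremal characters of $\hecke_q(\sym_\infty)$ with $\Omega$ via precisely this specialization, $\tau_z = \chi^{\omega_{q,t}}$ is irreducible. The only potential obstacle is the algebraic manipulation of the sign in $z^k$, which is really the whole content of choosing $z = -(1-q)(1-t)$ rather than $+(1-q)(1-t)$; outside of that, the proof is a direct application of the machinery developed earlier in the paper.
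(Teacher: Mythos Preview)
Your proof is correct and matches the paper's approach exactly: the paper does not give a separate argument for this proposition, since it follows immediately from the identification $X_{q,z}=A-\overline{B}$ (verified on power sums) together with the $q$-analog of Thoma's theorem, which is precisely what you spell out. Your hedge about $q\geq 1$ is appropriate --- the paper itself restricts to $q\in\,]0,1[$ shortly afterwards and handles $q>1$ via the symmetry $\proba_{n,q,t}[\lambda]=\proba_{n,q^{-1},t}[\lambda']$.
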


\begin{example}
Suppose that $t=1$. Then, $z=0$ and $\tau=\tau_{0}$ is the \textbf{regular trace} of $\hecke_{q}(\sym_{\infty})$:
$$\forall \omega \in \sym_{\infty},\,\,\,\tau(T_\omega)=\begin{cases}
1 & \text{if }\omega=1,\\
0 & \text{otherwise}.
\end{cases}
$$
Although the definition of the regular does not seem to depend on the value of $q$, the weights of $\tau$ in its decomposition in irreducible characters vary with $q$:
$$\forall n,\,\,\,\tau_{|\hecke_{q}(\sym_{n})}=(1-q)^{n}\sum_{\lambda \in \ym_{n}} (\dim \lambda)\,s_{\lambda}(1,q,q^{2},\ldots)\,\chi^{\lambda}(q).$$
The probability measures corresponding to this decomposition have been studied extensively in \cite{FM10}. In the following, we generalize the results of this paper by using the general theorems proved in Sections \ref{largenumbers} and \ref{clt}.
\end{example}

\subsection{$(q,t)$-Plancherel measures and their asymptotics}\label{qtasymptotic}~
For $q \in \R_{+}^{*}$ and $t \in [0,1]$, we call \textbf{(q,t)-Plancherel measure} of size $n$ the probability measure $\proba_{n,q,t}$ on $\ym_{n}$ corresponding to the decomposition of the Markov trace $\tau_{z}$ with $z=-(1-q)(1-t)$. In other words, $\proba_{n,q,t}=\proba_{n,\omega_{q,t}}$, and
$$
\proba_{n,q,t}[\lambda]=(1-q)^{n}\,(\dim \lambda)\,\, s_{\lambda}\left(\frac{[t]-[t-1]}{1-[q]}\right).$$
The $(q,t)$-Plancherel measures have the following symmetry: $$\proba_{n,q,t}[\lambda]=\proba_{n,q^{-1},t}[\lambda'].$$
Hence, we shall always suppose that $q \in\, ]0,1[$, the other case following by a symmetry. 
\begin{remark}
When $q$ goes to $1$, one recovers the usual Plancherel measures defined on page \pageref{csspezialschur} 
$$\proba_{n,q=1,t}[\lambda]=\proba_{n}[\lambda]=\frac{(\dim \lambda)^{2}}{n!}.$$
Hence, the $(q,t)$-Plancherel measures form a two-parameter deformation of the coherent system of Plancherel measures. They are also a generalization of the $q$-Plancherel measures defined in \cite{Ker92,Stra08} and studied in \cite{FM10}.
\end{remark}
\bigskip\bigskip

Using the results of \S\ref{randompermutation}, one can interpret the $(q,t)$-Plancherel measures as push-forwards of probability measures on the symmetric groups. Let us detail a little bit these measures $\qproba_{n,q,t}$ such that $\Lambda_{\star}\qproba_{n,q,t}=\proba_{n,q,t}$. We could of course take $\qproba_{n,q,t}[\sigma]=\qproba_{n,\omega_{q,t}}[\sigma]$, but it will be a little bit easier\footnote{In return, we lose the direct interpretation of the random permutations as generalized shuffles.} to describe the probability measures
$$\qproba_{n,q,t}[\sigma]=\qproba_{n,\omega_{q,t}}[\sigma^{-1}]=F_{\sigma}(\omega_{q,t})=L_{c(\sigma)}(\omega_{q,t}).$$
Taking $\sigma^{-1}$ instead of $\sigma$ does not change the fact that $\Lambda_{\star}\qproba_{n,q,t}=\proba_{n,q,t}$, because the bilateral Kazhdan-Lusztig cells $\{\sigma\in \sym_{n}\,\,|\,\,\Lambda(\sigma)=\lambda\}$ are stabilized by the inversion $\sigma \mapsto \sigma^{-1}$. Now, for any composition $c \in \comp_{n}$, if $\mathrm{comaj}(c)=\sum_{d \in D(c)} n-d$, then
$$\forall q \in\, ]0,1[,\,\,\,L_{c}(1,q,q^{2},\ldots)=\frac{q^{\mathrm{comaj}(c)}}{(1-q)(1-q^{2})\cdots(1-q)^{n}},$$
see \cite[\S7.19]{Stan91}. Using the definition of the specialization $L_{c}(\omega)$ (\emph{cf.} Paragraph \ref{pushforward}), one sees that:
\begin{align*}\qproba_{n,q,t}[\sigma]&=L_{c(\sigma)}(\omega_{q,t})=\sum_{i=0}^{n} L_{c(\sigma)_{\lle1,i\rre}}(\alpha_{q,t}) \,L_{\overline{c(\sigma)_{\lle i+1,n \rre}}}(\beta_{q,t})\\
&=\sum_{i=0}^{n} L_{c(\sigma)_{\lle1,i\rre}}(\alpha_{q,t}) \,L_{c(\sigma\omega_{0})_{\lle 1,n-i \rre}}(\beta_{q,t})\\
&=\sum_{i=0}^{n} \frac{t^{i}\,(1-t)^{n-i}}{\{i!\}_{q}\,\{n-i!\}_{q}}\,q^{\mathrm{comaj}(c(\sigma)_{\lle 1,i\rre})+\mathrm{comaj}(c(\sigma\omega_{0})_{\lle 1,n-i\rre})}.\end{align*}
For $w$ word of size $n$, we set $\qproba_{n,q}[w]=\frac{q^{\mathrm{comaj}(w)}}{\{n!\}_{q}}$; for any $q \in \R_{+}^{*}$, $\qproba_{n,q}$ is a probability measure on $\sym_{n}$. 
\begin{proposition}\label{interpolation}
As $t$ varies in $[0,1]$, the measures $\qproba_{n,q,t}$ provide an interpolation between $\qproba_{n,q}[\sigma]$ and $\qproba_{n,q}[\sigma\omega_{0}]$:
$$\qproba_{n,q,t}[\sigma]=\sum_{i=0}^{n} t^{i}\,(1-t)^{n-i}\,\qproba_{i,q}[\sigma_{\lle 1,i\rre}]\,\qproba_{n-i,q}[(\sigma\omega_{0})_{\lle 1,n-i\rre}]$$
where $\sigma_{\lle 1,i\rre}$ denotes the word $\sigma(1)\sigma(2)\cdots \sigma(i)$, and similarly for $(\sigma\omega_{0})_{\lle 1,n-i\rre}$.
\end{proposition}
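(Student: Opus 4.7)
The plan is to start from the explicit triple-sum expansion of $\qproba_{n,q,t}[\sigma]$ displayed just before the proposition, and identify each of its summands with the announced product of $\qproba_{i,q}$-weights. Every ingredient has been prepared: the content is essentially bookkeeping about fundamental quasi-symmetric functions, their behaviour on prefixes of compositions and under the conjugation $c\mapsto \overline{c}$, together with the closed form of $L_c$ on a geometric alphabet.

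First, I would rewrite the evaluations of $L_c$ on the alphabets $\alpha_{q,t}$ and $\beta_{q,t}$. Since $\alpha_{q,t}=t(1-q)\cdot(1,q,q^2,\ldots)$ and $L_c$ is homogeneous of degree $|c|$, the closed form $L_c(1,q,q^2,\ldots)=q^{\mathrm{comaj}(c)}/((1-q)(1-q^2)\cdots(1-q^{|c|}))$ recalled in \S\ref{fqsym} gives
\[
L_c(\alpha_{q,t})=\frac{t^{|c|}\,q^{\mathrm{comaj}(c)}}{\{|c|!\}_q},\qquad L_c(\beta_{q,t})=\frac{(1-t)^{|c|}\,q^{\mathrm{comaj}(c)}}{\{|c|!\}_q}.
\]
Plugged into the coproduct/antipode expansion
\[
\qproba_{n,q,t}[\sigma]=\sum_{i=0}^{n} L_{c(\sigma)_{\lle 1,i\rre}}(\alpha_{q,t})\,L_{c(\sigma\omega_0)_{\lle 1,n-i\rre}}(\beta_{q,t}),
\]
this recovers exactly the triple sum written just above the proposition.

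Second, I would match each factor with $\qproba_{k,q}[\,\cdot\,]$. By the definition of $c_{\lle 1,i\rre}$ recalled in \S\ref{fqsym}, the descent set of $c(\sigma)_{\lle 1,i\rre}$ equals $D(\sigma)\cap \lle 1,i-1\rre$, which is also the descent set of the prefix word $\sigma_{\lle 1,i\rre}$; since $\mathrm{comaj}$ depends only on the descent set and on the length, this yields $\mathrm{comaj}(c(\sigma)_{\lle 1,i\rre})=\mathrm{comaj}(\sigma_{\lle 1,i\rre})$ and hence
\[
\frac{q^{\mathrm{comaj}(c(\sigma)_{\lle 1,i\rre})}}{\{i!\}_q}=\qproba_{i,q}[\sigma_{\lle 1,i\rre}]
\]
(implicitly, $\qproba_{i,q}$ is evaluated on the standardization of the prefix word, but this changes nothing since the definition of $\qproba_{i,q}$ is descent-invariant). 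Applying the same identification to $\sigma\omega_0$ in place of $\sigma$ handles the second factor, and assembling everything yields the claimed decomposition.

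The only step that requires genuine care is the compatibility $\overline{c(\sigma)_{\lle i+1,n\rre}}=c(\sigma\omega_0)_{\lle 1,n-i\rre}$, which is invoked silently in the derivation of the triple sum. I would verify it by a direct comparison of descent sets: combining the identity $D(\sigma\omega_0)=\lle 1,n-1\rre\setminus \{n-d : d\in D(\sigma)\}$ with the descriptions of $D(\overline{c})$ and of $D(c_{\lle a,b\rre})$ recalled in \S\ref{fqsym}, one checks that both sides have descent set
\[
\lle 1,n-i-1\rre\setminus\{n-d : d\in D(\sigma)\cap\lle i+1,n-1\rre\}.
\]
Beyond this descent-set bookkeeping, no real obstacle is expected: the proof is a pure rearrangement of the formulas produced by the $\QSym$ machinery of \S\ref{pushforward}.
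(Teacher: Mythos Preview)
Your proposal is correct and follows exactly the same route as the paper: the proposition is stated right after the displayed computation of $\qproba_{n,q,t}[\sigma]$ as a sum of terms $\frac{t^{i}(1-t)^{n-i}}{\{i!\}_{q}\{n-i!\}_{q}}\,q^{\mathrm{comaj}(\cdots)}$, and once $\qproba_{k,q}[w]$ is defined the identity is immediate. You have simply made explicit the two points the paper leaves implicit---the identity $\overline{c(\sigma)_{\lle i+1,n\rre}}=c(\sigma\omega_{0})_{\lle 1,n-i\rre}$ and the equality of $\mathrm{comaj}$ for a composition prefix and the corresponding word prefix---and your descent-set verification of the former is correct.
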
\bigskip

 It was \emph{a priori} a non trivial fact that these interpolations correspond \emph{via} RSK to the weights of the Markov traces $\tau_{-(1-q)(1-t)}$. On the other hand, let us notice that the parts $\mathbb{Q}_{i,q}[\sigma_{\lle 1,i\rre}]\,\mathbb{Q}_{n-i,q}[(\sigma\omega_{0})_{\lle 1,n-i\rre}]$ of $\mathbb{Q}_{n,q,t}[\sigma]$ have a simple geometric interpretation. If $\sigma$ is a permutation, let us draw its corresponding ribbon, see \S\ref{fqsym}. We draw a line after the $i$-th box, and we define $m_{i}(\sigma)$ as the sum of the following quantities:
\begin{itemize}
\item for each box $\young(\vee)$ before the line and with another box just under, the distance between $\young(\vee)$ and the line in the ribbon;
\item for each box $\young(<)$ after the line and with another box just at the left, the distance between $\young(<)$ and the line in the ribbon.
\end{itemize}
Then, the $i$-th weight is simply $\frac{q^{m_{i}(\sigma)}}{\{i!\}_{q}\,\{n-i!\}_{q}}$; for instance, if  $i=5$ and $\sigma=64182357$, then $m_{i}(\sigma)=(4_{6}+3_{4}+1_{8})+(0_{3}+1_{5}+2_{7})=11$.
$$\young(6,4,18,:2357)\quad=\quad\young(\vee,\vee,~\vee,:~<<<)\,.$$
\bigskip
\bigskip

Now, let us describe the asymptotics of the $(q,t)$-Plancherel measures. First, we give the algebraic central limit theorem, noticing that
$$p_{k}(\omega_{q,t})=\frac{(t^{k}-(t-1)^{k})(1-q)^{k}}{1-q^{k}}$$
for any $k\geq 1$.
\begin{proposition}
If $\lambda \sim \proba_{n,q,t}$, then as $n$ goes to infinity, the random variables 
$$\sqrt{n}\left(\chi^{\lambda}(q,T_{l})-(t-1)^{l-1}(1-q)^{l-1}\right)$$ converge jointly to a gaussian vector $(\Delta_{\infty,q,t}(q,l))_{l \geq 1}$ with covariances 
\begin{align*}
&(1-q)^{-l-m+2}\,k(\Delta_{\infty,q,t}(q,l),\Delta_{\infty,q,t}(q,m))
\\
&=\sum_{\substack{\nu^{(1)}\in \ym_{l}\\ \nu^{(2)}\in \ym_{m} \\ a \in \nu^{(1)} \,,\, b \in \nu^{(2)} }} \frac{(t-1)^{\nu} - t^{\nu } }{z_{\nu^{(1)}}\,z_{\nu^{(2)}} } \left(\frac{1-u^{a+b-1}}{1-q^{a+b-1}}\,\frac{1-u}{1-q}\,\frac{1-q^{a}}{1-u^{a}}\,\frac{1-q^{b}}{1-u^{b}}-1\right)
\end{align*}
where  $u= \frac{t-1}{t}$, $\nu=\nu^{(1)}\sqcup \nu^{(2)}$, and $(t-1)^{\nu}-t^{\nu}=\prod_{i=1}^{\ell(\nu)} ((t-1)^{\nu_{i}}-t^{\nu_{i}})$.
\end{proposition}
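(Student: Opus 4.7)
The plan is to specialize the general central limit theorem stated immediately before this proposition to the point $\omega=\omega_{q,t}$ of the Thoma simplex. Joint gaussian convergence is then automatic, and only the centering constant and the limiting covariance need to be identified.

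For the mean, I reuse the computation performed in the proof of Wenzl's formula for Markov traces: one has $h_{l}(Y_{q,z})=(q-1)\,z^{l-1}$ for $Y_{q,z}=qX_{q,z}-X_{q,z}$, hence $q_{l}(q,X_{q,z})=z^{l-1}$. Setting $z=-(1-q)(1-t)=(1-q)(t-1)$ gives $\chi^{\omega_{q,t}}(q,T_{l})=(t-1)^{l-1}(1-q)^{l-1}$, which is exactly the centering constant in the statement.

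For the covariance, I apply the general formula with $\mu^{(1)}=(l),\,\mu^{(2)}=(m)$: then $\scal{h_{l}}{p_{\nu^{(1)}}}=\scal{h_{m}}{p_{\nu^{(2)}}}=1$ and $\ell(\mu^{(1)})+\ell(\mu^{(2)})=2$. Writing $\nu=\nu^{(1)}\sqcup\nu^{(2)}$, the key algebraic identity is
\[ (q^{\nu}-1)\,p_{\nu}(\omega_{q,t})=(1-q)^{l+m}\,((t-1)^{\nu}-t^{\nu}), \]
which follows immediately from the explicit form $p_{k}(\omega_{q,t})=(1-q)^{k}(t^{k}-(t-1)^{k})/(1-q^{k})$ together with the product conventions $q^{\nu}-1=\prod_{i}(q^{\nu_{i}}-1)$ and $(t-1)^{\nu}-t^{\nu}=\prod_{i}((t-1)^{\nu_{i}}-t^{\nu_{i}})$. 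This identity simultaneously produces the prefactor $(1-q)^{l+m-2}$ (after cancellation against $(q-1)^{2}=(1-q)^{2}$ from the denominator of the general coefficient $C$) and the combinatorial factor $(t-1)^{\nu}-t^{\nu}$ appearing in the target formula.

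It then remains to re-express the ratio $p_{a+b-1}(\omega_{q,t})/(p_{a}(\omega_{q,t})\,p_{b}(\omega_{q,t}))$ in the variables $q$ and $u=(t-1)/t$. Using $t^{k}-(t-1)^{k}=t^{k}(1-u^{k})$ together with $1-u=1/t$, a direct substitution yields
\[ \frac{p_{a+b-1}(\omega_{q,t})}{p_{a}(\omega_{q,t})\,p_{b}(\omega_{q,t})}=\frac{1-u^{a+b-1}}{1-q^{a+b-1}}\,\frac{1-u}{1-q}\,\frac{1-q^{a}}{1-u^{a}}\,\frac{1-q^{b}}{1-u^{b}}, \]
which is precisely the rational function in the statement. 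Assembling the three ingredients produces the announced formula. The computation is routine; the only genuine obstacle is the bookkeeping needed to reconcile the additive convention $|\nu|=l+m$ with the multiplicative conventions for $q^{\nu}-1$ and $(t-1)^{\nu}-t^{\nu}$, and to track the signs arising from $(q-1)^{\ell(\nu)}=(-1)^{\ell(\nu)}(1-q)^{\ell(\nu)}$ when comparing $\prod_{i}(q^{\nu_{i}}-1)$ with $\prod_{i}(1-q^{\nu_{i}})$ in the denominator of $p_{\nu}(\omega_{q,t})$.
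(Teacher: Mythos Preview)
Your proposal is correct and follows exactly the route the paper takes: the proposition is stated there without proof, as an immediate specialization of the general $q$-CLT (the Theorem at the end of \S7.1) to the point $\omega_{q,t}$, after recording that $p_{k}(\omega_{q,t})=(t^{k}-(t-1)^{k})(1-q)^{k}/(1-q^{k})$. Your write-up supplies the bookkeeping the paper omits --- the identification of the centering via $h_{l}(Y_{q,z})=(q-1)z^{l-1}$, the identity $(q^{\nu}-1)\,p_{\nu}(\omega_{q,t})=(1-q)^{l+m}\big((t-1)^{\nu}-t^{\nu}\big)$, and the rewriting of $p_{a+b-1}/(p_{a}p_{b})$ in the variable $u=(t-1)/t$ --- and these computations are all sound.
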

\begin{remark}
When $t=1$, this formula can be simplified by using a M\"obius inversion formula, and it becomes
$$k(\Delta_{\infty,q,t}(q,l),\Delta_{\infty,q,t}(q,m))=\frac{(q-q^{2})^{l+m-3}\,(1-q^{2})\,\{l-1\}_{q}\,\{m-1\}_{q}}{\{l+m-1\}_{q}\,\{l+m-2\}_{q}\,\{l+m-3\}_{q}}$$
see \cite{Mel10}.
\end{remark}\bigskip

Now, let us give the geometric version of the central limit theorem. We have drawn below a random Young diagram taken according to the $(q,t)$-Plancherel measure of parameters $n=300$, $q=\frac{1}{2}$ and $t=\frac{2}{3}$. 
\figcap{
\includegraphics[scale=0.8]{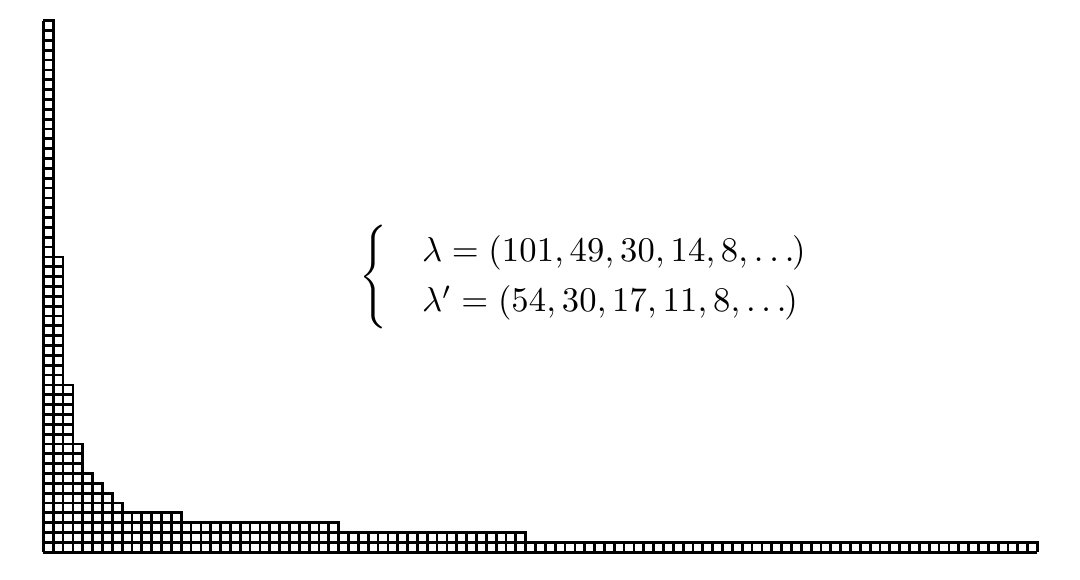}}{A random Young diagram under a $(q,t)$-Plancherel measure.}
With $q=\frac{1}{2}$, it is not hard to see that the first rows and the first columns seem to follow geometric progressions of reason $\frac{1}{2}$. Moreover, with $t=\frac{2}{3}$, the ratio between the first row and the first column seems to be close to $\frac{t}{1-t}=2$. This is in adequation with Theorems \ref{llngeom} and \ref{almostgeom} that state the following (notice that we are in the case of inequal coordinates):
\begin{proposition}\label{lastone}
If $\lambda \sim \proba_{n,q,t}$, then 
\begin{align*}
\lambda_{i} &= n\,t(1-q)q^{i-1}+n^{1/2}\,X_{i}+o(n^{1/2});\\
\lambda_{j}'&=n\,(1-t)(1-q)q^{j-1}+n^{1/2}\,Y_{j}+o(n^{1/2}),
\end{align*}
 where $(X_{i},Y_{j})_{i\geq 1,j\geq 1}$ is a gaussian vector with covariances
\begin{align*}
k(X_{i},X_{j})&=\delta_{i,j} \,t(1-q)q^{i-1} -t^{2}(1-q)^{2}q^{i+j-2};\\
k(Y_{i},Y_{j})&=\delta_{i,j} \,(1-t)(1-q)q^{i-1} -(1-t)^{2}(1-q)^{2}q^{i+j-2};\\
k(X_{i},Y_{j})&=-t(1-t)(1-q)^{2}q^{i+j-2}.
\end{align*}
\end{proposition}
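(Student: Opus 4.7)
The statement is essentially a direct corollary of the law of large numbers (Theorem \ref{llngeom}) and the geometric central limit theorem (Theorem \ref{almostgeom}) applied to the Thoma simplex point $\omega_{q,t}=(\alpha_{q,t},\beta_{q,t})$, where
$$
\alpha_{q,t}=(t(1-q)q^{i-1})_{i\geq 1}, \qquad \beta_{q,t}=((1-t)(1-q)q^{j-1})_{j\geq 1}.
$$
The two series satisfy $\sum_{i}\alpha_{i}+\sum_{j}\beta_{j}=t+(1-t)=1$, so the residual parameter is $\gamma=0$, and the coherent system attached to $\omega_{q,t}$ is exactly $(\proba_{n,q,t})_{n}$. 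The plan is simply to feed these explicit coordinates into the two general asymptotic results proved in Sections \ref{largenumbers} and \ref{clt}.

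The first step is to verify the ``isolation'' hypothesis required by Theorem \ref{almostgeom}. For $q\in(0,1)$ and $t\in(0,1)$, both $\alpha_{q,t}$ and $\beta_{q,t}$ are strictly decreasing geometric progressions of common ratio $q$, so every coordinate is nonzero and appears with multiplicity $m_{k}=w_{k}=1$. Theorem \ref{llngeom} then yields $\lambda_{i}/n\to \alpha_{i}$ and $\lambda_{j}'/n\to \beta_{j}$ in probability, which accounts for the deterministic parts $n\,t(1-q)q^{i-1}$ and $n\,(1-t)(1-q)q^{j-1}$.

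Next, I would invoke Theorem \ref{almostgeom} with every multiplicity equal to $1$. Writing $X_{i}$ and $Y_{j}$ for the limits of $\sqrt{n}(\lambda_{i}/n-\alpha_{i})$ and $\sqrt{n}(\lambda_{j}'/n-\beta_{j})$, the covariance formulas of that theorem specialise to
$$
k(X_{i},X_{j})=\delta_{ij}\,\alpha_{i}-\alpha_{i}\alpha_{j},\qquad k(Y_{i},Y_{j})=\delta_{ij}\,\beta_{i}-\beta_{i}\beta_{j},\qquad k(X_{i},Y_{j})=-\alpha_{i}\beta_{j},
$$
and substituting $\alpha_{i}=t(1-q)q^{i-1}$ and $\beta_{j}=(1-t)(1-q)q^{j-1}$ produces exactly the three expressions of Proposition \ref{lastone}. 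The joint Gaussian convergence for any fixed finite collection of indices is precisely what Theorem \ref{almostgeom} provides, which suffices to describe the law of the countable family $(X_{i},Y_{j})_{i,j\geq 1}$ at the level of finite-dimensional distributions.

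The main subtlety lies at the boundary values $t\in\{0,1\}$, where one of the two sequences vanishes identically. The covariance formulas above then become degenerate, so this direct argument only yields the weaker conclusion $\lambda_{i}=o(\sqrt{n})$ or $\lambda_{j}'=o(\sqrt{n})$, whereas the $q$-Plancherel measures studied in \cite{FM10,Mel10} actually produce nondegenerate Gaussian fluctuations of order $\sqrt{n}$ for the surviving rows or columns. The endpoints should therefore be excluded from the direct argument (or handled by the cited finer analysis); the main body of the proof is stated for $q\in(0,1)$ and $t\in(0,1)$.
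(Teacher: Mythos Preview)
Your proposal is correct and follows exactly the route the paper takes: the proposition is presented there as an immediate specialisation of Theorems \ref{llngeom} and \ref{almostgeom} to the point $\omega_{q,t}$, after observing that the coordinates of $\alpha_{q,t}$ and $\beta_{q,t}$ are pairwise distinct (the paper's parenthetical ``notice that we are in the case of inequal coordinates'').

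One small inaccuracy in your boundary discussion: at $t=1$ the sequence $\alpha_{q,1}=((1-q)q^{i-1})_{i\geq 1}$ still has all coordinates distinct and nonzero, so Theorem \ref{almostgeom} applies directly to the rows and yields the nondegenerate Gaussian limits for the $X_{i}$'s; only the column part becomes vacuous. The paper itself makes this point right after the proposition (``In particular, for $t=1$, we recover Theorems 1 and 2 of \cite{FM10}''). So the endpoints $t\in\{0,1\}$ are not problematic for the surviving side; they merely render the other side trivial.
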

\noindent In particular, for $t=1$, we recover Theorems 1 and 2 of \cite{FM10}. Proposition \ref{lastone} also gives the asymptotic behaviour of the length of a longest increasing or decreasing subword in a random permutation taken according to the probability measure described in Proposition \ref{interpolation}.
\bigskip

\section*{Conclusion}
\noindent Let us summarize the results obtained in this paper:\vspace{2mm}
\begin{enumerate}
\item To any irreducible character of $\sym_{\infty}$ or of $\hecke_{q}(\sym_{\infty})$ with $q \in \R_{+}^{*}$, we have associated a point $\omega$ in the Thoma simplex $\Omega$, and two random models: a model $(\qproba_{n,\omega})_{n\in \N}$ of random permutations obtained by generalized riffle shuffles, and a model $(\proba_{n,\omega})_{n \in \N}$ of random partitions related to the restriction of the irreducible character to the finite symmetric groups or their Hecke algebras.\vspace{2mm}
\item The computation of the probabilities of both models can be done using specializations of combinatorial Hopf algebras: the Hopf algebra $\FQSym$ for the permutations, and the Hopf algebra $\Sym$ for the partitions. This interpretation enabled us to see that the two models are related by the RSK correspondence.\vspace{2mm}
\item The model of random partitions satisfies a law of large numbers and a central limit theorem. In both cases, the asymptotic result can be given an algebraic flavour (for the values of the characters), or a geometric flavour (for the first rows and columns of the partitions). The geometric central limit theorem is much more involved than the other results, and cannot be proved by using only techniques of moments. Hence, one had to relate random permutations and partitions to some conditioned random walks, and to the behaviour of the eigenvalues of brownian hermitian matrices.\vspace{2mm}
\end{enumerate}
When $\omega=\mathbf{0}$, our methods do not describe with sufficient precision the asymptotic behaviour of the random partitions; however, they can be adapted to this purpose, see \cite{IO02}. One obtains again a law of large numbers and a central limit theorem, and this CLT seems to have a universality property; \emph{cf.} \cite{Mel11}. Hence, the next step in the asymptotic study of the models presented in this article would be to understand what happens ``in the neighbourhood'' of the point $\mathbf{0}$ of the Thoma simplex. More precisely, we would like to understand what happens when $\omega$ \emph{varies with $n$ and converges to $\mathbf{0}$}. Let us give a concrete example --- this is one of the two cases studied in \cite{Mel11}, and we refer also to \cite{Bia01}. Suppose that
$$\omega_{n}=\left(\left(\frac{1}{N},\ldots,\frac{1}{N},0,0,\ldots\right),(0,0,\ldots)\right),$$
with $N$ coordinates equal to $\frac{1}{N}$, and $cN \simeq \sqrt{n}$. Then, there is a limit shape for the random partitions $\lambda \sim \proba_{n,\omega_{n}}$; it has been found by P. Biane in \cite{Bia01}, and it is drawn on Figure \ref{schurweyl} in the case when $c=1$.
\figcap{
\includegraphics{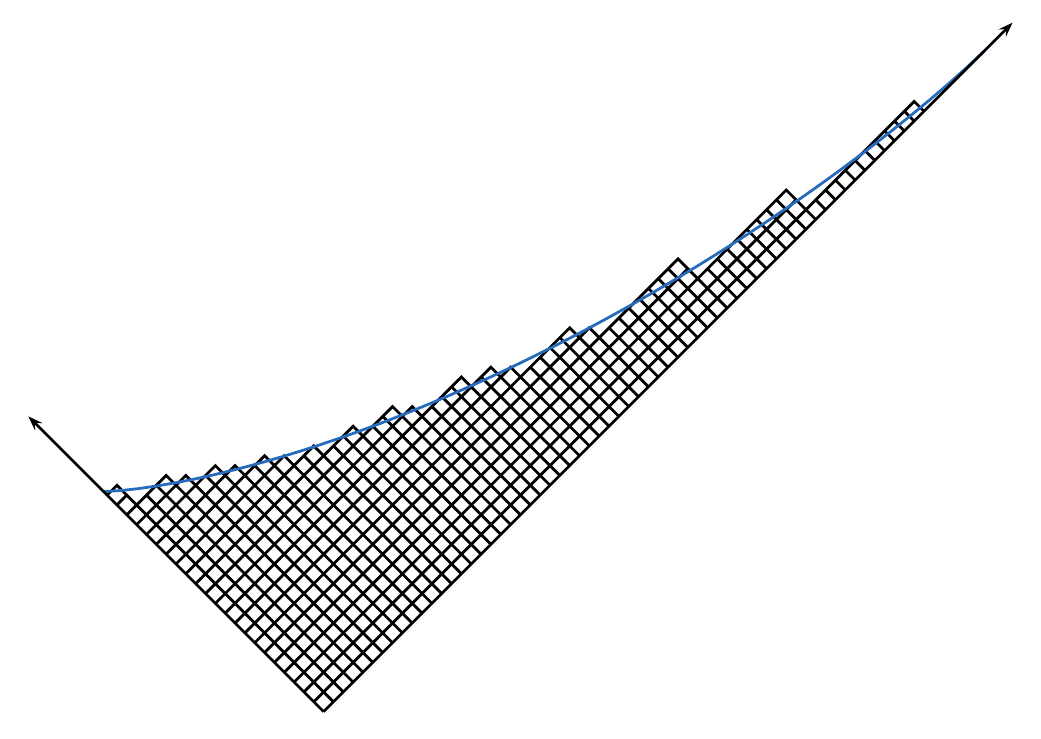}
}{Random partition under the Schur-Weyl measure of parameter $c=1$.\label{schurweyl}}

Now, the interesting fact is that the fluctuations of the partitions under these measures (called \textbf{Schur-Weyl measures}) are described by a gaussian free field that does not depend on $c$ (or, to be more precise, only by a translation of the fluctuations on the $x$-axis). More generally, we conjecture the following (the statement is willingly left unprecise):
\begin{conjecture}\label{lastconjecture}
This asymptotic behaviour is universal among models of random partitions associated to parameters $(\omega_{n})_{n \in \N}$ converging to $\mathbf{0}$ at speed $1/\sqrt{n}$ as $n$ goes to infinity.
\end{conjecture}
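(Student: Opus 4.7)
My plan is to adapt the method of non-commutative moments developed in Section \ref{obs} to the asymptotic regime where $\omega_{n}\to\mathbf{0}$ at speed $1/\sqrt{n}$. Since the rows and columns of $\lambda\sim\proba_{n,\omega_{n}}$ are now expected to be of order $\sqrt{n}$ (as in the pure Plancherel case $\omega=\mathbf{0}$), one should replace the grading $\deg p_{k}=k$ used throughout the paper by the \emph{Kerov grading} on $\obs$ in which $\deg_{K} p_{k}=k$ for $k\geq 2$ and $\deg_{K} p_{1}=2$, so that the natural renormalization of Frobenius moments becomes $\widetilde{p}_{k}(\lambda)=n^{-k/2}\,p_{k}(\lambda)$. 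Under this renormalization, the rescaled diagram profile lives on the $\sqrt{n}$-scale, and its moments are controlled by the observables $\varSigma_{\mu}$ whose expectations are still given by the explicit formula $\esper_{n,\omega_{n}}[\varSigma_{\mu}]=n^{\downarrow|\mu|}\,p_{\mu}(\omega_{n})$.

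The first substantial step is to make precise what is meant by ``$\omega_{n}\to\mathbf{0}$ at speed $1/\sqrt{n}$'': I would require that $\sqrt{n}\,X_{\omega_{n}}$ converges weakly to a (deterministic) compactly supported measure $\nu$ on $[-1,1]$. Then for every fixed $k\geq 2$,
\begin{equation*}
  n^{k/2}\,p_{k}(\omega_{n})=n^{k/2}\int x^{k-1}\,X_{\omega_{n}}(dx)\longrightarrow \int x^{k-2}\,\nu(dx)\quad\text{if }k=2,
\end{equation*}
while $n^{k/2}\,p_{k}(\omega_{n})\to 0$ for $k\geq 3$ provided the $\alpha_{n,i}$, $\beta_{n,j}$ are $O(1/\sqrt{n})$ uniformly. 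This computation says that, to leading order in the Kerov grading, only the second Frobenius moment feels the presence of $\omega_{n}$; hence $\esper_{n,\omega_{n}}[\varSigma_{\mu}]$ has the same asymptotic expansion as in the pure Plancherel case, up to a shift in $\varSigma_{2}$. This is exactly what one needs to prove universality up to a translation of the limit shape.

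For the fluctuations, the key is to re-examine Theorem~\ref{estimate} in the Kerov grading. Using the expansion of $k^{\id}(\varSigma_{l_{1}},\ldots,\varSigma_{l_{r}})$ in partial permutations (Lemma~\ref{identitycumulant}) and the factorization
$k_{n,\omega_{n}}^{\bullet}(\varSigma_{l_{1}},\ldots,\varSigma_{l_{r}})=p_{l_{1},\ldots,l_{r}}(\omega_{n})\,k_{n}^{\bullet}(\varSigma_{1^{l_{1}}},\ldots,\varSigma_{1^{l_{r}}})$
that already appears in the proof of Theorem~\ref{estimate}, one obtains the refined estimate
\begin{equation*}
  k_{n,\omega_{n}}(\varSigma_{l_{1}},\ldots,\varSigma_{l_{r}})=O\bigl(n^{(\sum(l_{i}+1)-2r+2)/2}\bigr)
\end{equation*}
which is precisely the bound that makes cumulants of order $r\geq 3$ of the rescaled observables $n^{-l_{i}/2}\varSigma_{l_{i}}$ vanish, while those of order $2$ converge to a finite limit independent of the sequence $(\omega_{n})$. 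This would yield joint convergence of the centered and rescaled Frobenius moments to a gaussian vector whose covariance is the Ivanov--Olshanski covariance of the Plancherel CLT, and by the test-function argument of \S\ref{gaussrow} one would upgrade this to convergence of the rescaled height function to the gaussian free field of Kerov.

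The main obstacle I expect is twofold. First, the ``translation'' that $\omega_{n}$ produces in the limit shape has to be extracted cleanly from the $\varSigma_{2}$-contribution, which means reworking the combinatorics of dominant terms in products $\varSigma_{\mu}\,\varSigma_{\nu}$ with the Kerov grading rather than the usual one; this is a bookkeeping issue but needs care when several $p_{k}(\omega_{n})$ are simultaneously of critical size. Second, and more seriously, the statement of Conjecture~\ref{lastconjecture} is willingly unprecise, and universality will likely \emph{fail} for families $(\omega_{n})$ whose coordinates are larger than $O(1/\sqrt{n})$; one therefore has to identify the exact threshold of universality. A natural guess, which I would try to justify a posteriori, is that any sequence satisfying $\sum_{i}(\alpha_{n,i})^{2}+\sum_{j}(\beta_{n,j})^{2}=O(1/n)$ falls in the universality class of the Schur-Weyl gaussian free field, whereas sequences with slower decay produce an additional non-gaussian contribution coming from the macroscopic atoms of $X_{\omega_{n}}$, analogous to what Theorem~\ref{almostgeom} already exhibits for fixed $\omega\neq\mathbf{0}$.
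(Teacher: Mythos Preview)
The statement you are attempting to prove is labelled \emph{Conjecture} in the paper, and the author explicitly writes that ``the statement is willingly left unprecise''. There is no proof in the paper to compare your proposal against: the conjecture is presented in the Conclusion as an open problem, motivated by the Schur--Weyl example treated in \cite{Mel11} and by the hope that a two-parameter family such as the free Meixner laws might make the computations generic enough to settle it. So your write-up is not a proof attempt to be checked against an existing argument, but a research programme.

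That said, the strategy you outline --- switching to the Kerov grading on $\obs$, re-running the cumulant estimates of Theorem~\ref{estimate} in that grading, and identifying the contribution of $\omega_{n}$ as a shift coming from $\varSigma_{2}$ --- is the natural one, and it is precisely the approach used in \cite{IO02} for $\omega=\mathbf{0}$ and in \cite{Mel11} for the Schur--Weyl case. Your identification of the correct hypothesis (uniform $O(1/\sqrt{n})$ bound on the coordinates, or equivalently $p_{2}(\omega_{n})=O(1/n)$) is also on the right track.

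There is, however, a concrete error in your scaling computation. You write $n^{k/2}\,p_{k}(\omega_{n})=n^{k/2}\int x^{k-1}\,X_{\omega_{n}}(dx)\to\int x^{k-2}\,\nu(dx)$ ``if $k=2$'', and claim the limit is $0$ for $k\geq 3$. But if the coordinates are of order $1/\sqrt{n}$ and the pushforward of $X_{\omega_{n}}$ under $x\mapsto\sqrt{n}\,x$ converges to $\nu$, then the correct statement is $n^{(k-1)/2}\,p_{k}(\omega_{n})\to\int y^{k-1}\,\nu(dy)$ for every $k\geq 1$, which is in general nonzero for all $k$. Your claimed bound $n^{k/2}\,p_{k}(\omega_{n})\to 0$ for $k\geq 3$ is therefore false under your own hypothesis; what one actually needs, and what the Kerov grading is designed to capture, is that the contribution of $p_{k}(\omega_{n})$ to the renormalized observable $n^{-|\mu|/2}\,\varSigma_{\mu}$ enters at the correct subleading order. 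Getting these exponents straight is exactly the ``bookkeeping issue'' you flag, and it is where the real work lies.
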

\bigskip

There is one example whose study would be very interesting in order to prove this conjecture --- it has been suggested by M. Bo\.{z}ejko. In some sense, the Schur-Weyl measures provide another one-parameter deformation of the Plancherel measures: indeed, when $c$ goes to $0$, the limit shape of the partitions under Schur-Weyl measures converges to the one corresponding to the Plancherel measures. On the other hand, the limit shapes under Schur-Weyl measures correspond \emph{via} the Markov-Krein correspondence (see \cite{Ker98}) to the Mar\v{c}enko-Pastur measures, that are a one-parameter deformation of the Wigner semicircle law (corresponding \emph{via} Markov-Krein to the limit shape of the random partitions under Plancherel measures). Now, the \textbf{free Meixner laws} (see \emph{e.g.} \cite{BB06}) form a two-parameter deformation of the Wigner measure, and they generalize the Mar\v{c}enko-Pastur laws. So it would be extremely interesting to construct models of random partitions corresponding to these laws, and to study their fluctuations. One can hope that with two parameters, the computations will be sufficiently generic in order to understand the behaviour of random partitions in the general situation of Conjecture \ref{lastconjecture}. The results of this paper have been obtained in a similar way: starting from the $q$-Plancherel measures already studied in \cite{FM10}, we have gone to the case of $(q,t)$-Plancherel measures, and we have then realized that our results hold in fact for any model of random partitions associated to a point of the Thoma simplex.

\bigskip
\bigskip

\bibliographystyle{alpha}
\bibliography{cltinfinity}

\end{document}